\documentclass[a4paper,11pt]{amsart}
\usepackage{amscd}

\usepackage{amsmath,amsfonts,amsthm,epsfig,latexsym,graphicx,amssymb}%,mathtext}
\usepackage{tikz}
\usepackage{enumitem}
\usepackage[english]{babel}
\usepackage{xcolor}
\usepackage[colorlinks=true, citecolor=red]{hyperref}
\usepackage{verbatim}
\usepackage{fullpage}

%%%%%%%%%%%%%%%%
%\usepackage{refcheck}

% Our list of newcommands and newtheorem
%%%%%%%%%%%%%%%%
%\usepackage{style}
\newtheorem{theorem}{Theorem}[section]
\newtheorem{corollary}[theorem]{Corollary}
\newtheorem{proposition}[theorem]{Proposition}
\newtheorem{definition}[theorem]{Definition}
\newtheorem{lemma}[theorem]{Lemma}
\newtheorem{claim}[theorem]{Claim}

\newtheorem*{theorem*}{Theorem}
\newtheorem*{proposition*}{Proposition}
\newtheorem*{definition*}{Definition}
\newtheorem*{lemma*}{Lemma}
\newtheorem*{claim*}{Claim}
\newtheorem*{corollary*}{Corollary}
\newtheorem*{convention*}{Convention}
\newtheorem{observation}[theorem]{Observation}

\newtheorem{thmintro}{Theorem}

\newtheorem{corintro}[thmintro]{Corollary}

\theoremstyle{definition}
\newtheorem{convention}[theorem]{Convention}
\newtheorem{example}{Example}

\newtheorem*{notation}{Notation}

\theoremstyle{remark}
\newtheorem{rem}[theorem]{Remark}
\newtheorem*{rem*}{Remark}
\newtheorem*{acknowledgement}{Acknowledgement}

%exponant on the left

%universal cover

%ensembles classiques
\newcommand\bR{\mathbb R}

\newcommand\bZ{\mathbb Z}
\newcommand\bN{\mathbb N}
\newcommand\bQ{\mathbb Q}

\newcommand{\R}{\mathbb R}

\newcommand{\bD}{\mathbb D}

\newcommand{\graph}{\Gamma}

%raccourci pour varepsilon

%%%%%%%%%%%%%%%%%%%%%%%Partie pour les flots de contact

%%%%%%%%%%%%%%%%%%%%%%%Partie pour les flots de contact

%les flots d'Anosov

%forme de contact

%\newcommand\vol{ \con \wedge d\con}

%l'espace des orbites et les espaces des feuilles

%feuilletage

%orbite relevée

%universal circle

%ideal boundary

\newcommand{\cC}{\mathcal{C}}

\newcommand{\cF}{\mathcal{F}}

\newcommand{\cI}{\mathcal{I}}
\newcommand{\cJ}{\mathcal{J}}
\newcommand{\cL}{\mathcal{L}}

\newcommand{\cP}{\mathcal{P}}
\newcommand{\cR}{\mathcal{R}}

\newcommand{\cT}{\mathcal{T}}
\newcommand{\cE}{\mathcal{E}}

\newcommand{\cX}{\mathcal{X}}

\newcommand{\Pbound}{\partial P}

\newcommand{\Homeo}{\mathrm{Homeo}}

\newcommand{\geo}{\mathrm{Geo}}

\newcommand{\nonsing}{\mathrm{ns}} %we can change the name here. I thought nonsing was too long for a subscript

%%%%%%%%%%%%%%%%%%%%%%%%%%%%
%-------- Notes de marge : -------------
\newcounter{notes}%[page]   %Le 2eme argument fait reinitialiser les numeros de notes a chaque page
%
% Notes de marge numerotees : 

%%%%%%%%%%%%%%%%%%%%%%%%%%%%

\title{Completing prelaminations}

\author[Thomas Barthelm\'e]{Thomas Barthelm\'e}
\address{Queen's University, Kingston, Ontario}
\email{thomas.barthelme@queensu.ca}
\urladdr{sites.google.com/site/thomasbarthelme}

\author[Christian Bonatti]{Christian Bonatti}
 \address{Universit\'e de Bourgogne, Dijon, France}
 \email{bonatti@u-bourgogne.fr}
% \urladdr{}

\author[Kathryn Mann]{Kathryn Mann}
 \address{Cornell University, Ithaca, NY}
 \email{k.mann@cornell.edu}
\urladdr{https://e.math.cornell.edu/people/mann}

\begin{document}
\begin{abstract} 
Motivated by problems in the study of Anosov and pseudo-Anosov flows on 3-manifolds, we characterize when a pair $L^+, L^-$ of subsets of laminations of the circle can be {\em completed} to a pair of transverse foliations of the plane or, separately, realized as the endpoints of such a {\em bifoliation} of the plane.  (We allow also singular bifoliations with simple prongs, such as arise in pseudo-Anosov flows).   This program is carried out at a level of generality applicable to bifoliations coming from pseudo-Anosov flows with and without perfect fits, as well as many other examples, and is natural with respect to group actions preserving these structures.    
\end{abstract}

\maketitle

\section{Introduction} 
A pseudo-Anosov flow on a $3$-manifold on $M$ gives rise to an action of $\pi_1(M)$ on an open disc (a topological plane) equipped with two transverse, possibly singular, foliations \cite{Fen_Anosov_flow_3_manifolds}. Such a bifoliated plane can be compactified by adding a topological circle at infinity to which the action of the group extends in a natural way  \cite{Bonatti_boundary,Fen_ideal_boundaries}.\footnote{see also \cite{Mather} for the case of one foliation or \cite{Fra_Mobiuslike} for cirles at infinity associated to even more general decompositions of the plane.} Leaves of each foliation (or faces of singular leaves) can be associated to pairs of points in the circle, giving pairs of points which define a subset of a lamination.  

Here we show that one can uniquely reconstruct a bifoliated plane, and a group action, given relatively sparse data from a ``circle at infinity" alone.  This data may be as sparse as a countable set of leaves.  
We do this in a very broad topological context, but one directly applicable to flows. Our Theorem \ref{thm_completion} below (specifically, uniquely reconstructing a bifoliated plane from {\em countable} data in a group-equivariant way) is used in \cite{BBM} to show that pseudo-Anosov flows on compact 3-manifolds can be {\em classified up to orbit equivalence by actions on the circle} -- precisely, a flow on $M$ is determined by the action of $\pi_1(M)$ on the circle at infinity of the orbit space of the flow.  Reconstructing the plane is a procedure we call ``completion'':

\begin{definition}
If $L^+$ and $L^-$ are subsets of laminations of $S^1$, a {\em planar completion} is a pair of transverse (possibly singular) foliations $(\cF^+, \cF^-)$ of the disc such that $L^\pm$ correspond to endpoints in the circle at infinity of a dense subset of leaves of $\cF^\pm$.  
\end{definition} 

Our main result gives easily checkable combinatorial conditions describing exactly when subsets of laminations can be ``completed" in this way.   A pair of points on $S^1$ can be represented by a Euclidean straight line in the unit disc.  If $L$ is a collection of such pairs, we let $\geo(L)$ denote the union of the corresponding geodesics.  Our conditions simply encode the intersections of these geodesics.  

We first present a simple case that is already relevant to the question of flows. Call a subset of a lamination {\em regular} if no two geodesic of $\geo(L)$ share a common endpoint and each geodesic of $\geo(L)$ is accumulated on both sides; or equivalently, no component of $\bD^2 \setminus \geo(L)$ has a geodesic of $\geo(L)$ in its closure.  

\begin{theorem}[Special case of main theorem]\label{thm_special_case_regular}
Let $L^+, L^-$ be regular subsets of laminations of $S^1$, with no shared leaves.  This pair can be {\em completed} to a non-singular bifoliation of the plane if and only if the following two conditions hold: 
\begin{enumerate}
\item $\geo(L^+) \cup \geo(L^-)$ is connected through segments of geodesics and dense in $S^1$.
\item Each connected component of $\bD^2 \setminus (\geo(L^+) \cup \geo(L^-))$ is a quadrilateral bounded by alternating segments of $\geo(L^+)$ and $\geo(L^-)$.
\end{enumerate} 
Moreover, this completion is {\em unique} up to foliation-preserving homeomorphisms restricting to the identity on $S^1$, and any group acting on $S^1$ preserving $L^+, L^-$ extends to act (uniquely) by foliation-preserving homeomorphisms of the plane.  
\end{theorem}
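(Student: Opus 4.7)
The plan is to establish necessity and sufficiency of the two conditions, then uniqueness and equivariance, all organized around describing each complementary region of $\geo(L^+)\cup\geo(L^-)$ as a ``foliated product quadrilateral'' and showing how these pieces glue.

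For necessity, assume $(\cF^+,\cF^-)$ completes $(L^+,L^-)$. Density of $L^\pm$-leaves in $\cF^\pm$, combined with the local product structure of any non-singular bifoliation of the disc, forces density of $\geo(L^+)\cup\geo(L^-)$ in $S^1$ and path-connectedness of this union through geodesic segments, giving (1). For (2), I would take a complementary region $R$, pick an interior point $p$, and invoke a product chart at $p$ bounded by two $\cF^+$-arcs and two $\cF^-$-arcs. Regularity and density let one extend these four bounding arcs along leaves until they meet the sides of $R$, pinning the combinatorics of $\partial R$ down to an alternating quadrilateral.

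For sufficiency I would build $(\cF^+,\cF^-)$ explicitly. Condition (2) realises $\bD^2$ as the union along sides of closed quadrilaterals $\{\overline{Q_i}\}$, each with a pair of opposite $L^+$-sides and a pair of opposite $L^-$-sides. Fix for each $Q_i$ a homeomorphism $\phi_i : [0,1]^2 \to \overline{Q_i}$ carrying horizontal sides of the square to the $L^+$-sides, and declare the $\phi_i$-images of the two product foliations of the square to be $\cF^+$ (horizontal) and $\cF^-$ (vertical) inside $\overline{Q_i}$. Globally, a leaf of $\cF^+$ is either a full geodesic of $\geo(L^+)$ or a maximal concatenation of horizontal arcs running through successive quadrilaterals adjacent along shared $L^-$-sides (and symmetrically for $\cF^-$). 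Condition (1) ensures $\bigcup_i\overline{Q_i}=\overline{\bD^2}$, so every point of $\bD^2$ lies on a unique leaf of each foliation; that these leaves are properly embedded arcs with distinct $S^1$-endpoints and that $L^\pm$ are dense among the $\cF^\pm$-endpoints then follow from (1). For uniqueness, any two completions induce the same skeleton and the same quadrilateral decomposition with matching side-labellings, so the identity on the skeleton extends quadrilateral-by-quadrilateral to a foliation-preserving homeomorphism of $\bD^2$ which is the identity on $S^1$. For equivariance, a group $G$ preserving $L^\pm$ permutes the $\overline{Q_i}$ with their labellings; choosing the $\phi_i$ consistently on a set of $G$-orbit representatives and transporting by $G$ yields a $G$-equivariant bifoliation, and uniqueness then identifies it with any other completion.

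The main obstacle is verifying, inside the sufficiency construction, that the local structure of $(\cF^+,\cF^-)$ at a point of an interior geodesic of $\geo(L^\pm)$ is a genuine topological product chart. Two quadrilaterals sharing a side can have very different ``shapes'' near that side, so the $\phi_i$-charts do not automatically glue to a chart across the shared geodesic; the hypothesis that each geodesic of $\geo(L^\pm)$ is accumulated on both sides (part of regularity) is what is needed to produce a continuous local product chart there, likely via a shrinking or nesting argument centered at each interior geodesic. This is also the point that forces the regularity hypothesis into the statement.
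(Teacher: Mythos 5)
Your argument takes a genuinely different route from the paper's: the paper deduces the regular special case (Corollary~\ref{cor:restatmentofregularformintro}) from the general Theorem~\ref{thm_completion}, whose proof passes through the abstract crossing space $\cX$ of Section~\ref{sec.plane_construction} and the combinatorial ``completion'' procedure (adding crossing geodesics) of Section~\ref{sec:proof_completion_thm}, rather than constructing a foliation inside $\bD^2$ by gluing charts over complementary regions.

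There is, however, a genuine gap in your sufficiency argument, and it is precisely the one the authors warn about in the introduction when they say the completion is ``not simply taking a closure in the disc.'' You assert that condition (1) ensures $\bigcup_i \overline{Q_i} = \overline{\bD^2}$, and your construction of $(\cF^+,\cF^-)$ only assigns leaves to points lying either on $\geo(L^\pm)$ or in some $\overline{Q_i}$. But $\bigcup_i \overline{Q_i}$ is typically a \emph{proper} subset of $\bD^2$. Worse, for \emph{regular} prelaminations the crossing points are never covered: if $p = \bar\alpha \cap \bar\beta$ with $\alpha\in L^+$, $\beta\in L^-$, then regularity gives sequences $\alpha_n\to\alpha$ and $\alpha_n'\to\alpha$ in $L^+$ from both sides, and similarly $\beta_n,\beta_n'\to\beta$ in $L^-$; since geodesics converging to $\bar\alpha$ do so uniformly, each of the four quadrants at $p$ contains leaf segments of both $\geo(L^+)$ and $\geo(L^-)$ arbitrarily close to $p$, so no complementary region has $p$ as a corner and $p\notin\bigcup_i\overline{Q_i}$. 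Thus your $\cF^-$ (and in some configurations $\cF^+$) is simply undefined at every crossing point of $\geo(L^+)$ with $\geo(L^-)$. The same reasoning shows that, for regular $L^\pm$, no $\geo(L^\pm)$ segment can even lie on the boundary of a complementary region of $\geo(L^+)\cup\geo(L^-)$ (a nearby leaf would enter the region), which is in tension with your reading of condition (2) and suggests your picture of how the quadrilaterals fit together is not the correct one. The paper avoids all of this by building the completed plane \emph{abstractly} from the set of crossing pairs $(\alpha^+,\alpha^-)$ with its order and rectangle topology (Lemmas~\ref{l.crossing_order}, \ref{l.rectangles_are_rectangles}), so that the points of the residual set appear automatically as limits in the new topology rather than having to be covered by finite charts.

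Two further, lesser, issues: the obstacle you flag at the end (that $\phi_i$-charts may not match across a shared side) is real but you give no argument for why regularity cures it, and your necessity and uniqueness paragraphs rely on a ``quadrilateral decomposition'' that, as above, does not partition $\bD^2$, so they inherit the same gap.
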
 
Examples satisfying these hypotheses include the (countable) laminations induced by the sets of leaves of an Anosov flow which contain periodic orbits unique in their homotopy class (see \cite{BBM}). In these examples, $\geo(L)$ is relatively sparse and has many complementary regions in $\bD^2$ with nonempty interior, as in Figure \ref{fig:intro} left.   In particular, even for these examples the ``completion" procedure is nontrivial, and not simply taking a ``closure" in the disc.  

As a consequence even of this simple case, the orbit equivalence class of an Anosov flow is determined by the intersection pattern of this countable set of periodic leaves.   

For the general case applicable to even sparser subsets of laminations, and to foliations with singularities, we replace the quadrilateral condition by instead describing the combinatorics of which sides of complementary regions of $\bD^2 \setminus \geo(L^\pm)$ are crossed by geodesics of $\geo(L^\mp)$.  This information is encoded in a graph, called the {\em linkage graph} of the region.  Instead of specifying quadrilaterals, we restrict the behavior of cycles and high-valence vertices in this graph (conditions \ref{item_simple_cycle} and \ref{item_no_high_valence} below).  We defer precise definitions to Sections \ref{sec:details} and \ref{subsec:linkage_graphs}.  

\begin{thmintro}[Main theorem -- completions] \label{thm_completion}
Let $L^+, L^-$ be two subsets of laminations of $S^1$ with no shared leaves.  The pair $(L^+, L^-)$ has a  {\em planar completion} if and only if the following 
conditions are satisfied: 
\begin{enumerate}[label=(\roman*)]
 \item $\geo(L^+) \cup \geo(L^-)$ is connected through segments of geodesics and is dense in $S^1$.  
 \item\label{item_simple_cycle} $(L^+,L^-)$ satisfies the simple cycle condition for linkage graphs,
 \item \label{item_no_high_valence} $(L^+,L^-)$ has no high valence leaves in the linkage graphs, and
 \item\label{item_same_endpoint_condition} No three $L^\pm$-leaves with a common endpoint cross a common $L^\mp$-leaf.
\end{enumerate}
Moreover, the planar completion is unique up to foliation-preserving homeomorphisms restricting to the identity on $S^1$. 
\end{thmintro}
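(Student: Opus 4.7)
The plan is to prove the two implications separately, with the bulk of the work in the sufficiency direction, and then address uniqueness by a block-by-block homeomorphism argument.

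For the necessity direction, suppose a completion $(\cF^+, \cF^-)$ exists. Condition (i) follows because each leaf of $\cF^\pm$ is a proper arc in $\bD^2$ separating the disc, and the density of the endpoint subsets $L^\pm$ forces $\geo(L^+) \cup \geo(L^-)$ to be dense in $S^1$; transversality of $\cF^+$ and $\cF^-$ ensures connectedness through segments. For the remaining conditions, the key observation is that each complementary component of $\geo(L^+) \cup \geo(L^-)$ in $\bD^2$ must correspond to a ``block'' in which $(\cF^+, \cF^-)$ restricts to a standard bifoliated model (possibly with a simple prong singularity at the center). Reading off which sides of the block are crossed by leaves of the opposite family gives exactly the linkage graph, and the combinatorics of any standard bifoliated polygon satisfies (ii), (iii), (iv); in particular (iv) is precisely the constraint that prevents more than two leaves of one family from meeting a common transverse leaf near a shared endpoint.

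For sufficiency, I construct the completion region by region. Condition (i) guarantees that $\bD^2 \setminus (\geo(L^+) \cup \geo(L^-))$ decomposes into complementary regions $\{R_\alpha\}$ whose closures meet $S^1$ only in a countable set of points and arcs of $\geo(L^\pm)$. For each such $R_\alpha$, the combinatorial data of the cyclic sequence of $L^\pm$-sides along $\partial R_\alpha$, together with its linkage graph, determines the region up to an ambient isotopy of the closed disc. Conditions (ii) and (iii) rule out combinatorial types for which no bifoliated filling exists, and (iv) ensures that any necessary prong singularities are simple. Given this, I build an explicit local bifoliation of $R_\alpha$ extending the boundary leaves, with the $\cF^+$-leaves inside $R_\alpha$ (which are \emph{not} in $L^+$) chosen to realize precisely the linkage pattern prescribed by the graph. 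Gluing across the common boundary arcs in $\geo(L^\pm)$ is automatic, since the adjacent blocks share these arcs as boundary leaves, producing the global bifoliation $(\cF^+, \cF^-)$.

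The main obstacle will be the combinatorial classification of the complementary regions and the proof that conditions (ii)--(iv) characterize precisely the ``fillable'' combinatorial types. In the regular case of Theorem~\ref{thm_special_case_regular}, each region is a quadrilateral and the argument is transparent; in general one must deal with polygons having countably many sides and with linkage graphs of arbitrary (but controlled) complexity. The simple cycle condition (ii) is what prevents a block from requiring a leaf to enter and leave it infinitely often, while the no-high-valence condition (iii) rules out a single transverse leaf crossing uncountably many sides in an unbounded way. These conditions must be reverse-engineered from the list of admissible local bifoliation models; showing the list is exhaustive will require a careful analysis of the possible topological types of transverse foliations of a polygonal region.

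Finally, for uniqueness, given two completions $(\cF^+, \cF^-)$ and $(\cF'^+, \cF'^-)$, the identity on $\geo(L^+) \cup \geo(L^-) \cup S^1$ extends to a foliation-preserving homeomorphism on each complementary region, since the local bifoliated model is determined up to homeomorphism by the combinatorial type established above. These local homeomorphisms patch along the shared boundary arcs to give a global homeomorphism restricting to the identity on $S^1$. Canonicity of the construction ensures that any group acting on $S^1$ and preserving $L^+, L^-$ lifts uniquely to a foliation-preserving action on $\bD^2$, proving the group-equivariant reconstruction claim stated in the introduction.
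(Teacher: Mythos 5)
Your plan takes a genuinely different route from the paper, and the route has a real gap in the sufficiency direction.

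The paper's strategy is in two steps. First (Section 2) it proves Theorem~\ref{thm:realization}, characterizing exactly which prelaminations are \emph{induced by} a pA-bifoliation: the complementary regions of $L^\pm$ (not of $L^+\cup L^-$) must each be an ideal polygon or a one-root region, ideal polygons must come in coupled pairs with no shared sides, and only countably many leaves may share an endpoint. The plane is then built \emph{abstractly} as a quotient of the crossing set $\cX\subset L^+\times L^-$ — there is no region-by-region filling, and the authors explicitly avoid Moore's theorem. Second (Sections 3--4), to prove Theorem~\ref{thm_completion}, they define the completion $\tilde L^\pm$ by adding a specific countable family of new leaves (geodesics bounding trivial complementary regions, and crossing geodesics $\alpha(e)$, $\alpha(e_1,e_2)$ of cut-edges and cut-pairs), and then verify that $(\tilde L^+,\tilde L^-)$ satisfies the hypotheses of Theorem~\ref{thm:realization}. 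The hard content lives in the ``toolkit'' of Section~3: how linkage graphs of complementary regions of $L^\pm$ transform when crossing geodesics are inserted (Lemmas~\ref{lem:same_graphs}, \ref{lem:divide_graph}, etc.), and why conditions~\ref{item_simple_cycle}--\ref{item_same_endpoint_condition} guarantee that no cut-edges, cut-pairs, or high-valence leaves survive in $\tilde L^\pm$.

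The gap in your sufficiency argument is the phrase ``for each such $R_\alpha$\ldots build an explicit local bifoliation\ldots gluing across the common boundary arcs\ldots is automatic.'' Several things are wrong or underspecified here. (a) You work with complementary regions $R_\alpha$ of the \emph{union} $\geo(L^+)\cup\geo(L^-)$, but the paper's linkage graph is attached to a complementary region $C$ of a \emph{single} prelamination $L^\pm$, with edges given by the opposite prelamination; your ``linkage graph of $R_\alpha$'' is not the object defined in the paper and its meaning is unclear. (b) The new leaves that must be added are full geodesics with both endpoints on $S^1$, not region-local segments; a leaf of the completion passes through many (possibly infinitely many) regions $R_\alpha$, and a local filling of each block does not obviously yield a globally consistent family of leaves — the consistency is exactly what you'd have to prove, and you've asserted it as automatic. (c) You say conditions~\ref{item_simple_cycle}--\ref{item_no_high_valence} ``rule out combinatorial types for which no bifoliated filling exists'' and that ``showing the list is exhaustive will require a careful analysis'' — but this analysis \emph{is} the theorem; listing admissible local models and proving exhaustiveness is not easier than the paper's argument and you do not supply it. The necessity and uniqueness parts of your sketch are closer to the paper's Section~5 in spirit (the paper shows via Lemmas~\ref{lem:density}--\ref{lem_charac_of_leaves} that every leaf of a planar completion is either in $\bar L^\pm$ or a crossing geodesic, so the added leaves are forced), but they too rely on the unproved local-model classification. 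To repair the plan you would need either the paper's two-step reduction through Theorem~\ref{thm:realization}, or an independent proof that the crossing geodesics you must add really exist and are the only choice, which is the content you currently assume.
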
 

As in the special case, planar completions respect group actions:
\begin{thmintro}[Extending group actions]\label{cor:action_extends_2}
If $G$ is a group acting on $S^1$ preserving a pair $L^+, L^-$ satisfying the hypotheses of Theorem \ref{thm_completion}, then the action of $G$ extends uniquely to an action by homeomorphisms of the compactified plane, preserving the foliations of the completion.  
\end{thmintro}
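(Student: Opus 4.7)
The plan is to deduce Theorem~\ref{cor:action_extends_2} directly from the uniqueness clause in Theorem~\ref{thm_completion}. Fix a planar completion $(D,\cF^+,\cF^-)$ of $(L^+,L^-)$, where $D$ denotes the compactified plane with boundary circle $S^1$.

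\emph{Construction of $\tilde g$.} Given $g\in G$, I first extend $g$ arbitrarily to a homeomorphism $\phi\colon D\to D$ (for instance, radially coning the action of $g$ across the interior). The pushforward foliations $(\phi_*\cF^+,\phi_*\cF^-)$ form another pair of transverse (possibly singular) foliations of $D$, and the circle-at-infinity endpoint data of their leaves is $g(L^+)=L^+$ and $g(L^-)=L^-$, because $g$ preserves the laminations $L^\pm$. Hence $(\phi_*\cF^+,\phi_*\cF^-)$ is a planar completion of the \emph{same} pair $(L^+,L^-)$. By the uniqueness clause of Theorem~\ref{thm_completion}, there is a foliation-preserving homeomorphism $h\colon D\to D$ restricting to $\id$ on $S^1$ with $h_*(\phi_*\cF^\pm)=\cF^\pm$. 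Setting $\tilde g := h\circ \phi$, one has $\tilde g|_{S^1}=g$ and $\tilde g_*\cF^\pm=\cF^\pm$, as required.

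\emph{Uniqueness of $\tilde g$ and the homomorphism property.} Suppose $\tilde g_1,\tilde g_2$ are two foliation-preserving extensions of $g$. Then $h:=\tilde g_1\tilde g_2^{-1}$ is a foliation-preserving self-homeomorphism of $(D,\cF^+,\cF^-)$ restricting to $\id$ on $S^1$; I will argue $h=\id$. Each non-singular leaf of $\cF^\pm$ has a well-defined endpoint pair in $S^1$, which is fixed by $h$, and in a planar bifoliation such a leaf is determined by its endpoint pair, so $h$ preserves each leaf setwise. At any non-singular point $x$, the leaves $\cF^+(x)$ and $\cF^-(x)$ meet transversally at the single point $x$, forcing $h(x)=x$; singular points are then fixed as intersections of prong-leaves, each of which is preserved by the same endpoint argument. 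Once uniqueness is established, the identity $\widetilde{g_1g_2}=\tilde g_1\tilde g_2$ is immediate, since both sides are foliation-preserving extensions of $g_1g_2$, and hence must agree.

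\emph{Main obstacle.} The substantive point in the whole argument is the claim that a non-singular leaf (or a prong of a singular leaf) is uniquely determined by its endpoint pair in $S^1$. This separation property ought to fall out of the explicit leaf-by-leaf construction yielding Theorem~\ref{thm_completion}: two distinct leaves of $\cF^+$ with identical endpoints would cobound a lune containing no $\cF^-$-leaf that crosses it, in tension with the density in condition~(i) and the linkage conditions~\ref{item_simple_cycle}--\ref{item_same_endpoint_condition}. Modulo this structural input, the entire proof of Theorem~\ref{cor:action_extends_2} is a routine application of uniqueness of completions, so no independent construction is needed.
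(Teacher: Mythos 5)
Your proposal takes a genuinely different route from the paper. The paper derives Theorem~\ref{cor:action_extends_2} from Proposition~\ref{prop:uniqueness} (via Corollary~\ref{cor:for_nontransitive_inthepaper}): there the extension $H$ of a boundary homeomorphism is built \emph{canonically}, point by point, as $H(x) = $ the intersection of the two $\cF_2$-leaves whose endpoints are the $h$-images of the endpoints of the leaves through $x$; continuity is then checked by hand, and uniqueness together with $H_{h_2\circ h_1} = H_{h_2}\circ H_{h_1}$ fall out of the formula for free. You instead extend $g$ arbitrarily to a homeomorphism $\phi$ of the disc, observe that $\phi_*\cF^\pm$ is another planar completion of $(L^+,L^-)$, and correct by the boundary-fixing homeomorphism supplied by the uniqueness clause. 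That is a clean abstract argument, and it correctly reduces everything to the statement that a foliation-preserving self-homeomorphism of the completion restricting to the identity on $S^1$ is the identity. Note, however, that the uniqueness clause as stated in Theorem~\ref{thm_completion} only asserts the \emph{existence} of a boundary-fixing identification between any two completions, so you do need this extra rigidity lemma to get uniqueness of $\tilde g$; you correctly flag this as the substantive input.

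The sketch you give for that lemma is not right. If two distinct $\cF^+$-leaves $l_1,l_2$ had the same pair of endpoints, the lune between them would very much be crossed by $\cF^-$-leaves (the two foliations are transverse and jointly fill the plane), so ``containing no $\cF^-$-leaf that crosses it'' is false. The actual contradiction is a countability one: the lune is an open region of the plane, so the leaves of $\cF^+$ contained in it are uncountably many; but each such leaf contributes two rays lying between the rays of $l_1$ and $l_2$ that approach the common endpoint, and by Proposition~\ref{o.same} (from \cite{Bonatti_boundary}) only countably many rays can separate two rays with the same endpoint. The paper's own proof of Proposition~\ref{prop:uniqueness} also leans on ``a leaf is determined by its endpoints,'' treating it as a known consequence of the circle-at-infinity construction, so this is a genuine piece of background rather than something one can wave away via the linkage conditions. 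With that lemma stated and proved correctly, the rest of your argument (including the homomorphism property from uniqueness of extensions of $g_1g_2$) goes through.
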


Along the way to the proof, we also characterize exactly what subsets of laminations arise as those defined ``at infinity" by leaves of a pair of transverse singular foliations such as those arising from a pseudo-Anosov flow. 
To motivate this and other related results, we introduce the reader to the important objects of study.  

%%%%%%%%%%%%%
%The main result of this article gives a characterization of when a pair of prelaminations can be ``completed" into a pA-bifoliation of the plane.  By completed, we mean that the prelaminations must be embedded as a dense subset of the bifoliation, in the following sense:
%\begin{definition}[Prelaminations dense in a foliation]
%Let $(\cF^+,\cF^-)$ be a pA-bifoliation and $(L_\cF^+,L_\cF^-)$ its induced pair of prelaminations of $S^1$.
%A (sub)-prelamination $L^\pm\subset L_\cF^\pm$  is \emph{dense in} $\cF^\pm$ if the set of leaves of $\cF^\pm$ whose ends lie in $L^\pm$ are dense in the plane.
%\end{definition}
%
%Note that density in the plane and density of pairs of endpoints on the circle are not equivalent: a leaf of a foliation that is nonseparated from others on both sides is isolated in the induced lamination.  
%
%\begin{definition}[Planar completion]\label{def_planar_completion}
%A pA-bifoliation $(\cF^+,\cF^-)$ is a \emph{planar completion} of a pair of prelaminations $(L^+,L^-)$ if the prelamination $(L_\cF^+,L_\cF^-)$ induced by $(\cF^+,\cF^-)$ contains $(L^+,L^-)$, and $L^+$ and $L^-$ are dense in $\cF^+$ and $\cF^-$, respectively.
% \end{definition}

%%%%%%%%%%%%%%%%%%%%%%%
\subsection{An introduction to bifoliations and (pre)-laminations} \label{sec:details}
Much of the terminology in this section is standard or slight variations on standard terms adapted to a broad context allowing for the study of pseudo-Anosov flows, including those with ``perfect fits".  To keep this paper self contained, and to disambiguate from related work such as that from the study of pseudo-Anosov flows without perfect fits (a special case) treated by Baik, Jung, and Kim in \cite{BJK}, we give a detailed exposition. 

\begin{definition} 
A pair of foliations $\cF^+, \cF^-$ of a plane $P$ is called a {\em pA-bifoliation}  if $\cF^+$ and $\cF^-$ are transverse, have only isolated prong singularities, and each leaf contains at most one singularity. 
\end{definition} 
\noindent This class of bifoliations contains all examples induced by pseudo-Anosov flows on their orbit spaces, and many more beyond this.  

A subset of a singular leaf $l$ that bounds a connected component of $P \smallsetminus l$  is called a {\em face} of $l$.  
As remarked above, a plane with a bifoliation has a natural compactification to a closed disc by a circle boundary (we recall the construction in Section \ref{sec:circle_infty}).  This circle is denote $S^1_\infty({\cF^+, \cF^-})$, or $S^1_\infty$ when the bifoliation is unambiguous.    
Nonsingular leaves of $\cF^\pm$ and faces of singular leaves define pairs of points, called their ``endpoints" in $S^1_\infty$.  We call these sets of pairs the {\em prelaminations induced by $\cF^\pm$}.  More generally: 

\begin{definition}
A {\em prelamination} is a set of pairs of distinct pairs of points of $S^1$ such that no two pairs cross.  Equivalently, this is just a subset of a {\em lamination} in the sense of \cite[Definition 2.2]{Calegari_book}. 
\end{definition} 
Recall that, if $a_i,b_i$ are points in $S^1$, we say the pairs $\{a_1,a_2\}$ and $\{b_1,b_2\}$ {\em cross} if $a_1$ and $a_2$ lie in different connected components of $S^1\setminus \{b_1,b_2\}$. 
An element $\{a_1, a_2\}$ of a prelamination $L$ is called a {\em leaf}, and the points $a_1$ and $a_2$ are its {\em endpoints}.  
%When we do not need to emphasize the endpoints, we will sometimes notate leaves by greek letters e.g., $\alpha$ rather than pairs $\{a_1, a_2\}$.  

Prelaminations induced by a pA-bifoliation satisfy additional properties.  The most easy to check is the following: 

\begin{definition}\label{def_FT} 
Two prelaminations $L^+, L^-$ are \emph{fully transverse (FT)}\footnote{Note that FT is much weaker than the notions of strongly transverse or quite full of Baik-Junk-Kim.  ``Quite full" requires each complementary region to be either a finite-sided polygon or countable-sided polygon with vertices accumulating on a single point, ``strongly transverse" requires no leaf of $L^+$ to share an endpoint with a leaf of $L^-$. Moreover, both require (closed) {\em laminations} rather than prelaminations.}
if the following properties hold
\begin{enumerate}
 \item {\em Transversality:} $L^-\cap L^+=\emptyset$
 \item {\em Density:} the set $\{a \in S^1 : \exists  b \text{ with } \{a,b\}\in L^-\cup L^+\}$ is dense.  
 \item {\em Connectedness:} For any $\alpha, \beta \in L^-\cup L^+$, there exists a sequence $\alpha = \alpha_1, \ldots, \alpha_k = \beta$ such that $\alpha_i$ crosses $\alpha_{i+1}$ for all $i=1, \ldots, {k-1}$.  
\end{enumerate}
\end{definition}

\noindent This is a concise, precise rephrasing of the ``no shared leaves, density, and connectedness through segments of geodesics" assumptions that appeared in Theorem \ref{thm_completion} and Theorem \ref{thm_special_case_regular}, and we will use the shorthand FT going forward.  
The fact that the induced laminations of a pA-bifoliation are FT is shown in Proposition \ref{prop:converse_A}.

\subsection*{Geodesic realizations} 
It is often convenient to record the data of a prelamination by connecting pairs of points defining leaves by geodesics in the disc.  
The {\em geodesic realization} $\geo(L)$ of a prelamination $L$ is the set of Euclidean straight lines $\gamma$  in the unit disc $\bD^2$
such that the pair of endpoints of $\gamma$ is an element of $L$ in $S^1 = \partial \bD^2$. \footnote{One could of course equivalently use hyperbolic geodesics in the Poincaré disc model (and we shall do that in figures) instead of Euclidean straight lines, but the affine structure given by straight lines will be convenient to work with at times, particularly the result proved in the Appendix.}

For fully transverse laminations (but not in general! --- see Example \ref{ex:non_natural}), we show in the Appendix that the geodesic realization is natural in the sense that changing a pair of FT prelaminations by a homeomorphism of $S^1$ changes the geodesic realization by a homeomorphism of the disc. See Theorem \ref{t.naturality}. 
Thus, we can speak meaningfully of any properties of a pair of FT laminations invariant under homeomorphism by using their geodesic realizations.

\begin{definition}\label{def.complementary_region}
A {\em complementary region} to $L$ is the {\em closure} 
of a connected component of $\bD^2 \smallsetminus \geo(L)$.  A complementary region is {\em trivial} if it is a single geodesic line. 

The {\em boundary $\partial R$} of a complementary region $R$ consists of geodesic segments and (possibly degenerate) intervals of $S^1$.  
A geodesic boundary component of a complementary region is called a {\em geodesic side} and a maximal nondegenerate component of $\partial R \cap S^1$ is called an {\em ideal side}.  
\end{definition} 
For convenience, we have defined complementary regions to be closed sets.  This property will be used in Section \ref{sec_preliminaries}.  

Two types of complementary regions, shown in Figure \ref{fig:intro},  play a special role.  
\begin{definition}
A nontrivial complementary region $C$ of $L^\pm$ is called {\em one-root} if there exists a unique geodesic side $s$ (the ``root") such that 
\begin{itemize}
\item every leaf of $L^\mp$ which intersects $C$ intersects $s$, and
\item $s$ is not a leaf of $L^\pm$, but every other geodesic side of $C$ is. 
\end{itemize} 
\end{definition}

\begin{definition}
An {\em ideal polygon} is a complementary region with finitely many sides, all of which are leaves. 
Two ideal polygons $P^+, P^-$ of prelaminations $L^+, L^-$ (respectively) are called a {\em coupled pair} if their vertices are disjoint and any two vertices of $P^+$ are separated in $S^1$ by vertices of $P^-$, and vice versa.  
\end{definition} 

   \begin{figure}[h]
     \centering
     \includegraphics[width=9cm]{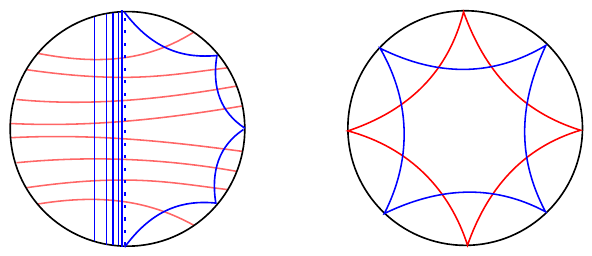}
     \caption{A one-root region and a coupled pair of ideal polygons}
     \label{fig:intro}
   \end{figure}
   
The {\em linkage graph} of a complementary region $R$ to $L^+$ encodes how $R$ interacts with $L^-$.  
When $R$ has only geodesic sides, the definition is simple to state:  vertices are geodesic sides of $R$, with an edge between two vertices if the sides are crossed by a common leaf of $L^-$.  
For ideal sides, we refer the reader to Definition  \ref{d.linkage-graph}. 

For example, the linkage graph of an ideal polygon is a cycle, and the linkage graph of a 1-root region is a {\em star} (a tree where all but one vertex, the ``root" has degree 1).

\subsection{Results} 
To prove Theorem \ref{thm_completion}, we first need to be able to characterizes the prelaminations that are induced from a pA-bifoliation.  We show the necessary conditions described above are in fact sufficient: 

\begin{thmintro}[Characterization of induced prelaminations] \label{thm:realization}
Let $L^+, L^-$ be prelaminations.  There exists a pA-bifoliation~on $\bR^2$ inducing the pair $(L^+, L^-)$ if and only if the following hold:
\begin{enumerate}[label=(\roman*)]
\item  \label{item.FT} $(L^+, L^-)$ is fully transverse 
 \item \label{item.countably_many} For any $a\in S^1$, the set $\{b : \{a,b\}\in L^-\cup L^+\}$ is countable
 \item\label{item.one_root_ideal_polygon}  Each complementary region of $L^\pm$ is an ideal polygon or one-root region.  Ideal polygons come in coupled pairs, and no leaf lies in the boundary of two ideal polygons.  
 \end{enumerate} 
Furthermore, the pA-bifoliation inducing $(L^+, L^-)$ is unique up to homeomorphism of $\bD^2$ which is the identity on the circle $S^1$. 
\end{thmintro}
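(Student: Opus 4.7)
The plan is to prove the two directions separately, with the necessity direction being a verification from the pA-bifoliation axioms and the sufficiency direction being a construction on $\bD^2$. Uniqueness is then proved by building homeomorphisms on the scaffold and extending across complementary regions.

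\emph{Necessity:} Assume $(\cF^+, \cF^-)$ is a pA-bifoliation whose induced prelaminations are $(L^+, L^-)$. For (i), two transverse leaves (or faces) meet at most once, so they cannot share both endpoints at infinity without bounding an impossible bigon; this yields $L^+\cap L^-=\emptyset$, and density and connectedness at infinity follow from the corresponding properties of the leaves in $\bR^2$ pushed out to the circle compactification. For (ii), I would observe that leaves of $\cF^\pm$ with a common endpoint $a\in S^1$ are totally ordered by the transverse coordinate near $a$; any two consecutive such leaves are separated by a leaf of the opposite foliation, and second countability of $\bR^2$ bounds them. For (iii), I would invoke naturality of the geodesic realization (Theorem \ref{t.naturality}) to identify, topologically, complementary regions of $\geo(L^+)$ in $\bD^2$ with regions in the bifoliated plane bounded by $\cF^+$-structures. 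A complementary region that contains a singular leaf of $\cF^+$ is an ideal polygon whose sides are the faces of that leaf, and by transversality must contain a matching $\cF^-$-singularity whose $k$ prongs interleave with the $\cF^+$-prongs, producing the coupled $L^-$-polygon with alternating vertices. A complementary region containing no singular leaf of $\cF^+$ must arise from accumulation of $\cF^+$-leaves toward infinity along a single ``limit" chord, giving a one-root region. The final clause -- that no leaf lies in the boundary of two ideal polygons -- follows because distinct singular leaves of $\cF^+$ have pairwise disjoint faces.

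\emph{Sufficiency:} Assume (i)--(iii). I build a model pA-bifoliation in $\bD^2$ using $\geo(L^+)\cup\geo(L^-)$ as scaffold; the geodesics themselves will serve as leaves of $\cF^\pm$ and I specify a local model in each complementary region. By (iii), a complementary region of $\geo(L^+)$ is either half of a coupled pair of ideal polygons or a one-root region. In a coupled pair of ideal $k$-gons -- one of $L^+$ and one of $L^-$ with alternating vertices -- I place a single $k$-pronged pA-singularity at an interior point with $\cF^+$-prongs matching the sides of the $L^+$-polygon and $\cF^-$-prongs matching the sides of the $L^-$-polygon; the alternation condition guarantees the prongs interleave correctly and glue to the surrounding scaffold. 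In a one-root region of $L^+$, I foliate by $\cF^-$-leaves extending the $L^-$-leaves that cross the root, and by a fan of $\cF^+$-leaves accumulating along the root; condition (ii) (countability) ensures that this fan is parametrized by a countable ordered family and glues to a topological foliation. Complementary regions of $\geo(L^+)\cup\geo(L^-)$ that are not yet covered by these pictures are topological rectangles and can be filled by a product foliation. The final model is then verified to be homeomorphic to $\bR^2$ and to induce exactly $(L^+,L^-)$.

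\emph{Uniqueness:} Given two pA-bifoliations realizing $(L^+, L^-)$, I construct a homeomorphism $\bD^2\to\bD^2$ by first matching leaves of $\cF^+_i$ (resp.\ $\cF^-_i$) via their endpoint data in $L^+$ (resp.\ $L^-$), then extending across each complementary region using that the local filling is essentially unique up to homeomorphism: a coupled pair of $k$-gons admits a single $k$-prong pA-singularity up to a leaf-preserving homeomorphism, and a one-root region filling is determined by the linear order of leaves in the fan. Naturality of the geodesic realization guarantees the assembled homeomorphism extends as the identity on $S^1$.

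The \emph{main obstacle} will be the sufficiency construction, specifically the global gluing step: checking that the space assembled from scaffold and local pictures is genuinely homeomorphic to $\bR^2$ (no wild accumulation points, no spurious singularities) requires careful use of countability (ii) and the simple combinatorial structure imposed by (iii). One-root regions are the most delicate because they encode accumulation phenomena absent from the (simpler) Anosov-without-perfect-fits setting; showing that the ``fan plus parallel leaves" model is the unique local picture compatible with the prelamination data is where the bulk of the technical work should lie.
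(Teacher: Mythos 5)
The proposal has a genuine and serious gap in the sufficiency direction, and a smaller but real gap in the necessity direction.

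\emph{Sufficiency.} Your plan is to use $\geo(L^+)\cup\geo(L^-)$ directly as a scaffold in $\bD^2$ and ``fill in'' each complementary region with a local model whose leaves include the scaffold geodesics. This cannot work when there is a one-root region $C$ of, say, $L^+$. Inside $C$ you propose a ``fan of $\cF^+$-leaves accumulating along the root.'' But the root $\gamma$ is, by definition of a one-root region, \emph{not} a leaf of $L^+$, yet it is accumulated from outside $C$ by leaves of $L^+$ (since it bounds the complementary region). If your fan also accumulates on $\gamma$ from inside $C$, then $\gamma$ is a two-sided limit of leaves, so the resulting topological foliation of the disc has an $\cF^+$-leaf along $\gamma$; this leaf's endpoints would then lie in the induced prelamination, which contradicts $\gamma\notin L^+$. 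More generally, any genuine foliation of the open disc by proper lines that you place inside the fat region $C$ will produce new pairs of endpoints on $S^1$, so the induced prelamination of your model is strictly larger than $L^+$. The correct picture is that one-root regions must effectively be \emph{collapsed}, not filled: the bifoliated plane $(\cP,\cF^\pm)$ realizing $(L^+,L^-)$ is not homeomorphic to $\bD^2$ by a map carrying $\geo(L^\pm)$ onto the foliations; the geodesic scaffold sits in the disc as a degenerate shadow of the plane, and the crossing set $\cX$ omits the interiors of complementary regions entirely. This is precisely why the paper builds $\cP$ as an abstract quotient of crossing pairs $(\alpha^+,\alpha^-)$ by the ideal-polygon relation, defines the topology via regular rectangles and prong polygons, and then has to prove Hausdorffness and simple connectedness of $\cP$ from scratch; none of these steps can be shortcut by a direct foliation of $\bD^2$.

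\emph{Necessity.} Your sketch of item (iii) asserts that ``a complementary region containing no singular leaf of $\cF^+$ must arise from accumulation of $\cF^+$-leaves toward infinity along a single limit chord, giving a one-root region,'' but this is exactly the statement to be proved, not a proof. In the paper this is a delicate argument (Proposition \ref{prop:converse_A}): one takes nested convex regions $C_n$ built from approximating leaves, passes to the $\cF^+$-saturated sets $K_n$ in the plane, shows the intersection $K$ has empty interior, and through several claims shows that exactly one geodesic side fails to be a leaf of $L^+$ and that every crossing leaf meets it. Your phrase about ``accumulation toward infinity along a single limit chord'' gestures at the conclusion but does not explain why a complementary region cannot, say, have two non-leaf boundary geodesics, or fail the crossing condition on the root. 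The uniqueness sketch is essentially the same idea as the paper's (match intersection points via endpoint data); your mention of ``extending across complementary regions'' is unnecessary there, since every point of a bifoliated plane already \emph{is} the intersection of two leaves, but it is not an error.
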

A consequence of uniqueness is the following: 
\begin{thmintro}\label{cor:action_extends_1}
If $G$ is a group acting on $S^1$ preserving prelamination $L^+, L^-$ satisfying the hypotheses of Theorem \ref{thm:realization}, then the action of $G$ extends uniquely to $\bD^2$ preserving the pA-bifoliation inducing $(L^+, L^-)$.  
\end{thmintro}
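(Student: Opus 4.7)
The plan is to leverage the uniqueness clause of Theorem~\ref{thm:realization} twice: once to build the extension of each group element, and once to verify it assembles into an action. Fix a pA-bifoliation $(\cF^+, \cF^-)$ of $\bD^2$ inducing $(L^+, L^-)$, supplied by Theorem~\ref{thm:realization}. Given $g \in G$, extend $g$ in any naive way to a homeomorphism $\hat g$ of $\bD^2$ (for instance by radial coning from the center). Then $\hat g_* \cF^\pm$ is another pA-bifoliation of $\bD^2$, and because $g$ preserves $L^\pm$, this pushforward still induces $(L^+, L^-)$ on $S^1$. By the uniqueness clause of Theorem~\ref{thm:realization}, there is a homeomorphism $h$ of $\bD^2$, restricting to the identity on $S^1$, with $h_*(\hat g_* \cF^\pm) = \cF^\pm$. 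Setting $\tilde g := h \circ \hat g$ gives an extension of $g$ that preserves the bifoliation.

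Both the uniqueness of $\tilde g$ and the homomorphism property reduce to the following rigidity statement, which is the heart of the argument: \emph{any homeomorphism $\phi$ of $\bD^2$ restricting to the identity on $S^1$ and preserving each of $\cF^+$ and $\cF^-$ must be the identity.} Granting this, if $\tilde g_1, \tilde g_2$ are two such extensions of $g$, then $\tilde g_1^{-1}\circ \tilde g_2$ is such a $\phi$, so $\tilde g_1 = \tilde g_2$; and $\tilde g \tilde h$ and $\widetilde{gh}$ both extend $gh$ and preserve $\cF^\pm$, forcing $\tilde g \tilde h = \widetilde{gh}$.

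To prove the rigidity lemma, I would show $\phi$ fixes a dense set of points. For each pair $\{a,b\} \in L^+$, the leaf or face $F$ of $\cF^+$ with endpoints $\{a,b\}$ is uniquely determined (using the structural information in condition~\ref{item.one_root_ideal_polygon}: distinct pairs in $L^+$ yield either distinct leaves or distinct faces of the same singular leaf). Since $\phi|_{S^1} = \id$ and $\phi$ maps $\cF^+$-leaves to $\cF^+$-leaves, $\phi(F)=F$; the same holds for $\cF^-$. At an intersection $p = L \cap L'$ with $L\in\cF^+$ and $L'\in \cF^-$ arising from pairs in $L^+, L^-$, transversality and preservation of both foliations force $\phi(p)=p$. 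The density and connectedness clauses of the FT condition, together with the fact that each nonsingular leaf is parameterized by its transverse intersections with the other foliation, imply that such intersection points are dense in $\bD^2$, whence $\phi = \id$.

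The main obstacle is the rigidity lemma near singular leaves and complementary regions. Away from these, the density argument is essentially immediate from the FT condition, but some combinatorial bookkeeping is required to ensure that $\phi$ cannot nontrivially permute leaves within ideal polygons (using the coupled-pair condition) or one-root regions; all the needed structural data are supplied by Theorem~\ref{thm:realization}.
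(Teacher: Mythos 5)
Your proposal is correct and takes essentially the same route as the paper's: the whole content is the uniqueness clause of Theorem~\ref{thm:realization}, which the paper establishes as Proposition~\ref{prop:uniqueness} (a homeomorphism of the circle taking induced prelaminations to induced prelaminations extends uniquely to the disc, and the extension respects composition), and your ``rigidity lemma'' is precisely the uniqueness half of that proposition, proved by the same observation that a point of the plane is determined by the boundary endpoints of the two leaves/faces through it. Your preliminary step of radially coning $g$ and then correcting by a boundary-fixing homeomorphism is a small detour --- the paper instead applies Proposition~\ref{prop:uniqueness} directly to the boundary map $g$ conjugated into $S^1_\infty$ --- but it is logically sound, since $\hat g$ is a genuine homeomorphism of the closed disc and therefore intertwines the circle-at-infinity compactifications, so $\hat g_*\cF^\pm$ does induce $g(L^\pm)=L^\pm$.
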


Results in the vein of Theorem \ref{thm:realization} have been obtained by other authors.  For example, in the unpublished work \cite{Fra_preprint}, Frankel also obtained a sufficient condition for the existence of a pA-bifoliation of the plane associated to the ``spidery decompositions" he introduced in \cite{Fra_coarse_hyperbolicity}, towards the study of quasi-geodesic flows.  It is however not always clear to us how one passes from our conditions on prelaminations to Frankel's conditions on his spidery decompositions.   Baik--Jung--Kim \cite{BJK} give (much stronger) conditions on pairs of circle laminations invariant under the action of a group that allow one to reconstruct a bifoliated plane without perfect fits, and then apply work of Frankel, Schleimer and Segerman \cite{FSS22} to produce a veering triangulation.  We recently learned also of work in progress of Baik, Wu and Zhao generalizing this approach. 

Thus, to avoid problems of translation and generalization, we give a self-contained and elementary proof.  Unlike the authors above, we do not even need to use Moore's theorem on planar decompositions.

\subsection*{Naturality with respect to homeomorphism} 
Finally, we state the corollary of Theorem \ref{thm_completion} that we use in \cite{BBM}, which should be of general interest: 
\begin{corintro} \label{cor:for_nontransitive}
Let $(P_i,\cF_i^+,\cF_i^-)$, $i=1,2$ be pA-bifoliations. Let $\cL_i^\pm$ be subsets of $\cF_i^\pm$ and call $L_i^\pm$ the prelaminations induced by $\cL_i^\pm$ in $\Pbound_i:=S^1_\infty(\cF^+_i, \cF^-_i)$.   

Let $h\colon \Pbound_1 \to \Pbound_2$ be a homeomorphism; this induces an obvious map $\hat{h}\{x,y\} = \{h(x), h(y)\}$ on pairs of points in $\Pbound_1$.  
Suppose that:
\begin{enumerate}[label=(\roman*)]
\item The subsets $\cL_i^\pm \subset \cF_i^\pm$ are dense in the plane, and 
\item $\hat{h}(L_1^+\cup L_1^-) = L_2^+\cup L_2^-$. 
\end{enumerate}
Then there exists a unique homeomorphism $H\colon P_1 \to P_2$ such that $H|_{\Pbound_1} = h$, $H(\cF^+_1) = \cF^\pm_2$, and $H(\cF^-_1) = \cF^\mp_2$.
\end{corintro}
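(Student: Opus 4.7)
The strategy is in two main parts. First, I would show that $\hat h$ must either preserve or swap the $\pm$ labels on the prelaminations; second, I would realize each $(P_i, \cF_i^+, \cF_i^-)$ as a planar completion of $(L_i^+, L_i^-)$ in the sense of Theorem \ref{thm_completion}, and invoke the uniqueness clause of that theorem to push one bifoliation to the other via $h$.

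\emph{Step 1: $\hat h$ respects the $\pm$ partition.} Since each $L_i^+$ and $L_i^-$ is a prelamination, no two leaves inside $L_i^+$ cross, and similarly for $L_i^-$; the map $\hat h$ preserves the crossing relation because $h$ is a homeomorphism of $S^1$. Fix $\alpha_0 \in L_1^+$ and consider first the case $\hat h(\alpha_0) \in L_2^+$. For any $\alpha \in L_1^+$, the connectedness part of the FT condition (Definition \ref{def_FT}), which holds for induced prelaminations of a pA-bifoliation, yields a chain $\alpha_0 = \gamma_0, \gamma_1, \ldots, \gamma_k = \alpha$ in $L_1^+ \cup L_1^-$ with consecutive crossings. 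Because each $L_1^\pm$ is internally non-crossing, the chain must alternate in sign; pushing this alternation through $\hat h$ (again using that $L_2^\pm$ are internally non-crossing) forces $\hat h(\gamma_j) \in L_2^+$ whenever $\gamma_j \in L_1^+$, so $\hat h(\alpha) \in L_2^+$. Bijectivity of $\hat h \colon L_1^+ \cup L_1^- \to L_2^+ \cup L_2^-$ together with $L_i^+ \cap L_i^- = \emptyset$ then upgrades these inclusions to equalities $\hat h(L_1^\pm) = L_2^\pm$. The alternative case $\hat h(\alpha_0) \in L_2^-$ symmetrically gives $\hat h(L_1^\pm) = L_2^\mp$, which I absorb by possibly swapping the labels on $\cF_2^\pm$.

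\emph{Step 2: Each bifoliation is a planar completion.} The boundary $\partial P_i$ is canonically identified with $S^1_\infty(\cF_i^+, \cF_i^-)$, so what remains is to verify that the endpoints of leaves of $\cL_i^\pm$ form a dense subset of the endpoints of nonsingular leaves and faces of singular leaves of $\cF_i^\pm$. This should follow from the hypothesis that $\cL_i^\pm$ is dense in $P_i$: density of the union of these leaves in $P_i$ passes to density in the leaf space of $\cF_i^\pm$ by a standard saturation argument, and density in the leaf space translates to density of endpoints on $\partial P_i$. Once this is in place, the pair $(L_i^+, L_i^-)$ inherits the FT and combinatorial conditions (i)--(iv) of Theorem \ref{thm_completion} from the full induced prelaminations of the pA-bifoliation, which satisfy them by Proposition \ref{prop:converse_A}, and $(P_i, \cF_i^+, \cF_i^-)$ is by construction a planar completion of $(L_i^+, L_i^-)$.

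\emph{Step 3 and main obstacle.} With Steps 1 and 2 in hand, I use $h$ to identify $\partial P_1$ with $\partial P_2$, so that $(P_1, \cF_1^+, \cF_1^-)$ becomes a planar completion of $(L_2^+, L_2^-)$ on $\partial P_2$. The uniqueness clause of Theorem \ref{thm_completion} then yields a foliation-preserving homeomorphism to $(P_2, \cF_2^+, \cF_2^-)$ restricting to the identity on $\partial P_2$, and composing with $h$ gives the required $H$; its uniqueness is inherited from the same clause. The hard part will be Step 2: promoting topological density of $\cL_i^\pm$ in $P_i$ to the needed density of endpoints at infinity, and confirming that the technical hypotheses of Theorem \ref{thm_completion} — especially the linkage graph conditions (iii) and (iv) — are inherited by the possibly-sparse subsets $L_i^\pm$, rather than only by the full induced prelaminations of the pA-bifoliation.
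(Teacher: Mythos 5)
Your Steps~1 and~3 match the paper's proof: the alternation argument to show $\hat h$ respects the $\pm$ partition (after possibly swapping labels on $L_2^\pm$) is exactly what the paper does, and the endgame of extending $h$ to a disc homeomorphism $g$ and then applying uniqueness of planar completions to $(g(\cF_1^+), g(\cF_1^-))$ and $(\cF_2^+,\cF_2^-)$ is the paper's Step~3 as well (the paper invokes Proposition~\ref{prop_uniqueness_completion} rather than Theorem~\ref{thm_completion} directly, but the content is the same).

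However, your Step~2 — which you flag as the main obstacle — rests on a misreading of the definition of planar completion. As restated at the start of Section~\ref{sec:proof_completion_thm}, $(\cF^+,\cF^-)$ is a planar completion of $(L^+,L^-)$ exactly when the leaves of $\cF^\pm$ whose endpoints lie in $L^\pm$ are \emph{dense in the plane}, which is precisely hypothesis~(i). There is nothing to promote to ``density of endpoints at infinity,'' and indeed the paper explicitly warns (same paragraph) that density in the plane and density of endpoint-pairs on $S^1_\infty$ are \emph{not} equivalent, because of leaves nonseparated on both sides. So the argument you sketch (``density in the plane $\Rightarrow$ density in leaf space $\Rightarrow$ density of endpoints'') is both unnecessary and, at the last step, false in general. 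Your other worry — verifying the linkage-graph conditions (iii)--(iv) for the sparse $L_i^\pm$ — is also a non-issue: Proposition~\ref{l.necessary} shows that any prelamination admitting a planar completion automatically satisfies all four hypotheses of Theorem~\ref{thm_completion}, and this verification is already packaged into Proposition~\ref{prop_uniqueness_completion}, which is what you should cite at the end. Once Step~2 is recognized as immediate from the definition, the proof closes exactly as you outline.
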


\subsection*{Embedding versus completion} 
In Theorem \ref{thm_completion}, we show that a planar completion is \emph{unique} if it exists. 
In section \ref{sec:uniqueness}, we treat the problem of {\em embedding} (rather than completing) a pair of fully transverse prelaminations in bifoliated planes.   We give examples of fully transverse prelaminations that arise as subsets of distinct (non-homeomorphic) pairs of pA-bifoliations, and give a necessary and sufficient condition to have a unique embedding. See Theorem \ref{thm_uniqueness_extension}.

\subsection*{The need for {\em pre}-laminations} 
It is important in this work to really consider \emph{pre}laminations instead of laminations (i.e.,~closed prelaminations) as is more usually done. Indeed, the induced prelamination from a single foliation (i.e. its boundary data) is a genuine lamination if and only if there are no nonseparated (branching) leaves, in other words, the leaf space of the foliation is hausdorff.  
A direct application of Theorem \ref{thm:realization} also shows that very few laminations themselves come from the boundary data of pA-bifoliations; specifically, one can show the following:  \\  
Let $(L^+, L^-)$ be a pair of FT \emph{laminations}.  Then  $(L^+, L^-)$ is induced by a pA-bifoliation if and only if  each complementary region of $L^\pm$ is an ideal polygon, and for each $a$, the set $\{b : \{a,b\}\in L^-\cup L^+\}$ is countable.

\subsection*{Further motivation and related work}
There are many reasons beyond pseudo-Anosov flows which motivate the study of group actions on bifoliated planes and their associated circles.  For instance, several authors have attempted to develop a theory of group actions on $S^1$ that  admit an analog of a Markov partition (see, e.g.,~\cite{ping_pong}). We expect that such a notion would be more easily defined and natural for the class of actions which extend to a bifoliated plane (see for instance \cite{Iakovoglou}).

Within the context of flows and 3-manifolds, there are many examples of the rich interaction of foliations, laminations, and other topolo-geometric structures.  We have already mentioned the work of \cite{FSS22}, who build a circle at infinity and a bi-foliated plane called the {\em link space} equipped with an action of $\pi_1(M)$ out of the data of a veering triangulation on a 3-manifold $M$; and conversely, show that the veering triangulation can be recovered from the dynamics of the action of $\pi_1(M)$ on the link space.  This is used also by \cite{BJK}, who broaden the picture by reconstructing link spaces out of certain circle laminations.   

In addition to the unpublished preprint mentioned above, Frankel's published work on quasigeodesic flows \cite{Fra_Mobiuslike,Fra_coarse_hyperbolicity} constructs a compactification of the orbit space of a quasigeodesic flow on a hyperbolic 3-manifold by an ideal circle, and studies the dynamics of the induced action of $\pi_1(M)$ (in particular, showing the existence of closed orbits of the flow). In much earlier work, which inspired several of the threads followed above, Thurston \cite{Thurston:3MFC} and Calegari--Dunfield \cite{CalDun_laminations}
showed that an atoroidal 3-manifold $M$ with either a taut foliation $\mathcal{F}$ or certain type of lamination has a {\em universal circle} relating the boundaries of leaves of the induced foliation in the universal cover, on which $\pi_1(M)$ acts by homeomorphisms.
Calegari \cite{Calegari_promoting} constructs a {\em pair} of $\pi_1(M)$-invariant laminations on this circle, and shows that (unless $\mathcal{F}$ is the weak-stable folation of an Anosov flow) these laminations of $S^1$ can be promoted to a pair of laminations on $M$ transverse to $\mathcal{F}$.

In another direction,  \cite{Gabai_Kazez}, Gabai and Kazez
show that an essential lamination of $\R^2$ can always be realized as a subset of a foliation of $\R^2$.  More precisely, they show that a lamination uniquely determines a one-dimensional object called a {\em cyclically ordered $\R$ order tree} up to isomorphism, and that the lamination, and a completion to a foliation, can be constructed out of the data given by this tree.  This is similar in spirit to our work here, but the setting is very different as they deal with only a single lamination, and are concerned with leaf spaces rather than ideal boundaries.

This is just a small sampling of related results, and far from exhaustive.  Further references and a detailed introduction can be found in Calegari's book \cite{Calegari_book}, or the recent survey in \cite{BaikKim}.
%--------------------------------------------

\subsection{Outline}
Since Theorem \ref{thm:realization} is used in the proof of Theorem \ref{thm_completion}, we prove it (and Corollary \ref{cor:action_extends_1}) in 
Section \ref{sec.plane_construction}.  This is elementary and can be read independently from the rest of the paper.

Section \ref{sec_preliminaries} develops a general toolkit to describe how the combinatorics of a pair of transverse prelaminations changes when leaves are added to one or both.  

Section \ref{sec:proof_completion_thm} gives the proof of the {\em existence} of a planar completion, proving half of Theorem \ref{thm_completion}.   The other half (necessity and uniqueness) is proved in Section \ref{sec:necessity_uniqueness}, along with Corollary \ref{cor:for_nontransitive} (see Corollary \ref{cor:for_nontransitive_inthepaper}), Theorem \ref{thm_special_case_regular} (see Corollary \ref{cor:restatmentofregularformintro}) and Theorem \ref{cor:action_extends_1}. 

Section \ref{sec:uniqueness} gives examples to show that the question of {\em embedding} a prelamination into a foliation is fundamentally different from that of completing, and gives a necessary and sufficient condition for a completable prelamination to admit only one embedding into a foliation (necessarily its completion).  

Finally, the appendix contains the proof of {\em naturality} of the geodesic realization of FT prelaminations (Theorem \ref{t.naturality}) and a cautionary example to show that naturality may not hold for general pairs of transverse prelaminations.

\begin{acknowledgement}
TB was partially supported by the NSERC (Funding reference number RGPIN-2017-04592). 
CB thanks the Banach center Bedlewo (for the conference ''Beyond Uniform Hyperbolicity'' in 2024) and the Simons Foundation Award No. 663281 granted to the Institute of Mathematics of the Polish Academy of Sciences for the years 2021-2023, where some of the ideas for this article were developed. 
KM was partially supported by NSF Career grant DMS-1933598, a Simons Fellowship in mathematics, and thanks the Institut Henri Poincar\'e Institut Henri Poincaré (UAR 839 CNRS-Sorbonne Université) and LabEx CARMIN (ANR-10-LABX-59-01). 
\end{acknowledgement}

%%%%%%%%%%%%%%%%%%%%%%%%%%%%%%%%%%%%%%%%%%%%%%%%%%%%%%%%%%%%%
\section{Proof of Theorem \ref{thm:realization}: Constructing the plane} \label{sec.plane_construction}
%%%%%%%%%%%%%%%%%%%%%%%%%%%%%%%%%%%%%%%%%%%%%%%%%%%%%%%%%%%%%

Most of this section is devoted to proving the existence part of Theorem \ref{thm:realization}. 
Let $L^+, L^-$ be prelaminations satisfying the hypotheses of Theorem \ref{thm:realization}.  We will use these to build an abstract space $\cP = \cP(L^+, L^-)$ and equip it with a natural topology.  We then show that this space is homeomorphic to the plane, has a natural bifoliation, and that this bifoliation induces $L^+, L^-$ on its circle at infinity.   Finally, we prove uniqueness in Proposition \ref{prop:uniqueness}.  
Necessity of the conditions of Theorem \ref{thm:realization} is shown in section \ref{sec:easy_direction}. 

We start by making the following obvious observation: Item \ref{item.one_root_ideal_polygon} of Theorem \ref{thm:realization} implies that, if $\alpha^+\in L^+$ crosses two sides of a complementary region of $L^-$, then either exactly one of these sides are leaves of $L^-$, or both sides are leaves and the region is an ideal polygon. We will use this simple fact many times in our construction.

By Theorem \ref{t.naturality}, there is a 
natural correspondence between $L^{\pm}$ and $\geo(L^{\pm})$ for fully transverse laminations, so we often work with $\geo(L^{\pm})$ instead of $L^{\pm}$ in the construction of $\cP$.

\subsection{The crossing space} 

\begin{notation}
Let $\cX = \{(\alpha^+, \alpha^-) \in L^+ \times L^- : \alpha^+ \text{ crosses } \alpha^- \}$.
\end{notation}
Define a relation on $\cX$ by $(\alpha^+, \alpha^-) \sim (\beta^+, \beta^-)$ if for each $\epsilon \in \{-, +\}$, either $\alpha^\epsilon = \beta^\epsilon$ or $\alpha^\epsilon$ and $\beta^\epsilon$ are two sides of the same ideal polygon.
In particular, this relation is trivial if there are no ideal polygons.

By identifying a pair $(\alpha^+, \alpha^-)$ with the intersection point of the geodesic representatives $\alpha^+$ and $\alpha^-$ in $\bD^2$, we can realize $\cX$ as a subset of $\bD^2$.  We often make this identification, so that (for example) 
when we write $I\cap \cX$ for some subset $I$ in $\bD^2$, we mean all the points $(\alpha^+, \alpha^-)$ such that $\alpha^+\cap\alpha^-\in I$.

\begin{lemma} 
$\sim$ is an equivalence relation
\end{lemma}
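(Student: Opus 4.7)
The plan is to verify the three axioms of an equivalence relation, with reflexivity and symmetry being immediate from the definition and only transitivity requiring actual work.

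Reflexivity follows because $\alpha^\epsilon = \alpha^\epsilon$ for each $\epsilon \in \{+,-\}$, and symmetry is built into the ``or'' clause of the definition, which treats $(\alpha^+,\alpha^-)$ and $(\beta^+,\beta^-)$ symmetrically.

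For transitivity, suppose $(\alpha^+,\alpha^-) \sim (\beta^+,\beta^-)$ and $(\beta^+,\beta^-) \sim (\gamma^+,\gamma^-)$. I would argue separately for each sign $\epsilon$. There are four combinations depending on which clause of the definition holds for the pair $(\alpha^\epsilon,\beta^\epsilon)$ and for the pair $(\beta^\epsilon,\gamma^\epsilon)$. Three of the four are trivial: if both pairs have equalities, then $\alpha^\epsilon = \gamma^\epsilon$; if one pair has an equality and the other places two leaves as sides of a common ideal polygon $P$, then we can substitute to conclude $\alpha^\epsilon$ and $\gamma^\epsilon$ are both sides of $P$.

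The only substantive case is when $\alpha^\epsilon$ and $\beta^\epsilon$ are sides of an ideal polygon $P_1$, while $\beta^\epsilon$ and $\gamma^\epsilon$ are sides of an ideal polygon $P_2$. Here transitivity amounts to showing $P_1=P_2$, and this is exactly where the hypotheses of Theorem \ref{thm:realization} are essential: item \ref{item.one_root_ideal_polygon} states that no leaf lies in the boundary of two ideal polygons. Since $\beta^\epsilon$ is a leaf lying on the boundary of both $P_1$ and $P_2$, we must have $P_1 = P_2$, and hence $\alpha^\epsilon$ and $\gamma^\epsilon$ share an ideal polygon. I expect this single observation — invoking the ``no shared leaf between two ideal polygons'' clause — to be the only real content of the proof, making the argument short but not completely trivial, since it is precisely this hypothesis that rules out the pathological chaining of distinct ideal polygons along a common leaf.
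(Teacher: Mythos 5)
Your proof is correct and follows exactly the same route as the paper's: reflexivity and symmetry are immediate, and transitivity reduces (in the one nontrivial case, $\epsilon$-wise) to the hypothesis that no leaf lies in the boundary of two ideal polygons. The paper states this in two sentences; you have simply expanded the case analysis.
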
 

\begin{proof}
Symmetry and reflexivity are immediate.  Transitivity follows from the assumption that no ideal polygons share a leaf.  
\end{proof} 

One can also explicitly describe the types of equivalence classes of $\sim$, as follows 
\begin{lemma}  \label{lem:equiv_classes}
Each equivalence class of $\sim$ is either 
\begin{enumerate} 
\item A singleton $(\alpha^+, \alpha^-)$ where no sides are boundaries of ideal polygons
\item Two points.  Up to reversing the roles of $+$ and $-$, these points are of the form $(\alpha^+, \alpha^-), (\alpha^+, \beta^-)$ where $\alpha^+$ is not a boundary of an ideal polygon but $\alpha^-$ and $\beta^-$ are boundaries of the same ideal polygon.
\item Four points, consisting of the crossing leaves of sides of two uncoupled ideal polygons
\item The $2k$ pairs of crossing leaves of two coupled ideal polygons, for $k>2$.
\end{enumerate} 
\end{lemma}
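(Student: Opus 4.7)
The plan is a case analysis on how many of $\alpha^+,\alpha^-$ lie in boundaries of ideal polygons, combined with a geometric analysis of how two opposite-lamination ideal polygons can overlap. Throughout I will use the elementary convexity fact that any chord meeting the interior of an ideal polygon $P$ (and not sharing a vertex with it) enters and exits through exactly two of its sides.

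If neither $\alpha^+$ nor $\alpha^-$ is a side of an ideal polygon, no replacement is possible in either coordinate and the class is the singleton $\{(\alpha^+,\alpha^-)\}$, giving case~(1). If exactly one of them, say $\alpha^-$, is a side of an ideal polygon $P^-$ while $\alpha^+$ is not, then the class equals $\{(\alpha^+,\beta^-):\beta^-\text{ is a side of }P^-\text{ crossed by }\alpha^+\}$, which has exactly two elements by the convexity fact, giving case~(2).

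Now suppose both $\alpha^\pm$ are sides of ideal polygons $P^\pm$, and let $Q^\mp$ denote the polygon coupled with $P^\pm$; these are unique by hypothesis~\ref{item.one_root_ideal_polygon}, and $P^-=Q^-$ iff $P^+=Q^+$. In the coupled case $P^-=Q^-$, writing $k:=k^+=k^-$, the vertices of $P^\pm$ interleave on $S^1$, so each side of $P^+$ separates a unique vertex of $P^-$ from the others and is therefore crossed by exactly the two sides of $P^-$ incident to that vertex. This yields $2k$ crossing pairs, all equivalent under $\sim$, proving case~(4).

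The uncoupled case $P^-\ne Q^-$ is the main content, and the key step is to show that the vertices of $P^-$ all lie in (the closure of) a single gap of $Q^-$: indeed sides of $P^-$ and $Q^-$ are both leaves of $L^-$ so cannot cross, ruling out $P^-$-vertices straddling any $Q^-$-chord. By the coupling of $P^+$ with $Q^-$, each gap of $Q^-$ contains exactly one vertex of $P^+$, say $A$; hence the $P^-$-vertices lie in an arc whose only $P^+$-vertex is $A$, equivalently in the union of the two gaps of $P^+$ adjacent to $A$. Since $(\alpha^+,\alpha^-)\in\cX$ these vertices must straddle $A$; ordering them cyclically as $B_1,\ldots,B_s,B_{s+1},\ldots,B_{k^-}$ with the two blocks on either side of $A$, direct inspection shows that (a) the only $P^-$-sides crossing $P^+$ are the two ``transition'' sides $B_sB_{s+1}$ and $B_{k^-}B_1$, each crossing precisely the two sides of $P^+$ incident to $A$, and (b) sides of $P^+$ not incident to $A$ are crossed by no side of $P^-$, because all $B_j$ lie on a common side of any such chord. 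This gives exactly four crossing pairs, all equivalent, establishing case~(3). The main obstacle is this localization step; once $P^-$ is confined to a single gap of $Q^-$, everything reduces to elementary combinatorics of chords on $S^1$.
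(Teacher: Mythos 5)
The paper gives no proof for this lemma (it is ``left to the reader''), so there is no written argument to compare against. Your case division---by which of $\alpha^\pm$ bound ideal polygons, and, if both, whether the two polygons are coupled---is the natural route, and the key idea in the uncoupled case, namely localizing all vertices of $P^-$ to the closure of one gap of $Q^-$ (because $L^-$-leaves cannot cross), hence to an arc containing exactly one vertex $A$ of $P^+$, and then reading off the four crossings from the position of the $B_i$ relative to $A$, is exactly right.

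There is, however, one gap. Your own ``convexity fact'' explicitly requires the chord to share no vertex with the polygon, yet in case (2) you apply it to $\alpha^+$ and $P^-$ without checking that no endpoint of $\alpha^+$ is a vertex of $P^-$; if that coincidence occurred, $\alpha^+$ could meet only one side of $P^-$, giving a singleton class whose $L^-$-entry bounds an ideal polygon, which the lemma's case (1) excludes. The coincidence cannot actually happen, but this needs an argument: if $v$ were such a shared endpoint, let $Q^+$ be the $L^+$-polygon coupled to $P^-$ and $(u_1,u_2)$ the $Q^+$-gap containing $v$; coupling forces $v$ to be the only $P^-$-vertex in $(u_1,u_2)$ and $u_1,u_2$ to not be $P^-$-vertices, so both endpoints of $\alpha^-$ lie outside $[u_1,u_2]$. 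The other endpoint $w$ of $\alpha^+$ then either lies outside $[u_1,u_2]$, in which case $\alpha^+$ crosses the $L^+$-leaf $u_1u_2$ (impossible in a prelamination), or $w\in[u_1,u_2]$, in which case both endpoints of $\alpha^+$ lie in an arc avoiding $\{v_i,v_{i+1}\}$ and $\alpha^+$ cannot cross $\alpha^-$ at all, contradicting $(\alpha^+,\alpha^-)\in\cX$. The same kind of coincidence is tacitly excluded in your case (3) when you say the $B_i$ ``straddle'' $A$ and identify exactly two transition sides: one must rule out a $P^-$-vertex equaling $A$ or a $Q^-$-vertex, which again follows by checking that in each such configuration no side of $P^-$ could cross any side of $P^+$, contradicting the hypothesis. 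With these coincidences excluded, your combinatorics goes through.
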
 

The proof follows easily from the definition, and is left to the reader.  
We note in particular that any $L^\pm$-leaf $\alpha$ meets a class of $\sim$ in at most $2$ points.

\begin{notation} 
We denote the quotient space $\cX/\sim$ by $\cP$; and $q\colon \cX \to \cP$ the quotient map.  
\end{notation}

\begin{definition} \label{def:singular} 
A point $p \in \cP$ is {\em singular} if it is the equivalence class consisting of the $2k$ pairs of crossing leaves of two coupled ideal polygons, for $k>2$, and {\em nonsingular} otherwise. 
\end{definition} 

Choosing an orientation of a leaf $\alpha$ induces a total order $<_\alpha$ on the set of leaves crossing $\alpha$. 
We now define sets that will play the role of nonsingular leaves, or faces of singular leaves in $\cP$, and then show that the order described above descends to these sets.  

\begin{notation} 
For $\alpha \in L^+$, let $\cP(\alpha) := \{ q(\alpha, \alpha^-) : (\alpha, \alpha^-)\in \cX  \} \subset \cP$.  If $\alpha \in L^-$, we similarly define $\cP(\alpha) := \{ q(\alpha^+, \alpha) : (\alpha^+, \alpha)\in \cX \}$.
\end{notation}

\begin{lemma}  \label{lem:order_to_quotient}
Fix an orientation on $\alpha \in L^+$.  Let $\beta, \beta', \gamma, \gamma' \in L^-$ be four elements that all cross $\alpha$, and suppose $(\alpha, \beta) \sim (\alpha, \beta')$ and $(\alpha, \gamma) \sim (\alpha, \gamma') \not\sim(\alpha, \beta)$.   If $\beta <_\alpha \gamma$, then $\beta' <_\alpha \gamma'$.  

The analogous statement holds for $\alpha \in L^-$. 
\end{lemma}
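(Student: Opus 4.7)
The plan is to exploit Euclidean convexity of ideal polygons in the disc model together with the fact that distinct complementary regions of $\geo(L^-)$ have disjoint interiors. In the generic case $\beta \neq \beta'$ and $\gamma \neq \gamma'$, the equivalences $(\alpha,\beta)\sim(\alpha,\beta')$ and $(\alpha,\gamma)\sim(\alpha,\gamma')$ produce ideal polygons $P_1,P_2$ of $L^-$, with $\beta,\beta'$ sides of $P_1$ and $\gamma,\gamma'$ sides of $P_2$; necessarily $P_1 \neq P_2$ since $(\alpha,\beta) \not\sim (\alpha,\gamma)$. In the Euclidean disc model each $\overline{P_i}$ is a convex Euclidean polygon with vertices on $S^1$, so $I_i := \alpha \cap \overline{P_i}$ is a closed, connected subsegment of the chord $\alpha$, and its two endpoints are precisely the transverse transition points $\alpha\cap\beta, \alpha\cap\beta'$ (for $i=1$), respectively $\alpha\cap\gamma, \alpha\cap\gamma'$ (for $i=2$).

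The heart of the argument is the claim $I_1 \cap I_2 = \emptyset$. Any overlap would lie in $\overline{P_1} \cap \overline{P_2}$; the interiors are disjoint since $P_1 \neq P_2$ are complementary regions of $\geo(L^-)$, and by hypothesis~(iii) of Theorem~\ref{thm:realization} no leaf is shared between two ideal polygons, so $\partial P_1 \cap \partial P_2$ can only consist of common ideal vertices on $S^1$. These ideal vertices cannot lie in $I_1$ or $I_2$, whose endpoints are interior crossing points $\alpha\cap\beta$, etc., all in $\mathrm{int}(\bD^2)$. Once $I_1$ and $I_2$ are disjoint closed subsegments of the totally ordered chord $\alpha$, the hypothesis $\beta <_\alpha \gamma$ forces the entire segment $I_1$ to precede $I_2$, which immediately gives $\beta' <_\alpha \gamma'$.

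The degenerate cases $\beta=\beta'$ or $\gamma=\gamma'$ are handled identically: if, say, $\beta=\beta'$, then $\beta$ is not a side of any ideal polygon having another side crossed by $\alpha$, and one takes $I_1 := \{\alpha\cap\beta\}$. This singleton still misses $I_2$ because $\beta$, as a leaf of $L^-$, is disjoint from $\mathrm{int}(P_2)$, and $\alpha\cap\beta \neq \alpha\cap\gamma, \alpha\cap\gamma'$ since two distinct leaves of the prelamination $L^-$ cannot share an interior point of $\bD^2$ (otherwise they would cross as Euclidean chords). The main, very mild, obstacle is this bookkeeping around degenerate equivalence classes and the careful use of the ``no leaf on the boundary of two ideal polygons'' hypothesis; the real content is just convexity plus disjointness of complementary regions.
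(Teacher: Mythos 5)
Your proof is correct and rests on the same key geometric fact as the paper's: an ideal polygon crossed by $\alpha$ meets it in a single subsegment, and distinct complementary regions of $L^-$ are disjoint (with no shared sides, by hypothesis (iii), which is in force throughout Section~\ref{sec.plane_construction}). The paper packages this as ``$\beta,\beta'$ are adjacent in $<_\alpha$'' together with a short case split on the sign of $\beta'-\beta$, whereas you state the disjointness of the intervals $I_1=\alpha\cap\overline{P_1}$ and $I_2=\alpha\cap\overline{P_2}$ directly and read off the conclusion uniformly; the two are morally the same argument, with yours arguably more unified.

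One small inaccuracy worth flagging: in your treatment of the degenerate case you assert that $\beta=\beta'$ forces $\beta$ not to be a side of any ideal polygon whose interior $\alpha$ crosses. That is not implied by the hypotheses — $(\alpha,\beta)\sim(\alpha,\beta')$ holds by reflexivity whenever $\beta=\beta'$, even if the $\sim$-class of $(\alpha,\beta)$ is larger. Fortunately the assertion is not load-bearing: taking $I_1=\{\alpha\cap\beta\}$ and invoking disjointness of this point from $I_2$ (because $\beta$ avoids $\mathrm{int}(P_2)$ and distinct $L^-$-leaves do not meet in $\mathring{\bD}^2$) is all you actually use, and that part is correct.
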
 

\begin{proof} 
For concreteness, assume $\alpha \in L^+$.
Let $\beta, \beta', \gamma, \gamma' \in L^-$ be as in the statement of the lemma.  
If $\beta = \beta'$ and $\gamma = \gamma'$ there is nothing to prove.  So suppose $\beta \neq \beta'$. 
Then $\beta$ and $\beta'$ are sides of the same ideal polygon, and thus adjacent points in the ordering along $\alpha$.  
In the case $\beta'<_\alpha \beta$, we have $\gamma' \geq_\alpha \beta >_\alpha \beta'$, since $\gamma'$ is also either equal to $\gamma$ or an adjacent point in the ordering.  Now consider the case $\beta' >_\alpha \beta$.  Since $(\alpha, \gamma') \not\sim(\alpha, \beta')$, we know that $\beta' \neq \gamma$ and $\beta' \neq \gamma'$.  Since no two ideal polygons share a side (in particular cannot share the side $\beta'$), we therefore have that $\gamma$ and $\gamma'$ are sides of a disjoint ideal polygon, hence $\beta' <_\alpha \gamma'$.  
\end{proof} 
Thus, $<_\alpha$ induces an order on $\cP(\alpha)$.  We show the following.  

\begin{lemma}\label{l.crossing_order}
There is a strictly increasing bijection from $\cP(\alpha), <_\alpha$ to the real line $\R$. 
\end{lemma}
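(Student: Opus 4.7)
My plan is to invoke the classical order-theoretic characterization of $(\R,<)$: a nonempty linearly ordered set is order-isomorphic to $\R$ if and only if it is densely ordered, has no endpoints, is Dedekind complete, and has a countable order-dense subset. I will verify these four properties for $(\cP(\alpha),<_\alpha)$ using the hypotheses of Theorem \ref{thm:realization}; the Cantor--Dedekind theorem then furnishes the required strictly increasing bijection.

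For density of the order, given $p_1<_\alpha p_2$ with representatives $\beta_1,\beta_2\in L^-$ crossing $\alpha$ at $x_1<x_2$, I want to produce $p\in\cP(\alpha)$ strictly between them. If every $L^-$-leaf crossing $\alpha$ in $(x_1,x_2)$ were $\sim$-equivalent to $\beta_1$ or to $\beta_2$, then by finiteness of equivalence classes (Lemma \ref{lem:equiv_classes}) I can select the last $\beta_1$-equivalent crossing and the first $\beta_2$-equivalent crossing; between those two points no $L^-$-leaf crosses $\alpha$, so the enclosed sub-arc of $\alpha$ lies in a single complementary region $R$ of $L^-$. Item (iii) of Theorem \ref{thm:realization} forces $R$ to be an ideal polygon or a one-root region. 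The one-root case is excluded because every $L^+$-leaf meeting a one-root region meets its (non-leaf) root, so $\alpha$ would enter $R$ through the root and exit through exactly one $L^-$-leaf, contradicting that both bounding crossings lie on $L^-$-leaves. Hence $R$ is an ideal polygon, but then one of its sides coincides with a side of the polygon giving the $\beta_1$-equivalence class, contradicting the clause in (iii) that no leaf lies in the boundary of two ideal polygons. The absence of endpoints follows by the same dichotomy, using FT-density and FT-connectedness to produce $L^-$-leaves crossing $\alpha$ arbitrarily far along it in either direction. For the countable order-dense subset, I pick a countable dense subset of the set $T\subset\alpha\cong\R$ of $L^-$-leaf crossings of $\alpha$ (possible since $T$ sits inside $\R$) and verify that its image in $\cP(\alpha)$ is order-dense using the density of the order.

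The main obstacle is Dedekind completeness. Given a nonempty $E\subset\cP(\alpha)$ bounded above, let $T_E\subset\alpha\cong\R$ be the union over $p\in E$ of the crossing coordinates of representatives of $p$, and set $x^*=\sup T_E$. If some $L^-$-leaf $\beta^*$ crosses $\alpha$ at $x^*$, the corresponding $p^*=q(\alpha,\beta^*)$ is the supremum of $E$ (verified via Lemma \ref{lem:order_to_quotient}). The subtle case is when no such $\beta^*$ exists: then $x^*$ is a limit of $L^-$-leaf crossings of $\alpha$ which is not itself such a crossing, so $x^*$ lies on the boundary of a complementary region $R$ of $L^-$ at a geodesic side (not an ideal side, since $x^*$ lies in the open disc). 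Item (iii) together with the absence of any $L^-$-leaf at $x^*$ rules out the ideal polygon case, leaving $R$ one-root with $x^*$ on its root. Then $\alpha$ traverses $R$ and exits through its unique $L^-$-leaf side $\ell$ at some $x_\ell>x^*$; I claim $p_\ell=q(\alpha,\ell)=\sup E$. It is an upper bound because no $L^-$-leaf crosses $\alpha$ in $(x^*,x_\ell)$, and any strictly smaller upper bound $q<p_\ell$ would, by Lemma \ref{lem:equiv_classes}, have a finite equivalence class with maximum strictly less than $x^*$, hence could not dominate the elements of $T_E$ accumulating to $x^*$. The hard part throughout is this careful treatment of one-root regions: the root, being a non-leaf geodesic, threatens to introduce both gaps in the order and failures of completeness, and both are ruled out by the rigidity forced by item (iii) of Theorem \ref{thm:realization}.
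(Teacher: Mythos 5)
Your proof is correct and follows a route that is parallel to, but distinct from, the paper's. You verify the Cantor--Dedekind axiomatization of $\R$ (dense order, no endpoints, countable order-dense subset, Dedekind completeness) directly, whereas the paper first extracts a countable separating set with no extrema, maps it order-isomorphically to $\bQ$, extends to an injection into $\R$, and then proves surjectivity by a lifting argument. The two approaches are morally equivalent --- your Dedekind completeness step plays exactly the role of the paper's surjectivity step, and your order-density and countable-order-dense-subset steps combine to do the work of the paper's ``countable separating set'' argument --- but the axiomatic packaging makes it clearer that the one-root/ideal-polygon dichotomy of item~(iii) is what rules out both gaps in the order and ``missing suprema.'' Each approach is a reasonable exposition; yours is somewhat more transparent about \emph{why} completeness holds, while the paper's is a little more constructive.

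A few points of phrasing deserve a comment, though none is a gap. In the density step, the one-root case is excluded not because ``$\alpha$ would enter $R$ through the root and exit through exactly one $L^-$-leaf,'' but rather because $\alpha\cap R$ is exactly the chord $[y_1,y_2]$ with both endpoints on leaf-sides, so $\alpha$ never meets the (non-leaf) root, contradicting the defining property of one-root regions; and the ideal-polygon case is excluded most directly because two sides of the \emph{same} ideal polygon crossed by $\alpha$ give $(\alpha,\beta_1')\sim(\alpha,\beta_2')$, forcing $p_1=p_2$ by transitivity. In the countable order-dense subset step, one should apply the density of the order a few times to produce a nonempty open sub-interval of $T$ inside $(x_1,x_2)$, which the countable dense set $D$ then meets; this is a minor but real additional step, since a dense subset of $T$ in the subspace topology need not a priori hit every order-interval $(p_1,p_2)$. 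Finally, in Case~2 of the completeness argument, ``unique $L^-$-leaf side $\ell$'' should read ``the side through which $\alpha$ exits $R$''; a one-root region can have many leaf sides. You should also note explicitly that crossings cannot accumulate to $x^*$ from above, since otherwise $x^*$ would lie on a geodesic accumulated on both sides --- a trivial complementary region, which hypothesis~(iii) excludes --- and that if $\alpha$ exits $R$ through its ideal boundary rather than a leaf side, then $E$ has no upper bound at all, contradicting the hypothesis. With these clarifications, your argument is complete.
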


For the proof, we call a subset $S$ of a totally ordered set $(Z, <)$ {\em a separating set} if for every $s_1 \neq s_2 \in S$ there exists $s_3 \in S$ with $s_1 < s_3 < s_2$.  

\begin{proof}[Proof of Lemme \ref{l.crossing_order}]
Fix $\alpha$. For concreteness we assume $\alpha$ lies in $L^+$.  Choose an orientation on $\alpha$ and let $<$ denote the order on $\cP(\alpha)$ induced by $<_\alpha$.  
We show first that $<$ has a countable separating set and no extremal elements.   Having done this, there is a unique (up to order-preserving homeomorphism) continuous map from this countable set to $\bQ$, and we show that it extends continuously to a surjection to $\R$.   

Working in $\bD^2$ with $\geo(L^\pm)$, consider the set of points $I:= \{ \alpha \cap \beta : \beta \in \geo(L^-)\}$; this is a subset of the geodesic $\alpha$; and $<_\alpha$ is simply the natural order on $I$. 

If $I$ has nonempty interior, let $S_1$ be a countable dense subset of the interior of $I$ (and let $S_1 = \emptyset$ otherwise).   Each component of $\alpha \smallsetminus I$ is the intersection of $\alpha$ with a complementary region of $L^-$, which are either ideal polygons or one-root regions.  In the first case, both endpoints belong to $I$ and in the second case exactly one endpoint belongs to $I$.  Let $S_2$ denote the union of all endpoints of components of $\alpha \smallsetminus I$ that lie in $I$; this is obviously countable.  Each point of $S_1 \cup S_2$ corresponds to a unique crossing pair $(\alpha, \beta)$ so we may identify it with a countable subset of $\cX$.  We will now show that $q(S_1 \cup S_2)$ is a separating set.

Given $p_1, p_2 \in \cP(\alpha)$ with $p_1 < p_2$, fix $\beta_i \in I$ such that $q(\alpha, \beta_i) = p_i$.  Since $p_1 \neq p_2$, the leaves $\beta_i$ are not the sides of a common ideal polygon.  Thus, (working again in the geodesic realization) there is some $\beta$ that intersects $\alpha$ between $\beta_1$ and $\beta_2$. Since there are no adjacent ideal polygons we can find such a $\beta$ which satisfy $q(\alpha, \beta) \notin \{p_1, p_2\}$.  If $\beta$ lies in the interior of $I$, then we may without loss of generality choose it so that $\alpha \cap \beta \in S_1$.  If not, we automatically have $\alpha \cap \beta \in S_2$.  Thus, $p_1 <  q(\alpha, \beta) <  p_2$ in $\cP(\alpha)$, as desired. 

Finally we show that $I$ has no maximum or minimum point.  Assume for a contradiction that $\beta$ represented a maximal point (the minimum case is similar).  Then, by definition, up to replacing $\beta$ with some $\beta' \sim \beta$ that is the side of an ideal polygon, the segment $J$ of $\alpha$ in $\geo(L^\pm)$ consisting of points greater than $\beta$ would be contained in a complementary region to $L^-$ bounded by $\beta$.  This complementary region cannot be an ideal polygon because the coupled condition forces $J$ to intersect another side of the region.  Similarly, it cannot be a one-root region because $J$ would need to cross also the non-leaf side, which is by definition accumulated by leaves of $L^-$ crossing $\alpha$, and so we would have another point of intersection.     
 
Thus, there is an order-preserving bijection $\phi$ from $\left( q(S_1 \cup S_2), <\right)$ to $\bQ$. As $q(S_1 \cup S_2)$ is separating for $\cP(\alpha)$, the bijection $\phi$ extends uniquely to an order preserving injective map from $\cP(\alpha)$ to $\bR$.  Abusing notation slightly we use $\phi$ to denote this map with domain $\cP(\alpha)$. We now check that it is surjective onto $\bR$.

Lift $\phi$ to a map $\Phi$ defined on $I\subset \alpha$ such that $\phi \circ q= \Phi$. 
Let $\tilde I\subset \alpha$ be the smallest (open) interval containing $I$.  The map $\Phi$ extends in a unique way as an orientation-preserving, nondecreasing map $\tilde \Phi\colon \tilde I\to\bR$, constant on each complementary component of $I$ in $\tilde I$.  Thus, $\tilde \Phi$ is continuous and surjective onto $\bR$. 

Now, consider any $t\in\bR$ and consider $\tilde \Phi^{-1}(t)$. 
Let $x \in \tilde \Phi^{-1}(t)$. If $x \in I$, then $t$ lies in the image of $\phi$.  If $x \notin I$, then $x$ lies in a complementary component $C$, and $\tilde \Phi$ is constant on the closure of $C$. By condition \ref{item.one_root_ideal_polygon} in Theorem \ref{thm:realization}, at least one endpoint, say $y$ of $C$ lies in $I$.  Thus, $\phi(q(y))=t$ proving that $\phi$ is surjective, which is what we needed to show.  
\end{proof}

%%%%%%%%%%%%%%%%%%%%%%%%%%%%%%%%%%%%%%%%%%%%%%%%%%%
\subsection{The polygon topology}
%%%%%%%%%%%%%%%%%%%%%%%%%%%%%%%%%%%%%%%%%%%%%%%%%%

By Lemma \ref{l.crossing_order}, if we endow $\cP(\alpha)$ with the order topology, it is homeomorphic to $\R$.  In this section we define a topology on $\cP$ by specifying a (sub)-basis of open sets.  These are the {\em regular rectangles} and {\em regular prong polygons} defined below.   
The situation is simpler when $L^\pm$ have no ideal polygons, in which case one needs only to work with rectangles, and the bifoliated plane will be nonsingular.  A reader interested only in this setting can skip any mention of regular prong polygons below.  

\begin{definition} \label{def:regular}
A leaf $\beta\in L^\pm$ is \emph{regular} if it is accumulated on both sides by leaves of $L^\pm$.  
\end{definition} 

By definition, any non-regular leaf is the boundary of a complementary region.  Thus, there are at most countably many non-regular leaves.  As a consequence of this, we have
\begin{lemma} \label{lem:regular_dense}
Suppose $\beta_1, \beta_2, \beta_3$ are leaves crossing $\alpha$, with $\beta_2$ between $\beta_1$ and $\beta_3$.   Then there are uncountably many regular leaves crossing $\alpha$ between $\beta_1$ and $\beta_3$.  
\end{lemma}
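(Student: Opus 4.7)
The plan is to reduce the claim directly to Lemma \ref{l.crossing_order}, together with the countability bound on non-regular leaves that was observed just before the lemma. Assume without loss of generality that $\alpha \in L^+$, so that $\beta_1,\beta_2,\beta_3 \in L^-$. Write $p_i := q(\alpha,\beta_i) \in \cP(\alpha)$ for $i=1,2,3$.

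The first step is to verify that $p_1 \neq p_3$ in $\cP(\alpha)$. This is the one spot where the hypothesis on $\beta_2$ is used. If $p_1 = p_3$, then by the definition of $\sim$, the leaves $\beta_1$ and $\beta_3$ are sides of a common ideal polygon $P$ of $L^-$. The geodesic $\alpha$ crosses $P$ in a single arc whose endpoints are $\alpha \cap \beta_1$ and $\alpha \cap \beta_3$, and the open interior of this arc lies in the interior of $P$ and therefore meets no leaf of $L^-$. This contradicts the existence of $\beta_2$ crossing $\alpha$ strictly between $\beta_1$ and $\beta_3$.

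Next, I invoke Lemma \ref{l.crossing_order}: the ordered set $(\cP(\alpha), <_\alpha)$ is order-isomorphic to $\bR$, so the open interval of $\cP(\alpha)$ with endpoints $p_1$ and $p_3$ contains uncountably many classes. Using the observation noted just after Lemma \ref{lem:equiv_classes} that any $L^\pm$-leaf meets an equivalence class of $\sim$ in at most two points, each such class corresponds to at most two leaves of $L^-$ crossing $\alpha$. Hence there are uncountably many leaves of $L^-$ that cross $\alpha$ with intersection point strictly between $\alpha \cap \beta_1$ and $\alpha \cap \beta_3$.

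Finally, the remark preceding the statement of the lemma says that non-regular leaves are boundary sides of complementary regions, and since there are only countably many complementary regions (pairwise disjoint open subsets of the second-countable disc $\bD^2$) each with at most countably many sides, there are only countably many non-regular leaves in total. Removing this countable set from the uncountable set produced in the previous step gives uncountably many \emph{regular} leaves crossing $\alpha$ between $\beta_1$ and $\beta_3$, as required. There is no serious obstacle in the argument; the only subtle point is establishing $p_1 \neq p_3$, and this is precisely what the auxiliary hypothesis on $\beta_2$ is designed to guarantee.
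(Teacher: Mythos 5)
Your proof is correct and follows essentially the same route as the paper's: project to $\cP(\alpha)$, check $q(\alpha,\beta_1)\neq q(\alpha,\beta_3)$, invoke Lemma~\ref{l.crossing_order} to get order type $\bR$, and discard the countable set of non-regular leaves. Your justification that $p_1\neq p_3$ (the arc of $\alpha$ between $\alpha\cap\beta_1$ and $\alpha\cap\beta_3$ would run through the open interior of the putative ideal polygon, where no $L^-$-leaf can cross) is spelled out more carefully than the paper's terse appeal to the absence of adjacent ideal polygons, and it is the correct geometric reason. One small phrasing point: to pass from uncountably many classes to uncountably many leaves, what is actually needed is that each class is hit by \emph{at least} one leaf (which holds because every point of $\cP(\alpha)$ is by definition $q(\alpha,\beta)$ for some $\beta$ crossing $\alpha$); the ``meets a class in at most two points'' bound you cite, while true, controls the fiber size in the unneeded direction.
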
 

\begin{proof} 
As observed above, the set of regular leaves is countable.  Thus, the projection to $\cP(\alpha)$ of all regular leaves is a countable subset.  Since there are no adjacent ideal polygons, $(\alpha, \beta_1)$ and $(\alpha, \beta_3)$ project to distinct points in $\cP(\alpha)$.  By Lemma \ref{l.crossing_order}, $\cP(\alpha)$ has the order type of $\bR$, and hence there are uncountably many regular leaves crossing $\alpha$ between $\beta_1$ and $\beta_3$.  
\end{proof}

\begin{definition} 
A \emph{polygonal path in $\bD^2$} is a simple curve $c$ in $\bD^2$ consisting of finitely many leaf segments of $\geo(L^\pm)$.  These segments are called the {\em sides}, and they alternate between $L^+$ and $L^-$.  The endpoints of the sides are the \emph{a vertices} of $c$.

Such a path is called {\em regular} if all its sides are segments of regular leaves.  
\end{definition}

\begin{lemma}\label{l.polygon-with-more-sides}
Let $c$ be a closed polygonal path with more than $5$ sides and bounding a convex region $P$ in $\bD^2$.  Then the interior of $P$ contains the intersection of two coupled ideal polygons.
\end{lemma}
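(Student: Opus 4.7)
The plan is to argue by induction (on a suitable measure of interior leaves of $P$, with termination addressed via a minimality argument described below), using a cutting construction, with the base case being a polygonal region whose interior contains no leaves of $L^+ \cup L^-$.

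\textbf{Inductive (cutting) step.} Suppose some leaf $\beta \in L^+ \cup L^-$ meets $\mathrm{int}(P)$; without loss of generality $\beta \in L^+$. Since leaves of $L^+$ do not pairwise cross, $\beta$ cannot meet any $L^+$-side of $P$, so it enters and exits $P$ through two $L^-$-sides. This splits $P$ into two convex polygonal regions $P_1, P_2$ with alternating $L^+/L^-$ sides. A direct count gives $|\partial P_1| + |\partial P_2| = 2k + 4$ where $P$ has $2k$ sides; since $k \geq 3$, at least one piece, say $P_1$, has at least $6$ sides. Applying the hypothesis to $P_1$ yields coupled ideal polygons whose intersection lies in $\mathrm{int}(P_1) \subseteq \mathrm{int}(P)$.

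\textbf{Base case.} Suppose $\mathrm{int}(P)$ contains no leaves. Let $R$ be the complementary region of $L^+$ containing $\mathrm{int}(P)$; by hypothesis $R$ is either an ideal polygon or a one-root region. Each $L^-$-side leaf $\alpha_i^-$ enters $R$ at $v_i := \alpha_i^+ \cap \alpha_i^-$ and exits at $w_i := \alpha_{i+1}^+ \cap \alpha_i^-$; by convexity of $R$ and the absence of $L^+$-leaves in $\mathrm{int}(P)$, $\alpha_i^-$ has no other crossings with sides of $R$. Hence the pairs $\{\alpha_i^+, \alpha_{i+1}^+\}$ (indices cyclic) form a $k$-cycle in the linkage graph of $R$. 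A one-root region has a star (hence acyclic) linkage graph, so for $k \geq 3$, $R$ must be an ideal polygon; since the linkage graph of an ideal $m$-polygon is an $m$-cycle and a $k$-cycle embeds therein only when $k = m$, $R$ is an ideal $k$-gon with sides exactly $\alpha_1^+, \ldots, \alpha_k^+$ (forcing consecutive $\alpha_i^+$'s to share endpoints on $S^1$). Symmetrically, the $L^-$-complementary region $R'$ containing $\mathrm{int}(P)$ is an ideal $k$-gon with sides $\alpha_1^-, \ldots, \alpha_k^-$.

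\textbf{Coupling and intersection.} An arc analysis on $S^1$ shows that the endpoints of each $\alpha_i^-$ bracket the shared vertex of $\alpha_i^+$ and $\alpha_{i+1}^+$; consequently the vertices of $R'$ (shared endpoints of consecutive $\alpha_j^-$'s) interleave with those of $R$. Together with the uniqueness of coupled partners (from the hypothesis), this identifies $R'$ as the coupled partner of $R$. A short chord-interleaving check then shows $\alpha_i^+$ crosses only $\alpha_{i-1}^-$ and $\alpha_i^-$ among the sides of $R'$; therefore $R \cap R'$ is the convex polygon with vertex set $\{v_i, w_i\}_{i=1}^k$, which equals $P$.

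\textbf{Main obstacle.} The induction must terminate, which is not automatic if $\mathrm{int}(P)$ contains infinitely many leaves. This is addressed via a minimality (Zorn/compactness) argument: among all closed polygonal subregions of $P$ with at least $6$ alternating sides, one selects a minimal element under inclusion; by minimality it has no leaves in its interior (else a further cut would produce a strictly smaller such subregion), so the base case applies. Establishing existence of such a minimal element, and controlling Hausdorff limits of nested chains of polygonal subregions (so that the limit still inherits a compatible polygonal structure), is the main technical challenge.
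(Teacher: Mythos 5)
Your proof and the paper's have the same skeleton: cut $P$ along any leaf meeting its interior (keeping $\geq 6$ sides, via the $2k+4$ side count you give), pass via a maximal-chain argument to a minimal polygon whose interior is disjoint from $\geo(L^+)\cup\geo(L^-)$, and then identify that minimal polygon as the intersection of complementary regions which must form a coupled pair of ideal polygons. Your base case is substantially more detailed than the paper's one-sentence dispatch (that the one-root property would force only four sides): you locate a $k$-cycle in the linkage graph of the ambient $L^+$-complementary region $R$, use the structure of linkage graphs of one-root regions versus ideal polygons to conclude $R$ is an ideal $k$-gon with sides exactly $\alpha_1^+,\dots,\alpha_k^+$, symmetrize, and check $P=R\cap R'$; this is a careful and correct elaboration of what the paper compresses. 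The one genuine divergence is the treatment of termination: you explicitly flag as an unresolved ``main technical challenge'' the claim that the intersection of a maximal nested chain of $\geq 6$-sided convex polygons is itself a nondegenerate convex polygon with $\geq 6$ sides, rather than a polygon with fewer sides, a segment, or a point. The paper's proof asserts exactly this in a single sentence (``The intersection is a convex polygon $C$ with more than $5$ sides, whose sides are geodesic segments contained either in $L^\pm$-leaves or accumulated by $L^\pm$-leaves'') with no justification, and restricts attention to chains of \emph{regular} convex polygons, a detail your write-up omits. So the gap you have identified is real but is shared with the paper's own argument; you are simply more forthcoming about where the work still lies.
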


\begin{proof} 
Each leaf crossing $P$ splits $P$ into two convex polygons of which at least one has more than $5$ sides. We consider a maximal decreasing chain (i.e., a totally ordered set with respect to inclusion) of regular convex polygons with more than $5$ sides. The intersection is a convex polygon $C$ with more than $5$ sides,  whose sides are geodesic segment contained either in $L^\pm$-leaves or accumulated by $L^\pm$-leaves.

The interior of this polygon is disjoint from $L^-$ and $L^+$ and thus it is the intersection of complementary components of $L^+$ and $L^-$.  Neither can be a one-root region, since the one root property would force $C$ to have only four sides.  
Thus $C$ is the intersection of two coupled ideal polygons.
\end{proof}

\begin{definition}
A {\em regular polygonal path in $\cP$} is the projection to $\cP$ of the union of the sides of a regular polygonal path in $\bD^2$. 
\end{definition} 

\begin{definition}Let $c$ be a closed, regular polygonal path.  One says that a point $p\in\cP$ \emph{lies  in the interior of $c$} if its class $q^{-1}(p)$ is contained in the interior of $c$ (or equivalently, contains a point in the interior of $c$).
\end{definition}

We highlight two special kinds of polygonal paths.  
\begin{definition} 
A {\em regular rectangle} is a 4-sided, closed, regular polygonal path (in either $\bD^2$ or $\cP$).  A {\em regular prong polygon in $\bD^2$} is a closed regular polygonal path in $\bD^2$ with $2k$ sides whose interior contains the intersection of two coupled ideal $k$-gon polygons; its projection to $\cP$ is called a {\em regular prong polygon in $\cP$}.
\end{definition} 

\begin{lemma} \label{obs:one_prong}
If $R$ is a regular prong polygon in $\bD^2$, then there is a {\em unique} pair of coupled ideal polygon complementary regions whose intersection is contained in the interior of $R$.
\end{lemma}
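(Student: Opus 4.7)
The existence of a coupled pair of ideal $k$-gons with intersection in $\mathrm{int}(R)$ is immediate from the definition of regular prong polygon; the content of the lemma is uniqueness. The plan is to suppose $(P^+, P^-)$ is the given coupled pair and $(Q^+, Q^-)$ is any other coupled pair with $Q^+\cap Q^-\subset\mathrm{int}(R)$, and show $(Q^+, Q^-) = (P^+, P^-)$.

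First I would reduce to the disjoint case. If $P^+ = Q^+$, then $P^-$ and $Q^-$ are two $L^-$-ideal polygons whose vertex sets both interlace with that of $P^+$. If $P^-\neq Q^-$, their own vertex sets must mutually interlace, forcing them to overlap as ideal polygons in $\bD^2$---contradicting the disjointness of distinct $L^-$-complementary regions. A symmetric argument handles $P^- = Q^-$ with $P^+\neq Q^+$. So I may assume $P^+\neq Q^+$ and $P^-\neq Q^-$. By hypothesis~\ref{item.one_root_ideal_polygon} of Theorem~\ref{thm:realization} (no leaf bounds two ideal polygons), $Q^+$ is strictly disjoint from $P^+$, hence lies in the closure of a single ear $E$ of $P^+$ bounded by one side $\sigma$ of $P^+$; in particular $Q^+\cap Q^-\subset R\cap E$.

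Next I would analyze the polygonal structure of $R\cap E$. Since $R$ and $E$ are both convex, so is $R\cap E$, and its boundary is a closed polygonal path consisting of (a) an arc of $\partial R$ inside $E$, containing $m\geq 1$ complete $L^+$ sides of $R$ (those on leaves with endpoints in the $S^1$-arc bounding $E$) together with $m-1$ intermediate $L^-$ sides and two partial $L^-$ sides of $R$ that cross $\sigma$; and (b) a segment of $\sigma$ itself, which is also an $L^+$ leaf. This boundary is a polygonal path with $2m+2$ leaf sides.

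\textbf{Main obstacle:} The remaining work is a counting/bijection argument contradicting the fact that $R$ has exactly $2k$ sides. I expect the cleanest route is to establish a bijection between the $2k$ sides of $R$ and the $2k$ sides of $\partial(P^+\cap P^-)$: each $L^+$ side of $R$ sits in exactly one ear of $P^+$ (matched to the side of $P^+$ separating that ear from $P^+$), and similarly each $L^-$ side of $R$ corresponds to one side of $P^-$. This bijection is forced by the convexity of $R$ and the fact that $\partial R$ wraps once around $P^+\cap P^-$. Given the bijection, ear $E$ contains exactly one $L^+$ side of $R$ (so $m=1$), making $R\cap E$ a 4-sided convex region; a direct argument then shows that---because $Q^+$'s sides are non-regular $L^+$ leaves in $E$ whose endpoints must cluster in specific sub-arcs of the arc of $E$ (namely, to one side of the endpoints of the unique $L^+$ side of $R$ in $E$)---this 4-sided region cannot accommodate the $\geq 6$-gon $Q^+\cap Q^-$, giving the contradiction. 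Rigorously establishing both the bijection and this final geometric obstruction is the substantive technical work.
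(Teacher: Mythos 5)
Your route differs from the paper's in structure: the paper runs a global side-count directly on $\partial R$ (each side $\alpha_i$ of $P^+$ is crossed at least twice by $\partial R$, while each $L^-$ side of $\partial R$ crosses at most a pair of adjacent $\alpha_i$'s, giving a lower bound of $2k$ sides, with a second coupled pair then forcing strictly more), whereas you localize to the single ear $E$ of $P^+$ containing $Q^+$ and analyze $R\cap E$. Your reduction to disjoint pairs is fine, and your bijection claim (exactly one $L^+$ side of $R$ per ear of $P^+$, so $m=1$ and $R\cap E$ is a $4$-gon) is true: since $R$ has exactly $2k$ sides, the estimates above become equalities, and the cyclic progression of ears along $\partial R$ forces the bijection. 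This part is provable but you have left it as stated.

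The real problem is in the last step. Your ``clustering'' claim---that the vertices of $Q^+$ all lie on one side of the endpoints $\{a,b\}$ of the unique $L^+$ side of $R$ in $E$---does not follow. The vertices of $Q^+$ lie in $(v_i,a)\cup(b,v_{i+1})$ (with $v_i, v_{i+1}$ the endpoints of $\sigma$), but a geodesic joining a point of $(v_i,a)$ to a point of $(b,v_{i+1})$ links neither $\{a,b\}$ nor $\{v_i,v_{i+1}\}$, so nothing obviously forbids the vertices of $Q^+$ from being split across the two sub-arcs. Fortunately the conclusion does not need clustering: each of the $l\geq 3$ sides $\delta_j$ of $Q^+$ must leave $R\cap E$ twice, and it can only do so through the two $L^-$ sides (it cannot cross the $L^+$ leaf through $\{a,b\}$ or $\sigma$); hence each of those two $L^-$ sides crosses all $l$ of the $\delta_j$'s. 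But each $L^-$ side of $R\cap E$ is a segment of a \emph{regular} $L^-$ leaf, which lies in a single ear of $Q^-$ and therefore crosses at most two adjacent sides of $Q^+$, forcing $l\leq 2$, a contradiction. Note this final count is exactly the paper's estimate run inside the $4$-gon $R\cap E$, so after the repair your route is best seen as a careful localization of the paper's direct argument rather than a fundamentally different one.
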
 

\begin{proof}  This is an easy consequence of Poincar\'e-Hopf, but one can also argue directly from the definition as follows: 
Containing the intersection of a pair of coupled $k$-gons forces a polygonal path to have {\em at least} $2k$ sides, since each side of the path can intersect at most one pair of  leaves of one ideal polygon (necessarily adjacent with a common endpoint).  If the interior of such a path also contained a (necessarily disjoint) intersection of ideal $l$-gons for some $l$, the path would need strictly more than $2k$ sides.   
\end{proof}

\begin{lemma} \label{l.nonsingular_points_in_rectangle}
Every nonsingular point (see Definition \ref{def:singular}) lies in the interior of a regular rectangle in $\cP$.
\end{lemma}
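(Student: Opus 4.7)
The strategy is a case analysis based on Lemma~\ref{lem:equiv_classes}: a nonsingular point $p \in \cP$ has $1$, $2$, or $4$ representatives in $\cX$, corresponding respectively to (1) a singleton $q(\alpha^+,\alpha^-)$ with neither $\alpha^\pm$ a side of an ideal polygon, (2) an $L^+$-leaf $\alpha^+$ crossing an $L^-$-ideal polygon $P^-$ through two of its sides $\alpha^-,\beta^-$, or (3) two uncoupled ideal polygons $P^+$ (with sides $\alpha^+,\beta^+$ in $L^+$) and $P^-$ (with sides $\alpha^-,\beta^-$ in $L^-$), these four leaves pairwise crossing. In each case I would construct four regular leaves, two of each prelamination, whose segments bound a regular rectangle in $\bD^2$ whose interior contains every representative of $p$; this rectangle projects to a regular rectangle in $\cP$ whose interior contains $p$.

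The key tool is Lemma~\ref{lem:regular_dense}. Applied directly it produces regular $L^\mp$-leaves crossing a given $L^\pm$-leaf $\alpha$ at points arbitrarily close to any prescribed interior point of $\alpha$ and on either side of it (using Lemma~\ref{l.crossing_order} to guarantee $\cP(\alpha)$ has no extremal elements). When $\alpha \in L^\pm$ is itself non-regular---for instance a side of an ideal polygon in Case 3---a second application lets us find regular $L^\pm$-leaves approximating $\alpha$ on a chosen side: apply Lemma~\ref{lem:regular_dense} to an $L^\mp$-leaf $\beta$ crossing $\alpha$, using a third $L^\pm$-leaf crossing $\beta$ on the desired side of $\alpha$ (which exists by the no-extremal-element property of $\cP(\beta)$), and extract a regular $L^\pm$-leaf crossing $\beta$ between them. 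The crucial geometric observation ensuring the four chosen leaves assemble into a rectangle is that, in the straight-line realization, an $L^\mp$-leaf crossing $\alpha^\pm$ very close to a point $p_\bD \in \alpha^\pm$ while unable to cross $\alpha^\mp$ (they share a prelamination) is forced by a slope estimate in $\bD^2$ to have both of its $S^1$-endpoints very close to those of $\alpha^\mp$; consequently the approximating leaves inherit the interleaving pattern of the reference leaves on $S^1$ and hence pairwise cross in the correct cyclic order.

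With these tools in hand, Case 1 is handled by picking regular $\gamma^-,\delta^- \in L^-$ crossing $\alpha^+$ on opposite sides of $p_\bD = \alpha^+ \cap \alpha^-$ and regular $\gamma^+,\delta^+ \in L^+$ crossing $\alpha^-$ on opposite sides of $p_\bD$, all chosen close enough to $p_\bD$ that their pairwise crossings bound a rectangle enclosing $p_\bD$. Case 2 is similar but the rectangle must contain the whole segment $[p_1,p_2]$ of $\alpha^+$ inside $P^-$: pick regular $L^-$-leaves crossing $\alpha^+$ just beyond $p_1$ and $p_2$ (outside $P^-$), and regular $L^+$-leaves approximating $\alpha^+$ on each side via the secondary construction applied to $\alpha^-$ or $\beta^-$. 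In Case 3, replace each of $\alpha^\pm,\beta^\pm$ by a regular approximation just outside $P^\pm$ via the secondary construction; the four approximations bound a rectangle strictly containing $P^+\cap P^-$ and hence all four representatives.

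The main obstacle will be making the near-parallel slope estimate precise enough, in each case, to certify the pairwise crossings and the containment of all representatives. I expect this to be purely a matter of careful Euclidean-chord estimates in $\bD^2$: given the $S^1$-positions of $\alpha^\pm$ (and in Case 3 also of $\beta^\pm$), a tolerance on the crossing points of the approximating leaves translates via the slope bound into a tolerance on their $S^1$-endpoints, which in turn guarantees both the correct interleaving for pairwise crossings and the containment of every representative in the resulting quadrilateral.
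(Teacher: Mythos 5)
Your case split by Lemma~\ref{lem:equiv_classes} and the use of Lemmas~\ref{l.crossing_order} and \ref{lem:regular_dense} are on the right track and match the paper's toolkit (the paper actually organizes into only two cases, since the same argument handles an $\alpha^+$ that is or isn't a polygon side once $\alpha^-$ isn't). But there is a genuine gap in the central step: the ``slope/Euclidean-chord estimate'' argument for why the four regular leaves form a rectangle does not survive the presence of one-root complementary regions, and this is precisely where the real content of the lemma lies.

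Concretely, suppose $\alpha^+$ bounds a one-root region $C$ on one side (this is allowed in your Case~1 -- the leaves of the singleton class need not be regular, only not sides of an ideal polygon). Then there are \emph{no} leaves of $L^+$ in the interior of $C$, so the nearest regular $L^+$-leaf on that side of $\alpha^+$ accumulates on the root $r$ of $C$, which may be far from $\alpha^+$ in $\bD^2$. Consequently, one cannot choose $\gamma^+$ so that $\gamma^+ \cap \alpha^-$ is ``close enough'' to $p_\bD$: the crossing point is bounded away from $p_\bD$ by $r \cap \alpha^-$, and the chord estimate never engages. Your ``secondary construction'' (apply Lemma~\ref{lem:regular_dense} along $\beta = \alpha^-$ between $\alpha^+$ and a third leaf) only places $\gamma^+$ somewhere in an interval along $\alpha^-$; it gives no control relating $\gamma^+$ to the previously chosen $\gamma^-, \delta^-$, so nothing certifies the two required crossings $\gamma^+ \cap \gamma^-$ and $\gamma^+ \cap \delta^-$. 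The correct argument here is combinatorial, not metric: because $C$ is one-root, \emph{every} $L^-$-leaf entering $C$ crosses the root $r$, so in particular the already-chosen $\alpha_1^-, \alpha_2^-$ (which cross $\alpha^+$) cross $r$; and since $r \notin L^+$ it is accumulated by regular $L^+$-leaves from the side away from $C$, which therefore cross both $\alpha_1^-$ and $\alpha_2^-$ once close enough to $r$. This is exactly what the paper does, and it is the piece your proposal replaces with an estimate that cannot be made to hold. The same issue reappears in your Cases~2 and 3 on the sides of $\alpha^\pm, \beta^\pm$ facing away from the ideal polygons, where the paper again invokes the one-root structure (using no adjacent ideal polygons) rather than proximity.
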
 

\begin{proof}
Let $x=q(\alpha^+,\alpha^-)$ be a nonsingular point in $\cP$.\\
\noindent \textbf{Case 1: $\alpha^\pm$ not an ideal polygon side. }
As a first case, suppose that $\alpha^+$ is not a side of an ideal polygon.  Since each non-regular leaf is adjacent to a complementary region, there are at most countably many such leaves.  Thus, by Lemma \ref{l.crossing_order}, we can find two regular leaves $\alpha^-_1,\alpha^-_2$ with $q(\alpha^+,\alpha_1^-)$ and $q(\alpha^+,\alpha_2^-)$ on either sides of $x$ on $\cP(\alpha^+)$.

Consider one side of $\alpha^+$. 
If  $\alpha_+$ is accumulated by leaves on this side, Lemma \ref{lem:regular_dense} says that we can approximate $\alpha_+$ on this side by regular leaves, and find such a regular leaf crossing both $\alpha_1^-$ and $\alpha_2^-$.
If not, $\alpha^+$ bounds a one-root region $C$ on this side.  By definition, both $\alpha_1^-$ and $\alpha_2^-$ cross the unique side of $C$ that is not in $L^+$, and so cross the regular $L^+$-leaves accumulating on this side.  

This shows that there are regular $L^+$-leaves on both sides of $\alpha^+$ crossing  both $\alpha_1^-$ and $\alpha_2^-$. This proves that $x$ belongs to the interior of a rectangle.
An analogous proof works when $\alpha^-$ is not the side of an ideal polygon, so it remains to consider the following.  

\noindent \textbf{Case 2: ideal polygon sides. }
Say that $\gamma_1, \gamma_2$ are {\em successive sides} of an ideal polygon if they share a vertex.  
Suppose  
\[ q^{-1}(x)=\{(\alpha^+,\alpha^-),(\alpha^+,\beta^-), (\beta^+,\alpha^-), (\beta^+,\beta^-)\}
\] where $\alpha^+,\beta^+$ are successive 
$L^+$-sides of an ideal polygon $P^+$ and $\alpha^-,\beta^-$ are successive $L^-$-sides of an ideal polygon $P^-$.

By assumption, there are no adjacent ideal polygons, so one side of each of these four leaves $\alpha^\pm$, $\beta^\pm$ (the side opposite $P^\pm$) is either accumulated by regular leaves, or bounds a one-root region whose root is accumulated by regular leaves.  

As in the previous case, one can easily show that the regular $L^+$-leaves accumulating the leaves $\alpha^+$ and/or $\beta^+$ or the corresponding root 
all cross the regular $L^-$-leaves accumulating the leaves $\alpha^-$ and/or $\beta^-$ or the corresponding root of the adjacent one-root region. 
This shows that $x$ belongs to the interior of a rectangle.
\end{proof}

\begin{lemma} \label{l.singular_points_in_polygon}
Every singular point in $\cP$ lies in the interior of a regular prong polygon in $\cP$.  
\end{lemma}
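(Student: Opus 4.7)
Let $x \in \cP$ be singular. By Lemma \ref{lem:equiv_classes}, $q^{-1}(x)$ consists of the $2k$ crossing pairs of leaves of two coupled ideal polygons $P^+ \subset L^+$ and $P^- \subset L^-$, each with $k>2$ sides. In $\bD^2$, the intersection $P^+ \cap P^-$ is a convex $2k$-gon whose sides alternate between arcs of sides of $P^+$ and $P^-$; label the sides of $P^\pm$ as $\alpha^\pm_1, \ldots, \alpha^\pm_k$ in cyclic order so that $\alpha^+_i$ meets $\alpha^-_{i-1}$ and $\alpha^-_i$ at two consecutive corners of $P^+\cap P^-$ (indices mod $k$). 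The plan is to construct a simple closed regular polygonal path in $\bD^2$ with $2k$ sides whose interior contains $P^+\cap P^-$; its projection to $\cP$ will then be a regular prong polygon whose interior contains $x$.

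For each $i$, consider the exterior side of $\alpha^\pm_i$ (the side opposite $P^\pm$). By hypothesis \ref{item.one_root_ideal_polygon} and the assumption that no leaf bounds two ideal polygons, this exterior side is either accumulated by $L^\pm$-leaves, or bounds a one-root complementary region whose non-leaf root is itself accumulated by $L^\pm$-leaves on its far side. In either case, I can produce regular $L^\pm$-leaves with endpoints in arbitrarily small prescribed arcs of $S^1$ near the two endpoints of $\alpha^\pm_i$. I choose for each $i$ a regular leaf $\tilde\alpha^\pm_i\in L^\pm$ whose endpoints lie in small disjoint arcs just outside the endpoints of $\alpha^\pm_i$, chosen so small as to avoid all other vertices of $P^+$ and $P^-$. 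In the one-root case, I use the fact that every $L^\mp$-leaf entering a one-root region for $L^\pm$ must cross its root, so that $\alpha^-_{i-1}$ and $\alpha^-_i$ continue beyond the root $s^+_i$ into the region where regular $L^+$-leaves accumulate.

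With these choices, a cyclic-order argument on $S^1$ shows that the endpoints of $\tilde\alpha^+_i$ separate, among the $\tilde\alpha^-_j$, exactly $\tilde\alpha^-_{i-1}$ and $\tilde\alpha^-_i$, so $\tilde\alpha^+_i$ crosses precisely these two. Taking the segment of $\tilde\alpha^+_i$ between its two crossings, and similarly for each $\tilde\alpha^-_j$, I obtain a closed polygonal path $c$ with $2k$ alternating regular sides. The same cyclic-order analysis on $S^1$ shows that $c$ is simple, and that the convex region enclosed by $c$ contains $P^+\cap P^-$. Hence $c$ is a regular prong polygon in $\bD^2$ whose interior contains $P^+\cap P^-$, and its projection to $\cP$ is a regular prong polygon whose interior contains $x$.

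The main technical obstacle is verifying simplicity of $c$ and that $P^+\cap P^-$ lies in its interior; both reduce to arranging the endpoints of the $\tilde\alpha^\pm_i$ on $S^1$ in the same cyclic order as those of the $\alpha^\pm_i$, which becomes automatic once the approximating arcs are chosen small and pairwise disjoint from each other and from the vertices of $P^+$ and $P^-$. The one-root case requires slightly more care, since $\tilde\alpha^\pm_i$ lies on the far side of the root $s^\pm_i$ rather than directly adjacent to $\alpha^\pm_i$, but the same separation arguments on $S^1$ apply verbatim.
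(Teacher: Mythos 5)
Your proof is correct and follows essentially the same strategy as the paper's: identify the singular class as the intersection of two coupled ideal polygons, observe (using that no leaf bounds two ideal polygons) that each side is accumulated by regular leaves either directly or via the root of an adjacent one-root region, and then choose one such approximating regular leaf on each exterior side so that the resulting $2k$-gon of crossing segments encloses the singular intersection. The paper's proof is more terse, delegating both the accumulation argument and the crossing verification to ``as in the proof of Lemma \ref{l.nonsingular_points_in_rectangle}''; you spell out the cyclic-order bookkeeping and flag the genuine subtlety in the one-root case (the endpoints of $\tilde\alpha^\pm_i$ are near the root's endpoints, not $\alpha^\pm_i$'s), which is precisely the point one must check: since $\alpha^-_{i-1}$ and $\alpha^-_i$ both cross the root, its endpoints straddle the unique $P^-$-vertex in the relevant arc, so the separation pattern is preserved and the argument goes through.
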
 

\begin{proof}
Let $x\in \cP$ be a singular point. So there exists $(\alpha_i^+,\alpha^-_i)\in \cX$, $i=1,\dots ,p$, such that $x = q(\alpha_i^+,\alpha^-_i)$, and $(\alpha_i^+,\alpha_{i+1}^-)\in \cX$ (with index considered modulo $p$).
Then as in the proof of Lemma \ref{l.nonsingular_points_in_rectangle}, either $\alpha^\pm_i$  is accumulated by regular $L^\pm$-leaves $\alpha^\pm_{i,n}$, or $\alpha^\pm_i$ bounds a complementary component whose geodesic root $\tilde \alpha^\pm_i$ is accumulated by regular leaves $\alpha^\pm_{i,n}$.

Then, again as in the proof of Lemma~\ref{l.nonsingular_points_in_rectangle}, one checks that, for $n$ large enough, the regular $L^+$-leaves $\alpha^+_{i,n}$ cross the regular $L^-$-leaves  $\alpha^-_{i-1,n}$ and  $\alpha^-_{i,n}$, building a convex regular polygon with $2p$-sides around the (non regular) intersection of the two ideal polygons whose corners define the class of $x$.
\end{proof}

\begin{notation}
For a regular rectangle $R$ we denote by $\mathring{R}$ the set of points of $\cP$ that lie in the interior of $R$.
\end{notation} 

\begin{lemma} \label{l.rectangles_are_rectangles}
For a regular rectangle $R$, there is a bijection  $\psi\colon\mathring{R}\to (0,1)^2$, such that for any $\alpha^\pm \in L^\pm$ 
\begin{itemize}
\item If $\cP(\alpha^+)\cap \mathring{R}\neq \emptyset$, then 
$\psi$ maps $\cP(\alpha^+)\cap\mathring{R}$ monotonically to a horizontal segment $(0,1)\times \{r\}$. 
 \item If $\cP(\alpha^-)\cap \mathring{R}\neq \emptyset$, then $\psi$ maps $\cP(\alpha^-)\cap\mathring{R}$ monotonically to a vertical segment $\{s\}\times(0,1)$.
\end{itemize}
\end{lemma}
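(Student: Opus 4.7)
My plan is to construct $\psi$ explicitly using the four sides of $R$ as coordinate axes, then verify surjectivity, injectivity, and the leaf-mapping property in turn.

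First I would observe that $\mathring R$ contains no singular points. By Lemma \ref{obs:one_prong}, a closed regular polygonal path whose interior contains the intersection of two coupled ideal $k$-gons has at least $2k \ge 6$ sides, so the $4$-sided rectangle $R$ cannot enclose a singular point. Hence every $p \in \mathring R$ falls into cases (1)--(3) of Lemma \ref{lem:equiv_classes}. Let $\beta_1^+, \beta_2^+ \in L^+$ and $\beta_1^-, \beta_2^- \in L^-$ be the leaves extending the four sides of $R$, with corner classes $q(\beta_i^+, \beta_j^-)$. I will use $\beta_1^+$ and $\beta_1^-$ as coordinate axes.

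For any representative $(\alpha^+, \alpha^-) \in \cX$ of a point $p \in \mathring R$, convexity of $R$ in $\bD^2$ forces $\alpha^-$ to cross both $\beta_1^+$ and $\beta_2^+$ and $\alpha^+$ to cross both $\beta_1^-$ and $\beta_2^-$. I define
\[
x_R(\alpha^-) := q(\beta_1^+, \alpha^-) \in \cP(\beta_1^+),\qquad y_R(\alpha^+) := q(\alpha^+, \beta_1^-) \in \cP(\beta_1^-),
\]
and set $\psi(p) := (x_R(\alpha^-), y_R(\alpha^+))$. The key well-definedness check is that if $(\alpha'^+, \alpha^-)$ is another representative with $\alpha'^+ \neq \alpha^+$, then $\alpha^+, \alpha'^+$ are two sides of a common ideal polygon of $L^+$; both cross $\mathring R$ and hence $\beta_1^-$, so $(\alpha^+, \beta_1^-) \sim (\alpha'^+, \beta_1^-)$, leaving $y_R$ unchanged. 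The symmetric argument handles $x_R$. Since $\beta_1^\pm$ are regular leaves and hence not sides of ideal polygons, each corner class $q(\beta_i^+, \beta_j^-)$ is a singleton, and the image of $x_R$ lies in the open interval of $\cP(\beta_1^+)$ between $q(\beta_1^+, \beta_1^-)$ and $q(\beta_1^+, \beta_2^-)$. By Lemma \ref{l.crossing_order} this interval is order-isomorphic to $\bR$, and similarly for $\cP(\beta_1^-)$, so I compose with order-preserving homeomorphisms onto $(0,1)$ to land in $(0,1)^2$.

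Bijectivity is the core of the argument. Injectivity follows since $\psi(p) = \psi(p')$ forces the equivalence classes of $\alpha^\pm$ and $\alpha'^\pm$ to agree, so $p = p'$. For surjectivity, given $(\xi, \eta)$ I pick $\alpha^-, \alpha^+$ with $x_R(\alpha^-) = \xi$ and $y_R(\alpha^+) = \eta$; each crosses $R$ from one side to the opposite, so their geodesic restrictions to $R$ are two chords of a convex quadrilateral with endpoints in the interiors of opposite sides. A standard Jordan-curve argument then forces them to intersect inside $\mathring R$, producing the desired $p$.

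For the leaf statement, fix $\alpha^+$ with $\cP(\alpha^+) \cap \mathring R \neq \emptyset$. Then $y_R(\alpha^+)$ is constant on this set, so $\psi$ maps $\cP(\alpha^+) \cap \mathring R$ into the horizontal segment at height $y_R(\alpha^+)$, and its image is the full open segment by the surjectivity argument. Monotonicity reduces to showing that if two noncrossing $L^-$-leaves $\alpha_1^-, \alpha_2^-$ both cross the noncrossing $L^+$-leaves $\alpha^+$ and $\beta_1^+$, the order of intersections along $\alpha^+$ coincides (up to global reversal) with that along $\beta_1^+$---an elementary ``parallelogram'' argument in $\bD^2$. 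The symmetric statement for $\cP(\alpha^-)$ follows by swapping $+$ and $-$. The main obstacle I anticipate is bookkeeping around ideal polygon classes: ensuring every representative of a $\sim$-class involving an ideal polygon side contributes the same coordinate, which ultimately rests on observing that all such sides traverse $R$ between the same pair of opposite sides.
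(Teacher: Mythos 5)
Your proof is correct and follows essentially the same approach as the paper: both define $\psi$ by projecting an intersection $(\alpha^+,\alpha^-)\in\cX$ onto two of the bounding sides of $R$, check that the projections descend to the quotient by $\sim$ via Lemma \ref{lem:equiv_classes}, and invoke Lemma \ref{l.crossing_order} to identify the resulting ordered sets with open intervals. You merely spell out the bijectivity and monotonicity verifications (and the preliminary observation, via Lemma \ref{obs:one_prong}, that $\mathring R$ contains no singular point) that the paper's proof leaves as brief assertions.
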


\begin{proof}
Let $\alpha_1^\pm,\alpha_2^\pm$ denote the regular leaves defining $R$.  
Let $I_i^\pm$ be the interior of the side of $R$ that lies on $\alpha_i^\pm$.

First, define a map $(\psi^-, \psi^+) \colon \mathring{R} \to (\cX \cap I_1^-) \times (\cX \cap I_1^-)$
by sending $(\alpha^+, \alpha^-)$ to $((\alpha^+ \cap I_1^-), (\alpha^+ \cap I_1^-))$.  Note that this is a bijection. 

From Lemma \ref{lem:equiv_classes}, it follows easily that two points $x,y\in\cX \cap \mathring{R}$ satisfy $x \sim y$ if and only if $\psi^-(x)\sim\psi^-(y)$ and $\psi^+(x)\sim\psi^+(y)$. 
Thus the map $(\psi^-,\psi^+)$ descends to a bijection $\psi$ from the quotient $\mathring{R}/\sim$ to 
$I^-/\sim\times I^+/\sim$.  By Lemma \ref{l.crossing_order}, $I^\pm/\sim$ are in strictly increasing bijection with an open interval of $\bR$.  
\end{proof}

Similarly, we have a standard model for regular prong polygons.  
\begin{definition} 
\emph{A standard bifoliated $k$-prong polygon} is a bifoliated region with exactly one $k$-prong singularity, obtained by taking $k$ copies $R_i$, $i\in \bZ/k\bZ$, of the rectangle $[0,1) \times (-1, 1)$ with the standard product foliation, and gluing the half-side $\{0\} \times [0, 1)$ in $R_i$ to the half-side $\{0\} \times (-1, 0]$ in $R_{i+1}$ isometrically.
\end{definition} 

\begin{lemma} \label{lem:standard_model_prong}
For a regular prong polygon $R$ in $\cP$, there is a natural bijection from the set of points that lie in the interior of $R$ in $\cP$ to a standard bifoliated prong polygon, via a map sending segments of $\cP(\alpha)$, $\alpha \in L^+$ to vertical leaves (of the form $\{t\} \times (-1, 1)$) monotonically, and segments of $\cP(\beta)$, for $\beta \in L^-$ to the transverse foliation monotonically.
\end{lemma}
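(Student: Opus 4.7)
My plan is to decompose the interior of $R$ in $\cP$ into sector regions around the singular point $x$ and build the bijection to the standard model sector by sector. By Lemma \ref{obs:one_prong}, $\mathring R$ contains a unique pair of coupled ideal $k$-gons $P^+, P^-$, whose $2k$ corner crossings form the equivalence class defining $x$. In $\bD^2$, the sides of $P^+ \cup P^-$ divide $\mathring R$ into three kinds of pieces: the central $2k$-gon $P^+ \cap P^-$ (whose closure projects to $x$ in $\cP$); $2k$ ``point-of-star'' regions inside one of $P^\pm$ (each bounded by two sides of one polygon and one side of the other, and containing no crossings of $L^+$ and $L^-$, hence empty in $\cP$); and $2k$ ``ear'' regions outside $P^+ \cup P^-$ but inside $R$, which contain crossings and thus contribute the nontrivial part of $\cP \cap \mathring R$. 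Label these ear regions $T_1, \dots, T_{2k}$ cyclically, one per side of $R$.

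For each sector $T_i$, I would construct a bijection $\psi_i \colon \cP \cap \overline{T_i} \to [0,1) \times (-1,1)$ into the $i$th model rectangle sending $x$ to $(0,0)$; the two bounding prong-half segments along sides of $P^\pm$ emanating from $x$ to the two halves $\{0\} \times [0,1)$ and $\{0\} \times (-1,0]$ of the left edge; segments of $\cP(\alpha)$ for $\alpha \in L^+$ crossing $T_i$ monotonically to vertical segments $\{t\} \times (-1,1)$; and segments of $\cP(\beta)$ for $\beta \in L^-$ monotonically to horizontal segments. The construction adapts the proof of Lemma \ref{l.rectangles_are_rectangles}: the two bounding prong-half segments play the role of coordinate axes. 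Lemma \ref{l.crossing_order} and Lemma \ref{lem:regular_dense}, combined with the fact from Lemma \ref{obs:one_prong} that $T_i$ contains no other singular equivalence classes, yield the order-preserving surjections needed to identify $\cP \cap \overline{T_i}$ with the rectangle.

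Finally, I would verify that consecutive sectors $T_i, T_{i+1}$ share a common prong half: from $\psi_i$'s viewpoint the shared prong maps to $\{0\} \times [0,1)$, and from $\psi_{i+1}$'s viewpoint to $\{0\} \times (-1,0]$, exactly matching the prescribed isometric gluing of the standard model. The main obstacle will be the adaptation of Lemma \ref{l.rectangles_are_rectangles} to sectors with a degenerate corner at $x$ whose bounding leaves are \emph{non-regular}, being sides of ideal polygons accumulated only on the ear side. Here one must show the order along each prong half is order-isomorphic to a half-open interval $[0,\infty)$ truncated at $x$, using Definition \ref{def:regular} and condition \ref{item.one_root_ideal_polygon} of Theorem \ref{thm:realization} to secure enough density of regular leaves accumulating on the prong side to provide the separating sets required in the analogue of Lemma \ref{l.crossing_order}.
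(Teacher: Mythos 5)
Your overall strategy---decompose the interior of $R$ around the singular point and adapt Lemma \ref{l.rectangles_are_rectangles} sector by sector, using Lemma \ref{obs:one_prong} to ensure there is exactly one singular class---is the same as the paper's, but the specific decomposition you chose introduces an error in the map to the standard model.

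Your ear regions $T_1,\dots,T_{2k}$ are \emph{quadrant}-shaped sectors, not half-spaces. Each $T_i$ is bounded near $x$ by exactly \emph{one} $\cF^+$ prong-half (the segment of a side of $P^+$ emanating from the corner of $P^+\cap P^-$) and exactly \emph{one} $\cF^-$ prong-half (a segment of a side of $P^-$), meeting at $x$. You propose a foliation-monotone bijection $\psi_i\colon \cP\cap\overline{T_i}\to [0,1)\times(-1,1)$ sending $x$ to $(0,0)$ and the two bounding prong-halves to $\{0\}\times[0,1)$ and $\{0\}\times(-1,0]$. But those two half-segments of the left edge both lie on the single \emph{vertical} (hence $\cF^+$) leaf $\{0\}\times(-1,1)$ of the model rectangle, whereas one of your bounding prong-halves is a segment of an $L^-$ leaf. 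So $\psi_i$ cannot send $L^-$-segments to horizontal leaves as you require. Relatedly, the ear region contains no interior leaf through $x$ to play the role of the $\cF^-$ prong $[0,1)\times\{0\}$ of the model rectangle. Moreover, if you literally glue $2k$ copies of $[0,1)\times(-1,1)$ cyclically by the stated rule, you obtain a standard $2k$-prong, not a $k$-prong, so the resulting object would not be the right standard model.

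The fix is either of the following. (a) Keep your $2k$ sector decomposition, but the correct target for each $T_i$ is a quadrant (say $[0,1)\times[0,1)$ with coordinate foliations and singularity at $(0,0)$), with the $\cF^+$ prong-half going to $\{0\}\times[0,1)$ and the $\cF^-$ prong-half going to $[0,1)\times\{0\}$; then $2k$ quadrants are glued cyclically, alternately along their $\cF^+$ and $\cF^-$ boundary prongs, and this reassembles the standard $k$-prong model. (b) Combine consecutive pairs of sectors $T_i, T_{i+1}$ that meet (after the identification by $\sim$, which collapses the intervening point-of-star region of $P^\mp$ to the shared $\cF^\mp$ prong) into $k$ ``half-spaces,'' one for each side $\alpha_j^+$ of $P^+$; each such half-space is bounded by the full face $\cP(\alpha_j^+)$ and does map to $[0,1)\times(-1,1)$ with $x\mapsto(0,0)$. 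This is what the paper does. Your final paragraph correctly flags the order-type issue on prong-halves bounded by non-regular ideal-polygon sides; that concern is real and applies to either version of the decomposition, and is handled by the same density-of-regular-leaves ideas you reference.
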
 

The proof is similar to that of Lemma \ref{l.rectangles_are_rectangles}, the only significant modification is to use 
Observation \ref{obs:one_prong}, which implies that a regular prong polygon $R$ contains a unique singular point.  
Then, one shows just as in the previous lemma that if $x= q(\alpha_i^+,\alpha^-_i)$, $i=1, \dots, p$ is the singular point inside a standard prong polygon, then each ``half space'' bounded by $\alpha_i^+$ and the appropriate sides of the polygon is in bijective correspondence with $[0,1) \times (-1,1)$ in a way that sends leaves of $L^\pm$ to horizontal and vertical leaves, and the singular point is sent to the point $(0,0)$.  After identifying the sides that are equivalent in $\cP$, one obtains the standard model.  Further details are left to the reader.  

With this, we define a topology on $\cP$.

\begin{convention}
We endow $\cP$ with the topology generated by the subbase consisting of interiors of regular rectangles and regular prong polygons.
\end{convention}

\begin{lemma} 
With this convention, the bijections defined in Lemmas \ref{l.rectangles_are_rectangles} and \ref{lem:standard_model_prong} are homeomorphisms.  
\end{lemma}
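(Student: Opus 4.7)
The plan is to verify continuity of $\psi$ and of $\psi^{-1}$ by checking them on the respective subbases: open rectangles $(a,b)\times(c,d)\subset (0,1)^2$ in the target, and interiors of regular rectangles and regular prong polygons in $\cP$. I focus on the rectangle case (Lemma \ref{l.rectangles_are_rectangles}); the prong polygon case (Lemma \ref{lem:standard_model_prong}) reduces to it away from the singular point, and near the singular point uses Lemma \ref{l.singular_points_in_polygon} to produce a basis of nested regular prong polygons with a fixed number of prongs.

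\emph{Continuity of $\psi$.} Fix $V=(a,b)\times(c,d)$ and $x\in\psi^{-1}(V)$, and write $\psi(x)=(s,t)$. The aim is to build a regular rectangle $R_x\subset R$ with $x\in\mathring{R_x}$ and $\psi(\mathring{R_x})\subset V$. Applying Lemma \ref{lem:regular_dense} to a representative $(\alpha^+,\alpha^-)$ of $x$, I find regular leaves $\gamma^-_1,\gamma^-_2\in L^-$ crossing both $L^+$-sides of $R$ whose $\psi^-$-coordinates lie in $(a,s)$ and $(s,b)$ respectively, and regular leaves $\gamma^+_1,\gamma^+_2\in L^+$ crossing both $L^-$-sides of $R$ whose $\psi^+$-coordinates lie in $(c,t)$ and $(t,d)$ respectively. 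These four leaves bound a regular rectangle $R_x\subset R$ containing $x$, and the monotonicity statement in Lemma \ref{l.rectangles_are_rectangles} gives $\psi(\mathring{R_x})\subset V$.

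\emph{Continuity of $\psi^{-1}$.} For a subbase element $\mathring{R'}$, with $R'$ a regular rectangle or regular prong polygon, and each $x\in\mathring{R}\cap\mathring{R'}$, I must find a regular rectangle $R_x$ with $x\in\mathring{R_x}\subset\mathring{R}\cap\mathring{R'}$; then $\psi(\mathring{R_x})$ is an open rectangle in $(0,1)^2$ containing $\psi(x)$, giving openness of the image. First note that $\mathring{R}$ contains no singular points, since singular points in $\cP$ come from coupled ideal $k$-gons with $k>2$, and by the argument used in Lemma \ref{obs:one_prong} any polygonal path enclosing such a pair must have at least $2k\geq 6$ sides, so the $4$-sided $R$ cannot enclose one. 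Consequently $x$ is nonsingular, and the construction in the proof of Lemma \ref{l.nonsingular_points_in_rectangle}, carried out inside $R\cap R'$, produces $R_x$: if $R'$ is a prong polygon, one first localizes to a rectangular sector of $R'$ containing $x$ and then proceeds as in the rectangular case.

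The main point to watch throughout is that the regular leaves $\gamma_i^\pm$ bounding $R_x$ must each cross both sides of $R$ of the opposite type; this is exactly the role played by Lemma \ref{lem:regular_dense} together with the one-root/ideal polygon description of complementary regions, and is the only nontrivial ingredient --- the bijectivity and order-preserving behaviour of $\psi$ being already in hand. The prong version of the lemma adds the mild further issue that shrinking neighbourhoods of the singular point must preserve the number of prongs, which is automatic from the sector-by-sector construction used in the proof of Lemma \ref{l.singular_points_in_polygon}.
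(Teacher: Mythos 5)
Your proposal is correct and takes essentially the same route as the paper: both rely on Lemma~\ref{lem:regular_dense} to produce a basis of regular subrectangles whose images are open subrectangles dense enough to generate the topology of $(0,1)^2$, the paper phrasing this as ``$\psi$ carries a subbase to a subbase'' while you unwind it into the two one-sided continuity checks. Your explicit note that $\mathring{R}$ contains no singular points (via the side-count bound behind Lemma~\ref{obs:one_prong}) is a worthwhile detail the paper elides, and is needed to know that subbasic opens of $\cP$ meeting $\mathring R$ can be refined to regular subrectangles rather than prong polygons.
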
 

\begin{proof} 
We treat the case for rectangles, the prong polygon case is obtained by applying a similar argument to the standard prong model. 
 
Let $R$ be a regular rectangle and let $\psi$ be the bijection to $(0,1)^2$ defined in Lemma \ref{l.rectangles_are_rectangles}.  Then, by definition, $\psi$ maps the interior of regular sub-rectangles of $\mathring{R}$ onto open subrectangles of $(0,1)^2$. 
As a consequence of Lemma \ref{lem:regular_dense}, the regular leaves of $L^+$ and of $L^-$ that cross $\mathring{R}$ project to dense subsets (for the order topology) under $\psi^+$ and $\psi^-$.  Thus, the set of open subrectangles of $(0,1)^2$ that arise as image of regular sub-rectangles under $\psi$ form a subbase for the topology of $(0,1)^2$, which implies that $\psi$ is a homeomorphism.
\end{proof} 

\begin{proposition} \label{prop:intersections_rectangles}
The intersection of two regular rectangles or prong polygons is either empty, a regular rectangle, or in the case of two prong polygons that contain the same singular point, then their intersection is again a regular prong polygon.  
\end{proposition}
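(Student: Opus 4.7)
My plan is to transfer the question to $\bD^2$ via the geodesic realizations. Both regular rectangles and regular prong polygons, being bounded by pairwise-transverse segments of $L^\pm$-leaves, are closed convex polygonal paths in $\bD^2$ and bound open convex regions; write $\mathrm{int}(R)$ for the open region in $\bD^2$ bounded by $R$. By the ``contained in $\leftrightarrow$ contains a point in'' characterization of the interior of a polygonal path, a point $p \in \cP$ lies in $\mathring{R_1} \cap \mathring{R_2}$ iff its equivalence class in $\cX$ contains a crossing lying in $\mathrm{int}(R_1) \cap \mathrm{int}(R_2) \subset \bD^2$. Thus the proposition reduces to identifying the convex region $Q := \mathrm{int}(R_1) \cap \mathrm{int}(R_2)$.

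If $Q$ has empty 2-dimensional interior, then it contains no crossings in its interior and the intersection in $\cP$ is empty. Otherwise, $Q$ is an open convex polygon whose boundary consists of segments of leaves drawn from $\partial R_1 \cup \partial R_2$. The sides of $Q$ must alternate between $L^+$ and $L^-$: at each vertex two sides meet in the interior of $\bD^2$, and since no two leaves of the same family cross in $\bD^2$, the two adjacent sides come from different families. In particular $Q$ has an even number of sides, at least $4$. I then split into cases based on whether $Q$ contains a coupled pair of ideal polygons in its interior. If it does not, Lemma~\ref{l.polygon-with-more-sides} applied to $\partial Q$ gives $\leq 5$ sides, so by alternation exactly $4$; the bounding leaves are regular since they are inherited from $\partial R_i$, so $Q$ is a regular rectangle. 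If $Q$ does contain a coupled ideal pair, then by Lemma~\ref{obs:one_prong} applied to each $R_i$ (the uniqueness of the coupled pair associated to a regular prong polygon), both $R_1$ and $R_2$ must be regular prong polygons containing this same coupled pair, hence share the same singular point.

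The main obstacle is the prong polygon case: I must show $Q$ has exactly $2k$ sides, not more. The plan is to establish a bijection between the $L^+$-sides of each $R_i$ and the $k$ sides of the common ideal polygon $P^+$. Each $L^+$-side of $R_i$ is an $L^+$-leaf disjoint from $P^+$, hence lies in a unique one of the $k$ complementary components of $\bD^2 \setminus \bigcup \sigma_j^+$, where the $\sigma_j^+$ are the full leaves extending the sides of $P^+$. Convexity of $R_i$ together with $\mathrm{int}(R_i) \supset P^+$ forces each such component to receive exactly one $L^+$-side of $R_i$. For each of the $k$ components, the two $L^+$-leaves (one from each $R_i$) do not cross and so are linearly ordered by distance from $P^+$; only the innermost contributes to $\partial Q$. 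This produces $k$ $L^+$-sides on $\partial Q$, and symmetrically $k$ $L^-$-sides, for a total of $2k$. Since these are segments of regular leaves and $Q$ contains the coupled pair in its interior, $Q$ is a regular prong polygon, completing the proof.
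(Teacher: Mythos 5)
Your approach is genuinely different from the paper's. The paper proves the proposition directly: it fixes a point $x$ in $\mathring{R_1}\cap\mathring{R_2}$, considers the leaves through $x$, and observes that the four boundary leaves of each family crossing these are totally ordered, so that the intersection is bounded by the two ``middle'' leaves of each family (with a one-line adaptation using rays from the singular point in the prong case). You instead transfer everything to $\bD^2$ via the geodesic realization, observe that $Q := \mathrm{int}(R_1)\cap\mathrm{int}(R_2)$ is a convex region bounded by alternating leaf segments, and then invoke Lemma~\ref{l.polygon-with-more-sides} and Observation~\ref{obs:one_prong} to pin down the number of sides. This route is appealing because it reuses the machinery already developed for those lemmas and makes the convexity do most of the work, whereas the paper's argument is elementary and self-contained but somewhat more ad hoc.

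There is, however, a genuine gap in your prong-polygon case. You write that in each of the $k$ regions $U_j$ (outside the extended side $\sigma_j^+$ of $P^+$), the two $L^+$-sides $\alpha_1$ (from $R_1$) and $\alpha_2$ (from $R_2$) ``do not cross and so are linearly ordered by distance from $P^+$.'' Non-crossing alone does not give this: two disjoint chords of $U_j$ with endpoints in the ideal arc $I_j$ can cut off two \emph{disjoint} sub-arcs of $I_j$ rather than nested ones, in which case neither separates the other from $\sigma_j^+$ and both could a priori contribute a side to $\partial Q$ in $U_j$, spoiling the count of $2k$. To rescue the count one must show the two sub-arcs actually overlap. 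One way: let $v_j^-$ be the unique vertex of $P^-$ in $I_j$ (it exists because $P^+,P^-$ are coupled). The two $L^-$-sides of $R_i$ adjacent to $\alpha_i$ lie in $V_{j-1}$ and $V_j$ (the corresponding regions for $P^-$), whose ideal arcs sit on opposite sides of $v_j^-$ in $I_j$; since both of these $L^-$-leaves cross $\alpha_i$, the endpoints of $\alpha_i$ must straddle $v_j^-$. Thus both $\alpha_1$ and $\alpha_2$ separate $v_j^-$ from $\sigma_j^+$, their cut-off arcs both contain $v_j^-$, and unlinked overlapping intervals are nested. With this (or an equivalent) argument supplied, and noting also that you implicitly use the fact that a four-sided polygonal path cannot contain a coupled pair (so a rectangle $R_i$ is excluded in the second case), your proof goes through.
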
 
Thus, what we originally specified as a subbase for the topology is in fact a basis for the topology.  

\begin{proof} 
We give the proof first for the intersection of rectangles.  Let $R_i$ $i = 1,2$  be regular rectangles, bounded by leaves $\alpha_i^\pm, \beta_i^\pm$.  Let $x \in R_1 \cap R_2$.  
Consider the leaves $\gamma^\pm(x)$ of $L^\pm$ through $x$.   We have that $\alpha_i^\pm \cap \gamma^\mp(x) \neq \emptyset$, and these are ordered along $\gamma^\mp$.   Then $R_1 \cap R_2$ is the rectangle bounded on two sides by two middle leaves of $\{\alpha_i^+, \beta_i^+\}$ (according to the order along $\gamma^-(x)$), and on the other two sides by the middle leaves of $\{\alpha_i^-, \beta_i^-\}$. 

The proof for intersections of prong polygons with rectangles is essentially the same; except in the case where the prong polygons contain a common singular point.  In this case, one simply considers instead the induced orders along the rays from the singular point.  
\end{proof}

\begin{proposition} \label{p.Hausdorff}
$\cP$ is Hausdorff. 
\end{proposition}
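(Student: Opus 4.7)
The plan is to reduce Hausdorffness of $\cP$ to the local Hausdorffness of each chart. By Lemmas \ref{l.rectangles_are_rectangles} and \ref{lem:standard_model_prong} together with the subsequent homeomorphism result, the interior of each regular rectangle (respectively, regular prong polygon) is homeomorphic to $(0,1)^2$ (respectively, to a standard bifoliated prong polygon) and hence is Hausdorff. By Lemmas \ref{l.nonsingular_points_in_rectangle} and \ref{l.singular_points_in_polygon}, every point of $\cP$ is contained in such a chart. Consequently, if two distinct points $x\neq y$ lie in a common chart, local Hausdorffness inside that chart already produces disjoint open neighborhoods; the real work is to handle pairs for which no single chart contains both.

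In that case, I would fix chart neighborhoods $R_x\ni x$ and $R_y\ni y$ with $y\notin\mathring{R_x}$ and $x\notin\mathring{R_y}$. If $\mathring{R_x}\cap\mathring{R_y}=\emptyset$ we are done immediately; otherwise Proposition \ref{prop:intersections_rectangles} expresses the intersection as the interior of a chart $R_3$, and by hypothesis neither $x$ nor $y$ lies in $\mathring{R_3}$. Working in the $(0,1)^2$ model of $\mathring{R_x}$, the subset $\mathring{R_3}$ appears as an open subrectangle, and generically $\psi(x)$ is separated from its closure: a smaller subrectangle about $\psi(x)$ disjoint from $\overline{\mathring{R_3}}$ (and symmetrically around $y$) then furnishes the desired disjoint neighborhoods in $\cP$.

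The main obstacle is the delicate subcase where $\psi(x)$ sits on the boundary of $\mathring{R_3}$; this means that $x$ lies on one of the bounding leaves of $R_y$ while remaining in the interior of $R_x$. To resolve this I would invoke Lemma \ref{lem:regular_dense} to replace the offending bounding leaf of $R_y$ by a nearby regular leaf lying on the far side of the leaf through $x$, producing a modified chart $R_y'$ around $y$ whose intersection with $R_x$ no longer approaches $x$. A second, independent subtlety is that two rectangles which are disjoint as geometric subsets of $\bD^2$ can nevertheless project to overlapping subsets of $\cP$ through coupled ideal polygons identified by $\sim$. Since $q^{-1}(x)$ and $q^{-1}(y)$ are finite and disjoint (Lemma \ref{lem:equiv_classes}), I would handle this by first shrinking around each representative in $\bD^2$ to a small Euclidean neighborhood that meets none of the finitely many ideal polygons connecting representatives of $x$ and $y$, and by ensuring the bounding leaves of the final rectangles are regular (so are themselves not sides of any ideal polygon). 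The singular cases, where $x$ or $y$ corresponds to a coupled pair of ideal polygons, are handled in exactly the same way using regular prong polygons in place of rectangles and the standard prong model of Lemma \ref{lem:standard_model_prong}; the additional bookkeeping of the $2k$ representatives is absorbed by the same finiteness argument for ideal-polygon identifications.
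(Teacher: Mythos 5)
Your proof is correct and follows essentially the same route as the paper's: reduce to the case where the two points do not share a chart, invoke Proposition~\ref{prop:intersections_rectangles} when the chart interiors overlap, and shrink charts to avoid boundary coincidences. Two minor remarks are in order. The ``second subtlety'' you raise --- that two rectangles disjoint as geometric subsets of $\bD^2$ might nevertheless have overlapping interiors in $\cP$ because of $\sim$-identifications through ideal polygons --- does not actually occur: by the definition of ``lies in the interior of $c$,'' a point $p\in\cP$ belongs to $\mathring{R}$ only when its \emph{entire} class $q^{-1}(p)$ is contained in the $\bD^2$-interior of $R$, so disjointness of regular rectangles in $\bD^2$ forces disjointness of their $\cP$-interiors, and your extra step of shrinking to avoid ideal polygons is unnecessary. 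Also, the phrase ``nearby regular leaf lying on the far side of the leaf through $x$'' is ambiguous and, read literally, would place $x$ inside the modified chart $R_y'$; the paper's cleaner version of the same idea is to shrink both $R$ and $R'$ \emph{before} any overlap analysis so that the leaves carrying their boundaries avoid both $p$ and $p'$, which dissolves your ``delicate subcase'' before it arises.
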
 

\begin{proof} 
We need to show that any two points $p \neq  p' \in \cP$ can be put in disjoint open sets.  Consider regular rectangles $R, R'$ (or regular prong polygons if $p$ or $p'$ is singular) containing each in their interior.  Since there are only countably many non-regular leaves, the regular leaves have dense image and so, up to shrinking $R$ and/or $R'$ slightly, we may assume that leaves carrying their boundary do not pass through $p$ or $p'$.  
We consider several cases.  

First, if $R$ and $R'$ are disjoint, we are done.

Secondly, if $p$ and $p'$ belong to a same rectangle or prong polygon, one may assume then $R = R'$.  Then we can find smaller regular sub-polygons containing each using the fact that regular leaves are dense in each of the product foliations of $R$, see Lemma~\ref{l.rectangles_are_rectangles}.

As a final case, suppose $p'\notin R$ and $p\notin R'$ but the interior $\mathring{R}$ and $\mathring{R'}$ have nontrivial intersection. Since $p\neq p'$, $R$ and $R'$ are not prong polygons associated to the same singular point, it follows from Proposition \ref{prop:intersections_rectangles} that $\mathring{R}\cap \mathring{R'}$ is the interior of a rectangle, whose side are segments of sides of $R$ and  $R'$.   

Cut $R$ along the (regular) geodesic carrying the sides of $\mathring{R}\cap \mathring{R'}$ to obtain new polygonal regions. Since the geodesic sides of $\mathring{R}\cap \mathring{R'}$ do not contain $p$ or $p'$, so these points belong to  the interiors of these new regions (and they lie in distinct regions).  The region containing $p$ has no more sides than $R$ does, so is either a rectangle or a regular prong polygon, depending on whether $p$ is singular or not.  
Either way, this component and $R'$ are disjoint open sets containing $p$ and $p'$, respectively, which is what we needed to show.  
\end{proof}

%%%%%%%%%%%%%%%%%%%%%%%%%%%%%%%%%%%%%%%%%%%%%
\subsection{$\cP$ is a bifoliated plane} 
%%%%%%%%%%%%%%%%%%%%%%%%%%%%%%%%%%%%%%%%%%%%
In the previous section, we showed that  $\cP$ is a Hausdorff surface.  
Moreover, we showed it admits an atlas of bifoliated charts, defining a pair of transverse singular foliations.  Denote these foliations by $\cF^+,\cF^-$. The curves $\cP(\alpha^\pm)$ for $\alpha^\pm$ regular leaves of $L^\pm$ are leaves of these foliations.  We call these \emph{regular leaves} of $\cF^\pm$, and they form a dense subset of $\cP$.  We choose the signs $+, -$ such that regular leaves of $\cF^\pm$ are the images of regular leaves of $L^\pm$. 
The connectedness property of fully transverse prelaminations (see Definition \ref{def_FT}) means that $\cP$ is connected and this global choice is well defined.  
  
In this section, we will show that $\cP$ is simply connected and that $(\cF^+,\cF^-)$ is a pA-bifoliation.
The proof of simple connectedness is somewhat technical, so we break it into several lemmas.  

\begin{lemma}  \label{l.homotop_regular}
Any loop $\gamma$ in $\cP$ is homotopic to a loop formed by a finite union of segments of regular leaves.  
\end{lemma}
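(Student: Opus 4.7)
The plan is to combine a compactness argument with the local product structure in Lemmas \ref{l.rectangles_are_rectangles} and \ref{lem:standard_model_prong}, and to reduce everything to the rectangle case by first pushing $\gamma$ off the (countable, isolated) set of singular points of $\cP$. Once the loop avoids singular points, the construction becomes a standard ``cover by charts, subdivide, replace each arc by an $L$-shape'' argument.

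The first step is to homotope $\gamma$ so that it avoids every singular point. Each singular point of $\cP$ is isolated, since by Observation \ref{obs:one_prong} each regular prong polygon contains a unique singular point; hence, by compactness of $\gamma$, only finitely many singular points $x_1,\dots,x_m$ lie on $\gamma$. For each $x_j$, Lemma \ref{l.singular_points_in_polygon} furnishes a regular prong polygon $U_j$ with $x_j\in\mathring U_j$, and Lemma \ref{lem:standard_model_prong} shows that $\mathring U_j$ is a topological disc, so $\mathring U_j\setminus\{x_j\}$ is path-connected and the portion of $\gamma$ inside $\mathring U_j$ can be locally deformed to miss $x_j$. Applying this at each $x_j$ produces a loop $\gamma'$ homotopic to $\gamma$ whose image lies in the nonsingular locus.

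Next I would cover $\gamma'$ by finitely many regular rectangles. By Lemma \ref{l.nonsingular_points_in_rectangle}, every nonsingular point lies in the interior of a regular rectangle, so compactness yields a finite subcover $\mathring R_1,\dots,\mathring R_n$, and the Lebesgue number lemma applied to the parametrization gives a cyclic subdivision $s_0<s_1<\dots<s_n=s_0$ with $\gamma'([s_i,s_{i+1}])\subset\mathring R_i$. Each transition point $p_i=\gamma'(s_i)$ lies in $\mathring R_{i-1}\cap\mathring R_i$, which by Proposition \ref{prop:intersections_rectangles} is itself (the interior of) a regular rectangle. Using the product chart of Lemma \ref{l.rectangles_are_rectangles} together with the density statement of Lemma \ref{lem:regular_dense}, crossings of a regular $\cF^+$-leaf with a regular $\cF^-$-leaf form a dense subset of any regular rectangle; hence I can move each $p_i$ by a short homotopy inside $\mathring R_{i-1}\cap\mathring R_i$ to a nearby point $p_i'$ lying at such a crossing.

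Finally, within each $\mathring R_i$ the regular $\cF^+$-leaf through $p'_{i-1}$ and the regular $\cF^-$-leaf through $p'_i$ meet in a single point (by the product model), yielding an $L$-shaped path of two regular leaf segments from $p'_{i-1}$ to $p'_i$ inside $\mathring R_i$. Since $\mathring R_i$ is simply connected, this $L$-shape is homotopic rel endpoints to the corresponding arc of $\gamma'$, and concatenating these homotopies produces a loop made of finitely many regular leaf segments homotopic to $\gamma$. The main subtlety I expect is Step~1: singular points are not 2-manifold points, so the ``avoidance'' cannot be obtained from a generic transversality argument and instead must invoke the explicit disc structure of prong polygons from Lemma \ref{lem:standard_model_prong}; once this is in place, the rest is routine.
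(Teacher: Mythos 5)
Your proof is correct and follows the same basic strategy as the paper: cover the loop by finitely many regular charts, use compactness and the product structure of those charts to replace arcs by leaf segments, and use density of regular leaves to land on regular leaf segments. The one organizational difference is your preliminary step: you first push $\gamma$ off the (discrete, closed) singular set so as to cover the resulting loop by rectangles alone, whereas the paper covers by rectangles \emph{and} regular prong polygons and treats both uniformly via the standard models of Lemmas~\ref{l.rectangles_are_rectangles} and~\ref{lem:standard_model_prong}, performing the chart-by-chart isotopy to horizontal/vertical segments directly (a prong chart is just as good as a rectangle chart for this purpose, since its standard model is bifoliated and simply connected). Both routes work; yours buys the convenience of only ever reasoning in product rectangles at the cost of the extra (harmless but not strictly necessary) singular-point avoidance step, while the paper's is slightly more streamlined. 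Incidentally, there is a small index slip in your final step: the $L$-shaped replacement for the arc $\gamma'([s_i,s_{i+1}])$ should run from $p'_i$ to $p'_{i+1}$ inside $\mathring R_i$ (not from $p'_{i-1}$ to $p'_i$), but this does not affect the validity of the argument.
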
 

\begin{proof} 
Let $\gamma$ be a loop in $\cP$. 
By Lemmas \ref{l.nonsingular_points_in_rectangle} and \ref{l.singular_points_in_polygon}, the image of $\gamma$ in $\cP$ can be covered by finitely many regular polygons.  By Lemmas \ref{l.rectangles_are_rectangles} and \ref{lem:standard_model_prong}, these are homeomorphic to standard models, and so we can perform an isotopy of $\gamma$, inductively in each rectangle (relative to boundary), to produce a loop whose image consists of a finite union of horizontal and vertical segments.  Since regular leaves are dense (Lemma \ref{lem:regular_dense}), we can further perturb this loop so that it only lies on regular leaves. 
\end{proof} 

\begin{definition}[Lifts]
Consider a segment of a regular $\cF^\pm$-leaf bounded by two regular points, $p,p'$. By definition $q^{-1}(p)$ and $q^{-1}(p')$ are each a singleton, respectively $x$ and $x'$, and are on the same regular $L^\pm$-leaf. We call the geodesic segment $[x,x']$ the \emph{lift of $[p,p']$}.

If $\rho$ is a path consisting of a union of such geodesic segments, we call the union of their lifts the {\em lift of $\rho$}. 
\end{definition}

Note that each point of $\cX$ in the $L^\pm$-segment $[x,x']$ is either a singleton for the equivalence relation $\sim$, or its class is two points, which are adjacent along $[x,x']$.  Thus,  $q$ induces a monotone bijection from the  classes contained in $[x,x']$ onto the segment $[p,p']$.

Borrowing terminology from $\cX$, we call a simple loop formed by a finite union of segments of regular leaves a {\em regular polygonal path in $\cP$}, and the segments the {\em sides} of the path.  

\begin{lemma}\label{c.minimal-is-convex} 
Suppose that $\rho$ is a noncontractible polygonal path that has a {\em minimum} number of sides (where the minimum is taken over all noncontractible polygonal paths in $\cP$).  Then the lift of $\rho$ to $\bD^2$ bounds a convex disc.
\end{lemma}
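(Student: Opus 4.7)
The plan is a ``minimality yields convexity'' argument: I would lift $\rho$ to a closed polygonal curve $\tilde\rho$ in $\bD^2$ and then use the minimum-sides hypothesis twice --- first to rule out self-intersections of the lift, and then to rule out reflex vertices of the bounded region.

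First I would check that the lift is well-defined. At each vertex $p_i$ of $\rho$ two regular $\cF$-leaves of opposite types meet, and because regular leaves of $\cF^\pm$ come precisely from regular leaves of $L^\pm$ (never ideal polygon sides), the equivalence class $q^{-1}(p_i)$ is a single point $v_i\in\bD^2$. Thus $\tilde\rho$ is a closed polygonal curve whose sides $\tilde s_i$ are geodesic segments alternating between leaves of $L^+$ and $L^-$ and meeting consecutively at the $v_i$. For simplicity of $\tilde\rho$, consecutive sides $\tilde s_i,\tilde s_{i+1}$ carry leaves of opposite types and so meet only at the shared vertex $v_i$; if two non-adjacent sides $\tilde s_i,\tilde s_j$ crossed at a point $\tilde y\in\bD^2$, then the $\sim$-class of $\tilde y=(\alpha_i,\alpha_j)$ would be the singleton $\{\tilde y\}$ since regular leaves are never sides of ideal polygons, so $q(\tilde y)$ would lie on both $s_i$ and $s_j$ in $\cP$, violating simplicity of $\rho$.

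For convexity of the Jordan disc $D$ bounded by $\tilde\rho$, I would argue by contradiction. If $D$ is non-convex, then $\tilde\rho$ has a reflex vertex $v_i$, and the continuation of $\alpha_i$ past $v_i$ enters $\mathrm{int}(D)$; as a complete geodesic in $\bD^2$ it must exit $D$ at some point $\tilde y$ on $\partial D$, generically in the interior of some side $\tilde s_k$. Since same-type leaves do not cross, $\alpha_k$ is of opposite type to $\alpha_i$ and $k-i$ is odd; since $\alpha_i$ already meets $\alpha_{i\pm 1}$ at the vertices $v_{i-1}, v_i$ and two geodesics meet at most once, $k\ne i,i\pm 1$, forcing the cyclic distance from $i$ to $k$ to be at least $3$. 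The chord $d\subset\alpha_i$ from $v_i$ to $\tilde y$ then splits $\tilde\rho$ into two closed polygonal curves $\tilde\rho_1,\tilde\rho_2$, each with strictly fewer sides, whose projections $\rho_1,\rho_2$ in $\cP$ concatenate (the two copies of $d$ cancelling) to a loop freely homotopic to $\rho$, so at least one is noncontractible. If the chosen $\rho_j$ is not simple, iterating the simplicity argument on self-intersections of its lift produces a simple noncontractible polygonal subloop with no more sides, contradicting minimality of $\rho$.

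The main obstacle I anticipate is the degenerate case in which the continuation of $\alpha_i$ exits $D$ through an existing vertex $v_j$ of $\tilde\rho$ rather than through the interior of a side. This forces $\alpha_i$ to coincide with $\alpha_j$ or $\alpha_{j+1}$, i.e.\ two sides of $\tilde\rho$ share an underlying leaf, and the chord-splitting above breaks down. Resolving this requires a separate shortcut argument along the shared leaf $\cP(\alpha_i)$: replacing a contiguous arc $s_i,\dots,s_j$ of $\rho$ by a single subsegment of $\cP(\alpha_i)$ produces a shorter polygonal loop, and careful $\pi_1$-bookkeeping (analogous to the $\rho_1\cdot\rho_2\simeq\rho$ decomposition above) should show that a noncontractible representative survives, again contradicting minimality.
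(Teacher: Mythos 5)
Your argument follows the same route as the paper's: lift $\rho$, find a reflex vertex, prolong the incident geodesic side into the interior until it hits $\partial\Delta$, cut along the resulting chord, observe that the two projected loops have strictly fewer sides and concatenate to $\rho$, and invoke minimality. Your opening paragraph, verifying that the lift really is an embedded closed curve (because regular leaves have singleton $\sim$-classes, so a self-intersection of the lift would project to a self-intersection of $\rho$), is a genuine and worthwhile addition: the paper simply writes ``$\Delta$ the disc bounded by $\hat\rho$'' without comment.

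The one place you hedge is the degenerate case where the prolongation exits through a vertex $v_j$, and there you propose a ``shortcut'' replacement of a contiguous arc by a subsegment of $\cP(\alpha_i)$, admitting the $\pi_1$-bookkeeping is left undone. That hedge is unnecessary, and the shortcut is not really the right move (a shortcut does not obviously preserve the free homotopy class). In fact the cut-and-concatenate decomposition you already set up works verbatim in the degenerate case. If the chord $d$ lands on $v_j$, then $\alpha_i$ must coincide with whichever of $\alpha_j,\alpha_{j+1}$ has the same sign; the only effect is that $d$ and that collinear side (and $\tilde s_i$, on the piece where they are adjacent) fuse into a single geodesic side of one of the two subpolygons. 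The side-counts of both pieces still drop strictly below that of $\hat\rho$ (parity and the bound ``cyclic distance $\ge 3$'' you already established do the bookkeeping), and the two projected loops still concatenate to $\rho$. So no separate argument is needed, and your worry about $\rho_j$ failing to be simple is likewise moot, by the same singleton-class reasoning you used for $\hat\rho$. This is in fact exactly how the paper's $J$ is phrased: $J$ is allowed to contain the whole side $I$ together with the extension, so the merging is absorbed automatically.
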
 

\begin{proof} 
Let $\hat \rho$ be the lift of $\rho$ to $\bD^2$ and $\Delta$ the disc bounded by $\hat \rho$. 
Assume for a contradiction that $\hat{\rho}$ is not convex. Thus there is a closed segment $J$ of a geodesic such that $J$ contains a side $I$ of $\hat \rho$, one endpoint of $J$ is an endpoint of $I$, the other endpoint of $J$ is also on $\hat \rho$, and $\mathring J\smallsetminus I$ is contained in the interior of the disc $\Delta$.

We cut $\hat \rho$ along  $J$. We get two components. The boundary of each component is a regular polygon with strictly fewer sides than $\hat \rho$.  They induce on $\cP$ two regular polygonal paths, whose concatenation is $\rho$.  Thus one of them is not homotopic to a point, but has strictly fewer sides than $\rho$, which contradicts the minimality.
\end{proof}

With these tools we can now finish the proof of simple connectedness.  
\begin{proposition} 
$\cP$ is simply connected.  
\end{proposition}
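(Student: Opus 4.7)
The plan is to argue by contradiction, choosing a noncontractible loop of minimal combinatorial complexity and then exhibiting an explicit disc bounding it by using the local models of Lemmas \ref{l.rectangles_are_rectangles} and \ref{lem:standard_model_prong}.

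Suppose $\cP$ is not simply connected. By Lemma \ref{l.homotop_regular} we may assume there exists a noncontractible \emph{regular polygonal} loop, and we choose such a loop $\rho$ with the minimum possible number of sides $2n$ (necessarily even, since sides alternate between $L^+$ and $L^-$). By Lemma \ref{c.minimal-is-convex}, its lift $\hat\rho\subset\bD^2$ bounds a convex disc $\Delta$.

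If $2n=4$, then $\hat\rho$ is a regular rectangle. Lemma \ref{l.rectangles_are_rectangles} shows that the image $q(\Delta)$ is the closure of a regular rectangle in $\cP$, homeomorphic to a closed square with boundary $\rho$. Hence $\rho$ is contractible -- a contradiction.

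If $2n\ge 6$, the key reduction is to show that any leaf $\ell$ of $\geo(L^\pm)$ crossing the interior of $\Delta$ must \emph{cut off a rectangle}. Indeed, if $\ell$ meets two sides of $\hat\rho$ and splits it into convex subpolygons with $2a$ and $2b$ sides, where $a+b=n+2$, then whenever $3\le a,b\le n-1$ both boundary loops in $\cP$ have strictly fewer than $2n$ sides. By the minimality of $\rho$ both are contractible, and hence so is their concatenation $\rho$, a contradiction. Thus $\{a,b\}=\{2,n\}$: the smaller piece is a regular rectangle (contractible by the base case), while replacing $\Delta$ by the larger piece yields a strictly smaller convex $2n$-sided polygon whose boundary projects to a noncontractible loop freely homotopic to $\rho$. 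Iterating this process yields a strictly decreasing chain of convex $2n$-sided polygons $\Delta\supsetneq\Delta^{(1)}\supsetneq\Delta^{(2)}\supsetneq\cdots$, each projecting to a noncontractible $2n$-sided loop $\rho^{(m)}$ freely homotopic to $\rho$.

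The maximal-chain argument of Lemma \ref{l.polygon-with-more-sides} then forces the intersection $C=\bigcap_m\Delta^{(m)}$ to be a convex polygon with more than $5$ sides whose sides are either in leaves of $L^\pm$ or accumulated by them, and therefore to be the intersection of a pair of coupled ideal polygons; this defines a singular point $s\in\cP$. Taking a regular prong polygon $\pi$ around $s$ via Lemma \ref{l.singular_points_in_polygon} and using that $q(C)=\{s\}$, for $m$ sufficiently large the entire loop $\rho^{(m)}$ lies inside $\pi$. By Lemma \ref{lem:standard_model_prong} this $\pi$ is homeomorphic to a standard prong polygon, hence is a topological disc, so $\rho^{(m)}$ -- and therefore $\rho$ -- is contractible, a contradiction.

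The main obstacle is controlling the limit step: adapting the maximal-chain argument of Lemma \ref{l.polygon-with-more-sides} to our chain of regular $2n$-sided polygons (so that the limit really is the intersection of a coupled pair of ideal polygons), and showing that $q(\Delta^{(m)})$ shrinks in the topology of $\cP$ into any prescribed prong-polygon neighborhood of the singular point $s$. Both points require careful analysis of how the quotient map $q$ behaves on convex regions closely surrounding a singular point.
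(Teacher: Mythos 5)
Your proof begins identically to the paper's: you reduce, via Lemmas \ref{l.homotop_regular} and \ref{c.minimal-is-convex}, to a minimal noncontractible regular polygonal loop $\rho$ with convex lift $\Delta$, handle the rectangle base case, and then use minimality to show that any leaf crossing the interior of $\Delta$ must cut off a rectangle. That last observation is exactly the paper's key combinatorial step. The divergence is in what you do with it. The paper uses it directly: it deduces that every convex polygon inside $\Delta$ is either a rectangle or has $2n$ sides, applies Lemma \ref{l.polygon-with-more-sides} once to $\Delta$ itself to produce the intersection of a coupled pair of ideal polygons inside $\Delta$, and concludes that this intersection (having more than five sides) must have exactly $2n$ sides --- whence $q(\Delta)$ is a standard prong polygon, i.e., a disc, and $\rho$ is contractible. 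No iteration and no limiting argument are needed.

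Your alternative route --- iterating the rectangle-cutting to build a nested chain $\Delta\supsetneq\Delta^{(1)}\supsetneq\cdots$ of convex $2n$-gons and analysing the intersection --- has exactly the gaps you honestly flag at the end, and they are genuine. First, your chain is not a maximal chain in the sense of the proof of Lemma \ref{l.polygon-with-more-sides}: with a careless choice of which rectangle to cut at each stage the intersection $C=\bigcap_m\Delta^{(m)}$ could a priori degenerate to a segment or a convex set with fewer than six sides, and nothing in your construction rules this out, so citing that lemma's maximal-chain argument is not legitimate here. Second, even granting that $C$ is the intersection of coupled ideal polygons, ``$\Delta^{(m)}$ eventually small in $\bD^2$'' does not automatically give ``$q(\Delta^{(m)})$ contained inside a chosen prong polygon of $\cP$'': the interior of a regular prong polygon in $\cP$ is defined by requiring the \emph{entire} $\sim$-class to lie inside the lifted polygonal path, and a $\sim$-class near the singular point consists of several points scattered around the coupled ideal polygons, so you would need to check they all fall inside $\Delta^{(m)}$. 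Both problems disappear if you replace the iteration by the paper's direct argument: show every convex subpolygon of $\Delta$ is a rectangle or a $2n$-gon, and invoke Lemma \ref{l.polygon-with-more-sides} once.
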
 

\begin{proof} 
Suppose for a contradiction that $\cP$ is not simply connected.  
By Lemma \ref{l.homotop_regular}, there exists a regular noncontractible loop.  Choose such a loop $\rho$, as in Lemma \ref{c.minimal-is-convex}, that has a minimum number of sides.   Then its lift $\hat{\rho}$ to $\bD^2$ bounds a disc $\Delta$.  Denote by $I_1, \dots I_{2n}$ the sides of $\hat \rho$ ordered cyclically, so that $I_{2i-1}$ is on (the geodesic realization of) a $L^+$ leaf.  If $n=2$, then $\rho$ is a rectangle, which is contractible by definition.  Thus, we assume $n>2$.  

Consider all the $L^-$ leaves intersecting $I_1$. Since $\hat{\rho}$ bounds a topological disc $\Delta$ in $\bD^2$, each such leaf must intersect another side of $\hat\rho$. 
Assume first that there exists a regular leaf $\beta^-$ that intersects both $I_1$ and $I_{2i-1}$ with $2<i<n$. Then we can cut $\rho$ into two distinct regular polygonal paths using $\beta^-$. These paths will have respectively $2i$ and $2n - 2i +4$ sides. In particular, both paths have strictly less than $2n$ sides and at least one of them must be non-contractible when projected to $\cP$. This contradicts the fact that $\rho$ has the minimal number of sides amongst non contractible curves.

Therefore, all regular leaves  of $L^-$ intersecting $I_1$ must intersect either $I_3$ or $I_{2n-1}$.
Since regular leaves are dense in $\cP$, this shows that all leaves intersecting $I_1$ and some other side $l_k$ must intersect either $I_3$ or $I_{2n-1}$ 

The above argument is independent of the choice of starting side $I_1$. Thus we deduce that any leaf of $L^{\pm}$ intersecting a side $I_j$ must intersect either $I_{j+2}$ or $I_{j-2}$.  This implies that every (regular or not) convex polygon contained in $\Delta$ is either a rectangle or has the same number of sides as $\Delta$.
By Lemma~\ref{l.polygon-with-more-sides}, $\Delta$ contains the intersection of a coupled pair of ideal polygons.  As stated above, this intersection of coupled pairs is a polygon with $2n$ sides.  Thus the polygon bounded by $\rho$ in $\cP$ is homeomorphic to a standard prong polygon.  This contradicts the fact the $\rho$ is assumed to be noncontractible, which ends the proof.
 \end{proof}

To show $(\cP, \cF^+, \cF^-)$ is a pA-bifoliation, it remains only to prove the following.

\begin{proposition}
Each leaf of $(\cP, \cF^+, \cF^-)$ has at most one singularity.  \end{proposition}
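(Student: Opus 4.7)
The plan is a contradiction argument: suppose a leaf $\ell$ of $\cF^+$ contains two distinct singular points $x_1 \neq x_2$, with coupled pairs of ideal polygons $(P_i^+, P_i^-)$ for $i=1,2$. The case of $\cF^-$ is symmetric.

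The key preliminary step is to show that an ideal polygon $P^+$ of $L^+$ is coupled with at most one ideal polygon of $L^-$. Suppose $P^+$ were coupled with two distinct $k$-gons $P^-, P'^-$ of $L^-$. Label cyclically the vertices: $v_j^+$ for $P^+$, with $u_j, w_j$ in the arc $(v_j^+, v_{j+1}^+)$ for $P^-$ and $P'^-$ respectively. The sides $\alpha_j^- := (u_{j-1}, u_j)$ of $P^-$ and $\beta_j^- := (w_{j-1}, w_j)$ of $P'^-$ are all leaves of $L^-$, hence no two cross. Inspecting the cyclic order of the four endpoints in question, the pair $\alpha_j^-, \beta_{j+1}^-$ does not cross if and only if $u_j$ lies between $v_j^+$ and $w_j$ in the arc $(v_j^+, v_{j+1}^+)$, while $\beta_j^-, \alpha_{j+1}^-$ does not cross if and only if $w_j$ lies between $v_j^+$ and $u_j$. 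These conditions are incompatible, so the coupling is unique.

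Using this, for any side $\gamma^+$ of $P_1^+$, the line $\cP(\gamma^+)$ has $x_1$ as its only singular point: any other singular $x' \in \cP(\gamma^+)$, corresponding to a coupling $(P'^+, P'^-)$, would (by Case 4 of Lemma~\ref{lem:equiv_classes}) force $\gamma^+$ to be a side of $P'^+$, hence $P_1^+ = P'^+$ by condition~\ref{item.one_root_ideal_polygon} (no leaf is in the boundary of two ideal polygons), then $P_1^- = P'^-$ by the uniqueness of coupling established above, so $x' = x_1$. By Lemma~\ref{lem:standard_model_prong}, in a prong polygon neighborhood of $x_1$ the leaf $\ell$ coincides with $\bigcup_j \cP(\alpha_j^+)$, where $\alpha_j^+$ ranges over the sides of $P_1^+$. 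Since leaves of $\cF^+$ branch only at singularities, each $\cP(\alpha_j^+)$ is homeomorphic to $\bR$ (Lemma~\ref{l.crossing_order}), and each such line contains only $x_1$ as a singular point, the leaf extends globally as precisely $\ell = \bigcup_j \cP(\alpha_j^+)$ with no further branching. Hence $x_2 \in \ell$ forces $x_2 \in \cP(\alpha_j^+)$ for some $j$, giving $x_2 = x_1$, contradicting the assumption.

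The main obstacle is the combinatorial verification that the coupling of ideal polygons is unique; once this is in hand, the remainder is a direct consequence of the description of singular equivalence classes in Lemma~\ref{lem:equiv_classes} together with condition~\ref{item.one_root_ideal_polygon} of Theorem~\ref{thm:realization}.
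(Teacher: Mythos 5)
Your proof is correct, but takes a genuinely different route from the paper's. The paper argues prong-by-prong: given a half-leaf $r$ of $\cF^+(x)$, it shows that $q^{-1}(r)$ consists of $q^{-1}(x)$ together with rays of only two adjacent sides $\alpha_i,\alpha_{i+1}$ of $P^+$ (using the no-adjacent-ideal-polygons condition), so that the $q$-preimage of a putative second singular point on $r$, which must meet at least three distinct $L^+$-leaves, cannot fit. You instead work face-by-face: you first prove a unique-coupling lemma --- an ideal polygon $P^+$ of $L^+$ is coupled with at most one ideal polygon of $L^-$ --- by an elementary cyclic-order argument on the non-crossing constraint for $L^-$-sides; then you deduce that each face $\cP(\alpha_j^+)$ contains a unique singular point; and finally you identify $\ell$ globally with $\bigcup_j \cP(\alpha_j^+)$. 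Your unique-coupling lemma is a genuine addition not spelled out in the paper, and it cleanly eliminates the scenario that $P^+$ is simultaneously coupled with two distinct $L^-$-polygons, which would produce two singular classes lying on the same sides $\alpha_j$ and which the paper's phrasing about ``other leaves of $L^+$'' does not confront directly. The trade-off is that your last step (``$\ell$ extends globally as precisely $\bigcup_j \cP(\alpha_j^+)$'') needs a word of justification that a face of a singular leaf cannot branch off except at a singularity, whereas the paper sidesteps this by counting distinct $L^+$-leaves hit by $q^{-1}(r)$; both versions are sound, but worth noting as the one place your route costs something.
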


\begin{proof} 
Let $x$ be a singular point in $\cP$.  Then $q^{-1}(x)$ consists of the 2k pairs of crossing leaves of two coupled ideal polygons.  Denote these ideal polygons by $P^+$ and $P^-$, and label the sides of $P^+$ by $\alpha_1, \ldots \alpha_k$ in cyclic order.  

Let $r$ be any half-leaf of $\cF^+(x)$, i.e., an infinite ray based at $x$.  Then $q^{-1}(r)$ contains infinite rays of $\alpha_i$ and $\alpha_{i+1}$ for some $i$ (indices taken cyclically).  Since there are no adjacent ideal polygons, $q^{-1}(r)$ does not contain points on any other leaves of $L^+$.  In particular, no other point on $r$ is singular.  
The same argument works for $\cF^-(x)$.  
\end{proof}

\subsection{Circle at infinity} \label{sec:circle_infty}
Finally, in this section we show that there is a homeomorphism $h\colon S^1 \to S^1_\infty(\cF^+, \cF^-)$ mapping each leaf of $L^\pm$ to the pair of endpoints of a leaf or face
 of $\cF^\pm$; and so that for any regular leaf $\alpha = \{a_1, a_2\}$, the images $h(a_i)$ are the endpoints of $q(\alpha)$.   
We refer the reader to \cite{Bonatti_boundary} for the formal definition of $S^1_\infty(\cF^+, \cF^-)$; the idea of the construction is to take the set of endpoints of rays of leaves as a cyclically ordered set, pass to a quotient by identifying any two rays with only a finite number in between them, and then ``complete" the resulting circularly ordered set to a circle in a manner analogous to taking Dedekind cuts.  

\begin{definition} A {\em ray} of $L^+ \cup L^-$ is a connected subset of $l\smallsetminus\{x\}$ where $l$ is a leaf  of $\geo(L^+ \cup L^-)$ and $x\in \mathring\bD^2$ is a point in $l$. 
Two rays define the same \emph{germ of ray} if they coincide outside of a compact set.
\end{definition} 
In order to avoid an heavy terminology, we generally say ``ray" instead of ``germs of ray" when this abuse of terminology does not create confusion.

There is a natural circular order on rays in $\R^2 \cong \bD^2 \smallsetminus S^1$, defined as follows.  

\begin{definition}  \label{def:order_rays}
Given three disjoint rays $r_1, r_2, r_3$, consider a simple closed curve $\gamma$ 
crossing each $r_i$ in exactly one point, and orient $\gamma$ as the boundary of its compact complementary component.   
We say the triple $r_1, r_2, r_3$ has positive orientation if the points $r_1\cap \gamma, r_2 \cap \gamma$ and $r_3 \cap \gamma$ are in positive (counterclockwise) cyclic order along $\gamma$.  This is independent of the choice of $\gamma$.  
We write $[r_1, r_2]$ for the set of rays between $r_1$ and $r_2$ in the positive sense in this order.
\end{definition} 
Note that 
the map associating its endpoint to each ray is a monotone map from the set of rays to $S^1$.

We first prove the following characterization of rays sharing an endpoint.
\begin{lemma} \label{lem:same_endpoint}
Two rays $r_1, r_2$ of $L^+ \cup L^-$ share the same endpoint if and only if either $[r_1, r_2]$ or $[r_2, r_1]$ is countable.  
\end{lemma}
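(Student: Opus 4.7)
The plan is to prove the two directions separately, with the forward implication being straightforward monotonicity and the reverse direction requiring the bifoliated plane structure of $\cP$ already constructed in this section.

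\textbf{Forward direction.} Suppose $r_1$ and $r_2$ share a common endpoint $a \in S^1$. The map sending a ray to its endpoint on $S^1$ is monotone with respect to the cyclic order on rays (Definition \ref{def:order_rays}), so the interval of $S^1$ corresponding to one of the two cyclic intervals $[r_1,r_2]$ or $[r_2,r_1]$ collapses to the single point $\{a\}$. By monotonicity every ray on that side must have endpoint $a$. Each leaf of $L^+\cup L^-$ with endpoint $a$ gives exactly one germ of ray ending at $a$, and by hypothesis \ref{item.countably_many} of Theorem \ref{thm:realization} there are only countably many such leaves. Hence that side contains only countably many rays.

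\textbf{Reverse direction (contrapositive).} Suppose $r_1, r_2$ have distinct endpoints $a_1 \ne a_2$ on $S^1$; I would show both $[r_1,r_2]$ and $[r_2,r_1]$ are uncountable. By symmetry it suffices to handle $[r_1,r_2]$. Let $A \subset S^1$ be the corresponding open arc, and let $U \subset \bD^2$ denote the open region bounded by $r_1, r_2$, and $A$. The plan is to exhibit uncountably many leaves $\alpha \in L^+$ with a chord crossing $U$ and having an endpoint in $A$; each such leaf contributes a ray to $[r_1,r_2]$.

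First I would locate a crossing point $x_0 = \alpha_0 \cap \beta_0 \in U$ with $\alpha_0 \in L^+$ and $\beta_0 \in L^-$. By density (FT) we may pick a leaf $\alpha_0 \in L^+$ with an endpoint $b \in A$, so a ray of $\alpha_0$ lies in $U$. If this ray met no $L^-$-leaf, it would lie in a single complementary region of $L^-$; by hypothesis \ref{item.one_root_ideal_polygon} such a region is an ideal polygon or a one-root region, and a careful case analysis (using that ideal polygons are coupled and that the root of a one-root region is accumulated by $L^-$-leaves) forces $\alpha_0$ to be a specific type of boundary leaf, which can be avoided by density of admissible choices for $\alpha_0$. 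Thus we obtain $x_0 \in U \cap \cX$.

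Then $q(x_0) \in \cP$ lies in the interior of a regular rectangle $R$ (or a regular prong polygon, if $q(x_0)$ is singular) by Lemma \ref{l.nonsingular_points_in_rectangle} or \ref{l.singular_points_in_polygon}. Using Lemma \ref{lem:regular_dense}, I would shrink $R$ so that its four bounding leaves $\alpha_1^\pm, \alpha_2^\pm$ are regular leaves whose endpoints all lie in $A$, and so that the quadrilateral in $\bD^2$ bounded by these four chords is contained in $U$. By Lemma \ref{l.rectangles_are_rectangles}, $\mathring R \cong (0,1)^2$, and the vertical leaves of $\cF^+$ in $R$ correspond to uncountably many elements $\alpha' \in L^+$ (the map $\alpha \mapsto \cP(\alpha)$ is at worst countable-to-one via coupled ideal polygons). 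Each $\alpha'$ crosses both $L^-$-bounding sides of $R$, hence its two endpoints on $S^1$ lie in the two arcs cut off by those sides; by our choice these arcs are contained in $A$, so both endpoints of $\alpha'$ lie in $A$, giving a ray in $[r_1,r_2]$.

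The main obstacle is the case analysis to locate the initial crossing $x_0 \in U$, which requires carefully exploiting the one-root-or-ideal-polygon structure of complementary regions together with the density and connectedness conditions in the FT hypothesis. Once this crossing is found, the uncountability transfers directly from the local product structure of $\cP$ via Lemma \ref{l.rectangles_are_rectangles}, and the geometric picture in $\bD^2$ guarantees that nearby leaves through the shrunken rectangle retain endpoints in $A$.
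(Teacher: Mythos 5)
Your forward direction matches the paper's argument and is fine.

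For the reverse direction (contrapositive), your route through $\cP$ differs from the paper's, which works directly in $\bD^2$ with a polygonal arc $\gamma$ joining $r_1$ to $r_2$ and splits into two cases: either some leaf has both endpoints in $I$ (the closed arc), or every leaf with an endpoint in $\mathring I$ crosses $\gamma$, in which case one leaf segment of $\gamma$ is crossed infinitely often and Lemma \ref{lem:regular_dense} is applied on that segment. Your proposal does not carry out this dichotomy, and there is a genuine gap at the step ``shrink $R$ so that its four bounding leaves $\alpha_1^\pm,\alpha_2^\pm$ are regular leaves whose endpoints all lie in $A$.'' Shrinking $R$ in $\cP$ only brings the bounding leaves closer to $\cP(\alpha_0)$ and $\cP(\beta_0)$; it does not bring the corresponding geodesics in $\bD^2$ closer to $\alpha_0, \beta_0$ (if $\alpha_0$ bounds a one-root region on one side, the regular leaves approximating $\cP(\alpha_0)$ on that side accumulate on the \emph{root}, whose endpoints may be far from those of $\alpha_0$), and even when they do, the limiting endpoints are $\partial\alpha_0$ and $\partial\beta_0$, not arbitrary points of $A$. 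In particular, if $\beta_0$ has an endpoint outside $A$, no shrinking puts the $L^-$ sides of $R$ inside $A$. Your argument therefore implicitly requires $\alpha_0$ and $\beta_0$ to have \emph{all four} endpoints in $A$ — a strictly stronger condition than the paper's Case~1 — and does nothing in the case (genuinely possible, e.g.\ when $A$ lies inside the ideal boundary of a single complementary region of $L^-$) where no leaf of $L^+\cup L^-$ has both endpoints in $A$. You flag locating $x_0$ as ``the main obstacle,'' but the deeper obstacle is this shrinking step, and density of ``admissible'' $\alpha_0$ does not rescue it since all leaves with an endpoint in $\mathring A$ may be inadmissible in this sense.

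A smaller point: even when the shrinking succeeds, the leaves $\alpha'$ through $R$ have only \emph{one} endpoint in $A$ (the other lies in the large arc beyond $\alpha_1^-$ or $\alpha_2^-$, which is most of $S^1$), not both as you assert. This does not damage the conclusion since one endpoint in $\mathring A$ already produces the desired ray; but the claim as written is false. To repair the whole argument you would need the paper's dichotomy, or an equivalent one, to handle the case where the arc $A$ is ``shadowed'' by a wide complementary region.
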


\begin{proof}[Proof of Lemma~\ref{lem:same_endpoint}]
If $r_1, r_2$ have the same endpoint on $S^1$ then all rays between them (in one direction) share that endpoint, and by assumption \ref{item.countably_many} from Theorem \ref{thm:realization} this set is countable.

Conversely, suppose that $r_1, r_2$ do not share an endpoint.  We will show $[r_1, r_2]$ is uncountable, the other case being symmetric. Let $I$ be the interval of $S^1$ corresponding to $[r_1,r_2]$.  Consider a simple polygonal path $\gamma$ joining $r_1$ to $r_2$ and whose interior is disjoint from $r_1\cup r_2$.

By the density assumption of fully transverse prelaminations, there are infinitely many leaves of $L^+$ or $L^-$ having an endpoint in the interior of $I$.  Each of these leaves either crosses $\gamma$ or has both its endpoints in $I$.

If some $L^\mp$ leaf has both endpoints in $I$ then every $L^\pm$-leaf crossing this leaf has an endpoint in $I$.  So uncountably many regular leaves of $L^\pm$ have one endpoint in $I$. Since distinct regular leaves
have distinct endpoints, one deduces that uncountably many rays of regular $L^\pm$-leaves have an endpoint in the interior of $I$ and hence belong to $[r_1,r_2]$, finishing the proof in this case.  

Otherwise, no leaf has both endpoints in $I$, so there is some leaf segment $\gamma_i$ of $\gamma$ which is crossed by infinitely many $L^\pm$-leaves that have endpoint in $I$.   By Lemma \ref{lem:regular_dense}, 
 there are uncountably many regular $L^\pm$-leaves crossing $\gamma_i$ between two of these leaves. These regular leaves each have an endpoint in $I$, so define distinct rays in $[r_1,r_2]$, showing this set is uncountable.
\end{proof}

The same characterization as above also holds for the circle at infinity of $(\cF^+,\cF^-)$:
\begin{proposition}[See \cite{Bonatti_boundary} Theorem 2] \label{o.same}
Two rays $r_1,r_2$ of leaves or faces of leaves of $\cF^+\cup\cF^-$ have the same endpoint on $S^1_{\cF^{\pm}}$ if and only if one of the two intervals $(r_1,r_2)$ or $(r_2,r_1)$ (for the cyclic order on the rays of leaves) is countable.
\end{proposition}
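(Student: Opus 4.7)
The plan is to work with the combinatorial structure of $S^1_\infty(\cF^+, \cF^-)$ used in \cite{Bonatti_boundary}. Let $\mathcal{R}$ denote the cyclically ordered set of germs of rays of leaves and leaf-faces of $\cF^+ \cup \cF^-$, and let $e\colon \mathcal{R} \to S^1_\infty(\cF^+, \cF^-)$ be the tautological map sending each ray to its endpoint. By construction $e$ is monotone with respect to the cyclic orders, and its image is dense. The proposition then reduces to two auxiliary facts: (A) for every $p \in S^1_\infty$, the fiber $e^{-1}(p)$ is countable; and (B) for every nondegenerate arc $A \subset S^1_\infty$, the preimage $e^{-1}(A)$ is uncountable.

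Granting (A) and (B), the proposition is essentially immediate. For the forward direction, if $e(r_1)=e(r_2)=p$, monotonicity of $e$ pins every ray in the interval $(r_1,r_2)$ on the side whose image arc degenerates to $\{p\}$ into $e^{-1}(p)$, and this set is countable by (A). Conversely, if $(r_1,r_2)$ is countable and, toward a contradiction, $e(r_1)\neq e(r_2)$, then the arc of $S^1_\infty$ between these two endpoints is nondegenerate, so its preimage under $e$ is uncountable by (B); monotonicity of $e$ places this preimage inside $(r_1,r_2)$, contradicting countability.

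The main obstacle is (A). The content is that the correspondence between rays in $\mathcal{R}$ and half-leaves of elements of $L^+ \cup L^-$ (together with a finite contribution from each prong singularity) lets one translate the countability hypothesis \ref{item.countably_many} of Theorem \ref{thm:realization} from $S^1$-endpoints of leaves of $L^\pm$ to $S^1_\infty$-endpoints of rays in $\mathcal{R}$. The subtlety is that this translation must be carried out intrinsically, in terms of the combinatorics of $\mathcal{R}$ and the construction of $S^1_\infty$, rather than via the homeomorphism $h\colon S^1 \to S^1_\infty$, which is precisely what the surrounding section is in the process of constructing. In the present paper we import this translation as \cite{Bonatti_boundary} Theorem 2, to avoid duplicating Bonatti's combinatorial argument. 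Fact (B) is easier and follows from density of $e(\mathcal{R})$ together with a standard ``no-jumps'' observation coming from the Dedekind-completion step in the construction of $S^1_\infty$.
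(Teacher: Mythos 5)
The paper gives no proof of this proposition: it is imported wholesale from Bonatti's Theorem~2, and the citation \emph{is} the argument. Your proposal, after a reorganization into Facts~(A) and~(B), likewise defers the real content (your Fact~(A)) to the same source, so at bottom you and the paper take the same route — both are citations — and the decomposition does not add a self-contained proof.

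That said, two steps of the surrounding wrapper are stated more confidently than they deserve. First, the forward direction as you phrase it (``monotonicity of $e$ pins every ray in $(r_1,r_2)$ on the side whose image arc degenerates to $\{p\}$'') implicitly requires that one of the two intervals between $r_1$ and $r_2$ maps into $\{p\}$; this is equivalent to the fiber $e^{-1}(p)$ being an arc (convex in the cyclic order), which does \emph{not} follow from bare monotonicity of a map out of an abstract cyclically ordered set. It does follow from density of $e(\cR)$ in $S^1_\infty$ together with the way the completion is built, but that needs to be said. Second, Fact~(B) does not follow from density alone: a dense image only guarantees $e^{-1}(A)\neq\emptyset$, not uncountability. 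What is actually needed is the stronger claim that uncountably many germs of rays meet every nondegenerate arc, which in the proof of the analogous statement for $(L^+,L^-)$ (Lemma~\ref{lem:same_endpoint}) the paper establishes by producing a leaf segment crossed by uncountably many regular leaves with endpoints in the arc. A parallel argument is required here, not just an appeal to density and the Dedekind-cut step.
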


We will now build a natural map from the rays of $L^+\cup L^-$ to those of $\cF^+\cup \cF^-$:
Let $\cR(L)$ and $\cR(\cF)$ be the sets of (germs of) rays in $L^+\cup L^-$ and $\cF^+\cup\cF^-$ respectively.

Given a ray $r$ inside a leaf $\alpha$ of $L^\pm$, we let $\cP(r)$ be the corresponding subset of $\cP(\alpha)$. Then $\cP(r)$ is a ray of $\cF^\pm$ contained in $\cP(\alpha)$. Moreover, if two rays $r,r'$ of $L^\pm$ define the same germ of ray, then $\cP(r)$ and $\cP(r')$ define the same germ of ray of $\cF^\pm$.
This gives a natural map
\[
\cP\colon \cR(L) \to \cR(\cF)
\]

If $\alpha \in L^\pm$ is not a side of an ideal polygon, then $\cP(\alpha)$ is a non-singular leaf of $\cF^\pm$ and  $q^{-1}(\cP(\alpha))=\alpha\cap \cX$.  If $\alpha$ is the sides of an ideal polygon, then $\cP(\alpha)$ is a face  of a singular leaf of $\cF^\pm$ and $q^{-1}(\cP(\alpha))$ consists of $\alpha\cap \cX$ together with all the points of $\cX$ in the equivalence class of the prong of $\cP(\alpha)$, as well as the two rays contained in that same ideal polygon, sharing an endpoint with $\alpha$, and starting at the corresponding preimage of the singularity.  
 
To avoid this problem, it is thus natural to restrict the map $\cP$ to rays not contained in ideal polygon.
Let $\cR_{\nonsing}(L)\subset\cR(\cF)$ be the subset of rays not contained in an ideal polygon of $L^+\cup L^-$, and $\cR_{\nonsing}(\cF)\subset \cR(\cF)$ the subset of rays contained in non-singular leaves of $\cF^+\cup\cF^-$.
By the paragraph above, 
$$\cP_{\nonsing}\colon \cR_{\nonsing}(L)\to \cR_{\nonsing}(\cF)$$
is a bijection.

Recall that the set $\cR_{\nonsing}(L)$ is endowed with a natural complete cyclic order. Similarly, the set $\cR_{\nonsing}(\cF)$ is also endowed with a natural cyclic order, up to a choice of an orientation of the bifoliated plane $\cP$.
We can therefore consider whether or not the map $\cP_{\nonsing}$ is monotone, as monotonicity is independent of the choice of orientation.

The main step for building the homeomorphisms $h\colon S^1\to S^1_\infty(\cF^+, \cF^-)$ is the following.
\begin{proposition}\label{p.P-is-monotonous} 
The bijection $\cP_{\nonsing}\colon \cR_{\nonsing}(L)\to \cR_{\nonsing}(\cF)$ is strictly monotone.

The map $\cP\colon \cR(L) \to \cR(\cF)$ is (posibly non strictly) monotone.
\end{proposition}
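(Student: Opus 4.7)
The plan is to reduce monotonicity to a comparison of cyclic orders of intersection points along a single simple closed curve built in $\bD^2$ and projected to $\cP$. Given three pairwise distinct rays $r_1, r_2, r_3 \in \cR_{\nonsing}(L)$, after passing to subrays of the same germs we may assume they are pairwise disjoint in $\bD^2$. Using Lemma \ref{lem:regular_dense} (density of regular leaves) together with the connectedness property of the FT prelamination (Definition \ref{def_FT}), I would construct a regular polygonal path $\gamma$ in $\bD^2$ --- a simple closed curve whose sides are segments of regular leaves of $L^+\cup L^-$ --- crossing each $r_i$ in exactly one point.

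The next step is to show that $q(\gamma)$ is a simple Jordan curve in $\cP$. The vertices of $\gamma$ are crossings of two regular leaves, which by Lemma \ref{lem:equiv_classes} have singleton equivalence classes, hence project to distinct points of $\cP$. For the sides, by further subdivision and using the countability of ideal polygons, one arranges that no side of $\gamma$ contains two crossings with distinct sides of a common ideal polygon. Under this condition $q$ is injective on $\gamma$, so $q(\gamma)$ is a simple closed curve in $\cP$ made of regular leaf segments of $\cF^+\cup\cF^-$, crossing each $\cP_{\nonsing}(r_i)$ in exactly one point, namely $q(r_i \cap \gamma)$.

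Since $q|_\gamma$ is a continuous bijection between compact circles, it is a homeomorphism, and so preserves cyclic order (up to a fixed orientation convention on $\cP$). Thus the cyclic order of $(r_i\cap\gamma)_{i=1,2,3}$ along $\gamma$ coincides with the cyclic order of the images $(q(r_i\cap\gamma))_{i=1,2,3}$ along $q(\gamma)$, which in turn is the cyclic order of $(\cP_{\nonsing}(r_1), \cP_{\nonsing}(r_2), \cP_{\nonsing}(r_3))$ in $\cR_{\nonsing}(\cF)$. This establishes monotonicity of $\cP_{\nonsing}$; strictness follows from the already-established bijection property.

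For the second statement, the only additional identifications beyond those of $\cP_{\nonsing}$ arise from pairs of rays on distinct sides of a common ideal polygon sharing an endpoint on $S^1$: these project to the same germ of ray in $\cF$, namely a ray along a common prong of the corresponding singular leaf. Such pairs are consecutive in the cyclic order on $\cR(L)$ (by Lemma \ref{lem:same_endpoint}, since they share an endpoint on $S^1$), so applying the same argument to triples in $\cR(L)$ preserves the cyclic order with possible equalities, yielding non-strict monotonicity. The main obstacle is the construction of $\gamma$ and the verification that $q(\gamma)$ is simple: because a single regular leaf may cross multiple sides of a coupled pair of ideal polygons, $q$ is not automatically injective on a regular leaf segment, so the sides of $\gamma$ must be chosen carefully --- using the countability of ideal polygons --- to avoid such collapsed portions while still meeting each $r_i$ exactly once.
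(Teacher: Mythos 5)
Your approach is the same in spirit as the paper's, but you run the construction in the reverse direction: the paper builds a regular polygonal loop $\gamma$ {\em in $\cP$} and lifts it to $\tilde\gamma \subset \bD^2$, whereas you build $\gamma$ in $\bD^2$ and project. This reversal introduces a technical problem that you identify as ``the main obstacle,'' but your proposed resolution does not work. You want $q(\gamma)$ to be a simple Jordan curve, i.e.\ $q|_{\gamma}$ injective, and you claim that ``by further subdivision and using the countability of ideal polygons'' one can arrange that no side of $\gamma$ contains two crossings with adjacent sides of a common ideal polygon. This cannot be achieved by subdivision in general: if a side of $\gamma$ lies on a regular leaf $\alpha$ and $\alpha$ crosses an ideal polygon $P^\mp$, the two resulting crossing points are {\em adjacent} in the order of crossings along $\alpha$ (there are no crossings strictly between them, since the intervening segment is in the interior of $P^\mp$), so there is no regular crossing at which you can break the side to separate the pair. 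Moreover, the set of such pairs along $\alpha$, while countable, can be dense (there can be infinitely many ideal polygons of $L^\mp$ crossed by $\alpha$), so one cannot simply shrink the sides to avoid them. Countability alone does not give avoidability.

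The good news is that injectivity of $q|_{\gamma}$ is not actually needed for your argument, and dropping it closes the gap. What you need is only that (i) $q|_{\gamma}$ is a (cyclic-order) {\em monotone} map from the circle $\gamma$ to the closed curve $q(\gamma)$ in $\cP$, possibly collapsing finitely or countably many adjacent pairs, and (ii) the crossing points $r_i \cap \gamma$ have singleton $\sim$-classes so that $q$ is injective on $\{r_1\cap\gamma, r_2\cap\gamma, r_3\cap\gamma\}$. Point (i) is exactly the observation recorded in the paper just after the definition of lifts: on each leaf segment $[x,x']$, the equivalence classes of $\sim$ are singletons or pairs of adjacent points, so $q$ induces a monotone bijection from the classes onto the image segment in $\cP$; this pastes together to give cyclic-order monotonicity of $q|_{\gamma}$. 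Point (ii) holds because the $r_i$ are nonsingular rays, so $r_i\cap\gamma$ is a crossing of a non--ideal-polygon leaf with a regular leaf, hence a singleton class by Lemma~\ref{lem:equiv_classes}. Then a monotone circle map that is injective on $\{r_i\cap\gamma\}$ preserves their cyclic order, giving the conclusion without any claim of simplicity of $q(\gamma)$. This is in effect the paper's argument; starting in $\cP$ and lifting makes the injectivity question moot, since the lift is a section of $q$ over regular vertices.

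Your treatment of the non-strict monotonicity of $\cP$ on $\cR(L)$ is correct in outline (singular rays on adjacent faces of an ideal polygon project to the same germ), but again what you should invoke is monotonicity of $q$ on classes along each side of $\gamma$, as above, rather than simplicity of $q(\gamma)$.
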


\begin{proof} Let $r_1,\dots,r_k$, $k>2$, be $k$ nonsingular rays in $\cR_{\nonsing}(L)$ and let $s_i=\cP_{\nonsing}(r_i)$, $i=1,\dots, k$, be the corresponding non-singular rays in $\cP$. 

Recall from Definition \ref{def:order_rays} that the cyclic order on rays is defined by intersections with simple closed curves.   Thus, we may find a curve $\gamma$ intersecting each ray $s_i$ exactly once at a point $y_i$, and thus specifying the cyclic order.  
Given any such curve, we may cover it with a finite number of rectangles and prong polygons and deform it slightly to be a simple, polygonal path with regular leaves as sides.   Thus, there is a 
well defined lift $\tilde \gamma$ to $\bD^2$.  The rays $r_i$ are disjoint outside of the disc bounded by $\tilde \gamma$ and they cross $\tilde \gamma$ in exactly one point, since the preimages $q^{-1}(y_i)$ are singletons $x_i$ on $\tilde\gamma$. Thus the $x_i$ are cyclically ordered on $\tilde \gamma$ as the $r_i$.

The projection $q\colon \tilde\gamma\cap\cX\to \gamma$ preserves (or reverses) the cyclic order between the segment constituting $\tilde \gamma$ and $\gamma$, is strictly monotone on the corners of the regular polygonal path, and is  monotonous on each side. Thus  the map $q$ is strictly monotone from $\{x_i\}$ to $\{y_i\}$.
Therefore, the map $r_i\mapsto s_i=\cP_{\nonsing}(r_i)$ is strictly monotone for the cyclic orders.

As this holds for any finite set of ray, one gets that the map $\cP_{\nonsing}$ is strictly monotone,  ending the first part of Proposition~\ref{p.P-is-monotonous}. 

Now, suppose that $r_1,\dots,r_k$, $k>2$, are  rays in $\cR(L)$ and $s_i=\cP(r_i)$, $i=1,\dots,k$, are the corresponding (possibly singular) rays.  We can apply the same argument as above, getting again a regular polygonal path that intersects each $s_i$ at $y_i$ with the correct cyclic order, whose lift $\tilde\gamma$ intersects each $r_i$ in $x_i$. Now, the preimages $q^{-1}(y_i)$, which are equivalence classes for $\sim$, contain $x_i$ but may also contain other $x_j$. However, if $x_j\in q^{-1}(y_i)$ then $y_i=y_j$ and  $s_i=s_j$.

Since the map $q$ is strictly monotone on the classes contained in each side, we deduce that the map $x_i\mapsto y_i$ is constant if $y_i=y_j$, and strictly monotone on distinct classes for $\sim$. Therefore, the map $r_i\mapsto s_i=\cP(r_i)$ is monotone, which is what we needed to show.  
\end{proof}

We are now ready to build the claimed homoemorphism between circles.  Let $E(L) \subset S^1$ be the set of endpoints of rays of $L^+ \cup L^-$, and $E(\cF)\subset S^1_\infty(\cF^-,\cF^+)$ the set of the endpoints of rays in $\cF^-\cup \cF^+$.

\begin{proposition}
The map $E(L) \to E(\cF)$ that sends the end $a\in E$ of a ray $r_a$ of $L^+ \cup L^-$ to the end of the ray $\cP(r_a)$ in $S^1_\infty(\cF^+, \cF^-)$ is well-defined and extends to a homeomorphism $h\colon S^1\to S^1_\infty(\cF^+, \cF^-)$.
\end{proposition}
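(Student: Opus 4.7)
The plan is to establish well-definedness of the map $E(L) \to E(\cF)$, then injectivity, then to verify it is cyclic-order-preserving, and finally to extend by density to the desired homeomorphism $h\colon S^1 \to S^1_\infty(\cF^+,\cF^-)$.

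The key preliminary observation is that the monotone map $\cP\colon \cR(L) \to \cR(\cF)$ of Proposition~\ref{p.P-is-monotonous} is surjective with finite fibers: every nonsingular ray of $\cF$ is hit via the bijection $\cP_{\nonsing}$, every ray of a face of a singular leaf is by construction of the form $\cP(r)$ for $r$ a ray of an ideal polygon side, and an equality $\cP(r)=\cP(r')$ with $r\neq r'$ forces $r,r'$ to be the two rays of adjacent sides of a common ideal polygon going to their shared vertex, giving fibers of size at most $2$. Monotonicity and surjectivity together imply that $\cP$ maps each cyclic interval of $\cR(L)$ onto the corresponding cyclic interval of $\cR(\cF)$; combined with the finiteness of the fibers, this preserves both countability and uncountability of intervals.

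Well-definedness then follows readily: if $r_a, r_a'$ are $L$-rays sharing the endpoint $a$, Lemma~\ref{lem:same_endpoint} gives that one of the cyclic intervals between them in $\cR(L)$ is countable, hence so is the corresponding interval between $\cP(r_a)$ and $\cP(r_a')$ in $\cR(\cF)$, and Proposition~\ref{o.same} yields that their endpoints in $S^1_\infty(\cF^+,\cF^-)$ coincide. Injectivity is the symmetric statement: distinct $a\neq a'\in E(L)$ admit $L$-rays whose two cyclic intervals in $\cR(L)$ are both uncountable by Lemma~\ref{lem:same_endpoint}, so are the corresponding intervals in $\cR(\cF)$, and Proposition~\ref{o.same} forces their endpoints in $S^1_\infty(\cF^+,\cF^-)$ to be distinct.

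Finally, the induced injection $E(L)\to E(\cF)$ is strictly monotone for the cyclic orders, since $\cP$ is monotone on $\cR(L)$ and the endpoint map from rays to the circle is itself monotone on either side. As $E(L)$ is dense in $S^1$ by the density condition in the definition of fully transverse prelaminations, and $E(\cF)$ is dense in $S^1_\infty(\cF^+,\cF^-)$ by the construction recalled in \cite{Bonatti_boundary}, this strictly order-preserving bijection between dense cyclically ordered subsets extends uniquely to an order-preserving bijection of the ambient circles, which is automatically a homeomorphism $h$. The main technical obstacle is the preliminary step: once one has the ``interval-preserving, finite-to-one'' picture of $\cP\colon \cR(L)\to \cR(\cF)$, the rest of the argument is a systematic transfer of the countable/uncountable dichotomy through the diagram via Lemma~\ref{lem:same_endpoint} on the $L$-side and Proposition~\ref{o.same} on the $\cF$-side.
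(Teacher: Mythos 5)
Your proof is correct, but takes a genuinely different route from the paper. The paper restricts to the set $\cR_\nonsing(L)$ of nonsingular rays, on which $\cP_\nonsing$ is a strictly monotone \emph{bijection}; it obtains the homeomorphism $h$ from the dense subset $E_\nonsing(L)$, and then handles the remaining endpoints $E(L)\smallsetminus E_\nonsing(L)$ (ends of ideal-polygon-side rays) by a separate continuity argument, approximating such a ray on both sides by nonsingular ones and using monotonicity of $\cP$, $e_L$, $e_\cF$. You instead work directly with the full map $\cP\colon\cR(L)\to\cR(\cF)$, for which you first establish surjectivity and that fibers have size at most $2$ (two rays of adjacent ideal-polygon sides pointing toward their shared vertex), so that monotonicity plus surjectivity makes $\cP$ carry cyclic intervals onto cyclic intervals while preserving the countable/uncountable dichotomy; well-definedness, injectivity and surjectivity of $E(L)\to E(\cF)$ then all transfer cleanly through Lemma~\ref{lem:same_endpoint} and Proposition~\ref{o.same} with no case split. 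Your version is more uniform (the singular endpoints are not treated specially), at the cost of the preliminary structural analysis of $\cP$ (surjectivity, fiber size, interval-preservation), which is compressed in your write-up and would need to be spelled out; the paper's version avoids this analysis by construction but then pays for it with the concluding case. Both culminate in a strictly monotone bijection between dense subsets of the two circles, extended to a homeomorphism by density.
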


\begin{proof}
Since the foliations are fixed, we use the notation $S^1_\infty$ for $S^1_\infty(\cF^+, \cF^-)$ in the proof.  
Let $E_{\nonsing}(L)\subset E$ be the set of endpoints of rays in $L^-\cup L^+$ which are not on ideal polygons, and $E_{\nonsing}(\cF)$ the set of the endpoints of rays on nonsingular leaves in $\cF^-\cup \cF^+$.
Call $e_L \colon \cR(L) \to E(L)$ and $e_\cF\colon \cR(\cF) \to E(\cF)$ then endpoint map which associate to a ray in $L^+\cup L^-$ (resp.~$\cF^+\cup\cF^-$) its endpoint in $S^1$ (resp.~$S^1_\infty$). 

By Lemma~\ref{lem:same_endpoint} and Observation~\ref{o.same}, two rays $r_1,r_2\in \cR_\nonsing(L)$ have the same endpoint on $S^1$ if and only if the rays
$\cP_{\nonsing}(r_1),\cP_{\nonsing}(r_2)$ have the same endpoint on $S^1_\infty$.
Therefore, the map $\cP_{\nonsing}\colon \cR_{\nonsing}(L)\to \cR_{\nonsing}(\cF)$ descends to a \emph{bijective} map $h_\nonsing = e_\cF \circ \cP_\nonsing \circ e_L^{-1} \colon E_\nonsing(L) \to E_\nonsing(\cF)$.

By definition of the cyclic order, the maps $e_L$ and $e_\cF$ are both monotone, and $\cP_{\nonsing}\colon \cR_{\nonsing}(L) \to \cR_{\nonsing}(\cF)$ is a strictly monotone bijection by Proposition \ref{p.P-is-monotonous}.
Therefore, $h_\nonsing$ is a monotone bijection. 
Since $E_{\nonsing}(L)$ and $E_{\nonsing}(\cF)$ are dense in $S^1$ and $S^1_\infty$ respectively, $h_{\nonsing}$ extends continuously to a homeomorphism $h\colon S^1\to S^1_\infty$.   
By definition, if $e_L(r_a)=a\in E_\nonsing(L)$, then $h(a) = e_\cF(\cP(r_a))$. To finish the proof, we just need to show that this equality still holds when if $e_L(r_a)=a\in E(L) \smallsetminus E_\nonsing(L)$.

If $r$ is a ray for $L^+\cup L^-$, it is accumulated on both sides, for the cyclic order, by rays $r_n$ which are not contained in ideal polygons. As $\cP\colon \cR(L)\to\cR(\cF)$ is monotonous and bijective from $\cR_{\nonsing}(L)$ to $\cR_{\nonsing}(\cF)$ one deduces that $\cP(r)$ is accumulated on both sides by the $\cP(r_n)$.
Using again the monotonicity of $e_L$ and $e_\cF$, we deduce that $e_L(r) = \lim_{n\to \infty} e_L(r_n)$ and $e_\cF(\cP(r)) = \lim_{n\to \infty} e_\cF(\cP(r_n))$. So continuity of $h$ allows us to conclude.
\end{proof}

\subsection{Uniqueness}
It remains to prove the uniqueness of the bifoliated plane obtained in the construction above.  This is an immediate consequence of the following, which says that the induced lamination on the boundary circle determines a pA-bifoliation .

The statement also gives the proof of Theorem \ref{cor:action_extends_1} from the introduction, which said that the construction of the bifoliated plane was natural with respect to group actions preserving the laminations.  

\begin{proposition} \label{prop:uniqueness}
Suppose that $(\cF_1^+, \cF_1^-)$ and $(\cF_2^+, \cF_2^-)$ are pA-bifoliations on the open disc, and there exists a homeomorphism $h\colon S^1_\infty(\cF^+_1, \cF^-_1) \to S^1_\infty(\cF^+_2, \cF^-_2)$ taking leaves of the induced lamination of $\cF^\pm_1$ to leaves of the induced lamination of $\cF^\pm_2$.  Then there is a unique homeomorphism $H\colon \bD^2 \to \bD^2$ taking leaves of  $\cF^\pm_1$ to leaves of  $\cF^\pm_2$ and restricting to $h$ on the boundary.

Furthermore, if $h_i: S^1_\infty(\cF^+_i, \cF^-_i) \to S^1_\infty(\cF^+_{i+1}, \cF^-_{i+1})$, $i = 1, 2$ are two such maps, with induced homeomorphisms $H_i: \bD^2 \to \bD^2$, then the map induced by $h_2 \circ h_1$ is $H_2 \circ H_1$.  
\end{proposition}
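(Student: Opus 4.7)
The plan is to reduce the proposition to a naturality statement for the construction $(L^+, L^-) \mapsto \cP(L^+, L^-)$ of Section \ref{sec.plane_construction}. The key observation is that every pA-bifoliation on the open disc is canonically homeomorphic to the bifoliated plane built from its own induced prelaminations, and that the construction of $\cP$ depends functorially on the prelamination data. Granting these two ingredients, the desired homeomorphism is obtained as $H := \Psi_2 \circ h_* \circ \Psi_1^{-1}$, where $\Psi_i$ are the canonical identifications of $\cP(L_i)$ with $\bD^2$ and $h_*$ is the map induced by $h$ on the combinatorial model.

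First I would construct, for any pA-bifoliation $(\bD^2, \cF^+, \cF^-)$ with induced prelaminations $(L^+, L^-)$, a canonical homeomorphism $\Psi\colon \cP(L^+, L^-) \to \bD^2$. This sends a nonsingular equivalence class $q(\alpha^+, \alpha^-)$ to the unique intersection in $\bD^2$ of the leaves of $\cF^+, \cF^-$ realizing $\alpha^+, \alpha^-$, and sends a singular class to the singular point contained in the bigon determined by the associated coupled pair of ideal polygons. Well-definedness and injectivity use transversality of $\cF^+, \cF^-$ and the condition that each leaf contains at most one singularity; surjectivity is clear because every point of $\bD^2$ either lies on the intersection of two leaves or is a singularity. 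That $\Psi$ is a homeomorphism would follow from the same local model arguments (regular rectangles and regular prong polygons) used throughout Section \ref{sec.plane_construction}. Finally, the construction of $S^1_\infty(\cF^+, \cF^-)$ in Section \ref{sec:circle_infty} shows that $\Psi$ realizes the canonical identification on boundary circles.

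Next I would verify naturality. The homeomorphism $h$ of the hypothesis induces a bijection $L_1^\pm \to L_2^\pm$, hence a bijection $\bar h\colon \cX_1 \to \cX_2$. Since $h$ preserves the cyclic order on $S^1$, it sends ideal polygons of $L_1^\pm$ to ideal polygons of $L_2^\pm$ and coupled pairs to coupled pairs, so $\bar h$ respects $\sim$ and descends to a bijection $h_*\colon \cP(L_1) \to \cP(L_2)$. By construction, $h_*$ sends regular leaves to regular leaves and regular rectangles and prong polygons to the same, hence is a homeomorphism for the polygonal topology. Tracing through the boundary construction of Section \ref{sec:circle_infty}, one checks that the continuous extension of $h_*$ to the circles at infinity is precisely $h$, because the endpoint map $e_L$ and the projection $\cP\colon \cR(L) \to \cR(\cF)$ used there are visibly natural with respect to $h$. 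Composing as indicated above yields the required extension $H$.

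For uniqueness, any extension $H'$ of $h$ preserving the bifoliations must carry regular leaves to regular leaves, since regularity is a topological condition (two-sided accumulation). On each regular leaf $\alpha$, the image leaf $H'(\alpha)$ is determined by $h$ via the endpoints of $\alpha$, and hence $H'$ is determined at every intersection of two regular $\cF_1^\pm$-leaves; as these are dense in $\bD^2$, continuity forces $H' = H$. The compositional statement $H_2 \circ H_1 \leftrightarrow h_2 \circ h_1$ then follows formally, since both $h \mapsto \bar h$ and $\bar h \mapsto h_*$ are compositional and the $\Psi_i$ are canonical. The main obstacle is the bookkeeping required in the previous paragraph: one must check that ``the identification of circles'' appearing in three different places---the canonical $\Psi_i$ on the boundary, the given $h$, and the extension of $h_*$ produced by running Section \ref{sec:circle_infty} a second time---all refer to the same object; this is essentially an exercise in unwinding the definitions of $e_L$, $\cP$ and the cyclic order, but does not require new ideas beyond those already present in Section \ref{sec:circle_infty}.
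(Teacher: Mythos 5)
The proposal takes a genuinely different route than the paper.  The paper's proof constructs $H$ directly by a pointwise formula: writing each interior point uniquely as $x=\alpha^+\cap\alpha^-$, it sends $x$ to the intersection of the $\cF_2^\pm$-leaves whose endpoints are $h(\partial\alpha^\pm)$, then verifies that this is bijective, monotone on each leaf, continuous at interior points by comparing foliated rectangle/prong neighborhoods, and continuous at the circle by a neighborhood-basis argument.  You instead reduce the statement to functoriality of the combinatorial model $\cP$, composing a canonical identification $\Psi_i:\cP(L_i)\to\bD^2$ with an induced map $h_*:\cP(L_1)\to\cP(L_2)$.  This is a valid and conceptually appealing reorganization, but be aware it does not actually save work.

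A few places where the proposal's load-bearing steps are heavier than you suggest.  First, to even form $\cP(L_1^+, L_1^-)$ you need that prelaminations induced by a pA-bifoliation satisfy the hypotheses of Theorem~\ref{thm:realization}; this is Proposition~\ref{prop:converse_A}, proved separately, and should be cited explicitly.  Second, and more seriously, establishing that $\Psi$ is a homeomorphism \emph{at the boundary circle} is not a repetition of Section~\ref{sec:circle_infty}: that section builds a homeomorphism from the abstract $S^1$ on which $(L^+,L^-)$ lives to $S^1_\infty$ of the \emph{constructed} plane $\cP$, whereas here you must further match this with the already-given identification of the boundary of $\bD^2$ with $S^1_\infty(\cF_1^+,\cF_1^-)$.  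That matching is precisely the content of the neighborhood-basis argument at the end of the paper's proof, so the work reappears inside your claim that ``$\Psi$ realizes the canonical identification.''  Third, your uniqueness paragraph appeals to $H'$ sending regular leaves to regular leaves ``since regularity is a topological condition''; for it to determine $H'$ you also need that a nonsingular leaf of $\cF_2^\pm$ is uniquely recovered from its endpoint pair, which holds for pA-bifoliations but deserves a word.  Once these gaps are filled, both approaches produce the same map $H$; yours simply refactors the verification through $\cP$, with the nontrivial continuity arguments landing in the construction of $\Psi$ rather than in the direct definition of $H$.
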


\begin{proof} 
Each point $x$ in the open disc can be written uniquely as an intersection $\alpha^+ \cap \alpha^-$ where $\alpha^\pm$ is a leaf of $\cF_1^\pm$. 
Let $\{a_1^\pm, a_2^\pm\}$ denote the endpoints of $\alpha^\pm$ in $S^1_\infty(\cF^-_1,\cF^+_1)$.  This set has two elements, except when $\alpha^\pm$ is a singular leaf, where it has $k$ elements for some $k>2$.  
By assumption, $\{h(a_i^+)\}$ and $\{h(a_i^-)\}$ are endpoints of leaves of $\cF_2^+$ and $\cF_2^-$, respectively, and since $h$ is a homeomorphism these leaves cross.  Define $H(x)$ to be the unique intersection point of these crossing leaves.    This gives an injective map (which is surjective by considering the obvious inverse) mapping each leaf of $\cF^\pm_1$ to a leaf of $\cF^\pm_2$ whose endpoints are the images by $h$ of the endpoints of the initial leaf.  We need only show that $H$ is continuous and that its union with $h$ on the boundary circle is continuous as well.  

The order on the set of leaves crossing a given $\cF^\pm$-leaf can easily be read from the endpoints. This implies that $H$ is a monotone bijection on each $\cF^\pm_1$ leaf to its image, and thus the restriction of $H$ to each leaf is a homeomorphism. 

If $x$ is nonsingular, then it has a foliated rectangle neighborhood, bounded by four leaves; the interior of this will map to a foliated rectangle neighborhood under $H$. By the discussion above, $H$ maps horizontal (resp.~vertical) segments of this rectangle homeomorphically on horizontal (resp.~vertical) segments of the image rectangle.  Thus $H$ is continuous at $x$.

Similarly, if $x$ is singular, it has a neighborhood homeomorphic (in a foliation-preserving way) to the standard bifoliated $k$-prong polygon, which will be mapped by $H$ to a corresponding neighborhood of $H(x)$.  This shows that $H$ is a homeomorphism on  the open disc.

Finally, we check continuity at the points of the circle. A neighborhood basis of a point
$t\in S^1_\infty(\cF^-_1,\cF^+_1)$ is obtained by considering proper embeddings $\gamma_n$ of $\bR$ in the disc obtained by concatenation of a ray of regular leaf of
$\cF^-_1\cup\cF^+_1$ with endpoint $a_n$, a ray   of regular leaf of
$\cF^-_1\cup\cF^+_1$ with endpoint $b_n$ and a segment $\sigma_n$ joining this two rays, so that the sequences  $a_n$ and $b_n$ converge to $t$ on both sides and in a monotone way.  We choose the segment $\sigma_n$ so that the half compact discs containing $t$, and bounded in $\mathring{\bD}^2$  by $\gamma_n$,
are decreasing for the inclusion, and their intersection is $t$.

The image by $H$ of such a sequence is a similar sequence for $\cF^-_2,\cF^+_2$, proving the continuity of $H$ at $t$.  Thus $H$ is a homeomorphism extending $h$. Uniqueness is immediate from the construction, as is the fact that this construction respects composition.  
\end{proof}

\subsection{Converse} \label{sec:easy_direction}

Having proved the hard direction, we now prove the easy direction of Theorem \ref{thm:realization}.  This is the following statement.  

\begin{proposition} \label{prop:converse_A}
Suppose that $(L^+, L^-)$ is the pair of prelaminations induced by a pA-bifoliation  $(\cF^+,\cF^-)$.  Then
\begin{enumerate}[label=(\roman*)]
\item  $L^+, L^-$ are fully transverse
 \item  For any $a\in S^1$, the set $\{b : \{a,b\}\in L^-\cup L^+\}$ is countable
 \item  Each complementary region of $L^\pm$ is an ideal polygon or one-root region.  Ideal polygons come in coupled pairs, and no leaf lies in the boundary of two ideal polygons.  
 \end{enumerate} 
\end{proposition}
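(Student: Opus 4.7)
The plan is to verify each of the three conditions in turn, working in the bifoliated plane $(P,\cF^+,\cF^-)$ with its compactification $\bD^2 = P \sqcup S^1_\infty$ constructed in \cite{Bonatti_boundary}.

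\textbf{For item (i)}, transversality $L^+ \cap L^- = \emptyset$ follows because two transverse leaves or faces sharing a common pair of endpoints on $S^1_\infty$ would necessarily cross at an interior point of $P$; arcs of each from the crossing out to the shared endpoint would bound a topological ``bigon'' region in $P$ that must be foliated in two incompatible transverse ways, contradicting standard index/Poincar\'e--Bendixson considerations. Density is immediate from the construction of $S^1_\infty$ as a completion of endpoints of ray germs of leaves and faces. Connectedness follows by covering a path in $P$ between points on any two given leaves or faces with finitely many foliated rectangles and standard prong neighborhoods, then extracting from this cover a chain of pairwise crossing leaves.

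\textbf{For item (ii)}, fix $a \in S^1_\infty$. Each pair $\{a,b\} \in L^+ \cup L^-$ corresponds to the ray of a leaf or face of $\cF^+ \cup \cF^-$ ending at $a$, and distinct such rays give distinct partners $b$. By the explicit construction of $S^1_\infty$, each point is an equivalence class of ray germs generated by identifications across pairs bounding only countably many separating rays; combined with the separability of $P$, this yields countability of rays ending at $a$.

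\textbf{For item (iii)}, the main geometric step, let $R$ be a nontrivial complementary region of $\geo(L^+)$ in $\bD^2$, and translate the combinatorics back to $P$ via the leaves and faces of $\cF^+$ corresponding to the leaf sides of $R$. If $\partial R$ consists of finitely many sides, all leaves of $L^+$, with no ideal side, then the corresponding leaves and faces of $\cF^+$ close up in $P$ at a single common point, which by the pA-bifoliation assumption can only be a $k$-prong singularity; this singularity is simultaneously a $k$-prong singularity of $\cF^-$, so the $k$ faces of $\cF^-$ at that singularity give a coupled ideal $k$-gon of $L^-$, with the alternation condition inherited from the interleaving of $\cF^\pm$-faces around the singular point. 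In every other case---ideal sides, infinitely many geodesic sides, or at least one non-leaf geodesic side---the region $R$ corresponds in $P$ to a region of non-Hausdorff branching in the leaf space of $\cF^+$, and one identifies a unique non-leaf geodesic side $s$ arising as the limit at infinity of the branching behavior. Every leaf of $L^-$ crossing $R$ must cross $s$, because in $P$ such an $L^-$-leaf is forced to cross the branching ``seam'' transversely exactly once, making $R$ a one-root region with root $s$. Finally, the ``no shared boundary leaf'' condition for ideal polygons is immediate: any leaf of $L^+$ lying on the boundary of two ideal polygons of $L^+$ would give a leaf of $\cF^+$ containing two singular points, contradicting the pA-bifoliation hypothesis.

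The main obstacle is the one-root case: one must verify that the non-leaf root $s$ is well-defined and unique, is genuinely not a leaf of either $L^+$ or $L^-$, and is crossed by every $L^-$-leaf entering $R$. This requires a careful analysis of how leaves of $\cF^-$ behave near non-Hausdorff branching loci of $\cF^+$ and a precise identification of the ``limit'' geodesic at infinity---everything else in the proposition reduces to relatively standard properties of transverse foliations on the plane.
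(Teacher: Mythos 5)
Your outline for items (i), (ii), and the ideal-polygon part of (iii) matches the paper's approach, and your final observation that a shared side of two ideal polygons would produce a singular leaf with two singularities is exactly what the paper uses to rule out adjacent ideal polygons. But the core of the paper's proof of (iii) is precisely the part you flag as ``the main obstacle'' and then do not carry out: establishing that a non-ideal-polygon complementary region is one-root. As written this is a genuine gap, not a routine verification. You gesture at ``non-Hausdorff branching in the leaf space'' and ``a unique non-leaf geodesic side $s$ arising as the limit at infinity,'' but you do not explain why there is at least one non-leaf side, why there is at most one, why that side is crossed by every $L^\mp$-leaf entering the region, or why the remaining sides are all leaves of $L^\pm$ --- and none of these are immediate.

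The paper fills this in with a specific construction: it approximates each geodesic side $\alpha^k$ of $C$ by a monotone sequence of leaves $\alpha^k_n$ (taking the constant sequence when $\alpha^k$ is already a leaf), forms the convex hulls $C_n$ and the corresponding plane regions $K_n$ saturated by $\cF^\pm$-leaves, and studies the nested intersection $K=\bigcap_n \overline{K_n}$. It shows $K$ has empty interior, so $K\cap\mathring{\bD}^2$ is a union of leaves or faces of $\cF^\pm$. It then proves a sequence of claims: some side $\alpha^{k_0}$ cannot be a leaf (otherwise $K$ would be a single singular leaf and $C$ an ideal polygon, a contradiction); every leaf or face lying in $K$ comes from a geodesic side of $C$ that \emph{is} a leaf; every point of $K$ is accumulated by the sequence $l^0_n$ approximating the chosen non-leaf side; and finally the non-leaf side is unique, since two such sides would force $K$ to be accumulated from both and hence collapse to a single leaf or ideal segment. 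Only after all of that does the one-root property follow. Without some version of this argument (or an equivalent one), the claim that $C$ is one-root is unsupported, and the proof of Proposition \ref{prop:converse_A} is incomplete.
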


\begin{proof}
We check each property; the first two are easy.  

\smallskip
\noindent \textbf{Fully transverse property \ref{item.FT} and countably many leaves with shared endpoints \ref{item.countably_many}.} 
Transversality is immediate, and density follows from the definition of $S^1_\infty(\cF^+, \cF^-)$.  Connectedness follows from the connectedness of the plane and the fact that any path between leaves of $\cF^+ \cup \cF^-$ can be deformed to one that follows alternating segments of $\cF^+$ and $\cF^-$.  

The fact that sets of the form $\{b : \{a,b\}\in L^-\cup L^+\}$ are countable is an immediate consequence of Proposition \ref{o.same}.  

\smallskip
\noindent \textbf{Properties of complementary regions \ref{item.one_root_ideal_polygon}.} 
We now prove the desired properties hold for complementary regions to $L^+$;  the same proof works for $L^-$.

Let $C$ be a complementary region of $L^+$.  If $C$ is an ideal polygon, then the sides of $C$ define a singular leaf $l$ of $\cF^+$.   There is a coupled ideal polygon given by the faces in $\cF^-$ of the prong singularity in $l$.   Since there are no saddle connections in the bifoliated plane $\cF^+, \cF^-$, there are no adjacent ideal polygons to $C$.  
\smallskip 

Consider now  a complementary region $C$ of $L^+$ which is not an ideal polygon. We need to show $C$ is a one-root region.   
Denote by $\alpha^k$, $k=1, 2, ...$, the (finitely or countably many) geodesic sides of $C$. For each $k$, take $\alpha_n^k$ a monotone sequence of leaves of $L^+$ converging to $\alpha$ from the side opposite $C$. If $\alpha^k\in L^+$, we take $\alpha_n^k=\alpha^k$ for all $n$.
Let $l_n^k = \cP(\alpha^k_n)$ be the corresponding non-singular leaf or face of singular leaf in $\cF^+$. 

For each $n$, let $C_n$ be the convex hull in $\bD^2$ of the geodesics $\alpha^k_n$; $k = 1, 2, \ldots$. 
There is a unique connected component of $\bD^2 \smallsetminus (\cup_k l_n^k)$ whose boundary contains $\cup_k l_n^k$, we let $K_n$ denote this connected component.  Note that a leaf $\alpha$ of $L^+$ lies in the interior of $C_n$ iff $\cP(\alpha)$ lies in $K_n$.  
We have that $C_{n+1}\subset C_n$ and $K_{n+1}\subset K_n$. Moreover, the interior of each $K_n$ is saturated by leaves of $\cF^+$.  

By definition, $C= \cap_n C_n$. Let $K= \cap_n \overline{K_n}$. Since the $K_n$ is a decreasing sequence of compact  simply connected regions saturated by $\cF^+$ in $\bD^2$, the set $K$ is connected, simply connected, and $\cF^+$-saturated. Hence, $\partial K \cap \mathring{\bD}^2$ consists of a union of leaves or faces of leaves of $\cF^+$.

Moreover, $K$ has empty interior: Otherwise, there would be a leaf $l^+$ that (by density) we can choose to be non-singular, in the interior of $K$, whose corresponding leaf $\alpha$ of $L^+$ would have to be in the interior of $C$, which is impossible. We conclude $K \cap \mathring{\bD}^2$ is a connected union of leaves or faces of leaves of $\cF^+$ with empty interior.

\begin{claim}
There exists at least one $k_0\in J$ such that $\alpha^{k_0}$ is \emph{not} a leaf of $L^+$.
\end{claim}

\begin{proof}
 If all the geodesic sides of $C$ are leaves of $L^+$, then by our construction, $\alpha^k = \alpha^k_n$ and thus $l^k = l^k_n$, for all $n$, and the union of the leaves $l^k$ bounds $K$.  We just showed $K$ has empty interior. Thus, $K$ must be a singular leaf whose faces are the $l^k$.  We conclude $C$ is an ideal polygon, which contradicts our assumption.
 \end{proof}

Up to renaming the sides of $C$, we now assume that $\alpha^0$ is not a leaf of $L^+$. We next want to show the other sides are leaves.  

\begin{claim}\label{claim_leaves_in_K_are_geodesic_sides}
Every leaf or face in $K$ corresponds to a geodesic side $\alpha^k$ of $C$ such that $\alpha^k\in L^+$.
\end{claim}

\begin{proof}
Let $l$ be a leaf or face of $\cF^+$ in $K$.
Let $\alpha$ be the leaf of $L^+$ such that $\cP(\alpha) = l$.  Then $\alpha$ must be a side of $C$, since otherwise, for large enough $n$, $\alpha$ is not contained in $C_n$, so $l$ would not be in $K_n$, a contradiction.
\end{proof}

\begin{claim}\label{claim_every_leaf_in_limit}
Every point in $K$ is accumulated upon by the sequence $l_n^0$.
More precisely, every leaf $l$ of $\cF^+$ in $K$  is in the limit of $l_n^0$, and so is every point in $K\cap S^1$.
\end{claim}

\begin{proof}
We start by proving that every leaf in $K$ is accumulated by $l_n^0$. 
Suppose that a leaf $l$ of $\cF^+$ is in $K$ but is not in the limit of $l_n^0$. Then there exists a leaf or face $f\in \lim l_n^0$ such that $f$ separates $l$ from the $l_n^0$. By Claim \ref{claim_leaves_in_K_are_geodesic_sides}, $l$ and $f$ corresponds to geodesic sides $\alpha^{k_0}$ and $\alpha^{k_1}$, respectively, of $C$, and the fact that $f$ separates $l$ from the $l_n^0$ implies that $\alpha^{k_1}$ separates $\alpha^{k_0}$ from $\alpha^0$. We conclude $l$ is not in $K$.

Now we prove the second statement. If $x\in K\cap S^1$, either $x$ is an endpoint of a leaf $l$ in $K$, so the first part of the proof yields the result, or $x$ is contained in an open interval of $K\cap S^1$. This open interval corresponds to an ideal side $I$ of $C$ (since $K_n\cap S^1 = C_n \cap S^1$). So as in the first part of the proof, if $x$ is not accumulated by $l_n^0$, then there would be a side of $C$ that separates $I$ from $\alpha^0$, which once again is impossible.
\end{proof}

Using this we can show:
\begin{claim}
$\alpha^0$ is the unique geodesic side of $C$ that is not a leaf of $L^+$.
\end{claim}

\begin{proof}
If $\alpha^0$ is the unique geodesic side of $C$, then, for any fixed $l$ intersecting $C$, the set of leaves meeting $l$ is bounded on one side by $\alpha^0$, but is homeomorphic as an ordered set to $\bR$, so cannot include $\alpha^0$, which is what we needed to show. 
 So we assume that $C$ has at least two geodesic sides, and hence must have at least a third side possibly an ideal one. 
 Let $\alpha^1$ be another side and $l_n^1$ the corresponding sequence of leaves of $\cF^+$.
 By Claim \ref{claim_every_leaf_in_limit}, if $\alpha^1$ is not in $L^+$, then both $l_n^0$ and $l_n^1$ must accumulate onto the whole of $K$, which can happen only if $K$ reduces to one leaf of $L^+$ or one ideal segment. But this is not the case by our assumption.
\end{proof}

Thus, we have proven that there exists a unique geodesic side $\alpha^0$ of $C$ that is not a leaf of $L^+$, and Claim \ref{claim_every_leaf_in_limit} directly imply that every leaf of $L^-$ that crosses $C$ must intersect $l_n^0$ for $n$ large enough, hence must intersect $\alpha^0$. That is, we proved that $C$ is a one-root region.
\end{proof}

%%%%%%%%%%%%%%%%%%%%%%%

\section{A toolkit for completing prelaminations to bifoliations}\label{sec_preliminaries}
Theorem \ref{thm:realization} described the behavior of complementary regions of induced prelaminations.  In order to complete sparser prelaminations to pA-bifoliations, we need an efficient way to encode the configurations of complementary regions, so we can describe how these change when one adds or removes leaves.  This data is captured by the {\em linkage graphs}.  

We treat this in full generality, but the situation is simpler in certain cases (see Remark \ref{rem:easy_cases}) such as nonsingular bifoliations.  The reader may wish to follow the advice of Remark \ref{rem:easy_cases} on a first pass.  

\subsection{Linkage graphs and their basic properties}  \label{subsec:linkage_graphs}
Recall from Definition \ref{def.complementary_region} that the boundary $\partial C$ of a complementary region $C$ consists of {\em geodesic sides} contained in the interior of the disc and possibly {\em ideal sides} on the circle. We further subdivide ideal sides of $C$ as follows. 

\begin{definition} \label{d.ideal-segments}
Let $(L^+, L^-)$ be a pair of FT prelaminations, and $C^\pm$ a nontrivial complementary region of $L^\pm$. 
An  \emph{an ideal segment of $C$} is a
a compact segment $I\subset S^1$ with non-empty interior $\mathring I$  
such that:
\begin{itemize}
\item There exists a single geodesic side $\alpha$ of $C$ such that the endpoints of $L^\mp$-leaves crossing $\alpha$ are dense in $\mathring I$, and 
 \item $I$ is maximal for the property above.
\end{itemize}
\end{definition}
\begin{rem}\label{rem_ideal-segments} 
One easily checks that if $I$ is an ideal segment, then every 
 $L^\mp$-leaf having an endpoint in $\mathring I$ crosses the same geodesic side $\alpha$ of $C$.  As a consequence,  distinct ideal segments always have disjoint interiors.
\end{rem}
See Figure \ref{fig:linkage_graph} for an illustration. 

\begin{definition}[Linkage graph] \label{d.linkage-graph}
Let $(L^+, L^-)$ be  a pair of FT prelaminations, and $C^\pm$ a complementary region of $L^\pm$. The {\em linkage graph $\graph(C^\pm;L^\mp)$} is defined as follows:
\begin{itemize}
\item The vertices are:
\begin{enumerate}[label=(\roman*)]
\item Geodesic sides of $C^\pm$, which we call \emph{geodesic vertices} and
\item The ideal segments of $C^\pm$ (see Definition~\ref{d.ideal-segments}), which we call the \emph{ideal vertices}.
\end{enumerate}
\item Edges:  Two distinct vertices (geodesic or ideal) are joined by an edge if there exists a leaf $\alpha$ of $L^\mp$ \emph{intersecting} both.
\end{itemize}
\emph{Intersecting a geodesic vertex} means the leaf crosses the geodesic, and \emph{intersecting an ideal vertex} means an endpoint of the leaf on $S^1$ lies in the interior of the corresponding ideal segment.
Any $L^\mp$-leaf $\alpha$ with this property is said to {\em define} the edge $e$.
\end{definition} 

   \begin{figure}[h]
     \centering
     \includegraphics[width=8cm]{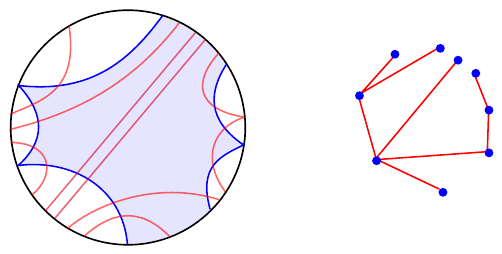}
     \caption{A complementary region (in blue) and its linkage graph.  Note the ideal side divided into three ideal segments giving three vertices.  See figures \ref{fig:condition_4} or \ref{fig:crossing} for examples with a cycle.}
     \label{fig:linkage_graph}
   \end{figure}
   
\begin{rem} 
If $C$ is a trivial complementary region, then its linkage graph consists of a single vertex and no edges.  We will mostly ignore trivial complementary regions. 
\end{rem}

Note that the linkage graph depends both on the complementary region $C^\pm$ of $L^\pm$, and on  its position relative to $L^\mp$. When the prelamination $L^\mp$ is clear from context, we will just write $\graph(C^\pm)$ for the linkage graph with respect to $L^\mp$.

We often think of $\graph(C)$ as a {\em labelled graph}, with vertices labelled by the geodesic sides or ideal segments defining them. That is, we make the following convention:

\begin{convention}
Going forward, we always identify a vertex of a linkage graph with its corresponding geodesic side, or ideal segment. Thus, when we say ``let $s$ be a vertex of $\graph(C^\pm;L^\mp)$" we mean that $s$ is a geodesic side or an ideal segment of $C^\pm$.
\end{convention}

Later, we will also keep track of the cyclic order on vertices induced by the order on $\partial C$.

\begin{definition}[Simple cycle condition]\label{d.simple-cycle} 
A linkage graph $\graph(C^\pm;L^\mp)$ satisfies the {\em simple cycle} condition if each edge $e$ is contained in at most one cycle.\footnote{Recall a {\em cycle} in a graph is a simple closed curve contained in the graph.}

A pair of FT prelaminations $(L^+,L^-)$ satisfies the simple cycle condition if the linkage graph of every complementary region does.
\end{definition} 

\begin{definition}[High valence]\label{d.high-valence}  
A {\em high-valence side} of $C^\pm$ is a side $s$ satisfying {\em at least one} of the following conditions:
\begin{enumerate}[label=(\roman*)]
\item\label{item_high_val_cycle_greater_3} $s$ has valence $\geq 3$ on a cycle of $\graph(C^\pm; L^\mp)$,
\item\label{item_high_val_greater_2} $s$ has valence $\geq 2$ and is not on any cycle of $\graph(C^\pm; L^\mp)$,
 \item\label{item_high_val_degenerate}  $\graph(C^\pm; L^\mp)$ has only two vertices (one being $s$); in that case we say that $\graph(C^\pm; L^\mp)$ is degenerate.
 \item\label{item_high_val_two_polygons} $s$ is the side of two complementary regions $C^\pm_0$ and $C_1^\pm$  and has valence $\geq 2$ on both graphs $\graph(C_i^\pm; L^\mp)$. (See e.g. Figure \ref{fig:condition_4})
\end{enumerate}
We say $\alpha \in L^\pm$ is a {\em high valence leaf} if it is a high-valence side of some complementary region. 
\end{definition}

   \begin{figure}[h]
     \centering
     \includegraphics[width=8cm]{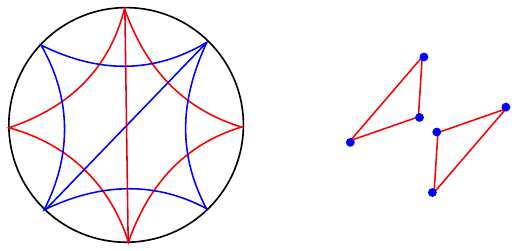}
     \caption{An example of a high valence leaf (blue, diagonal) satisfying \ref{item_high_val_two_polygons} but none of the other conditions, with the linkage graphs for the two complementary regions of the blue foliation shown.} 
     \label{fig:condition_4}
   \end{figure}

\begin{rem} \label{rem:easy_cases}
There are conditions that can simplify the definition and properties of linkage graphs that the reader may want to assume on a first reading:
\begin{itemize}
\item If the two sets $\{a\in S^1 : \{a,b\}\in L^+\}$ and $\{a\in S^1 : \{a,b\}\in L^-\}$ are \emph{both} dense, then the complementary regions of $L^\pm$ have no ideal segments in their boundary. Thus, the reader who wants to add this assumption may ignore every argument involving ideal vertices of linkage graphs.
\item For the readers who are only interested in \emph{non-singular} bifoliations, they may consider pairs of prelaminations such that their linkage graphs have \emph{no} cycles.
\end{itemize}
\end{rem}

We will now establish some basic properties about the structure of the linkage graphs $\graph(C^\pm;L^\mp)$ for FT prelaminations. For instance, we show linkage graphs are connected (Lemma \ref{lem:linkage_connected}), and ideal segments have valence $1$ (Lemma \ref{l.no-ideal-connection}).

\begin{convention}
For the remainder of this section, $(L^+, L^-)$ denotes a pair of FT prelaminations and $C$ is a fixed, {\em nontrivial} complementary region of $L^\pm$. 
\end{convention} 

\begin{lemma}\label{l.no-ideal-connection} 
Any $L^\mp$-leaf with non-empty intersection with $C$ intersects a geodesic boundary component of $C$.  Consequently, no edges of $\Gamma(C)$ joins two ideal vertices, and every ideal vertex has valence equal to $1$. 
\end{lemma}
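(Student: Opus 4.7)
The key observation is that each complementary region is convex. Indeed, $\geo(L^\pm)$ is a union of Euclidean chords, so each connected component of $\bD^2 \smallsetminus \geo(L^\pm)$ is an intersection of open half-discs, and taking closures preserves convexity. Consequently, for any chord $\alpha \in L^\mp$, the intersection $\alpha \cap C$ is a convex subset of the line $\alpha$: empty, a single point, or a line segment with both endpoints on $\partial C$.

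Suppose $\alpha \in L^\mp$ meets $C$ and assume, for a contradiction with the first assertion, that $\alpha$ intersects no geodesic side of $C$. The argument splits into two steps. First, if $\alpha \cap \mathring C \neq \emptyset$, then $\alpha \cap C$ is a nondegenerate segment whose two endpoints lie on $\partial C$; since neither endpoint lies in the interior of a geodesic side (no crossings, and two distinct straight chords meet in at most one transverse point---note that a geodesic side cannot itself be an $L^\mp$-leaf, as such a leaf would cross no other leaf and violate FT connectedness), both endpoints lie on $S^1$ and therefore equal the endpoints of $\alpha$. Thus $\alpha \subset C$, and the interior of $\alpha$ (avoiding $S^1$ and all geodesic sides) lies in $\mathring C$. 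By FT connectedness $\alpha$ must cross some leaf $\gamma \in L^+ \cup L^-$; the prelamination property forbids $\gamma \in L^\mp$, so $\gamma \in L^\pm \subset \geo(L^\pm)$, but the crossing point of $\gamma$ and $\alpha$ lies in $\mathring C$, which is disjoint from $\geo(L^\pm)$---contradiction. Second, if $\alpha \cap \mathring C = \emptyset$ then $\alpha \cap C \subset \partial C \cap S^1$ consists of endpoints of $\alpha$ on ideal sides; such an endpoint cannot lie in the interior of an ideal side $S$, for then no $L^\pm$-leaf would have an endpoint in a small neighborhood of it in $S^1$ (an $L^\pm$-endpoint in $\mathring S$ would force a corner of $\partial C$ in the interior of the maximal arc $S$), so a small $\bD^2$-neighborhood of the point would lie in $C$, pushing $\alpha$ into $\mathring C$ and contradicting the case assumption. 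Thus the endpoint is a corner of $\partial C$, shared with a geodesic side of $C$---again contradicting the hypothesis that $\alpha$ meets no geodesic side.

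For the first consequence, an edge between ideal vertices $I_1$ and $I_2$ would be witnessed by some $L^\mp$-leaf $\alpha$ with endpoints in $\mathring I_1$ and $\mathring I_2$; the neighborhood argument above places both endpoints of $\alpha$ in $C$, convexity forces $\alpha \subset C$, and the contradiction from Step 1 applies. For the valence statement, fix an ideal vertex $I$ with associated geodesic side $\alpha^*$. The definition of ideal segment yields at least one $L^\mp$-leaf crossing $\alpha^*$ with endpoint in $\mathring I$, so the edge $I\alpha^*$ exists and the valence is at least $1$. For the upper bound, let $\beta \in L^\mp$ have endpoint $b_1 \in \mathring I$; I first claim $\beta$ crosses $\alpha^*$. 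If not, the other endpoint $b_2$ would lie on the same side of $\alpha^*$ as $b_1$, and---using the density of endpoints of $L^\mp$-leaves crossing $\alpha^*$ in $\mathring I$ together with the fact that $b_1$ lies in the open arc $\mathring I$---one can pick such a leaf $\gamma$ with endpoint strictly between $b_1$ and $b_2$ along $S^1$; the endpoints of $\beta$ and $\gamma$ are then linked, contradicting the prelamination property. Convexity of $C$ now forces $\beta \cap C$ to be a single segment from $b_1$ to the crossing point on $\alpha^*$, so $\beta$ crosses no other geodesic side; and the other endpoint $b_2$ lies on the non-$C$ side of $\alpha^*$, hence in no ideal side of $C$ and in no ideal segment. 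Thus $\beta$ contributes only to the edge $I\alpha^*$, and the valence of $I$ equals exactly $1$.

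The main obstacle is ruling out the configuration $\alpha \subset C$ in Step 1; this is the geometric heart of the proof, resolved by the interplay between convexity of $C$, the disjointness of $\mathring C$ from $\geo(L^\pm)$, and FT connectedness, which together force the crossing leaf $\gamma$ to yield a contradiction.
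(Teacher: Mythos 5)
Your proof is correct and takes essentially the same approach as the paper's: you use the convexity of complementary regions to show that a leaf meeting $C$ but not crossing a geodesic side must be contained in $C$, and then derive a contradiction from FT connectedness (such a leaf would cross no leaf of $L^\pm$ since $\mathring C$ is disjoint from $\geo(L^\pm)$, nor any leaf of $L^\mp$ by the prelamination property). The differences are matters of detail rather than strategy: you carefully split off the degenerate case where $\alpha$ touches $\partial C$ only at a boundary point without entering $\mathring C$ (the paper's statement ``\dots then it is contained in $C$'' is slightly inaccurate in this case, though the edge case does not arise in the lemma's applications, which always concern leaves meeting $\mathring C$), and for the valence-one conclusion you re-derive from scratch the content of Remark \ref{rem_ideal-segments} (that every $L^\mp$-leaf with an endpoint in $\mathring I$ crosses the same geodesic side), whereas the paper simply cites the Remark together with Definition \ref{d.ideal-segments}.
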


\begin{proof} If a leaf $\beta$ of $L^\mp$ with non empty intersection with $C$ does not cross a geodesic boundary component of $C$, then it is contained in $C$. Thus $\beta$ does not cross any $L^\pm$-leaf.  This contradicts the density and connectedness property of FT prelaminations.

Since complementary regions are closed and convex, if a $L^\mp$-leaf has both endpoints in ideal boundary components of $C$, it is therefore contained in of $C$. Thus, it is either a geodesic boundary component (which is impossible) or is contained in $C$ and one concludes as before.
An ideal vertex  is by definition joined to a unique geodesic vertex by an edge (see Definition \ref{d.ideal-segments} and Remark \ref{rem_ideal-segments}). Thus ideal vertices have valence $1$.
\end{proof}

\begin{rem}\label{rem_no-ideal-connection} 
The statement and proof of Lemma \ref{l.no-ideal-connection} also holds for any geodesic $\beta$ that intersects the complementary region $C$, as long as $\beta$ does not cross any $L^\mp$-leaf. 
Indeed, if such a geodesic did not cross a geodesic side of $C$, then it would contradict the density or connectedness properties of $(L^+,L^-)$. 
\end{rem}

\begin{lemma} \label{lem:linkage_connected}
For any complementary region $C$ of $L^\pm$, the graph $\graph(C)$ is connected.
\end{lemma}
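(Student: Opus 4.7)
The plan is to use the FT connectedness property of $(L^+, L^-)$ to produce chains of crossing leaves and translate them into paths in $\graph(C)$.

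Let $v_1, v_2$ be vertices of $\graph(C)$ and assume $C$ is a complementary region of $L^\pm$. The first step is to show that every vertex $v$ of $\graph(C)$ admits an $L^\mp$-leaf actually intersecting it. For ideal vertices this is immediate from the definition (endpoints of $L^\mp$-leaves crossing the associated geodesic side are dense in the ideal segment). For a geodesic side $s$ of $C$ that is a leaf of $L^\pm$, FT connectedness applied to $s$ and any other leaf produces a chain whose second entry must lie in $L^\mp$ and cross $s$, since no two leaves in the single prelamination $L^\pm$ cross. For a geodesic side $s$ that is not a leaf (e.g.\ the root of a one-root region), the same argument underlying Remark \ref{rem_no-ideal-connection} applies: if no $L^\mp$-leaf crossed $s$, the two half-discs bounded by $s$ would each contain their own leaves of $L^\pm \cup L^\mp$ and no chain could pass between them, violating FT connectedness.

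Next, pick $L^\mp$-leaves $\beta_1, \beta_2$ intersecting $v_1, v_2$ respectively, and apply FT connectedness to produce a chain $\beta_1 = \gamma_0, \gamma_1, \ldots, \gamma_k = \beta_2$ with each $\gamma_i$ crossing $\gamma_{i+1}$. Since leaves of a single prelamination never cross, the chain alternates between $L^\pm$ and $L^\mp$. The key step is to show that any two consecutive $L^\mp$-leaves in the chain intersect vertices of $\graph(C)$ lying in the same connected component of $\graph(C)$. Consider three consecutive entries $\gamma_{i-1} \in L^\mp$, $\gamma_i \in L^\pm$, $\gamma_{i+1} \in L^\mp$. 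Either (a) $\gamma_i$ is a geodesic side of $C$, in which case $\gamma_i$ is itself a vertex of $\graph(C)$ and both $\gamma_{i-1}, \gamma_{i+1}$ cross it, so any vertex intersected by $\gamma_{i-1}$ is connected in $\graph(C)$ via $\gamma_i$ to any vertex intersected by $\gamma_{i+1}$; or (b) $\gamma_i$ is disjoint from $\mathring{C}$, in which case $\gamma_{i-1}$ and $\gamma_{i+1}$ both cross $\gamma_i$ from the same half-plane and convexity of $C$ shows the chain can be shortcut, removing the excursion outside $C$.

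The main obstacle is controlling the case when an intermediate $L^\mp$-leaf $\gamma_i$ itself does not meet $\mathring{C}$, so that the chain temporarily leaves $C$ entirely. I handle this by induction on the number of $L^\mp$-leaves in the chain failing to intersect $\mathring{C}$: using convexity of $C$, any maximal subchain lying outside $\mathring{C}$ enters and exits across geodesic sides of $C$, and one can replace such a subchain by the pair of sides involved, producing a shorter chain whose $L^\mp$-leaves all intersect $\mathring{C}$. Once the chain has this property, each pair of consecutive $L^\mp$-leaves contributes vertices in a common component of $\graph(C)$ by the analysis above, and since vertices intersected by a common $L^\mp$-leaf are trivially connected, we obtain a walk in $\graph(C)$ from $v_1$ to $v_2$.
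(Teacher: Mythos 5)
Your high-level plan---produce a chain of crossing leaves via the FT connectedness axiom and convert it into a walk in $\graph(C)$---matches the paper's approach, and your preliminary step (every vertex of $\graph(C)$ is met by some $L^\mp$-leaf) is sound. However, the handling of excursions outside $C$ has genuine gaps.

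In your case (b), the claim that ``the chain can be shortcut, removing the excursion outside $C$'' is not justified as stated: the flanking entries $\gamma_{i-1}$ and $\gamma_{i+1}$ both lie in $L^\mp$ and hence do \emph{not} cross one another, so deleting $\gamma_i$ does not produce a valid chain. The observation you actually need, but do not make, is the following. An $L^\pm$-leaf cannot cross any geodesic side of $C$ (the sides are leaves of $L^\pm$ or limits of such, and leaves of a single prelamination do not cross). Hence if $\gamma_i \in L^\pm$ is disjoint from $\mathring C$, it lies entirely in a single half-disc $H_s$ of $\mathring{\bD}^2 \smallsetminus \mathring C$ bounded by one geodesic side $s$, and any $L^\mp$-leaf meeting both $\gamma_i$ (inside $H_s$) and $\mathring C$ must cross $s$. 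So $\gamma_{i-1}$ and $\gamma_{i+1}$ both intersect the vertex $s$, and the walk in $\graph(C)$ passes through $s$ with no modification of the chain at all.

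The induction ``replace any maximal subchain lying outside $\mathring{C}$ by the pair of sides involved'' is not well-defined: geodesic sides of $C$ need not be leaves of $L^\pm$ (for instance the root of a one-root region), so the replacement need not produce a chain in $L^+ \cup L^-$, and even when the sides are leaves you have not checked the required crossing relations. The missing topological fact, which makes both your case (b) and the excursion bookkeeping go through, is that $\mathring{\bD}^2 \smallsetminus \mathring C$ is a \emph{disjoint} union of half-discs $H_s$, one per geodesic side $s$; a connected arc leaving $C$ through $s$ and staying in the interior of the disc must therefore return through the same $s$. This is exactly why the paper switches from the combinatorial chain to the geometric polygonal path $\gamma$ and simply reads off the walk in $\graph(C)$ from $\gamma \cap C$: each exit/re-entry is automatically through a single common vertex. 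If you adopt that viewpoint, your argument closes cleanly; as written, the ``shortcutting'' and the subchain replacement are the two points where it does not.
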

\begin{proof}
By Lemma~\ref{l.no-ideal-connection} every ideal vertex of $\Gamma(C)$ is connected to a geodesic vertex.  Thus it is enough to check that any two geodesic vertices $\alpha,\beta$ are connected by a path.  Each of $\alpha,\beta$ is either accumulated upon by leaves $\alpha_n, \beta_n\in L^\pm$ contained in the half disc bounded by the geodesic side $\alpha$ or $\beta$ and disjoint from $C$, or is already a leaf of $L^\pm$ (in which case we can write $\alpha_n=\alpha, \beta_n = \beta$ for all $n$).

By the connectedness property of fully transverse prelaminations, there is a polygonal path $\gamma$ joining $\alpha_n$ to $\beta_n$.  
The intersection of this path with $C$ consists of $L^\mp$-leaf segments joining two geodesic sides of $C$ and possibly $L^\pm$-leaf segments contained in geodesic sides of $C$.  Thus $\gamma\cap C$ induces a path in $\Gamma(C)$ connecting $\alpha$ with $\beta$.
\end{proof}

\begin{lemma} \label{lem:union_of_segments} 
Suppose that $C$ admits an ideal side $I$. Let $\{ s_i \}$ denote the set of ideal segments contained in  $I$. Then $\overline{\bigcup_i s_i} = I$.
\end{lemma}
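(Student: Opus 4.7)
The plan is to show every $y \in \mathring I$ lies in $\overline{\bigcup_i s_i}$; combined with the obvious inclusion $\bigcup_i s_i \subset I$ and the density of $\mathring I$ in $I$, this yields equality. So fix $y \in \mathring I$ and an open interval $V \subset \mathring I$ containing $y$; the task is to exhibit an ideal segment inside $V$.

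The first step is to verify that no leaf of $L^\pm$ has an endpoint in $\mathring I$. A point $a \in \mathring I$ is not an endpoint of any geodesic side of $C$ (those live on $\partial I$), so a small half-disc neighborhood of $a$ in $\bD^2$ is contained in $\mathrm{int}(C)$, and any $L^\pm$-leaf ending at $a$ would immediately enter $\mathrm{int}(C)$. Combined with Definition \ref{def_FT}, this shows that the endpoints of $L^\mp$-leaves are dense in $\mathring I$. Any such leaf has its other endpoint outside $\bar I$ (otherwise by convexity it would lie in $\bar C$ while meeting no $L^\pm$-leaf, contradicting FT connectedness) and, by Lemma \ref{l.no-ideal-connection}, crosses exactly one geodesic side of $C$.

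Next, I would pick $\beta_1,\beta_2 \in L^\mp$ with endpoints $e_1 < y < e_2$ in $V$. Since $L^\mp$-leaves are mutually non-crossing, $\beta_1,\beta_2$ together with the arc $[e_1,e_2] \subset I$ bound a convex wedge $W \subset C$, and every $L^\mp$-leaf with endpoint in $(e_1,e_2)$ lies in $W$ and exits through a geodesic side of $C$ on the remaining boundary arc $P := \partial W \cap \partial C$. Non-crossing further forces the assignment $e \mapsto x_e$ (where $x_e$ is the exit point of the leaf with endpoint $e$) to be monotone along $P$. Consequently, for every geodesic side $\alpha^*$ the set $E_{\alpha^*} \cap (e_1,e_2)$ of endpoints of leaves exiting through $\alpha^*$ is order-convex in $(e_1,e_2)$. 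Together with density of the full endpoint set $E \cap (e_1,e_2)$ in $(e_1,e_2)$, any two distinct points of $E_{\alpha^*} \cap (e_1,e_2)$ immediately produce an ideal segment inside $V$, namely the maximal interval on which $E_{\alpha^*}$ is dense.

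It therefore remains to exhibit such a side $\alpha^*$. The one-sided limits $x_y^\pm := \lim_{e \to y^\pm} x_e$ exist in $\partial C$ by monotonicity, and the key geometric claim, which I expect to be the main obstacle, is that each $x_y^\pm$ lies either in the interior of some geodesic side or at a vertex of $C$. Accumulation in the interior of another ideal side $I'$ is ruled out by the first step applied to $I'$ (no geodesic side has an endpoint in $\mathring{I'}$, so geodesic sides cannot accumulate there), while accumulation at an interior point of $\bD^2$ is excluded because each $x_e$ is the intersection of $\beta_e$ with $\partial C$, and this intersection converges along a Hausdorff limit of the chords $\beta_e$ to a point of $\partial C$. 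Two cases then conclude the argument: if $x_y^- = x_y^+$ lies in the interior of some $\alpha^*$, shrinking $V$ makes $\beta_1,\beta_2$ themselves exit through $\alpha^*$ and the previous paragraph applies; if the limits meet at a vertex between adjacent sides $\alpha^-,\alpha^+$, then leaves with $e$ just below $y$ exit through $\alpha^-$ and leaves with $e$ just above $y$ exit through $\alpha^+$, producing ideal segments on each side of $y$ inside $V$. This ``no pathological accumulation'' step genuinely exploits that $y \in \mathring I$, since at the endpoints of $I$ geodesic sides may indeed accumulate.
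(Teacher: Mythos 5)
Your approach is genuinely different from the paper's: you argue directly by analyzing one-sided limits of the monotone ``exit map'' $e\mapsto x_e$, whereas the paper argues by contradiction, using the FT-connectedness axiom to build a finite polygonal path in $C$ connecting two exit sides and then pigeonholing among the finitely many geodesic sides appearing in that path. Your route avoids the polygonal-path construction but trades it for a limit argument, and it is precisely there that I see a gap.

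The key claim you flag as ``the main obstacle'' --- that each $x_y^\pm$ lies in the interior of a geodesic side or at a vertex \emph{between two adjacent geodesic sides} --- is not established. You correctly rule out accumulation in $\mathring I'$ for another ideal side $I'$, and accumulation at an interior point of $\bD^2$ off $\partial C$ is automatic since $\partial C$ is closed. But there is a third possibility you do not address: $x_y^\pm$ may be a point of $S^1\cap\partial C$ at which geodesic sides of $C$ \emph{accumulate}, with the sides $\alpha_{k(e)}$ carrying the exit points $x_e$ shrinking to $x_y^\pm$ as $e\to y^\pm$. In that scenario no single $\alpha^*$ collects the exits near $y$, and the final step of your proof collapses. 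Complementary regions of a prelamination can genuinely have such accumulating boundary chords, so this case cannot simply be dismissed. It can, however, be excluded: if $x_y^-\in S^1$, then necessarily $f(e)\to x_y^-$ so the limiting geodesic $\geo(y,x_y^-)$ lies in $\bar L^\mp$; it is a chord of the convex set $C$ and thus crosses no geodesic side of $C$ and no leaf of $L^+\cup L^-$, contradicting the connectedness property of FT prelaminations exactly as in Remark \ref{rem_no-ideal-connection}. With this addition your argument goes through (and the ``vertex between adjacent sides'' case in fact turns out to be vacuous, since it too is ruled out by the same reasoning). Note that this is the same ingredient the paper uses, just deployed differently: the paper invokes connectedness once, to get a \emph{finite} polygonal path and a finite pigeonhole, rather than to control a limit.

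Two smaller imprecisions worth cleaning up: the map $e\mapsto x_e$ is only well-defined after choosing one leaf per endpoint (several $L^\mp$-leaves can share the endpoint $e$), though any consistent choice remains monotone and suffices; and the case $x_y^-\neq x_y^+$ is not stated as a case, although the same one-sided argument handles it.
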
 

\begin{proof} 

Suppose for contradiction that there exists an open interval $U$ in $I\smallsetminus \cup_i s_i$. 
By the density property of FT prelaminations, there exist two leaves $\alpha_0$, $\alpha_1$  of $L^\mp$ with distinct  endpoints $a_0\neq a_1$ in $U$. By Lemma \ref{l.no-ideal-connection} the other endpoints of $\alpha_0$ and $\alpha_1$ are  contained in (possibly equal) geodesic sides $\beta_0$ and $\beta_1$ of $C$, respectively.

If $\beta_0=\beta_1$ then every $L^\mp$-leaf having an endpoint between $a_0$ and $a_1$ crosses $\beta_0$. Thus, the open interval $(a_0,a_1)\subset U$ is contained in an ideal segment, contradicting the hypothesis on $U$.

So we must have $\beta_0\neq \beta_1$.  Since $(L^+,L^-)$ is fully transverse, there is a polygonal path $\gamma_0$ from $\beta_0$ to $\beta_1$. By replacing all components of $\gamma_0\smallsetminus C$ by geodesic sides of $C$, we obtain a path $\gamma$ that stays in $C$ and has its (finitely many) sides alternatively on $L^\mp$-leaf segments and segments of geodesic boundary components of $C$.  This may not be a path in $L^+ \cup L^-$ because geodesics sides of $C$ are not necessarily leaves, however the path can be approximated arbitrarily well by paths in $L^+ \cup L^-$, and we may replace $\gamma$ by any sufficiently good approximation in the rest of the argument. 

Now every $L^\mp$-leaf having an endpoint between $a_0$ and $a_1$ crosses $\gamma$ at one of its finitely many sides on geodesic components of $C$.
Thus there is a non-empty open interval $V\subset (a_0,a_1)$ such that a dense subset of $V$ consists of endpoints of $L^\mp$-leaves crossing the same geodesic boundary component of $C$. In other words, $V\subset U$ is contained in an ideal segment, contradicting the hypothesis on $U$.
\end{proof}

We finish this introductory subsection with an additional lemma about valence $1$ vertices.
\begin{lemma}\label{lem:valence1}
Suppose $\alpha$ is a geodesic side of $C$ of valence $1$ in $\graph(C)$, and let $e$ be the edge starting at $\alpha$.
If the other vertex of $e$ is a geodesic side $\alpha_1$ (resp.~an ideal segment $s$) then every $L^\mp$-leaf 
crossing $\alpha$ crosses $\alpha_1$ (resp. has an endpoint in the interior of $s$).
\end{lemma}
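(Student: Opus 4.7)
The plan is to start with an arbitrary $L^\mp$-leaf $\beta'$ crossing $\alpha$ and analyze how the geodesic segment $\beta'\cap C$ exits the convex region $C$. Since $\beta'$ enters $C$ through $\alpha$ and cannot cross $\alpha$ a second time, it must exit $C$ either \emph{(A)} through another geodesic side $\alpha'$ of $C$, or \emph{(B)} through an ideal endpoint $a'$ of $\beta'$ lying in the interior of some ideal side $I'$ of $C$.

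In case (A), the leaf $\beta'$ defines an edge of $\graph(C)$ from $\alpha$ to $\alpha'$, so the valence-$1$ hypothesis forces $\alpha'$ to coincide with the unique neighbor of $\alpha$. This is compatible only with the geodesic-neighbor case of the lemma (giving $\alpha'=\alpha_1$ and the desired conclusion), and is ruled out in the ideal-neighbor case. Consequently, in the ideal-neighbor case we must be in situation (B).

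The core of the proof is case (B). First I would observe that since $C$ is a complementary region of $L^\pm$, no $L^\pm$-leaf can have an endpoint in $\mathring{I'}$: such a leaf would approach $\mathring{I'}$ from the $C$-side and enter the interior of $C$. Combined with the FT density of endpoints on $S^1$, this forces the endpoints of $L^\mp$-leaves to be dense in $\mathring{I'}$. Each such $L^\mp$-leaf meets $C$, so by Lemma~\ref{l.no-ideal-connection} it crosses some geodesic side $\alpha_*$ of $C$, placing its ideal endpoint in a set $E_{\alpha_*}\cap I'$. Next comes the crucial use of valence $1$: if $a'$ lay in the interior of some ideal segment $s'$ associated to a geodesic side $\alpha_*\neq\alpha$, then Remark~\ref{rem_ideal-segments} would imply $\beta'$ also crosses $\alpha_*$, yielding a forbidden second edge $\alpha\to\alpha_*$. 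Similarly, the existence of any ideal segment associated to $\alpha$ other than $s$ (in the ideal-neighbor case), or of any ideal segment associated to $\alpha$ at all (in the geodesic-neighbor case), produces an extra edge at $\alpha$ and contradicts valence $1$. Combining this with Lemma~\ref{lem:union_of_segments} (the ideal segments cover a dense subset of $\mathring{I'}$) and a Baire-type argument on the closures $\overline{E_{\alpha_*}\cap I'}$, one concludes that $a'$ must lie in $\mathring{s}$ in the ideal-neighbor case, and that case (B) cannot occur in the geodesic-neighbor case -- leaving case (A), which gives the conclusion there.

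The main obstacle is this final density step: verifying that $a'$ lands in the \emph{interior} of $s$, not on its boundary or in a gap between ideal segments associated to other geodesic sides. The argument has to marry the valence-$1$ constraints (which prevent ideal segments of other sides from containing $a'$ in their interior) with the maximality property in the definition of an ideal segment, to show that a neighbourhood of $a'$ in $I'$ is genuinely filled out by $E_\alpha$ rather than by $E_{\alpha_*}$ for other $\alpha_*$.
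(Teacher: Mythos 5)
Your case decomposition---a leaf $\beta$ crossing $\alpha$ must leave $C$ through another geodesic side, through the interior of some ideal segment, or at a ``gap point'' of the ideal boundary---is the right one, and your treatment of the first two exits is correct. But the gap you flag at the end is real, and the proposal does not close it. Lemma~\ref{lem:union_of_segments} only gives that the ideal segments are dense in the ideal side $I'$; their complement in $I'$ is a closed nowhere-dense set, and $a'$ could perfectly well lie in it. A Baire category argument gives no purchase on the location of a single prescribed point $a'$ (the decomposition of $I'$ into ideal segments plus a nowhere-dense gap set is entirely consistent with Baire), and trying to show a neighbourhood of $a'$ is filled out by $E_\alpha$ via maximality just reproduces the same difficulty one scale down. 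So the final step is genuinely missing, not just unwritten.

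What closes the gap in the paper is not a pointwise density argument but a global one, using the connectedness of the linkage graph (Lemma~\ref{lem:linkage_connected}). Suppose $\beta$ exits $C$ at an ideal point $a'$ lying in no vertex of $\graph(C)$. Cut $C$ along $\beta$; this produces two pieces, and the piece $P$ not containing the other vertex of $e$ (namely $\alpha_1$, resp.~$s$) still contains vertices of $\graph(C)$, because by Lemma~\ref{lem:union_of_segments} ideal segments accumulate on $a'$ from both sides. Now no edge of $\graph(C)$ can join a vertex lying in $P$ to a vertex in the other piece: the $L^\mp$-leaf defining such an edge would have to cross $\beta\in L^\mp$, which is impossible in a prelamination, and the only vertex straddling $\beta$ is $\alpha$, whose sole edge $e$ runs into the other piece. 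Thus $\graph(C)$ would be disconnected, contradicting Lemma~\ref{lem:linkage_connected}. Replace your final density/Baire step with this cut-and-disconnect argument and the proof is complete.
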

  
\begin{proof}
Suppose $\beta \in L^-$  
crosses $\alpha$.   Since $\alpha$ has valence 1, either $\beta$ crosses $\alpha_1$ or ends in the interior of $s$, depending on the case (which is what we wanted to show) or $\beta$ meets $\partial C$ at an ideal point between two (ideal or geodesic) sides (by Lemma \ref{lem:union_of_segments}).  In the latter case, consider the connected component of $C \smallsetminus \beta$ that does not contain $e$.  No side of $C$ in this component can be connected to $\alpha$ (because $\alpha$ has valence 1), nor can an edge cross $\beta$.  This shows that $\graph(C)$ is disconnected, contradicting Lemma \ref{lem:linkage_connected}.  
\end{proof}

\subsection{Adding leaves to prelaminations}
By definition, the planar completion of a prelamination is obtained by adding additional leaves.  
In this and the next section, we gather results towards this process, but with a spirit of more generality. In particular, we treat two questions: 
\begin{enumerate} 
\item What leaves can be added to a pair of FT prelaminations while preserving the prelamination and fully transverse properties?
\item Which leaves may be added while preserving the no high-valence or simple cycle conditions?   More generally, how does inserting an extra leaf into a complementary region change its linkage graph? 
\end{enumerate} 

As a first step towards the first question we have the following fact, which will also be used in the Appendix.  

\begin{lemma} \label{lem:extend_lamination}
Let $(L^+,L^-)$ be a pair of FT prelaminations.  Define 
\[ \bar{L}^\pm := \{ (a, a') \in S^1 \times S^1 : a \neq a', \,  \exists (a_n, a_n') \in L^\pm \text{ converging to } (a, a')\} \]
Then, for any subset $\hat{L}^\pm$ with $L^\pm \subseteq \hat L^\pm \subseteq \bar L^\pm$, the pair $(\hat L^+, \hat L^-)$ is a pair of FT prelaminations. 
\end{lemma}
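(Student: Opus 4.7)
The plan is to verify the four conditions in Definition~\ref{def_FT} for $(\hat L^+,\hat L^-)$: (a) each of $\hat L^\pm$ is a prelamination; (b) $\hat L^+\cap\hat L^-=\emptyset$; (c) density; and (d) connectedness. Items (c) and (d) are immediate from the inclusions $L^\pm\subseteq\hat L^\pm$, so the real work concentrates on (a) and (b).

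For (a), I would argue by contradiction: if distinct $\alpha,\beta\in\hat L^+$ cross, then their four endpoints are distinct and cyclically interleaved on $S^1$, and this interleaving is an open condition on ordered quadruples of pairwise distinct points. Choosing approximating sequences $\alpha_n,\beta_n\in L^+$ with $\alpha_n\to\alpha$ and $\beta_n\to\beta$, the endpoints of $\alpha_n,\beta_n$ remain interleaved for $n$ large, so $\alpha_n$ crosses $\beta_n$, contradicting that $L^+$ is a prelamination. The argument for $\hat L^-$ is identical.

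For (b), since $\hat L^\pm\subseteq\bar L^\pm$ it suffices to show $\bar L^+\cap\bar L^-=\emptyset$. Assume for contradiction that $\alpha=\{a,b\}\in\bar L^+\cap\bar L^-$ and fix approximating sequences $\alpha_n=\{a_n,b_n\}\in L^+$ and $\beta_m=\{c_m,d_m\}\in L^-$ both converging to $\{a,b\}$. The strategy is to produce a single leaf $\gamma\in L^+\cup L^-$ whose endpoints lie in opposite open arcs $A_1,A_2$ of $S^1\setminus\{a,b\}$. Once $\gamma$ is in hand, a brief case-check on the four possible configurations of which side of $a$ and $b$ the sequences approach from confirms that the endpoints of $\gamma$ (lying strictly in $A_1,A_2$) remain separated by $\{a_n,b_n\}$ for $n$ large, so $\gamma$ crosses $\alpha_n$; likewise $\gamma$ crosses $\beta_m$ for $m$ large. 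Since $\gamma$ belongs to either $L^+$ or $L^-$, one of these two crossings contradicts the prelamination property of the corresponding prelamination, completing the argument.

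It remains to produce $\gamma$. By density of endpoints, there exist leaves $\mu_i\in L^+\cup L^-$ with at least one endpoint in each arc $A_i$. If either $\mu_i$ already has endpoints in opposite arcs, take $\gamma=\mu_i$. Otherwise both $\mu_1,\mu_2$ lie in closed arcs $\overline{A_1},\overline{A_2}$ that meet only at $\{a,b\}$, and an elementary check shows that two leaves lying in distinct such closed arcs cannot cross (their endpoints cannot interleave). By the connectedness axiom of FT, the chain of crossings from $\mu_1$ to $\mu_2$ must therefore contain some leaf $\gamma$ with endpoints on opposite sides of $\{a,b\}$. The main obstacle I anticipate is handling the edge case where $\alpha$ itself lies in $L^+\cup L^-$: then $\alpha$ can participate in the chain (its endpoints lie on the common boundary of the two closed arcs), but by the same density and connectedness reasoning $\alpha$ must be crossed by some leaf of the opposite prelamination, and that leaf plays the role of $\gamma$. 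The rest of the argument is comparatively mechanical.
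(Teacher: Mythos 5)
Your handling of conditions (a) and (b) — prelamination property and transversality — is correct and closely parallels the paper's own argument: both use the openness of crossing to pass from approximating sequences, and both produce a leaf of $L^\mp$ crossing a putative limit leaf via density plus connectedness. However, there is a genuine gap in your treatment of connectedness (d). You assert that connectedness of $(\hat L^+,\hat L^-)$ is ``immediate from the inclusions $L^\pm\subseteq\hat L^\pm$,'' but it is not. The connectedness axiom quantifies over \emph{all} $\alpha,\beta\in\hat L^+\cup\hat L^-$, including leaves $\alpha\in\hat L^\pm\smallsetminus L^\pm$. For such $\alpha$, the chain promised by the FT property of $(L^+,L^-)$ only connects leaves of $L^+\cup L^-$, so you still need to attach $\alpha$ to this chain by exhibiting some leaf of $L^+\cup L^-$ that $\alpha$ crosses. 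The paper makes this explicit: it first records that every leaf of $\hat L^\pm$ crosses a leaf of $L^\mp$ (a consequence of the transversality argument), and only \emph{then} deduces connectedness. You do effectively establish this crossing fact inside your argument for (b), but it is buried inside a proof by contradiction and never isolated or invoked for (d); as written, your step (d) does not follow from what you have stated. The fix is minor — extract the observation ``every leaf of $\hat L^\pm$ is crossed by some leaf of $L^\mp$'' (which your argument for (b) already yields) and use it to splice new leaves into existing chains — but as it stands the claim of immediacy is false.
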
 

\begin{proof}
We first show that $\bar{L}^\pm$ are both prelaminations (in fact actual laminations, since they are closed), and are transverse. 

Suppose $(a_n, a_n') \in L^+$ converges to $(a, a')$ and $a \neq a'$.  (The case for $L^-$ is the same).  We need to show that $(a, a')$ does not cross any leaf or limit of leaves of $L^+$, and that it does not agree with a leaf or limit of leaves of $L^-$.   For the first point, if $(a, a')$ crossed a limit, then it would cross a leaf $\alpha$, and thus $\alpha$ would cross $(a_n, a_n')$ for sufficiently large $n$, which is impossible.   For the second point, note that density and connectedness of fully transverse prelaminations implies that there must exist a leaf of either $L^+$ or $L^-$ crossing $(a, a')$, and as we just observed this is necessarily a leaf of $L^-$.  Thus, $(a, a')$ cannot be a limit of leaves in $L^-$. 

Now we show the other properties of FT prelaminations hold.  Density of the pair $(\hat L^-,\hat L^+)$ follows  from the density of $(L^-,L^+)$, as $L^\pm\subset \hat L^\pm $.
For the connectedness of $(\hat L^-,\hat L^+)$, note that the argument above implies that every leaf of $\hat L^\pm$  crosses a leaf of $L^\mp$.  With this, the connectedness of the pair $(\hat L^-,\hat L^+)$ follows from the connectedness of the pair $(L^-,L^+)$.
\end{proof} 

Adding leaves from $\bar L^\pm$ to $L^\pm$ changes complementary regions and linkage graphs in a predictable way:  
\begin{lemma} \label{lem:closure_same_graph}
Let $(L^+,L^-)$ be a pair of FT prelaminations.  Then 
\begin{enumerate}[label=(\roman*)]
\item the closure $\bar L^\pm$ has no trivial complementary regions
\item For any $L^\pm \subseteq \hat L^\pm \subseteq \bar L^\pm$, we have that $C$ is a nontrivial complementary region of $\hat{L}^\pm$ if  and only if it is a nontrivial complementary region of $L^\pm$. Moreover, $\graph(C; L^\mp)\cong\graph(C; \hat L^\mp)$. 
\end{enumerate}

In particular, $v$ is a high-valence side for $\graph(C; L^\mp)$ if and only if it is a high-valence side for $\graph(C; \hat L^\mp)$, and $(L^+,L^-)$ satisfies the simple cycle condition if and only if $(\hat L^+,\hat L^-)$ does too.
\end{lemma}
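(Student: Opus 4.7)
The plan is to combine Lemma \ref{lem:extend_lamination} (which makes $(\bar L^+, \bar L^-)$ into an FT pair) with the closedness of $\bar L^\pm$ as sets of pairs of points, and check in turn that each piece of data defining a linkage graph is stable under passing from $L^\pm$ to an intermediate $\hat L^\pm \subseteq \bar L^\pm$.

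For (i), I would argue by contradiction. A trivial complementary region of $\bar L^\pm$ would be a single geodesic $\alpha$ accumulated on both sides by $\geo(\bar L^\pm)$. Using the density/connectedness property of the FT pair $(\bar L^+, \bar L^-)$ together with the closedness of $\bar L^\pm$, one extracts a sequence of pairs in $L^\pm$ whose endpoints converge to those of $\alpha$; this places $\alpha$ in $\bar L^\pm$. But then $\alpha$ cannot be the closure of a connected component of $\bD^2 \setminus \geo(\bar L^\pm)$ that reduces to $\alpha$ alone, a contradiction.

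For the first half of (ii), the key claim is that no leaf $\hat\beta \in \hat L^\pm \setminus L^\pm$ can enter the interior of an $L^\pm$-complementary region $C$. Indeed, $\hat\beta$ is a limit of leaves $\beta_n \in L^\pm$ with endpoints converging; if $\hat\beta$ met $\mathring{C}$ then so would $\beta_n$ for large $n$, which is impossible since each $\beta_n$ is a geodesic side of some $L^\pm$-complementary region. Hence adding leaves from $\hat L^\pm \setminus L^\pm$ never subdivides $C$, and the nontrivial complementary regions for $L^\pm$ and $\hat L^\pm$ coincide. With such a $C$ fixed, I would then verify $\graph(C; L^\mp)$ and $\graph(C; \hat L^\mp)$ agree as labelled graphs. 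Geodesic vertices agree because $C$ is unchanged. For ideal vertices, the density condition in Definition \ref{d.ideal-segments} passes from $L^\mp$ to $\hat L^\mp$ trivially (since $L^\mp \subseteq \hat L^\mp$) and back because every $\hat L^\mp$-leaf crossing a side $\alpha$ of $C$ is a limit of $L^\mp$-leaves crossing $\alpha$; maximality transfers in both directions for the same reason. For edges, an edge defined by $\hat\beta \in \hat L^\mp$ between two vertices is matched by edges defined by approximating leaves $\beta_n \in L^\mp$, which for large $n$ cross the same geodesic vertices and --- using openness of the interiors of ideal segments --- have endpoints inside the same ideal segments. Equality of labelled graphs immediately implies preservation of high-valence sides and the simple cycle condition.

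The main obstacle I foresee is the ideal-segment analysis, since the definition balances a density property (monotone in the lamination) against a maximality property (antitone in the lamination), and one must check that the approximating endpoints of $L^\mp$-leaves actually land in the interior of the relevant ideal segment rather than on its boundary. This is handled by openness of the interior and by noting that the linkage-graph edge relation is defined purely by interior-endpoint incidences, so boundary-limiting sequences do not cause spurious edges to appear or disappear.
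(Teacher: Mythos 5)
Your proposal is correct and follows essentially the same approach as the paper: closedness of $\bar L^\pm$ disposes of trivial complementary regions; leaves of $\hat L^\pm \setminus L^\pm$ are limits of $L^\pm$-leaves and hence cannot penetrate the (open) interior of an $L^\pm$-complementary region, so the nontrivial complementary regions are unchanged; and the linkage-graph data (geodesic and ideal vertices, edges) is stable because crossing a geodesic side and landing in the interior of an ideal segment are open conditions. Two minor remarks: for part (i) the density/connectedness of $(\bar L^+, \bar L^-)$ is not needed -- closedness of $\bar L^\pm$ alone immediately makes an accumulated-on-both-sides geodesic into a leaf, hence not in the complement; and the phrase ``each $\beta_n$ is a geodesic side of some $L^\pm$-complementary region'' is imprecise (not every leaf is a boundary side), though the conclusion you want follows simply because $\beta_n \subset \geo(L^\pm)$ and $\geo(L^\pm)$ is disjoint from the open set $\mathring{C}$. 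Your explicit discussion of why ideal segments and their maximality transfer back and forth is a careful, correct elaboration of the paper's terser ``open conditions'' argument.
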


Note that prelaminations $\hat L^\pm \subseteq \bar L ^\pm$ may contain high-valence leaves even when $L^\pm$ does not: For instance, if $(L^+,L^-)$ is induced from a pA-bifoliation with some nonseparated leaves, then the closures $\bar L ^\pm$ will contain the ``roots" of the one-root complementary regions, which are high-valence.  

\begin{proof}[Proof of Lemma \ref{lem:closure_same_graph}]
A trivial complementary region of a prelamination is a geodesic $\alpha$ that is accumulated on both sides by leaves. Since $\bar L^\pm$ is closed, it has no trivial complementary regions. 

If $C$ is a nontrivial connected component of $L^\pm$, its boundary in the open disc is made up of leaves of $\bar L^\pm$, so it is also a complementary region of $\hat L^\pm$ (Recall that we defined complementary regions to be the \emph{closure} of connected components of the complement.)
We now argue that $\graph(C;\hat L^\mp) \cong \graph(C;L^\mp)$. Suppose a leaf $\alpha$ of $\hat L^\mp \smallsetminus L^\mp$ represents an edge of  $\graph(C;\hat L^\mp)$, thus crosses at least one geodesic side and perhaps meets an ideal segment of $C$.  Since crossing and meeting ideal segments (which were defined in terms of open intervals) are open conditions, this means that some $L^\mp$ leaves accumulating on $\alpha$ represents the same side.  This shows 
$\graph(C;\hat L^\mp) \subseteq \graph(C;L^\mp)$ and the other inclusion is immediate. 
\end{proof}

The other possible leaves that could be added to a prelamination are geodesics contained inside nontrivial complementary regions. Any nontrivial complementary region $C$ of $L^\pm$ is the convex hull of its extremal points.  Any geodesic $\alpha$ between two extremal points of $C$ can be added as a leaf to $L^\pm$ in such a way so that $\alpha\cup L^\pm$ is still a prelamination, and the fully transverse property is still preserved. 

However, adding such a leaf will change the linkage graphs, since it divides one complementary region in two.  More importantly, it may change the properties of linkage graphs that we care about, such as the simple cycle condition and high-valence leaves. In the next section, we introduce a natural class of geodesics called \emph{crossing geodesics} that can be added to FT prelaminations without adversely affecting these properties.   We then use the crossing geodesics in our construction of a completion in Section \ref{sec:proof_completion_thm}.  

%%%%%%%%%%%%%%%%%%%%%%%%%%%%%%%%%%%%%%%%%%%%%%%%%

\subsection{Crossing geodesics}

In this section again, $L^\pm$ are FT prelaminations and $C$ is a fixed nontrivial complementary region of $L^\pm$.

Since the complementary region $C$ we consider has non-empty interior, its boundary $\partial C$ is a simple closed curve. The vertices correspond to intervals on this curve, with non-empty, pairwise disjoint interiors. Thus these vertices inherit a natural cyclic order.
If we select a vertex $v_0$ in $\graph(C)$, the vertices in $\graph(C)\smallsetminus\{v_0\}$ all lie in a single component of $\partial C \smallsetminus\{v_0\}$ and therefore inherit a {\em linear order}; which also gives a linear order on $(\partial C \smallsetminus\{v_0\}) \cap S^1$. Thus, if $\Gamma \subset \graph(C)\smallsetminus\{v_0\}$ is a subgraph, we get an induced linear order on the subset of $S^1$ corresponding to endpoints of vertices in $\Gamma$.   Abusing notation, we denote this set by $\Gamma \cap S^1$.  

\begin{definition}\label{d.extrema} 
Let $v_0$ be a vertex of $\graph(C)$ and let $\Gamma\subset \left( \graph(C)\smallsetminus\{v_0\} \right)$ be a subgraph of $\graph(C)$.
The {\em extremal points of $\Gamma$ with respect to $v_0$} are the unique pair of points in $S^1$ obtained as the infimum and supremum of $\Gamma \cap S^1$ (as defined above) with respect to the inherited linear order.   
\end{definition}

When $e$ is an edge of $\graph(C)$ which is not in a cycle, then $\graph(C)\smallsetminus \{e\}$ has exactly two connected components, possibly reduced to a vertex. Similarly, if $e_1,e_2$ are two successive edges in a same cycle (that is, $e_1$ and $e_2$ are in the same cycle and share a vertex $v_0$) and neither $e_1$ nor $e_2$ are part of a different cycle, then $\graph(C)\smallsetminus\{e_1,e_2\}$ has exactly two connected components. Thus, 
we make the following definition
\begin{definition}
We call an edge $e$ not part of a cycle a \emph{disconnecting edge}\footnote{There is a similar notion of \emph{cut-edge} in graph theory, which is defined as an edge that separates the graph into two \emph{nontrivial} subgraphs. Thus a cut-edge is a disconnecting edge, but not vice-versa.}. Similarly, we call a pair of edges $(e_1,e_2)$ a \emph{disconnecting pair} if they share a vertex and are part of a unique cycle containing both $e_1$ and $e_2$. 
\end{definition}

We are now ready to define crossing geodesics. \footnote{
We could have also defined crossing geodesics for more general unions of edges that separates a linkage graph in exactly two connected components. However, we will not need to use a more general setting in our construction of a completion.}

\begin{definition}[Crossing geodesic]\label{d.crossing-geodesic}
Let $e$, respectively $(e_1,e_2)$, be a disconnecting edge and pair. Let $v_0$ be any vertex of $e$ or the shared vertex of $(e_1,e_2)$. 
Let $\Gamma_1$ be the connected component of $\graph(C)\smallsetminus \{e\}$ (resp.~$\graph(C)\smallsetminus\{e_1,e_2\}$) \emph{not} containing $v_0$.

The geodesic joining the two extremal points of $\Gamma_1$ with respect to $v_0$ is called \emph{the crossing geodesic} associated to the disconnecting edge $e$ (resp.~the disconnecting pair $(e_1,e_2)$) or simply \emph{$e$-crossing geodesic} (resp.~\emph{$(e_1,e_2)$-crossing geodesic}), and it is denoted by 
 $\alpha(e)$ (resp.~$\alpha(e_1,e_2)$).
\end{definition}

Some examples are shown in Figure \ref{fig:crossing}. 

\begin{rem}\label{rem_crossing_geodesics_dont_change_with_closure}
By Lemma \ref{lem:closure_same_graph}, the crossing geodesics of a pair of fully transverse prelamination $(L^+,L^-)$ are the same as the crossing geodesics of the prelaminations $(\hat L^+,\hat L^-)$ for any $L^\pm \subseteq \hat L^\pm \subseteq \bar L^\pm$.
\end{rem}

While the vertex $v_0$ in the above definition is uniquely defined for disconnecting pairs, it is not for disconnecting edges. However, the lemma below shows that the definition of crossing geodesic is independent of that choice, hence the notation $\alpha(e)$ makes sense.

\begin{lemma}\label{l.extrema-edge}
Let $C$ be a  complementary region of $L^\pm$ and $e$ be a disconnecting edge of $\graph(C)$, joining the vertices $v_0$ and $v_1$.
Let $\Gamma_0, \Gamma_1\subset \left( \graph(C)\smallsetminus\{e\} \right)$ be the subgraphs of $\graph(C)$ obtained by removing the edge  $e$, and containing respectively $v_0$ and $v_1$.

Then the  extremal points with respect to $v_0$, of $\Gamma_1$ coincide with the extremal points with respect to $v_1$ of $\Gamma_0$. 
\end{lemma}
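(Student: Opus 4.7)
The plan is to exhibit $\graph(C)$ as a planar graph embedded in $\overline{C}$, use this planarity to show that $\Gamma_0$ and $\Gamma_1$ each occupy a contiguous arc of $\partial C$, and then identify the two transition points between these arcs as the common pair of extremal points. The embedding is obtained by fixing, for each edge of $\graph(C)$, a single defining leaf $\gamma\in L^\mp$; since $L^\mp$ is a prelamination, distinct such leaves meet at most on $S^1$, and each meets the convex set $C$ in a single chord connecting two vertex-arcs on $\partial C$. This gives a planar drawing of $\graph(C)$ in $\overline{C}$ with pairwise disjoint edge-chords in $\mathring{C}$.

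The key step is to rule out interleaving of $\Gamma_0$ and $\Gamma_1$ along $\partial C$. Suppose for contradiction there existed vertices $a,c\in\Gamma_0$ and $b,d\in\Gamma_1$ arranged in cyclic order $a,b,c,d$ along $\partial C$. Connectedness of $\Gamma_0$ and $\Gamma_1$ in $\graph(C)\setminus\{e\}$ would yield respective paths from $a$ to $c$ and from $b$ to $d$; each such path concatenates to a topological arc in $\overline{C}$ made of leaf chords and vertex-arc segments, and after perturbing the vertex-arc segments slightly into $\mathring{C}$ we obtain two simple arcs in $\overline{C}$ whose endpoints interleave on $\partial C$. By the Jordan curve theorem these arcs must meet in $\mathring{C}$, but such an intersection would lie on two distinct $L^\mp$-leaves, contradicting the non-crossing property of the prelamination. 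Hence, modulo $v_0$ and $v_1$, the sets $\Gamma_0$ and $\Gamma_1$ occupy two disjoint contiguous arcs of $\partial C$.

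Lemma~\ref{lem:union_of_segments} forbids positive-measure gaps between consecutive vertex-arcs sharing an ideal side, so the two transitions between the $\Gamma_0$- and $\Gamma_1$-blocks occur at two single points $x,y\in S^1$, each realized as a shared endpoint of the two boundary vertices of the blocks. In the linear order on $\partial C\setminus\{v_0\}$, the vertices of $\Gamma_1$ have endpoints spanning precisely the interval $[x,y]$, with $x$ and $y$ themselves appearing in $\Gamma_1\cap S^1$ as endpoints of the extremal $\Gamma_1$-vertices; so the extremal points of $\Gamma_1$ with respect to $v_0$ are $\{x,y\}$. The symmetric computation for $\Gamma_0$ with respect to $v_1$ returns the same pair. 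The main difficulty will be in executing the Jordan-curve argument cleanly in the setting where vertices of $\graph(C)$ are arcs rather than points on $\partial C$, so that the paths realizing connectedness visit $\partial C$ at intermediate vertex-arcs; the slight perturbation into $\mathring{C}$ described above reduces this to the classical planar case.
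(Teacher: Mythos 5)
Your proof is correct and follows essentially the same approach as the paper's: both establish (1) that the vertices of $\Gamma_0$ and $\Gamma_1$ occupy complementary arcs of $\partial C$ (non-interleaving), and (2) that these arcs are dense in $\partial C$ via Lemma~\ref{lem:union_of_segments}, so their closures meet in exactly the two common extremal points. The paper's proof simply asserts the non-interleaving step (``As $\Gamma_0$ and $\Gamma_1$ are connected disjoint graphs, their vertices are contained in intervals of $\partial C$ with disjoint interiors''), relying implicitly on the planar embedding of the linkage graph by non-crossing $L^\mp$-leaves; you make this explicit with the Jordan-curve argument, which is a reasonable thing to spell out. One small imprecision: the extremal points are defined as infimum/supremum of $\Gamma_i\cap S^1$, so they need not themselves be endpoints of any $\Gamma_i$-vertex (they may only be limits thereof); your phrase ``$x$ and $y$ themselves appearing in $\Gamma_1\cap S^1$'' is not needed and not always true, but it does not affect the conclusion.
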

\begin{proof}
As $\Gamma_0$ and $\Gamma_1$ are connected disjoint graphs, their vertices are contained in intervals of $\partial C$ with disjoint interiors. As $\graph(C)$ is connected,  $\Gamma_0\cup\Gamma_1$ contains every vertices of $\graph(C)$. Thus the union of these intervals is dense in $\partial C$, which means that these intervals have the same extremal points, which is precisely what we wanted.
\end{proof}

Crossing geodesics have the following simple characterization:
\begin{lemma}\label{l.crossing-geodesic} 
Let $e$ be a disconnecting edge of $\graph(C)$.
Then $\alpha(e)$  is the unique geodesic contained in $C$ and  satisfying the following property:
An $L^\mp$-leaf crosses $\alpha(e)$ if and only if it represents the edge $e$.

Similarly, if $(e_1,e_2)$ is a disconnecting-pair of $\graph(C)$, then  $\alpha(e_1,e_2)$ is the unique geodesic contained in $C$ and  satisfying the following property:
An $L^\mp$-leaf crosses $\alpha(e_1,e_2)$ if and only if it represents either $e_1$ or $e_2$.

Moreover, the same holds if $L^\pm$ is replaced with $\hat{L}^\pm$ if $L^\pm \subseteq \hat{L}^\pm \subseteq \bar{L}^\pm$. 
\end{lemma}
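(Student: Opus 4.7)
The plan is to leverage the outerplanar structure of the linkage graph: since leaves of $L^\mp$ do not cross each other, the edges of $\graph(C)$ correspond to pairwise non-crossing chords in $C$ with endpoints on $\partial C$.  This forces a rigid correspondence between the graph-theoretic decomposition induced by removing a disconnecting edge (or pair) and the cyclic geometry of $\partial C$.

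First I would handle the disconnecting edge case.  Let $e$ connect $v_0$ and $v_1$, and let $\Gamma_0 \ni v_0$, $\Gamma_1 \ni v_1$ be the two components of $\graph(C) \setminus \{e\}$.  The key geometric step is to show that the vertex-intervals of $\Gamma_0 \setminus \{v_0\}$ and $\Gamma_1 \setminus \{v_1\}$ occupy disjoint cyclic arcs of $\partial C \setminus \{v_0, v_1\}$; if they interleaved, then because the components are connected, some edge of $\Gamma_0$ would be represented by a leaf crossing a leaf representing some edge of $\Gamma_1$, contradicting the non-crossing property of $L^\mp$-leaves.  Given this, the two extremal points $p, q$ of $\Gamma_1$ with respect to $v_0$ sit on $\partial C$ precisely at the two interfaces between the $\Gamma_0$-arc and the $\Gamma_1$-arc; so $\alpha(e) = \overline{pq}$ lies in $C$ by convexity.

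To verify the crossing characterization, I note that any $L^\mp$-leaf $\beta$ intersecting $C$ defines a unique edge of $\graph(C)$ (using Lemma \ref{l.no-ideal-connection}, since $\beta$ enters and exits $C$ through at most two sides of $\partial C$).  If $\beta$ represents $e$, its two endpoints on $\partial C$ lie in $v_0$ and $v_1$, hence on opposite sides of $\alpha(e)$, so $\beta$ crosses $\alpha(e)$.  Conversely, if $\beta$ represents an edge $e' \neq e$, both of its $\partial C$-endpoints lie in the same component ($\Gamma_0 \cup \{v_0\}$ or $\Gamma_1 \cup \{v_1\}$), because $e$ is the only edge joining the two components; hence $\beta$'s endpoints lie in the same arc of $S^1 \setminus \{p, q\}$ and $\beta$ does not cross $\alpha(e)$.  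For uniqueness, any geodesic $\gamma \subset C$ with the same crossing property must have endpoints on $S^1$ separating the endpoints of leaves representing $e$ from those of every other leaf of $L^\mp$ meeting $C$; using the density of such leaves in the two $\Gamma_i$-arcs (by the FT hypothesis and Lemma \ref{lem:union_of_segments}), the endpoints of $\gamma$ are forced to coincide with $p$ and $q$.

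The disconnecting pair case $(e_1, e_2)$ sharing $v_0$ is essentially identical: removing the pair yields two components, the outerplanar argument shows the component not containing $v_0$ occupies a single cyclic arc of $\partial C$, and its extremal points with respect to $v_0$ define $\alpha(e_1,e_2)$, with $e_1, e_2$ being precisely the edges joining the two components.  Finally, for the ``moreover'' statement, I use Lemma \ref{lem:closure_same_graph} (and Remark \ref{rem_crossing_geodesics_dont_change_with_closure}) to identify $\graph(C; L^\mp)$ with $\graph(C; \hat L^\mp)$, so that $\alpha(e)$ is unchanged; the crossing characterization for $\hat L^\mp$-leaves then follows by a limit argument, since any $\hat L^\mp$-leaf $\beta \subseteq \bar L^\mp$ is a limit of $L^\mp$-leaves $\beta_n$, transverse crossing of $\alpha(e)$ is an open condition, and the endpoints of the $\beta_n$ converge to those of $\beta$.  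The step I expect to require the most care is formalizing the outerplanar/non-crossing-chord structure in the presence of ideal segments (which are arcs rather than points on $\partial C$), in particular handling disconnecting edges incident to ideal vertices so that the ``extremal points'' are genuinely well-defined points on $S^1$ and truly separate the two components.
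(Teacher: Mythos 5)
Your overall strategy is close to the paper's, but the central step of the crossing characterization has a genuine gap. You assert that ``any $L^\mp$-leaf $\beta$ intersecting $C$ defines a unique edge of $\graph(C)$,'' citing Lemma \ref{l.no-ideal-connection}. That lemma only guarantees that $\beta$ crosses \emph{at least one} geodesic side of $C$; it does not guarantee that $\beta$ defines an edge at all. The problematic case is when $\beta$ crosses exactly one geodesic side and its other endpoint lies in $\partial C\cap S^1$ but \emph{not} in the interior of any ideal segment (e.g.\ at the common endpoint of two adjacent ideal segments). Such a leaf defines no edge, so your dichotomy ``$\beta$ represents $e$'' versus ``$\beta$ represents $e'\neq e$'' is not exhaustive, and your converse argument does not cover exactly the leaves that a priori could cross $\alpha(e)$ without being detected by the linkage graph. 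This is precisely the subtle case the paper's proof spends most of its effort on: it shows by contradiction that if $\beta$ crosses $\alpha(e)$ and its ideal endpoint $x$ fails to land in an ideal segment, then there is a vertex of $C$ on the far side of $x$ which, by connectedness of $\graph(C)$, is joined by a path to the $e$-vertex nearest $x$ --- forcing some $L^\mp$-leaf to cross $\beta$, which is impossible. You flag ``care with ideal segments'' as a concern at the end, but the concern you articulate (well-definedness of extremal points) is a different, and easier, issue than the one that actually needs work.

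The remainder of your sketch (the forward implication, the uniqueness argument using density of leaf endpoints in the $\Gamma_i$-arcs, the disconnecting-pair case, and the reduction of the $\hat{L}^\mp$ statement to Remark \ref{rem_crossing_geodesics_dont_change_with_closure} together with a limit argument) is in line with the paper, though the uniqueness step should be tightened to rule out endpoints of $\gamma$ lying in interiors of ideal segments --- the paper does this explicitly by showing such an endpoint would force $\gamma$ either to cross leaves it shouldn't, or to coincide with a geodesic side, before invoking the partition of vertices.
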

\begin{proof}
We do the proof for $\alpha(e)$, the case of a disconnecting-pair being essentially identical we leave it to the reader.

By definition, the crossing geodesic $\alpha(e)$ joins two extremal points of the convex region $C$ and thus is contained in $C$. 
First, we show the desired property holds.  Let $a, b$ be the endpoints of $\alpha(e)$.   By Lemma \ref{l.extrema-edge}, these are the extremal points of each component of $\Gamma(C) \smallsetminus \{e\}$. 
Suppose $\beta$ is a  leaf crossing $\alpha(e)$.  Then the endpoints of $\beta$ are in opposite connected components of $S^1 \smallsetminus \{a, b\}$.  By Lemma~\ref{l.no-ideal-connection}, $\beta$ crosses at least one geodesic side of $C$.  If it crosses also another geodesic side, or has endpoint in the interior of an ideal segment of $C$, then we are done: $\beta$ represents an edge with one vertex on either side of $S^1 \smallsetminus \{a, b\}$, so that edge must be $e$ since $e$ is disconnecting. 

Otherwise, let $x$ denote the ideal endpoint of $\beta$ in $\partial C\cap S^1$.    Let $I$ denote the open interval of $S^1$ bounded by the endpoints of the vertex of $e$ that lies on the same side of $\alpha(e)$ as $x$; up to reversing orientation and labeling $a, b$ we assume $I \subset (a, x)$ and $(x, b) \cap I = \emptyset$.  
There is necessarily another vertex of $C$ in $(x, b)$, which by definition of crossing edge, is connected to $I$ by a path.  Thus, some leaf of $L^\mp$ crosses $\beta$, which is a contradiction.  Thus, we have shown that $\beta$ represents $e$.

The fact that all geodesics representing $e$ cross $\alpha(e)$ is immediate from the definition.   For uniqueness, 
suppose that $\alpha$ is a geodesic contained in $C$ and crossing precisely the $L^\mp$-leaves representing $e$. If $\alpha$ has an endpoint $x$ in an ideal segment of $C$ then $x$ is accumulated on both sides by endpoints of $L^\mp$-leaves crossing the same geodesic side. 
Thus either $\alpha$ crosses these leaves (contradicting the assumption), or $\alpha$ crosses the same geodesic side contradicting again the assumptions.
Thus the endpoints of $\alpha$ are points in the ideal boundary of $C$, and not in the interiors of  ideal segments. These endpoints therefore partition the vertices of $\graph(C)$ into two sets,  which can be connected only by edges crossing $\alpha$, that is, by $e$.  By definition, this implies that $\alpha = \alpha(e)$. 

Finally, the same characterization holds for $\hat L^\pm$ by Remark \ref{rem_crossing_geodesics_dont_change_with_closure}.\qedhere
\end{proof}

Adding a single crossing geodesic to a prelamination obviously still gives a prelamination, since the crossing geodesic is contained in a complementary region.  But the situation is better than that: our next lemma shows that distinct crossing geodesics never intersect, hence they can \emph{all} be added simultaneously.

\begin{lemma} \label{lem_crossing_geodesics_dont_intersect}
Let $\alpha_0$ and $\alpha_1$ be crossing geodesics in $C$.
Then $\alpha_0$ and $\alpha_1$ do not cross each other.
\end{lemma}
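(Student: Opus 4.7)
The plan is to assume for contradiction that two distinct crossing geodesics $\alpha_0, \alpha_1$ in $C$ cross each other in the interior of $\bD^2$, and derive a contradiction. The main tool is Lemma~\ref{l.crossing-geodesic}: each $\alpha_i$ is the unique geodesic in $C$ crossed exactly by the $L^\mp$-leaves representing a prescribed set $E_i$ of edges of $\graph(C)$, so $E_i=\{e_i\}$ in the disconnecting edge case and $E_i=\{e_i^{(1)},e_i^{(2)}\}$ in the disconnecting pair case. Distinctness of $\alpha_0, \alpha_1$ forces $E_0 \neq E_1$.

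I will carry out the main case where both $\alpha_i = \alpha(e_i)$ come from disconnecting edges and treat the others in the same way. Any $L^\mp$-leaf meeting $C$ enters and leaves $C$ at exactly two vertices (convexity of $C$), so it represents a single edge of $\graph(C)$; hence no $L^\mp$-leaf crosses both $\alpha_0$ and $\alpha_1$. Let $\Gamma_i^L, \Gamma_i^R$ denote the two components of $\graph(C) \smallsetminus \{e_i\}$ and set $V_{XY} := \Gamma_0^X \cap \Gamma_1^Y$ for $X,Y\in\{L,R\}$. Because $\graph(C)$ is outerplanar (its edges are realized by pairwise non-crossing $L^\mp$-leaves) and each $e_i$ is a cut edge, the vertex-arcs of $\Gamma_i^L$ and $\Gamma_i^R$ occupy the two arcs of $\partial C$ cut off by the endpoints of $\alpha_i$; combined with the interleaving on $S^1$ of the four endpoints of $\alpha_0, \alpha_1$, this places the vertex-arcs of each $V_{XY}$ inside a single quadrant arc $A_{XY}$ of $S^1$.

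Any edge of $\graph(C)$ between $V_{LL}$ and $V_{RR}$ (or between $V_{LR}$ and $V_{RL}$) would connect both $\Gamma_0^L$ to $\Gamma_0^R$ and $\Gamma_1^L$ to $\Gamma_1^R$, hence would simultaneously be $e_0$ and $e_1$; this is impossible since $e_0 \neq e_1$. Since the only edge between $\Gamma_0^L$ and $\Gamma_0^R$ is $e_0$ and likewise for $e_1$, one of the four sets $V_{XY}$ ends up with no edges to the other three. By Lemma~\ref{lem:linkage_connected}, that set is empty; say $V_{RR}=\emptyset$. Then $\Gamma_0^R = V_{RL}$, and by Lemma~\ref{l.extrema-edge} both endpoints of $\alpha_0$ are the extremal points of $V_{RL}$ along $\partial C$, so both must lie in $\overline{A_{RL}}$. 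This contradicts the interleaving: one of the endpoints of $\alpha_0$ lies on the boundary between $A_{LR}$ and $A_{RR}$, which is disjoint from $\overline{A_{RL}}$.

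The disconnecting-pair and mixed cases proceed by the same scheme. The only wrinkle is that a single $L^\mp$-leaf may cross both $\alpha_0$ and $\alpha_1$ whenever it represents an edge in $E_0 \cap E_1$; however, the definition of a disconnecting pair forces its two edges to lie in a unique common cycle, so two distinct disconnecting pairs share at most one edge, and the corresponding shared edge contributes a single predictable cross-quadrant edge in the partition, after which the same ``isolated quadrant'' argument applies. The step I anticipate as the main technical obstacle is justifying the contiguity claim: that for a cut edge (or disconnecting pair) the vertex-arcs of the two components of $\graph(C)$ with that edge (or pair) removed form two contiguous blocks of $\partial C$ separated precisely by the endpoints of the corresponding crossing geodesic. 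This reduces to the outerplanarity of $\graph(C)$ and the standard fact that removing a cut edge in an outerplanar graph with vertices on the outer face partitions those vertices into two consecutive arcs.
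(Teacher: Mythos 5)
Your approach is genuinely different from the paper's. The paper gives a short, direct argument: using Lemma~\ref{l.extrema-edge} it chooses the defining vertices $v_0, v_1$ of $\alpha_0, \alpha_1$ to be distinct, observes that the relevant component of $\graph(C)\smallsetminus\{e_1\}$ (whose extremal points give the endpoints of $\alpha_1$) sits inside a single component of $\graph(C)\smallsetminus\{v_0\}$, and concludes that the endpoints of $\alpha_1$ do not separate those of $\alpha_0$. Your argument instead proceeds by contradiction: assuming crossing, you build a four-quadrant partition of $\partial C$, rule out ``diagonal'' inter-quadrant edges, isolate a quadrant, force it to be empty by connectedness, and then contradict the position of the extremal points. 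This is a valid alternative strategy, and in the main case (both $\alpha_i = \alpha(e_i)$ with $e_0 \neq e_1$ disconnecting edges) it works.

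There are, however, two concrete issues. First, the contiguity step --- that the vertex sets of $\Gamma_i^L$ and $\Gamma_i^R$ occupy the two arcs of $\partial C$ cut off by $\partial\alpha_i$ --- should be argued directly from the geometry already available: $\alpha_i$ lies in the convex region $C$ (Lemma~\ref{l.crossing-geodesic}), so sides of $C$ do not cross it, and by Lemma~\ref{l.crossing-geodesic} no $L^\mp$-leaf representing an edge other than $e_i$ crosses it either; connectedness of each component then puts it entirely on one side. Appealing to a ``standard fact'' about outerplanar graphs is both an unnecessary detour and not obviously a closed step (it needs the geometric realization to be proved, so you would essentially be re-proving the geometric fact). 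Second, and more seriously, your treatment of the disconnecting-pair and mixed cases is internally inconsistent as written. You first rule out diagonal edges $V_{LL}$--$V_{RR}$ and $V_{LR}$--$V_{RL}$ on the grounds that such an edge would have to equal both $e_0$ and $e_1$; but when $E_0 \cap E_1 \neq \emptyset$ (which you later acknowledge can happen), the shared edge is precisely such a diagonal, so that step fails. One can check by hand that the isolated-quadrant conclusion still holds in these cases (a different quadrant becomes isolated), but you would need to redo the enumeration of inter-quadrant edges in each configuration; your sentence ``contributes a single predictable cross-quadrant edge in the partition, after which the same argument applies'' does not actually close this gap. Until that case analysis is spelled out, the proof is incomplete outside the main case.
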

\begin{proof}
We first consider the case where at least one of the crossing geodesics is associated to a disconnecting edge. That is, we suppose that $\alpha_0=\alpha(e_0)$ with $e_0$ a disconnecting edge. 
Since the definition of $\alpha(e)$ is independent of vertex chosen (Lemma \ref{l.extrema-edge}), we may choose vertices $v_i$, as in the definition of $\alpha_i$, such that $v_1 \neq v_0$.  Thus, $v_1$ is contained in some connected component (say $\Gamma_1$) of $\graph(C)\smallsetminus \{v_0\}$; and hence one component of $\graph(C) \smallsetminus \{e_1\}$ is a further subset. Therefore, its extremal points are contained in one connected component of $S^1 \smallsetminus \alpha_1$.  This means $\alpha_0$ and $\alpha_1$ do not cross. 

Now, suppose we are given disconnecting pairs $(e_0,e_1)$ sharing vertex $v_0$, and $(e_2,e_3)$ sharing vertex $v_1$, such that $\alpha_0= \alpha(e_0,e_1)$ and $\alpha_1=\alpha(e_2,e_3)$.
If $v_0 \neq v_1$, we may run the same argument as above, and deduce that the $\alpha_i$ do not cross.
Suppose now that $v_0 = v_1$. By definition of disconnecting pair, the edges $e_0$, $e_1$, $e_2$ and $e_3$ are all distinct. Thus, the other vertices of $(e_0,e_1)$ and of $(e_2,e_3)$ are in \emph{distinct} connected components of  $\graph(C) \smallsetminus \{v_0\}$. Call these $\Gamma_0$ and $\Gamma_1$ respectively. From this, it follows that the extremal points of $\Gamma_0$ do not separate the extremal points of $\Gamma_1$, and thus $\alpha_0$ and $\alpha_1$ do not cross.
\end{proof}

\begin{corollary} \label{cor:completion_FT}
Let $(L^+,L^-)$ be a pair of fully transverse prelaminations and suppose that $\hat L^\pm$ is obtained from $L^\pm$ by adding any collection of crossing geodesics as well as any collection of leaves in the closure $\bar L^\pm$.
Then $(\hat L^+,\hat L^-)$ is a pair of FT prelaminations.
\end{corollary}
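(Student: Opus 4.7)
The plan is to verify each of the four defining conditions of FT prelaminations -- the prelamination property, transversality, density, and connectedness -- for the augmented pair $(\hat L^+, \hat L^-)$, after first reducing to the case where only crossing geodesics are being added. For this reduction, I would let $\tilde L^\pm$ denote $L^\pm$ together with the added leaves from $\bar L^\pm$. Lemma \ref{lem:extend_lamination} already guarantees that $(\tilde L^+, \tilde L^-)$ is FT. Moreover, by Lemma \ref{lem:closure_same_graph} the nontrivial complementary regions and their linkage graphs of $L^\pm$ and $\tilde L^\pm$ agree, and by Remark \ref{rem_crossing_geodesics_dont_change_with_closure} the crossing geodesics of $(L^+,L^-)$ coincide with those of $(\tilde L^+, \tilde L^-)$. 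So it suffices to prove that adding an arbitrary collection of crossing geodesics to an FT pair yields an FT pair; renaming $(\tilde L^+, \tilde L^-)$ as $(L^+, L^-)$, I would focus only on this case.

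For the prelamination property within $\hat L^\pm$: every $L^\pm$-crossing geodesic is contained in the closed complementary region $C$ of $L^\pm$ that defines it. An existing $L^\pm$-leaf only meets $C$ along its boundary, so the crossing geodesic does not cross any original $L^\pm$-leaf. Two crossing geodesics lying in the same complementary region do not cross by Lemma \ref{lem_crossing_geodesics_dont_intersect}. Two crossing geodesics in distinct complementary regions of $L^\pm$ cannot cross either, because complementary regions are closed convex sets with disjoint interiors; any would-be crossing would force one geodesic to enter the interior of a complementary region distinct from the one it lives in. Hence $\hat L^+$ and $\hat L^-$ are each prelaminations.

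For transversality $\hat L^+ \cap \hat L^- = \emptyset$, Lemma \ref{l.crossing-geodesic} tells us that every $L^+$-crossing geodesic $\alpha(e)$ is crossed by at least one $L^-$-leaf, namely any leaf representing the disconnecting edge (or one of the two edges of the disconnecting pair) defining it. On the other hand, every element of $\hat L^-$ either is an $L^-$-leaf (which does not cross another $L^-$-leaf because $L^-$ is a prelamination) or is an $L^-$-crossing geodesic, hence contained in a complementary region of $L^-$ and so disjoint from the interior of every $L^-$-leaf. Either way no element of $\hat L^-$ is crossed by an $L^-$-leaf, so $\alpha(e) \notin \hat L^-$, and by symmetry the two sets are disjoint.

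Density is inherited trivially since $\hat L^\pm \supseteq L^\pm$. For connectedness, each newly added crossing geodesic is crossed by at least one $L^\mp$-leaf (Lemma \ref{l.crossing-geodesic} again), so it is joined by a single crossing to the connected network $L^+ \cup L^-$, and the connectedness of $(L^+, L^-)$ then chains it to any other leaf. The only point requiring care is ruling out that a crossing geodesic might coincide with an already-added leaf of the opposite type; but the precise characterization in Lemma \ref{l.crossing-geodesic} of exactly which $L^\mp$-leaves cross $\alpha(e)$ makes this transparent, so I do not expect any serious obstacle beyond assembling these observations carefully.
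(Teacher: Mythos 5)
Your proof is correct and follows essentially the same route as the paper: Lemma \ref{lem:extend_lamination} and Lemma \ref{lem_crossing_geodesics_dont_intersect} for the prelamination property, then a short argument for transversality, with density and connectedness inherited. The only cosmetic differences are that you carry out an explicit reduction to the closed case (the paper just invokes Lemma \ref{lem:extend_lamination} directly without renaming) and that you cite Lemma \ref{l.crossing-geodesic} rather than Lemma \ref{l.no-ideal-connection} (via Remark \ref{rem_no-ideal-connection}) to rule out a crossing geodesic of one sign coinciding with a leaf of the other; both lemmas make the same underlying point, namely that the crossing geodesic $\alpha(e)$ of $C^+$ is necessarily crossed by some $L^-$-leaf while no element of $\hat L^-$ can be.
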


\begin{proof}
Lemma \ref{lem_crossing_geodesics_dont_intersect} together with Lemma \ref{lem:extend_lamination} shows that $(\hat L^+,\hat L^-)$ are prelaminations. Moreover, a crossing geodesic for a complementary region of, say, $\hat L^+$ cannot correspond to a leaf of $\hat L^-$ by Lemma \ref{l.no-ideal-connection}. Therefore, $(\hat L^+,\hat L^-)$ are disjoint prelaminations. Then the connectedness and density properties follows as before from the connectedness and density properties of $(L^+,L^-)$.
\end{proof}

Now, we finally show that crossing geodesics behave well with respect to linkage graphs.  More precisely, we show that  if $\hat L^\pm$ is obtained from $L^\pm$ by adding crossing geodesics, then the linkage graphs of complementary regions of $L^\mp$ do not change (Lemma \ref{lem:same_graphs}), and the linkage graphs of complementary regions of $\hat L^\pm$ are either unchanged or obtained from linkage graphs of $L^\pm$ by ``splitting them in two'', in a way made precise in Lemmas \ref{lem_low_valence_crossing} and \ref{lem:divide_graph}.

\begin{lemma}  \label{lem:same_graphs}
Suppose $(L^+, L^-)$ is a pair of FT prelaminations and suppose $\hat{L}^+$ is obtained from $L^+$ by adding (any collection of) crossing geodesics.

Let $C$ be a (nontrivial) complementary region of $L^-$, then $\graph(C;\hat L^+) \cong \graph(C;L^+)$. More precisely, the vertices of $\graph(C;\hat L^+)$ are vertices of $\graph(C;L^+)$ (with the same cyclic order) and  each edge of $\graph(C;\hat L^+)$ is an edge of $\graph(C;L^+)$.
\end{lemma}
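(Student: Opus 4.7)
The plan is to prove the lemma in two stages. First I would verify that $\graph(C;L^+)$ and $\graph(C;\hat L^+)$ have the same vertex set with the same cyclic order; then that every edge in $\graph(C;\hat L^+)$ comes from $\graph(C;L^+)$. The geodesic sides of $C$ depend only on $L^-$ and so are trivially unchanged, so the vertex analysis reduces to the question of ideal segments. The key input will be to show that any endpoint on $S^1$ of an added crossing geodesic which lies on an ideal side of $C$ must already lie in an existing ideal segment of $\graph(C;L^+)$, so that no ideal segment is created, broken, or enlarged.

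For the edge step, let $\alpha\in\hat L^+ \smallsetminus L^+$ be a crossing geodesic defining an edge $(v_1,v_2)$ of $\graph(C;\hat L^+)$. By definition $\alpha$ sits in some complementary region $C'$ of $L^+$ and corresponds to a disconnecting edge $e=(u_1,u_2)$ of $\graph(C';L^-)$ (the disconnecting-pair case being identical). By Lemma~\ref{l.no-ideal-connection} applied to $\graph(C';L^-)$, at least one vertex of $e$, say $u_1$, is a geodesic side of $C'$, so $u_1\in\bar L^+$. Before going further I would simplify by replacing $L^-$ with $\bar L^-$: by Remark~\ref{rem_crossing_geodesics_dont_change_with_closure} and Lemma~\ref{lem:closure_same_graph} this changes neither the set of crossing geodesics nor the linkage graphs, and has the virtue that every geodesic side of $C$ now lies in $L^-$. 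Another application of Lemma~\ref{l.no-ideal-connection}, this time to $\graph(C;\hat L^+)$, guarantees that at least one of $v_1,v_2$ is a geodesic side; I would treat first the case that both are.

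In that case each $v_i$ is an $L^-$-leaf crossing $\alpha$, so Lemma~\ref{l.crossing-geodesic} forces each $v_i$ to represent $e$, and hence to cross $u_1$. Choosing any sequence $u_1^k\in L^+$ with $u_1^k\to u_1$ (constant if $u_1\in L^+$), I would exploit the openness of the crossing relation for geodesics---strict separation of endpoints on $S^1$ is preserved under small perturbation---to conclude that for $k$ large enough $u_1^k$ crosses both $v_1$ and $v_2$, and so witnesses $(v_1,v_2)$ as an edge of $\graph(C;L^+)$. The case where one of $v_1,v_2$ is an ideal segment is analogous: the transversality argument for the ideal vertex is replaced by the density of $L^+$-leaf endpoints in the ideal segment that is built into the definition, which ensures the approximating sequence $u_1^k$ has an endpoint in the right ideal segment for large $k$.

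The hard part will be the ideal-segment portion of the vertex-set analysis. For this I would rely on the extremal-point characterization (Definition~\ref{d.extrema}): an endpoint of $\alpha$ on an ideal side of $C$ is an extremal point of a subgraph of $\graph(C';L^-)$, and so is accumulated by endpoints of vertices of that subgraph, which, after the reduction to $\bar L^-$, are endpoints of $L^-$-leaves. A careful bookkeeping, using the same disconnecting-edge machinery as in the edge step, shows that these accumulating $L^-$-leaves all cross a common geodesic side of $C$, placing the endpoint of $\alpha$ in the closure of an existing ideal segment of $\graph(C;L^+)$; this is exactly what is needed to see that the ideal segments (and hence the whole vertex set with its cyclic order) are unchanged.
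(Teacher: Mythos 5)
Your core edge analysis mirrors the paper's: you reduce $\alpha$ to a crossing geodesic of a cut-edge or cut-pair in some $\graph(C';L^-)$, use Lemma~\ref{l.crossing-geodesic} to see that the sides of $C$ crossed by $\alpha$ must represent (or be accumulated by leaves representing) the corresponding edge, and then approximate the geodesic vertex $u_1$ by $L^+$-leaves to witness the existing edge. The reduction to $\bar L^-$ is a clean simplification that the paper achieves instead by distinguishing, inside each case, between sides that are $L^-$-leaves and sides that are merely accumulated by them. Where you diverge is in scope: you split the proof into a vertex step and an edge step, and flag the ideal-segment portion of the vertex step as the hard part, whereas the paper only argues about edges and treats the vertex set as implicitly unchanged.

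However, two of your arguments contain genuine errors. First, in the ideal-vertex case of the edge step, you claim the approximating sequence $u_1^k\to u_1$ eventually has an endpoint in the ideal segment $I$ of $C$. This does not follow: the endpoints of $u_1^k$ converge to the endpoints of $u_1$, a geodesic side of $C'$, and there is no reason these should lie in $\mathring I$. The correct argument (the paper's Case~1) proceeds directly from the definition of an ideal segment: $x\in\mathring I$ is accumulated on both sides by endpoints of $L^+$-leaves crossing the geodesic side of $C$ associated to $I$, which forces $\alpha$ to cross that same side and shows the edge already exists in $\graph(C;L^+)$. Second, your sketch for the ideal-segment preservation conflates the two laminations. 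Ideal segments of $C$, a complementary region of $L^-$, are defined by density of $L^+$-leaf endpoints (Definition~\ref{d.ideal-segments} with $\pm=-$), so you must show the added endpoints are approximated by $L^+$-endpoints crossing a common side; your sketch instead produces accumulation by $L^-$-leaf endpoints, which is the wrong lamination and does not yield the needed conclusion. The observation that the extremal-point characterization should help is sound, but as written the argument does not close.
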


\begin{proof} 
Let $C = C^-$ be a (nontrivial) complementary region of $L^-$.  
Let $\alpha$ be a leaf of $\hat{L}^+ \smallsetminus L^+$ that intersects $C^-$, i.e., $\alpha$ is a crossing geodesic for some region $C^+$ of $L^+$.   We need to show $\alpha$ does not define a new edge.  
Remark~\ref{rem_no-ideal-connection} implies that $\alpha$ crosses at least one geodesic side of $C^-$. 

\noindent {\bf Case 1: $\alpha$ crosses only one geodesic side of $C$.}
In this case, the other vertex of edge $\alpha$ is a point $x$ on the ideal boundary of $C$.  If $x$ does not belong to the interior of an ideal segment, $\alpha$ does not define an edge.  If $x$ belongs to the interior of an ideal segment of $C^-$, then $x$ is accumulated on both sides by $L^+$-leaves crossing the same geodesic side of $C^-$ and hence $\alpha$ crosses the same geodesic side thus defines an already existing edge of $\graph(C^-;L^+)$.

\noindent {\bf Case 2: $\alpha$ crosses two geodesic sides $\beta_0,\beta_1$ of $C^-$, and $\alpha= \alpha(e)$.}
By Lemma~\ref{l.crossing-geodesic}, $\beta_0$ and $\beta_1$ either represent the edge $e$, or are accumulated by leaves that represent the edge $e$ (depending on whether they are in $L^-$ or not), in $\Gamma(C^+, L^-)$.
At least one vertex of $e$ is a geodesic segment $\gamma$, which by definition of $e$ must intersect both $\beta_0$ and $\beta_1$. If $v_0$ corresponds to a leaf of $L^+$ we are already done, otherwise, it is accumulated by leaves of $L^+$, and so some leaf of $L^+$ crosses $\beta_0,\beta_1$, as desired.

\noindent {\bf Case 3: $\alpha$ crosses two geodesic sides $\beta_0,\beta_1$ of $C^-$, and $\alpha= \alpha(e_1, e_2)$.}
By  Lemma~\ref{l.crossing-geodesic}, $\beta_0$ and $\beta_1$ represent (or are accumulated by leaves that represent) $e_1$ or $e_2$, which share a vertex $v_0$ of a cycle in $C^+$.  By  Lemma~\ref{l.no-ideal-connection}, ideal vertices have valence 1 so vertices of a cycle (in particular $v_0$) are geodesic sides.  By definition $v_0$ crosses $\beta_0$ and $\beta_1$.  As in the previous case, if $\gamma$ is a leaf of $L^+$ we are already done, otherwise, it is accumulated by leaves of $L^+$, and so some leaf of $L^+$ crosses $\beta_0,\beta_1$ as desired.  
\end{proof}

Collecting the results above, we summarize how adding a crossing geodesic to $L^\pm$ changes the linkage graphs of its complementary regions, depending on the type or valence of the vertices contained in the edge or edge pair.
In what follows, $C$ always denotes a nontrivial complementary region to $L^+$.  

   \begin{figure}[t]
     \centering
     \includegraphics[width=12cm]{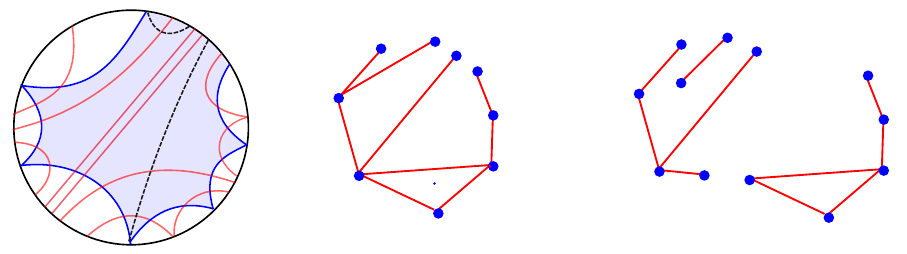}
     \caption{Crossing edges (dotted lines) associated to a valence 1 cut-edge, and to a cut-pair.  The original linkage graph (before adding crossing edges), and resulting three graphs (after) are shown.}
     \label{fig:crossing}
   \end{figure}

\begin{lemma}[Splitting $C$ by crossing an edge with ideal vertex] \label{lem:splitting_regions}
Let $e$ be a disconnecting edge of $\graph(C;L^-)$ between an ideal vertex $s$ and another vertex $v_1$.
Then the crossing geodesic $\alpha(e)$ is the geodesic between the two endpoints of $s$, so $C = C_s \cup \alpha(e) \cup C_1$ where $C_s, C_1$ are complementary regions of $L^+ \cup \{\alpha(e)\}$. 

Moreover, $\graph(C_s;L^-)$ is a degenerate graph consisting of one ideal vertex $s$ and one geodesic vertex $\alpha(e)$ joined by an edge, while $\graph(C_1;L^-)$ is isomorphic to $\graph(C;L^-)$ except that the ideal vertex corresponding to $s$ in $\graph(C;L^-)$ is replaced by the geodesic vertex corresponding to $\alpha(e)$. 
\end{lemma}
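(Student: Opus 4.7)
The plan is to identify $\alpha(e)$ explicitly using the definition of crossing geodesic, describe the decomposition of $C$, and then compute the two linkage graphs by tracking which $L^-$-leaves cross which sides.

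First, I would apply Definition \ref{d.crossing-geodesic} with $v_0 := v_1$. By Lemma \ref{l.no-ideal-connection}, the ideal vertex $s$ has valence $1$, so the connected component of $\graph(C;L^-) \smallsetminus \{e\}$ not containing $v_1$ is the singleton $\{s\}$. The extremal points of $\{s\}$ with respect to $v_1$ are by definition the two endpoints of the ideal segment $s$, so $\alpha(e)$ is the geodesic joining these two endpoints. Since both endpoints of $\alpha(e)$ lie on $\partial C \cap S^1$, the geodesic $\alpha(e)$ is a chord of the convex region $C$, and $C\smallsetminus \alpha(e)$ has exactly two components whose closures are $C_s$ (bounded by $\alpha(e)$ and $s$) and $C_1$ (containing the remaining sides of $C$).

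Next I would compute $\graph(C_s; L^-)$. By Lemma \ref{l.crossing-geodesic}, the $L^-$-leaves crossing $\alpha(e)$ are exactly those representing $e$; each such leaf has (by definition of $e$) an endpoint in $\mathring s$ and crosses $v_1$, and since its intersection with the convex region $C$ is a connected arc, inside $C_s$ it runs from a point in $\mathring s$ to $\alpha(e)$. These are the only $L^-$-leaves meeting $C_s$ (any other leaf meeting $C_s$ would, by Lemma \ref{l.no-ideal-connection}, cross its unique geodesic side $\alpha(e)$, hence represent $e$). The endpoints of such leaves are dense in $\mathring s$ by the definition of an ideal vertex, so $s$ remains a single ideal segment of $C_s$ and no other ideal segment can appear. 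This gives the degenerate two-vertex, one-edge graph claimed.

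For $\graph(C_1; L^-)$, the geodesic sides are those of $C$ together with $\alpha(e)$, and the ideal sides of $C_1$ are obtained from those of $C$ by deleting $\mathring s$. The key point is that each $L^-$-leaf intersects the convex set $C$ in a connected arc, so it contributes \emph{at most one edge} to $\Gamma(C;L^-)$. Leaves not representing $e$ do not cross $\alpha(e)$ (Lemma \ref{l.crossing-geodesic}), so they lie entirely in $C_1$ once restricted to $C$ and contribute the same edges to $\Gamma(C_1;L^-)$ as to $\Gamma(C;L^-)$; in particular, they preserve the ideal segment structure for every geodesic side other than $v_1$, and also for $v_1$ since leaves crossing $v_1$ and representing $e$ had their endpoint in $\mathring s$ (now removed from the ideal boundary). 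Leaves representing $e$, which in $C$ ran from $\mathring s$ to $v_1$, now run from $\alpha(e)$ to $v_1$ in $C_1$, producing the single new edge between $\alpha(e)$ and $v_1$. Finally, $\alpha(e)$ carries no ideal segment in $C_1$, since the endpoints of leaves crossing it all lie in $\mathring s \not\subset \partial C_1 \cap S^1$. Combining these observations gives the claimed isomorphism $\graph(C_1;L^-) \cong \graph(C;L^-)$ with $s$ replaced by $\alpha(e)$.

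The main technical obstacle I anticipate is verifying that the ideal segment decomposition of $C_1$ is exactly that of $C$ minus $s$ (no new ideal segments appear, and none of the old ones are enlarged or split). This is controlled entirely by knowing which leaves cross $\alpha(e)$, which is precisely the content of Lemma \ref{l.crossing-geodesic}; the ``single arc in convex $C$'' argument then prevents any leaf from contributing in an unexpected way.
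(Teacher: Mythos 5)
Your proof is correct and follows essentially the same route as the paper's: identify $\alpha(e)$ via the definition of crossing geodesic (using that $\{s\}$ is one component of $\graph(C)\smallsetminus\{e\}$, so its extremal points are the endpoints of $s$, cf.\ Lemma~\ref{l.extrema-edge}), then compute both linkage graphs by invoking Lemma~\ref{l.crossing-geodesic} to see exactly which $L^-$-leaves cross $\alpha(e)$. The paper's own proof is just two sentences pointing at those same two lemmas; your version supplies the details it leaves implicit --- in particular the careful check that $C_1$ acquires no new ideal segments along $\alpha(e)$ and that the remaining ideal segments of $C$ survive unchanged, which you correctly control via the fact that leaves crossing $\alpha(e)$ all have an endpoint in $\mathring s$ and that convexity of $C$ makes each $L^-$-leaf contribute at most one edge.
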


\begin{rem}
In the case above of an edge $e$ ending at an ideal segment $s$, while the valence of $\alpha(e)$ is $1$ in both $\graph(C_s;L^-)$ and $\graph(C_1;L^-)$, the leaf $\alpha(e)$ \emph{is} a high valence leaf of $L^+ \cup \{\alpha(e)\}$ since it satisfies condition \ref{item_high_val_degenerate} of Definition \ref{d.high-valence}. 
This is why we will not include such crossing geodesics in the completion of a pair of prelaminations (Definition \ref{def_completion}), so a reader might wish to skip this Lemma on a first pass. 
\end{rem}

\begin{proof}
The fact that $\alpha(e)$ is the geodesic between the endpoints of $s$ comes from the definition, see Lemma \ref{l.extrema-edge}. The description of the induced linkage graphs follows from the characterization of crossing geodesics given in Lemma \ref{l.crossing-geodesic}.
\end{proof}

The next lemma describes the somewhat degenerate case of a crossing geodesic for a vertex of very low valence.  We first observe an immediate consequence of Lemma \ref{l.crossing-geodesic}.  
\begin{observation} \label{obs:easy}
Let $e$ be a disconnecting edge of $\graph(C;L^-)$ between geodesic vertices $v_0,v_1$, such that $v_0$ has valence $1$. 
Then the crossing geodesic $\alpha(e)$ is the geodesic side $v_0$. 

Similarly, if $(e_1,e_2)$ is a disconnecting pair of $\graph(C;L^-)$ such that the shared vertex $v_0$ has valence $2$, then the crossing geodesic $\alpha(e_1,e_2)$ is the geodesic side $v_0$.
\end{observation}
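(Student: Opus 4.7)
The plan is to invoke the uniqueness clause of Lemma \ref{l.crossing-geodesic} in both cases by verifying that the side $v_0$ itself satisfies the characterizing property of the relevant crossing geodesic. Since $v_0$ is a geodesic boundary component of $C$, it is contained in $C$ (recall $C$ is closed by convention), so it is a candidate geodesic to be checked against the uniqueness statement.

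For the first case, I would argue as follows. Let $e$ be a disconnecting edge from $v_0$ to $v_1$, with $v_0$ of valence $1$ in $\graph(C;L^-)$. On the one hand, any $L^-$-leaf representing $e$ crosses $v_0$ by definition of the edges of the linkage graph. On the other hand, Lemma \ref{lem:valence1} (applied with $\alpha = v_0$) tells us that any $L^-$-leaf crossing $v_0$ must also cross $v_1$ (or meet the interior of $v_1$ if $v_1$ is ideal), and therefore represents the edge $e$. Thus the $L^-$-leaves crossing $v_0$ are exactly those representing $e$, and uniqueness in Lemma \ref{l.crossing-geodesic} yields $\alpha(e) = v_0$.

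The second case proceeds analogously. Let $(e_1, e_2)$ be a disconnecting pair with shared vertex $v_0$, and assume $v_0$ has valence $2$, so that $e_1$ and $e_2$ are the only edges incident to $v_0$. Any $L^-$-leaf representing $e_1$ or $e_2$ crosses $v_0$. Conversely, given a leaf $\beta \in L^-$ crossing $v_0$, Lemma \ref{l.no-ideal-connection} (combined with the argument in the proof of Lemma \ref{lem:valence1}, which does not genuinely require valence $1$ but only that the connected component of $C \smallsetminus \beta$ on one side contains no other side connected to $v_0$) forces $\beta$ to represent an edge incident to $v_0$, hence $e_1$ or $e_2$. So $v_0$ satisfies the crossing property characterizing $\alpha(e_1, e_2)$, and uniqueness in Lemma \ref{l.crossing-geodesic} gives $\alpha(e_1, e_2) = v_0$.

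There is no real obstacle here; the statement is genuinely immediate from Lemma \ref{l.crossing-geodesic} once one observes that low valence of $v_0$ forces every $L^-$-leaf crossing it to represent the prescribed edge(s). The only minor care required is to rule out the possibility that such a leaf ends in an ideal segment of $C$ without representing $e$ (or $e_1, e_2$), which is precisely what Lemma \ref{lem:valence1} and the valence hypothesis preclude.
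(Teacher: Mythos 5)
The paper gives no proof for this observation, labeling it "an immediate consequence of Lemma \ref{l.crossing-geodesic}," and your proposal is exactly the natural fleshing-out of that assertion: verify that $v_0$ satisfies the crossing characterization in Lemma \ref{l.crossing-geodesic} and then invoke its uniqueness clause. Your argument is correct; the only point requiring a touch more care than you give is the converse direction in the disconnecting-pair case, where one should note that if $\beta$ crossing $v_0$ exited $\partial C$ at a point outside all ideal segments, then (as in Lemma \ref{lem:valence1}) either both endpoints of $e_1,e_2$ lie on one side of $\beta$, isolating the vertices on the other side, or they lie on opposite sides, in which case the rest of the cycle through $e_1,e_2$ would have to cross $\beta$ --- either way contradicting connectedness of $\graph(C)$.
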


As a further consequence we have: 
\begin{lemma}[Crossing low valence geodesic vertices]\label{lem_low_valence_crossing}
In the setting of Observation \ref{obs:easy}, $C$ is also a complementary region of  $L^+ \cup \{\alpha(e)\}$ (respectively, of $L^+ \cup \{\alpha(e_1,e_2)\}$), the linkage graph of $C$ is left unchanged, and $\alpha(e)$ (resp. $\alpha(e_1,e_2)$) has valence $1$ in both graphs.  

Moreover, $\alpha(e)$ is a high-valence leaf for $(L^+ \cup \{\alpha(e)\},L^-)$ if and only if $\alpha(e)\in L^+$ and it is a high-valence leaf of $(L^+,L^-)$\footnote{Necessarily, the only possibility in this case is that $\alpha(e)$ would satisfy condition \ref{item_high_val_degenerate} of Definition \ref{d.linkage-graph}.}.

Similarly, $\alpha(e_1,e_2)$ is a high-valence leaf for $(L^+ \cup \{\alpha(e_1,e_2)\},L^-)$ if and only if $\alpha(e_1,e_2)\in L^+$ is a high-valence leaf of $(L^+,L^-)$\footnote{Necessarily, the only possibility in this case is that $\alpha(e_1,e_2)$ would satisfy condition \ref{item_high_val_two_polygons} of Definition \ref{d.linkage-graph}.}
\end{lemma}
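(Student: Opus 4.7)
The plan is to begin by invoking Observation~\ref{obs:easy} to identify $\alpha(e)=v_0$ in the edge case and $\alpha(e_1,e_2)=v_0$ in the pair case. The key structural observation is that $v_0\subseteq\overline{\geo(L^+)}$ (since $v_0$ is a geodesic side of $C$), and therefore $\overline{\geo(L^+\cup\{v_0\})}=\overline{\geo(L^+)}$. This implies the complementary regions of the two prelaminations coincide: $C$ remains a complementary region of $L^+\cup\{\alpha(e)\}$, and its linkage graph $\graph(C;L^-)$ is unchanged because it depends only on $C$ (through its geodesic sides and ideal segments) and on $L^-$, both of which are unchanged. The valence of $v_0$ is therefore preserved.

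For the high-valence equivalence, the direction $(\Leftarrow)$ is trivial since $\alpha(e)\in L^+$ makes the two pairs literally identical. For $(\Rightarrow)$, suppose $v_0$ is a high-valence leaf in the new pair. I would first establish the sub-claim that if $v_0\notin L^+$, then $v_0$ is a geodesic side of only $C$: indeed $\mathring C$ is disjoint from $\geo(L^+)$, so $L^+$-leaves cannot accumulate on $v_0$ from the $C$-side, and this combined with $v_0\notin\geo(L^+)$ prevents $v_0$ from serving as a common boundary of two distinct regions. Granted this, the high-valence status must come from $\graph(C;L^-)$ alone, and I would check each clause of Definition~\ref{d.high-valence}: (i) and (ii) are eliminated by the valence $1$ or $2$ hypothesis (in the pair case using that $v_0$ lies on the unique cycle containing $(e_1,e_2)$, which rules out (ii)); (iv) is eliminated by the sub-claim; and in the pair case (iii) is ruled out because valence $2$ forces at least three vertices in the graph, so the pair case is fully established.

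The step I expect to be the main obstacle is condition (iii) in the edge case --- ruling out that $\graph(C;L^-)$ could be degenerate (two geodesic vertices $v_0,v_1$) while $v_0\notin L^+$. The approach I would pursue exploits that $v_0$ and $v_1$, as distinct geodesics in a prelamination, cannot share both endpoints, so $C$ necessarily has a nondegenerate ideal arc on $S^1$. Combining the FT density assumption with the constraint that leaves of $L^+$ cannot meet $\mathring C$, I would then argue that some $L^-$-leaf must produce an ideal segment in $\partial C$, creating an additional vertex of $\graph(C;L^-)$ and contradicting degeneracy. Once $v_0\in L^+$ is established in all subcases, the claimed equivalence follows automatically, since the two pairs coincide and the graphs are unchanged.
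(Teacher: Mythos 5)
Your proposal is correct and takes essentially the same route as the paper: identify $\alpha(e)$ (resp.\ $\alpha(e_1,e_2)$) with the side $v_0$ via Observation~\ref{obs:easy}, so $C$ and $\graph(C;L^-)$ are unchanged, then rule out each high-valence condition using the low valence of $v_0$ in $\graph(C)$ together with the sub-claim that a non-leaf side of $C$ is accumulated by $L^+$-leaves only from outside $C$ and hence bounds no second complementary region. You make explicit one step the paper compresses into the phrase ``must bound complementary regions of $L^+$ on both sides''---namely, that condition~\ref{item_high_val_degenerate} cannot be realized by $C$ itself, which you derive by observing that two distinct geodesic vertices force a nondegenerate ideal arc in $\partial C$ and hence (Lemma~\ref{lem:union_of_segments}) an ideal-segment vertex---but this is the same underlying argument, not a genuinely different approach.
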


\begin{proof}
Observation \ref{obs:easy} implies that $C$ and its linkage graph is unchanged, since $\alpha(e)$ or $\alpha(e_1,e_2)$ is a side of the complementary region $C$.  
Thus, the crossing geodesic $\alpha(e)$ or $\alpha(e_1,e_2)$ cannot satisfy conditions \ref{item_high_val_cycle_greater_3} or \ref{item_high_val_greater_2} of Definition \ref{d.linkage-graph}, and thus is a high-valence leaf if and only if it satisfies either condition \ref{item_high_val_degenerate} or \ref{item_high_val_two_polygons} of Definition \ref{d.high-valence}. In particular, this means that it must bound complementary regions of $L^+$ on both sides, and hence must be an element of $L^+$ (which is necessarily a high-valence leaf).
\end{proof}

\begin{lemma}[Splitting $C$ by a crossing edge, general case]
\label{lem:divide_graph}
Let $\alpha$ be a crossing geodesic associated to a disconnecting-pair $(e_1,e_2)$ with shared vertex $v_0$ of valence $>2$, or to a disconnecting edge with vertices $v_0,v_1$ both of valence $>1$.

Then $C = C' \cup \alpha \cup C''$, with $C', C''$ complementary regions of $L^+ \cup \{\alpha\}$, and we have:
\begin{itemize}
\item If $\alpha= \alpha(e)$, then $\graph(C')$ is obtained from $\graph(C)$ by deleting one connected component of $\graph(C) \smallsetminus e$, and relabelling the vertex $v_i$ adjoining the deleted component by $\alpha$.  Similarly, $\graph(C'')$ is obtained from $\graph(C)$ by applying this procedure to the other connected component. 
\item If $\alpha= \alpha(e_1, e_2)$, where $(e_1,e_2)$ share the vertex $v$, then one of $\graph(C')$, $\graph(C'')$ contains $e_1$ and $e_2$ and is obtained by deleting the other connected component of $\graph(C) \smallsetminus \{v\}$ and relabelling $v$ by $\alpha$.  The other is obtained by deleting the connected component of $\graph(C) \smallsetminus (e_1 \cup e_2)$ containing the rest of the cycle, attaching an edge to $v$ and labelling its other vertex by $\alpha$.   
\end{itemize}
\end{lemma}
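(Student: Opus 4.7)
The plan is to proceed in three stages: geometric splitting of $C$, identification of vertices of the two new linkage graphs, and identification of their edges.

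For the splitting, the crux is to check that $\alpha$ lies in the \emph{interior} of $C$ (so it genuinely divides $C$, unlike the degenerate situations of Observation \ref{obs:easy}). This is precisely where the valence hypotheses enter: in the edge case, valence $\geq 2$ at each of $v_0,v_1$ yields an $L^-$-leaf realizing an edge at $v_i$ distinct from $e$, and by Lemma \ref{l.crossing-geodesic} such a leaf cannot cross $\alpha(e)$; this forces $\alpha$ off both geodesic sides $v_0,v_1$, hence into the interior of $C$. An analogous argument using valence $>2$ at $v$ handles the pair case. Corollary \ref{cor:completion_FT} then guarantees that $(L^+\cup\{\alpha\},L^-)$ is fully transverse, and the chord $\alpha$ cuts the closed convex region $C$ into two closed convex regions $C', C''$ which are nontrivial complementary regions of $L^+\cup\{\alpha\}$.

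For the vertex partition, the key point is that the endpoints $a,b$ of $\alpha$ are, by Definition \ref{d.crossing-geodesic}, the extremal points of the subgraph $\Gamma_1$ of $\graph(C)$---the component of $\graph(C)\setminus e$ not containing $v_0$ in the edge case, or of $\graph(C)\setminus\{e_1,e_2\}$ containing the rest of the cycle in the pair case. Thus vertices of $\Gamma_1$ have all endpoints on one arc of $S^1\setminus\{a,b\}$ and the remaining vertices of $\graph(C)$ on the other arc, so each geodesic side and ideal segment of $C$ lies in exactly one of $\overline{C'},\overline{C''}$. Ideal segments transfer cleanly since they are defined by a density of $L^-$-leaves meeting a prescribed geodesic side, and that side is unchanged (on the side of $\alpha$ containing it) when one passes to the new region. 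I expect this stage to be the main technical nuisance, as one must also verify that no new ideal segment appears near the endpoints of $\alpha$ and no existing segment is subdivided---both of which reduce, via Lemma \ref{l.crossing-geodesic}, to the observation that the only $L^-$-leaves whose behaviour relative to the linkage graph is altered by adding $\alpha$ are those realizing $e$ (or $e_1,e_2$).

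For the edges, Lemma \ref{l.crossing-geodesic} gives that an $L^-$-leaf crosses $\alpha$ iff it represents $e$ (edge case) or one of $e_1,e_2$ (pair case). Any edge of $\graph(C)$ not among these is realized by a leaf that does not cross $\alpha$, hence lies entirely in one of $C',C''$ and still realizes an edge between the same two vertices in the appropriate new graph; conversely, every edge between two old vertices in $\graph(C')$ or $\graph(C'')$ arises this way. The remaining edges are incident to the new vertex $\alpha$ and come from leaves that cross $\alpha$: in the edge case the leaves realizing $e$ yield exactly the edge $(v_0,\alpha)$ in $\graph(C')$ and $(v_1,\alpha)$ in $\graph(C'')$, which matches the statement under the ``relabel $v_i$ by $\alpha$'' reading. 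In the pair case, on the side of $\alpha$ containing $v$ the two families of crossing leaves collapse to the single edge $(v,\alpha)$, while on the other side they contribute distinct edges $(u_1,\alpha)$ and $(u_2,\alpha)$ (with $u_i$ the other endpoint of $e_i$), giving the $\Gamma_1\cup\{\alpha\}$ description with $v$ relabelled and $e_1,e_2$ preserved. Combined with the vertex partition, these identifications reconstruct precisely the linkage graphs described in the lemma.
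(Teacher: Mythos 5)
Your proposal is correct and follows essentially the same route as the paper: the paper's (quite terse) proof likewise reduces everything to Lemma \ref{l.crossing-geodesic}, first noting that the nontriviality of both components of $\graph(C)\smallsetminus\{e\}$ (equivalently, the valence hypotheses) forces $\alpha$ into the interior of $C$, and then reading off the new linkage graphs from the characterization of which $L^\mp$-leaves cross $\alpha(e)$ or $\alpha(e_1,e_2)$. Your Stages 2 and 3 spell out details the paper leaves implicit, but there is no divergence in strategy.
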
 

\begin{proof}
In the situation of the lemma, neither connected component of the graph $\graph(C)\smallsetminus\{e\}$ (or $\graph(C)\smallsetminus\{e_1,e_2\}$) is trivial, so one deduces from Lemma \ref{l.crossing-geodesic} that $\alpha$ is contained in the interior of $C$. The description of the associated graphs of  $L^+ \cup \{\alpha\}$ also follow from Lemma \ref{l.crossing-geodesic}, which specifies exactly the edges crossed by $\alpha(e)$ or $\alpha(e_1, e_2)$.  
\end{proof}

The results above make clear that the impact of adding crossing geodesics on linkage graphs is different depending on whether the disconnecting edge or pair splits the graph into two nontrivial component or not. For ease of reference, we introduce the following terminology.
\begin{definition}\label{def_cut_edge_pair}
A \emph{cut-edge} $e$ of $\graph(C)$ is a disconnecting edge such that $\graph(C)\smallsetminus \{e\}$ has two \emph{nontrivial} components\footnote{This corresponds to the usual notion of cut-edge in graph theory.}.
A \emph{cut-pair} $(e_1,e_2)$ of $\graph(C)$ is a disconnecting pair such that $\graph(C)\smallsetminus \{e_1,e_2\}$ has two \emph{nontrivial} components.
\end{definition}

We note the following useful direct consequence of the three lemmas above. 
\begin{corollary} \label{cor:crossing_is_valence_1}
If $\alpha$ is a crossing geodesic associated to a cut pair or cut edge, and lies in the boundary of complementary regions $C'$ and $C''$, then $\alpha$ has valence 1 in at least one of $\graph(C')$ or $\graph(C'')$. 
\end{corollary}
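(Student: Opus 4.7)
The plan is to invoke Lemma \ref{lem:divide_graph} directly, since it gives an explicit description of the linkage graphs $\graph(C')$ and $\graph(C'')$ after splitting $C$ by the crossing geodesic $\alpha$. I will split into the two cases of the hypothesis: $\alpha$ associated to a cut edge, and $\alpha$ associated to a cut pair.

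In the cut-edge case ($\alpha = \alpha(e)$ with $e$ a cut edge having endpoints $v_0, v_1$), Lemma \ref{lem:divide_graph} describes $\graph(C')$ as obtained from $\graph(C)$ by deleting one connected component of $\graph(C)\smallsetminus \{e\}$ and relabelling, as $\alpha$, the endpoint of $e$ that lay in the deleted component. In the resulting graph the vertex $\alpha$ is incident to exactly one edge, namely the edge $e$ with one endpoint renamed. Hence $\alpha$ has valence $1$ in $\graph(C')$, and by the symmetric argument on the other side, also in $\graph(C'')$.

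In the cut-pair case ($\alpha = \alpha(e_1,e_2)$ with shared vertex $v$), Lemma \ref{lem:divide_graph} states that exactly one of $\graph(C'), \graph(C'')$, namely the one containing $e_1, e_2$, has $\alpha$ playing the role of $v$ (so $\alpha$ there may have valence $\geq 2$); the other graph is obtained by attaching a single new edge from $v$ to a new vertex labelled $\alpha$, so in that graph $\alpha$ has valence exactly $1$.

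I do not anticipate any serious obstacle: the conclusion reduces to reading off the valence of $\alpha$ from the explicit description of $\graph(C'), \graph(C'')$ given by Lemma \ref{lem:divide_graph}. Perhaps the only point requiring a moment of care is verifying in the cut-edge case that no extra edge is contributed to $\alpha$ by some $L^\mp$-leaf entering $C'$ in an unexpected way; this is ruled out by the characterization in Lemma \ref{l.crossing-geodesic} (the $L^\mp$-leaves crossing $\alpha$ are precisely those representing $e$) together with the convexity of $C$, which forces any such leaf to enter $C'$ through $\alpha$ and exit through the single endpoint of $e$ lying in $\partial C'$.
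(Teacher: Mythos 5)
Your proof is correct and follows the same route the paper intends: the corollary is stated there precisely as a direct consequence of Lemma \ref{lem:divide_graph}, and you read off the valence of $\alpha$ from that lemma's explicit description of $\graph(C')$ and $\graph(C'')$, after noting (as you should) that a cut-edge has both endpoints of valence $>1$ and a cut-pair's shared vertex has valence $>2$, so the lemma applies. The extra remark invoking Lemma \ref{l.crossing-geodesic} and convexity to rule out spurious edges from $\alpha$ is a helpful clarification but not a departure from the paper's argument.
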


\section{Proof of Theorem \ref{thm_completion}: Existence}\label{sec:proof_completion_thm}
%%%%%%%%%%%%%%%%%%%%%%%%%%%%%%%%%%%%%%%%%%%%%%%%%%%%

Recall that a {\em planar completion} of $(L^+, L^-)$ is a pA-bifoliation  $(\cF^+, \cF^-)$ such that $L^+$ and $L^-$ are dense in the induced prelamination of $(\cF^+, \cF^-)$, meaning that the set of leaves of $\cF^\pm$ whose ends lie in $L^\pm$ are dense in the plane. (Note that density in the plane and density of pairs of endpoints on the circle are not equivalent: a leaf of a foliation that is nonseparated from others on both sides is isolated in the induced lamination.)

In this section we use the tools developed above to prove the existence of a planar completion under the assumptions of Theorem \ref{thm_completion}. 

In the rest of this section, we fix a pair of FT prelaminations $(L^+,L^-)$ satisfying the conditions of Theorem \ref{thm_completion}, namely: the simple cycle condition, no high-valence leaves, and if three $L^\pm$-leaves cross a common $L^\mp$-leaf, then they do not have a common endpoint.

Notice that the simple cycle condition implies that every edge in a linkage graph is either a disconnecting edge (if it is not included in a cycle) or part of a disconnecting pair (if it is part of a cycle).

\begin{definition}\label{def_completion}
The \emph{completion} of $(L^+,L^-)$ is the pair of laminations $(\tilde L^+,\tilde L^-)$ defined as follows: 
$\tilde L^\pm$ is the union of $L^\pm$ with
\begin{itemize}
\item all the geodesics which correspond to trivial complementary regions;
\item all the crossing geodesics $\alpha^\pm(e_1,e_2)$ associated to a disconnecting pair of $L^\pm$; and 
 \item all the crossing geodesics $\alpha^\pm(e)$ where $e$ is a disconnecting edge whose vertices are {\em both} geodesics.
\end{itemize}
\end{definition}

Note that the completion depends on the pair $(L^+, L^-)$, it does not make sense to ``complete" $L^+$ without reference to $L^-$.  

The main work in this section will be to show that the completion satisfies the conditions of Theorem \ref{thm:realization}. Most of this will follow from the preparatory work we did in section \ref{sec_preliminaries}.

We first give an equivalent characterization of leaves that are added in the completion:
\begin{observation}\label{obs_types_leaves_completion}
Let $\alpha\in \tilde L^\pm\smallsetminus L^\pm$. Then $\alpha$ is of one of three possible, mutually exclusive, types:
\begin{enumerate}
\item\label{item_added_trivial_complementary_regions}  $\alpha$ is a geodesic corresponding to a trivial complementary region of $L^\pm$;
\item \label{item_added_boundary_geodesic}  $\alpha$ is a {\em not high-valence} geodesic boundary component of a nontrivial complementary region $C$ of $L^\pm$ 
\item \label{item_added_crossing_geodesic_cutedge}
 there is a nontrivial complementary region $C$ of $L^\pm$ such that $\alpha$ is the crossing geodesic of a \emph{cut-edge} $e$, or \emph{cut-pair} $(e_1,e_2)$ (see Definition \ref{def_cut_edge_pair}).
\end{enumerate}
Conversely, any geodesic of one of the type above must be a leaf of $\tilde L^\pm$.

Moreover, there are only countably many leaves of types \ref{item_added_boundary_geodesic} and \ref{item_added_crossing_geodesic_cutedge}.
\end{observation}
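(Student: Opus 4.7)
The plan is to analyze Definition~\ref{def_completion} case-by-case, using Observation~\ref{obs:easy}, Lemma~\ref{lem_low_valence_crossing} and Lemma~\ref{lem:divide_graph} to determine whether each newly added geodesic coincides with a boundary side of, or lies in the interior of, a complementary region of $L^\pm$.

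For the forward direction, the simple cycle condition forces every edge in a linkage graph to be either a disconnecting edge (not on any cycle) or part of a disconnecting pair, so Definition~\ref{def_completion} gives three exhaustive cases for $\alpha \in \tilde L^\pm \setminus L^\pm$: $\alpha$ corresponds to a trivial complementary region (Type 1), $\alpha = \alpha(e)$ for a disconnecting edge $e$ with both vertices geodesic, or $\alpha = \alpha(e_1,e_2)$ for a disconnecting pair. In the latter two cases, I would distinguish via Definition~\ref{def_cut_edge_pair}: if the edge or pair is a cut-edge or cut-pair, Lemma~\ref{lem:divide_graph} places $\alpha$ in the interior of $C$ (Type 3); otherwise one component of $\graph(C)$ minus the edge(s) is a singleton, and by Observation~\ref{obs:easy} together with Lemma~\ref{lem_low_valence_crossing}, $\alpha$ coincides with a geodesic side $v_0$ of $C$ of valence $1$, or valence $2$ on a cycle. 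I would then check that $v_0$ is not high-valence by ruling out each clause of Definition~\ref{d.high-valence}: the valence bounds exclude \ref{item_high_val_cycle_greater_3} and \ref{item_high_val_greater_2}; the degenerate case \ref{item_high_val_degenerate} is excluded because two distinct Euclidean chords of the disc cannot share both boundary endpoints, so a connected two-vertex linkage graph with both vertices geodesic is impossible; and \ref{item_high_val_two_polygons} is ruled out since $v_0 \notin L^\pm$ forces $v_0$ to bound only one region of $L^\pm$.

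For the converse, Type 1 geodesics are added explicitly, and Type 3 crossing geodesics of cut-edges or cut-pairs fit the second and third bullets of Definition~\ref{def_completion} (using Lemma~\ref{l.no-ideal-connection} to observe that cut-edges and cut-pairs have all their relevant vertices geodesic, since ideal vertices have valence $1$ and so cannot sit on a cycle or in a nontrivial component after deletion). For Type 2, if $\alpha \in L^\pm$ we are done; otherwise, Definition~\ref{d.high-valence} restricts $\alpha$'s valence in $\graph(C)$ to $1$ (if not on a cycle) or $2$ (if on a cycle). In the valence-$1$ case, the adjacent vertex must be geodesic, for otherwise Lemma~\ref{lem:linkage_connected} forces $\graph(C)$ to consist only of $\alpha$ and that ideal neighbor, putting us in case \ref{item_high_val_degenerate}; in the valence-$2$ case, the two incident edges on the cycle form a disconnecting pair by the simple cycle condition. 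In both situations, Observation~\ref{obs:easy} identifies $\alpha$ as the corresponding crossing geodesic, so $\alpha \in \tilde L^\pm$.

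For countability of Types 2 and 3, nontrivial complementary regions are pairwise disjoint open subsets of $\bD^2$ and therefore countable; each such region has countably many geodesic boundary sides (Type 2) and its linkage graph has countably many vertices and hence countably many edges, giving countably many cut-edges, cut-pairs, and associated Type 3 crossing geodesics. Mutual exclusivity of the three types is built into the case distinctions (trivial versus nontrivial region, and boundary versus interior of $C$). The main obstacle I anticipate is the converse for Type 2: the four-way case analysis of Definition~\ref{d.high-valence} must be combined carefully with the connectedness of $\graph(C)$ and with the hypothesis $\alpha \notin L^\pm$ to be sure no degenerate configuration escapes, especially when ideal vertices are present.
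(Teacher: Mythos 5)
Your proof is correct and follows essentially the same route as the paper: decompose $\tilde L^\pm \smallsetminus L^\pm$ via Definition~\ref{def_completion}, then split each crossing geodesic into the ``low-valence'' case (Observation~\ref{obs:easy} / Lemma~\ref{lem_low_valence_crossing}, giving Type~2) and the ``cut'' case (Lemma~\ref{lem:divide_graph}, giving Type~3). The one place where the paper is shorter is that it cites the second part of Lemma~\ref{lem_low_valence_crossing} to get the not-high-valence conclusion directly from $\alpha \notin L^\pm$, whereas you re-derive it by checking each clause of Definition~\ref{d.high-valence}; your exclusion of the degenerate case~\ref{item_high_val_degenerate} compresses a step (that the region bounded by two non-coincident chords must have a nondegenerate $S^1$-arc, hence an ideal side, hence by Lemma~\ref{lem:union_of_segments} an ideal vertex), but the conclusion is correct, so this is a stylistic rather than a substantive gap.
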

\begin{proof}
By definition, if $\alpha$ is not associated to a trivial complementary region of $L^\pm$, then it is a crossing geodesic of some disconnecting edge or pair in $\gamma(C)$, for a nontrivial complementary region $C$. Then either Lemma \ref{lem_low_valence_crossing} applies, so that $\alpha$ is in case \ref{item_added_boundary_geodesic}, or Lemma \ref{lem:divide_graph} applies so $\alpha$ is in case \ref{item_added_crossing_geodesic_cutedge}.
The converse is also immediate given the definition of completion and Lemmas \ref{lem_low_valence_crossing} and \ref{lem:divide_graph}. Finally, since there are only countably many nontrivial complementary regions, the last claim follows.
\end{proof}

\begin{rem} \label{rem:equiv_def_completion}
From the observation above, we see that there is an equivalent way of defining the completion:
The completion, $(\tilde L^+,\tilde L^-)$, of $(L^+,L^-)$ is obtained by first taking all the leaves in the closure $\bar L^\pm$ which do not represent high valence vertices of complementary regions of $L^\pm$; 
and then adding all the crossing geodesics associated to cut-edges or cut-pairs of the linkage graphs $\Gamma(C^\pm;L^\mp)$.
\end{rem}

Leaves in $\tilde L^\pm$ of type \ref{item_added_trivial_complementary_regions} or \ref{item_added_boundary_geodesic} are not high-valence, and, by Corollary \ref{cor:crossing_is_valence_1}, neither are the leaves of type \ref{item_added_crossing_geodesic_cutedge}. Hence, from Observation \ref{obs_types_leaves_completion}, we also deduce
\begin{corollary}\label{cor_no_high_valence_leaf_in_completion}
The completion $(\tilde L^+,\tilde L^-)$ of $(L^+,L^-)$ does not contain any high-valence leaves.
\end{corollary}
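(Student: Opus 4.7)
The plan is a case analysis on the leaves of $\tilde L^\pm$: either the leaf is an old one in $L^\pm$, or it is a new one falling into one of the three types from Observation \ref{obs_types_leaves_completion}. For new leaves, I would argue each type in turn. A Type 1 leaf (a trivial complementary region of $L^\pm$) is accumulated on both sides by leaves of $L^\pm \subseteq \tilde L^\pm$, so it is not a geodesic side of any complementary region of $\tilde L^\pm$ and hence not a vertex of any linkage graph, so vacuously not high-valence. A Type 2 leaf is, by Lemma \ref{lem_low_valence_crossing}, the degenerate crossing geodesic of a disconnecting edge or pair whose relevant endpoint has valence $1$ or $2$; that lemma guarantees its addition does not split the ambient complementary region $C$ and leaves $\graph(C;L^\mp)$ unchanged, so the leaf remains at the non-high-valence position it occupied in $\graph(C;L^\mp)$ by construction, and subsequent splittings along other crossing geodesics can only decrease its valence. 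For a Type 3 leaf, I would combine Corollary \ref{cor:crossing_is_valence_1} -- giving valence $1$ on at least one side, ruling out conditions (i), (ii), and (iv) there -- with Lemma \ref{lem:divide_graph}: on the other side the leaf has valence at most $2$ (and lies on a cycle in the cut-pair case), so (i) and (ii) fail there as well, and (iii) fails on both sides because the definitions of cut-edge and cut-pair require both components of $\graph(C)\smallsetminus\{e\}$ or $\graph(C)\smallsetminus\{e_1,e_2\}$ to be nontrivial, so each resulting sub-region's linkage graph has at least three vertices.

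For old leaves $\alpha \in L^\pm$, I would first invoke Lemmas \ref{lem:closure_same_graph} and \ref{lem:same_graphs} to deduce that $\graph(C;\tilde L^\mp) \cong \graph(C;L^\mp)$ for every nontrivial complementary region $C$ of $L^\pm$. Any complementary region of $\tilde L^\pm$ that has $\alpha$ as a side is therefore either equal to such a $C$ -- in which case $\alpha$ is non-high-valence by the hypothesis on $L^\pm$ -- or is a sub-region $C' \subsetneq C$ produced by splitting along Type 3 crossing geodesics. Lemma \ref{lem:divide_graph} controls this splitting: $\alpha$'s valence in $\graph(C';\tilde L^\mp)$ is at most its valence in $\graph(C;L^\mp)$, since all edges from $\alpha$ that crossed into the deleted component collapse into a single new edge to the crossing-geodesic vertex. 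Consequently, conditions (i), (ii), and (iv) for $\alpha$ transfer from the parent graph to the sub-graph, while condition (iii) again cannot newly arise because sub-regions of cut-edges and cut-pairs always contain at least three linkage-graph vertices.

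The principal subtlety will be verifying condition (iii) uniformly -- ensuring that the completion never produces a sub-region whose linkage graph has only two vertices. This is the main bookkeeping point, but it is controlled by the \emph{nontrivial components} clause built into the definitions of cut-edge and cut-pair, together with the fact that each crossing geodesic of a genuine cut introduces exactly one new vertex into each adjacent sub-region.
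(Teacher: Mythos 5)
Your proof is correct and follows the same overall strategy as the paper's (case analysis on the types of leaves via Observation \ref{obs_types_leaves_completion} and Corollary \ref{cor:crossing_is_valence_1}), but you are considerably more thorough.  The paper's argument is a single terse sentence that only explicitly treats the three types of \emph{new} leaves and simply cites Corollary \ref{cor:crossing_is_valence_1} for Type 3; it does not spell out why conditions (iii) and (iv) of Definition \ref{d.high-valence} fail for a Type 3 leaf, nor does it explicitly address leaves already belonging to $L^\pm$.  You fill both gaps: you observe that the ``nontrivial components'' clause in the definition of cut-edge and cut-pair guarantees each resulting sub-region has at least three linkage-graph vertices, so the degenerate condition (iii) cannot arise, and you reduce the case of an old leaf $\alpha \in L^\pm$ to the hypothesis on $(L^+,L^-)$ via Lemmas \ref{lem:closure_same_graph}, \ref{lem:same_graphs}, and \ref{lem:divide_graph}, noting that splitting preserves or decreases valences.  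One small imprecision: for a cut-edge split the valences of surviving vertices are in fact \emph{preserved} (not just bounded above) since the edge $e$ persists as an edge to the new crossing-geodesic vertex, and for Type 2 leaves the phrase ``can only decrease its valence'' under subsequent splittings should similarly read ``cannot increase''; neither affects the conclusion.  Your argument is a welcome unpacking of what the paper leaves to the reader.
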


\begin{rem}
We emphasize that we \emph{do not} take crossing geodesics associated to edges with one ideal vertex in the completion since otherwise the above result would fail (see Lemma \ref{lem:splitting_regions}). 
\end{rem}

Lemma \ref{lem:divide_graph} shows that  adding crossing geodesics never creates new cut-edges or cut-pairs.  Thus, we immediately conclude:
\begin{corollary} \label{cor:completion_stabilizes}
Suppose $(\tilde L^+,\tilde L^-)$ is the completion of $(L^+, L^-)$.  Then 
the completion of $(\tilde L^+,\tilde L^-)$ is equal to $(\tilde L^+,\tilde L^-)$.
\end{corollary}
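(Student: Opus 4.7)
The plan is to apply Observation \ref{obs_types_leaves_completion} to the pair $(\tilde L^+, \tilde L^-)$, which is fully transverse by Corollary \ref{cor:completion_FT}. This reduces the claim to showing that $(\tilde L^+, \tilde L^-)$ already contains every candidate new leaf of each of the three types enumerated there: (1) geodesics corresponding to trivial complementary regions of $\tilde L^\pm$; (2) non-high-valence geodesic boundary components of nontrivial complementary regions of $\tilde L^\pm$; and (3) crossing geodesics associated to cut-edges or cut-pairs of $\graph(C^*;\tilde L^\mp)$ for each nontrivial complementary region $C^*$ of $\tilde L^\pm$.

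First I would identify the combinatorial structure of $(\tilde L^+, \tilde L^-)$. Every nontrivial complementary region $C^*$ of $\tilde L^\pm$ lies inside a unique nontrivial complementary region $C$ of $L^\pm$ and is obtained from $C$ by cutting along the crossing geodesics added to $L^\pm$ during completion. Using Lemma \ref{lem:closure_same_graph} (to absorb the closure leaves added to $L^\mp$) and Lemma \ref{lem:same_graphs} (to absorb the crossing geodesics added on the opposite side), I would argue that $\graph(C^*;\tilde L^\mp) = \graph(C^*;L^\mp)$. Then Lemma \ref{lem:divide_graph} describes this graph explicitly as a subgraph of $\graph(C;L^\mp)$ obtained by pruning at each cut-edge and cut-pair, with a valence-one appendage vertex attached for each adjacent crossing geodesic, except on the cycle side of a cut-pair, where the appendage replaces the cut vertex with valence two inside the cycle.

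The main step, and the principal obstacle, is to verify that such pruned-and-appendaged graphs contain no cut-edges and no cut-pairs, which settles condition (3). The plan is a case analysis on edges of $\graph(C^*;\tilde L^\mp)$: every edge is either incident to a valence-one appendage (so removing it isolates a trivial component), inherited from a disconnecting but non-cut-edge of $\graph(C;L^\mp)$ (whose trivial-side endpoint retains valence one in $\graph(C^*)$), or a cycle edge (hence not disconnecting at all). A parallel case analysis, relying on the simple cycle condition together with the explicit description of how cycles transform in Lemma \ref{lem:divide_graph}, shows no cut-pairs remain; in particular, the two cycle-side edges flanking a valence-two appendage isolate that appendage upon removal.

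The remaining two conditions are less subtle. For condition (2), any geodesic side of $C^*$ is either a geodesic side of $C$ already included in $\tilde L^\pm$ via Remark \ref{rem:equiv_def_completion} if not high-valence (and a high-valence side of $C$ can be shown to remain high-valence in $\graph(C^*)$), or a crossing geodesic (already in $\tilde L^\pm$ by Definition \ref{def_completion}). For condition (1), every trivial complementary region of $L^\pm$ lies in $\tilde L^\pm$ by construction, and any new interior trivial complementary region of $\tilde L^\pm$ would be accumulated on both sides by leaves of $\tilde L^\pm$; the non-crossing property of Lemma \ref{lem_crossing_geodesics_dont_intersect} together with the characterization in Lemma \ref{l.crossing-geodesic} forces any such limit to already be a leaf of $\bar L^\pm$, hence handled in the construction of $\tilde L^\pm$.
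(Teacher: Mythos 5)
Your proof follows the same high-level strategy as the paper's: reduce via Observation \ref{obs_types_leaves_completion} to the three types of added leaves, and invoke Lemma \ref{lem:divide_graph} (together with Lemmas \ref{lem:closure_same_graph} and \ref{lem:same_graphs}) to see that the new linkage graphs have no cut-edges or cut-pairs. The paper compresses this into a single sentence citing Lemma \ref{lem:divide_graph} and treats the absence of new type (1) and type (2) leaves as immediate, while you spell those cases out explicitly; that extra care is appropriate and your case analysis for type (3) is sound.

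There are two spots worth tightening. First, for type (2) you assert without proof that ``a high-valence side of $C$ can be shown to remain high-valence in $\graph(C^*)$''; this is correct but does need the observation that, under the graph surgery of Lemma \ref{lem:divide_graph}, a vertex of $\graph(C)$ that survives into $\graph(C^*)$ keeps all its incident edges (cut edges at that vertex are retained, with the far vertex relabelled by the crossing geodesic), so its valence does not drop, and no cycle it belongs to is destroyed on its side. Second, for type (1) your conclusion ``hence handled in the construction of $\tilde L^\pm$'' is not quite what is needed: a limit leaf lying in $\bar L^\pm$ is in $\tilde L^\pm$ only if it is not high-valence. The correct resolution is that a high-valence leaf $\alpha\in\bar L^\pm\smallsetminus\tilde L^\pm$ is, by definition, a geodesic side of some nontrivial complementary region $C$ of $L^\pm$; after inserting the crossing geodesics, $\alpha$ still bounds the (nontrivial) piece of $C$ adjacent to it, so $\alpha$ is a geodesic side of a nontrivial complementary region of $\tilde L^\pm$ and therefore cannot be a trivial complementary region of $\tilde L^\pm$. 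With those two points made explicit, the argument is complete and matches the paper's intent.
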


As stated above, our main work in this section will be to prove the following: 
\begin{proposition}\label{p.completion}Let $(\tilde L^+,\tilde L^-)$ be the completion of $(L^+,L^-)$.  Then: 
 \begin{enumerate}[label=(\roman*)]
\item  $(\tilde L^+, \tilde L^-)$ is fully transverse
 \item\label{item_completion_countable}  For any $a\in S^1$, the set $\{b : \{a,b\}\in \tilde L^-\cup \tilde L^+\}$ is countable
 \item\label{item_completion_one_root_ideal_polygon}  Each complementary region of $\tilde L^\pm$ is an ideal polygon or one-root region.  Ideal polygons come in coupled pairs, and no leaf lies in the boundary of two ideal polygons.
 \end{enumerate}
\end{proposition}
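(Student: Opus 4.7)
The plan is to verify each of the three conditions in turn, exploiting the toolkit developed in Section \ref{sec_preliminaries}. For condition (i), I would simply note that, by Remark \ref{rem:equiv_def_completion}, $\tilde L^\pm$ is obtained from $L^\pm$ by adding a subset of $\bar L^\pm$ together with a collection of crossing geodesics; Corollary \ref{cor:completion_FT} then immediately yields that $(\tilde L^+,\tilde L^-)$ is fully transverse.

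The heart of the argument is condition \ref{item_completion_one_root_ideal_polygon}. I would fix a nontrivial complementary region $\tilde C$ of $\tilde L^+$ (the case of $\tilde L^-$ being symmetric) and classify its linkage graph $\graph(\tilde C;\tilde L^-)$. Corollary \ref{cor:completion_stabilizes} (idempotence) guarantees that $\graph(\tilde C)$ contains no cut-edges and no cut-pairs, for otherwise further crossing geodesics would have been added; moreover, the simple cycle condition and the absence of high-valence leaves pass from $L^\pm$ to $\tilde L^\pm$ via Lemmas \ref{lem:closure_same_graph}, \ref{lem:same_graphs}, \ref{lem:divide_graph} and Corollary \ref{cor_no_high_valence_leaf_in_completion}. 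If $\graph(\tilde C)$ is a tree, then the absence of cut-edges forces every edge to have a valence-$1$ endpoint; a standard connectedness argument shows the tree is a star, and Definition \ref{d.high-valence}\ref{item_high_val_greater_2} forces the center to be a non-leaf, so $\tilde C$ is a one-root region. If $\graph(\tilde C)$ contains a cycle $\cC$, any chord in $\cC$ would violate the simple cycle condition (creating two cycles sharing that chord), and any pendant edge at a cycle vertex $v$ would make the two cycle edges at $v$ a cut-pair, whose crossing geodesic would then have been added--contradiction. Thus $\graph(\tilde C)=\cC$, its vertices are all geodesic (ideal vertices have valence $1$ by Lemma \ref{l.no-ideal-connection}), and a careful case analysis using Definition \ref{d.high-valence} shows that they must all be leaves of $\tilde L^+$ (a non-leaf cycle vertex of valence $2$ would satisfy none of the high-valence conditions and hence could not have been excluded from $\tilde L^\pm$ by the completion construction). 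Hence $\tilde C$ is an ideal polygon.

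The coupled-pair structure and non-sharing of sides then follow from further applications of the high-valence constraints. Let $P^+$ be an ideal polygon of $\tilde L^+$ with cyclic sides $\alpha_1,\ldots,\alpha_k$. Each $\alpha_{i+1}$ has valence $2$ in the cycle $\graph(P^+)$, so item \ref{item_high_val_two_polygons} of Definition \ref{d.high-valence}, together with the no-high-valence property, forces $\alpha_{i+1}$ to have valence $1$ in the linkage graph of the adjacent complementary region $\tilde C'$ across $\alpha_{i+1}$. By Lemma \ref{lem:valence1}, the two $\tilde L^-$-leaves $\beta_i,\beta_{i+1}$ defining the cycle edges incident to $\alpha_{i+1}$ must then continue inside $\tilde C'$ to the same further vertex of $\graph(\tilde C')$; combined with condition (iv) of Theorem \ref{thm_completion}, this forces $\beta_i$ and $\beta_{i+1}$ to share the appropriate vertex of $P^+$ as a common endpoint, and iterating around the cycle assembles the coupled ideal polygon $P^-$. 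A shared side between two ideal polygons of $\tilde L^+$ would similarly trigger item \ref{item_high_val_two_polygons} and be high-valence, contradicting Corollary \ref{cor_no_high_valence_leaf_in_completion}. Finally, condition \ref{item_completion_countable} follows once \ref{item_completion_one_root_ideal_polygon} is established: each $a \in S^1$ lies on the boundary of at most countably many nontrivial complementary regions of $\tilde L^\pm$ (they have disjoint interiors, so only countably many can approach $a$), and each such region contributes at most two sides adjacent to $a$, giving only countably many leaves of $\tilde L^\pm$ with endpoint $a$. The main obstacle will be the coupled-pair step: showing that $\beta_i$ and $\beta_{i+1}$ share the expected endpoint of $P^+$ requires a careful interplay of Lemma \ref{lem:valence1} with condition (iv), and may necessitate passing to limits in an appropriate family of $\tilde L^-$-leaves.
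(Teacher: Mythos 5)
Your approach for item (i) and for the star/cycle classification in (iii) matches the paper: full transversality follows immediately from Corollary~\ref{cor:completion_FT}, the idempotence of the completion (Corollary~\ref{cor:completion_stabilizes}) rules out cut-edges and cut-pairs, and the simple-cycle and no-high-valence conditions then force each linkage graph to be a star or a cycle, giving one-root regions and ideal polygons respectively. The argument that a shared side of two ideal polygons would be high-valence is also the one the paper uses.

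There are, however, two genuine gaps.

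First, your derivation of condition~(ii) from~(iii) does not work. You count leaves of $\tilde L^\pm$ through a point $a$ by counting sides of nontrivial complementary regions touching $a$, but a leaf that is \emph{accumulated on both sides} bounds no nontrivial complementary region and is invisible to that count. Nothing in (iii) prevents an uncountable ``fan'' of $\tilde L^+$-leaves through $a$ all accumulated on both sides; indeed, uncountably many leaves of type~(1) in Observation~\ref{obs_types_leaves_completion} (trivial complementary regions) may have been added to form $\tilde L^+$, and your argument says nothing about those. The paper's proof of (ii) is independent of (iii) and hinges on hypothesis~\ref{item_same_endpoint_condition} of Theorem~\ref{thm_completion} (no three $L^\pm$-leaves with a common endpoint crossing a common $L^\mp$-leaf): from uncountably many leaves of $\bar L^+$ through $a$, one finds a single $L^-$-leaf $\beta$ crossing uncountably many of them, approximates by $L^+$-leaves, and contradicts~\ref{item_same_endpoint_condition}. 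You never invoke that hypothesis, which is a strong signal that your route to (ii) cannot be made to work without it.

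Second, your argument for the coupled pair is incomplete (as you flag) and seems to be aimed at the wrong target: you try to show that consecutive $\tilde L^-$-leaves $\beta_i,\beta_{i+1}$ defining cycle edges share a vertex ``of $P^+$'', but the $\beta_i$ are $L^-$-objects, and the vertices of the coupled polygon $P^-$ interleave with those of $P^+$ rather than coincide with them. Moreover, one cannot expect finitely chosen representatives $\beta_i$ of the cycle edges to literally share endpoints; they only accumulate on the sides of $P^-$. The paper sidesteps this entirely: it intersects all the half-planes cut out by $\tilde L^-$-leaves representing the cycle edges, shows the resulting convex region is a complementary region $C^-$ of $\tilde L^-$ with a cycle as linkage graph, concludes $\partial C^-$ has no ideal segments, and then applies Observation~\ref{obs_types_leaves_completion} to see its sides are leaves. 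That route avoids any appeal to condition~\ref{item_same_endpoint_condition} and any limiting argument over families of $\tilde L^-$-leaves. I'd recommend replacing your coupled-pair step with this intersection argument and proving (ii) directly from hypothesis~\ref{item_same_endpoint_condition} rather than from~(iii).
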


Before proving Proposition \ref{p.completion}, we first show that the proposition implies the existence of a {\em planar completion} for $(L^+,L^-)$.

\begin{proof}[Proof of Theorem \ref{thm_completion}, Existence]
Let $(\tilde L^+,\tilde L^-)$ be the completion of $(L^+,L^-)$. By Proposition \ref{p.completion}, $(\tilde L^+,\tilde L^-)$ satisfies the condition of Theorem \ref{thm:realization}, hence $(\tilde L^+,\tilde L^-)$ is induced by a pA-bifoliation $(\cF^+,\cF^-)$. We need to show that $(\cF^+,\cF^-)$ is a planar completion of $(L^+,L^-)$.
Since $L^\pm \subset \tilde L^\pm$, we only have to show that $L^\pm$ is dense in $\cF^\pm$.

Consider a non-singular leaf, or face of singular leaf, $\ell^\pm$ of $\cF^\pm$.  It corresponds to a $\tilde L^\pm$-leaf $\alpha$.  Any $\tilde L^\pm$-leaf crosses a $L^\mp$-leaf, so let $\beta$ be a $L^\mp$-leaf crossing $\alpha$ and let $\ell_\beta^\mp$ be the corresponding leaf or face.

The set $U_\beta$ of non-singular leaves or faces of leaves of $\cF^\pm$ crossing $\ell_\beta^\mp$ fills a neighborhood of $\ell$. By Observation \ref{obs_types_leaves_completion}, there are at most countably many leaves of $\tilde L^\pm$ in this neighborhood that are not in the closure $\bar L^\pm$.  
Thus a dense subset of leaves in $U_\beta$ define leaves of $L^\pm$, ending the proof of the existence of planar completion, assuming Proposition \ref{p.completion}.
\end{proof}

\subsection{Proof of Proposition \ref{p.completion}}

Let $(\tilde L^+,\tilde L^-)$ be the completion of $(L^+,L^-)$. By Corollary \ref{cor:completion_FT}, $(\tilde L^+,\tilde L^-)$ is fully transverse. We need to show items \ref{item_completion_countable}-\ref{item_completion_one_root_ideal_polygon} of Proposition \ref{p.completion}.

\begin{lemma}
For any $a\in S^1$, the set $\{b : \{a,b\}\in \tilde L^-\cup \tilde L^+\}$ is countable, i.e., item \ref{item_completion_countable} is satisfied.
\end{lemma}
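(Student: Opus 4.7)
The plan is first to reduce the count of $\tilde L^\pm$-leaves through $a$ to counting leaves of the closure $\bar L^\pm$ through $a$.  By Observation~\ref{obs_types_leaves_completion}, the leaves of $\tilde L^\pm \smallsetminus L^\pm$ split into three types; types \ref{item_added_boundary_geodesic} and \ref{item_added_crossing_geodesic_cutedge} together form only a countable set, since the nontrivial complementary regions are countable and each has a countable linkage graph.  Type \ref{item_added_trivial_complementary_regions} leaves (trivial complementary regions) together with all of $L^\pm$ lie in $\bar L^\pm$.  So it suffices to show that for every $a\in S^1$, the set $F_a^\pm := \{b : \{a,b\}\in \bar L^\pm\}$ is countable; I treat $F_a^+$ below, the case of $F_a^-$ being symmetric.

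I would argue by contradiction: if $F_a^+$ is uncountable, then since it is closed in $S^1$, Cantor--Bendixson yields an uncountable perfect kernel $P \subset F_a^+$, in which all but countably many points are accumulated by $P$ from both sides in $S^1$.  The key technical step---which I expect to be the main obstacle---is the following claim: if $x\in F_a^+$ is 2-sided accumulated in $F_a^+$, then $\alpha_x := \{a,x\}$ is the limit of $L^+$-leaves of the form $\{a,b_n\}$ \emph{through $a$} (not merely the limit of pairs $(a_n,b_n)\in L^+$ with $a_n\to a$).  To prove this, take any approximating sequence $(a_n,b_n)\in L^+$ with $(a_n,b_n)\to (a,x)$, and pick $y'\in F_a^+$ on one side of $x$ and $y''\in F_a^+$ on the other side (possible by 2-sided accumulation).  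Since $(\bar L^+,\bar L^-)$ is also a pair of FT prelaminations by Lemma~\ref{lem:extend_lamination}, the leaf $\{a_n,b_n\}\in L^+\subset \bar L^+$ can cross neither $\{a,y'\}$ nor $\{a,y''\}$.  As $b_n\to x$, this forces $a_n$ into the intersection of two arcs of $S^1$ approaching $a$ from opposite directions, whose intersection is just $\{a\}$; hence $a_n=a$ for all large $n$.

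Granted this key step, the contradiction comes from hypothesis \ref{item_same_endpoint_condition}.  Choose $x\in P$ which is 2-sided accumulated in $P$ and pick an $L^-$-leaf $\beta$ crossing $\alpha_x$; such a $\beta$ exists because FT connectedness of $(\bar L^+,\bar L^-)$ yields some $\bar L^-$-leaf crossing $\alpha_x$, which can then be approximated by $L^-$-leaves, crossing being an open condition.  Let the endpoints of $\beta$ be $b_1, b_2$, lying on opposite arcs of $S^1\smallsetminus\{a,x\}$.  Using 2-sided accumulation of $P$ at $x$, pick $y_1,y_2\in P$ strictly between $x$ and $b_1,b_2$ respectively, each 2-sided accumulated in $P$ (such points are uncountable near $x$, since only countably many points of a perfect set in $S^1$ fail to be 2-sided accumulated).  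Then $\beta$ automatically crosses both $\{a,y_1\}$ and $\{a,y_2\}$ by the position of its endpoints.  Applying the key step to each of $\alpha_x$, $\{a,y_1\}$, $\{a,y_2\}$ and using openness of crossing yields, for all large $n$, three distinct $L^+$-leaves through $a$ all crossed by the single $L^-$-leaf $\beta$, contradicting hypothesis \ref{item_same_endpoint_condition}.  Hence $F_a^+$ is countable.
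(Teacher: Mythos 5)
Your proof is correct and follows the same overall strategy as the paper's: reduce from $\tilde L^\pm$ to $\bar L^\pm$ (only countably many crossing geodesics are added), then contradict condition~\ref{item_same_endpoint_condition} by producing three $L^+$-leaves through $a$ crossing a common $L^-$-leaf. Where you improve on the paper's exposition is the ``key step'': you carefully prove that a $\bar L^+$-leaf $\{a,x\}$ sandwiched by two other $\bar L^+$-leaves $\{a,y'\},\{a,y''\}$ through $a$ is necessarily a limit of $L^+$-leaves \emph{sharing the endpoint $a$}, because the non-crossing constraints (within the prelamination $\bar L^+$) trap the approximating endpoints $a_n$ in $\{a\}\cup [\text{arc from }y'\text{ to }y''\text{ through }x]$, forcing $a_n=a$ for large $n$. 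The paper's proof leans on exactly this point (``every leaf of $\hat L^+$ is approximated on both sides by leaves of $L^+$ \ldots we deduce that $\beta$ also intersects three leaves of $L^+$ all ending at'' the same point) without spelling out why the approximating $L^+$-leaves may be taken through $a$; your argument supplies that justification. The only inessential difference is how the three sandwiched leaves are located: you route through Cantor--Bendixson and a perfect kernel, while the paper finds a single $\beta$ crossing uncountably many $\{a,b\}$ more directly (implicitly a covering argument). Both routes work, yours being somewhat heavier machinery than strictly required.
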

\begin{proof} 
Suppose for contradiction there is $a\in S^1$ so that $\{b : \{a,b\}\in \tilde L^-\cup \tilde L^+\}$ is uncountable.  Up to switching $+$ and $-$, we can assume without loss of generality that there are uncountably many leaves of the form $\{a, b\}$ in $\tilde L^+$.  Since only countably many crossing leaves are added in the construction of $\tilde L^+$, we in fact have uncountably many leaves of the form $\{a, b\}$ in $\tilde L^+ \cap \bar L^+$.  
Each such leaf is crossed by a leaf of $L^-$, so there exists 
 $\beta\in \hat L^-$ such that $\{b : \{a,b\}\in \hat L^+ \text{ and }\{a,b\}\text{ crosses } \beta \}$ is still uncountable. 
In particular, $\beta$ intersects three leaves of $\hat L^+$ ending at the same point in $S^1$. Since every leaf of $\hat L^+$ is approximated on both sides by leaves of $L^+$, we deduce that $\beta$ also intersect three leaves of $L^+$ all ending at $b$ and that $\beta$ can be assumed to be in $L^-$. This contradicts the assumption on $(L^+,L^-)$.
\end{proof}

Now we have to show that complementary regions of $\tilde L^\pm$ satisfy item \ref{item_completion_one_root_ideal_polygon}. We split this in several lemmas.
The first one is a direct application of the work done in section \ref{sec_preliminaries}.
\begin{lemma} \label{lem:star_or_cycle}
Let $C$ be a complementary region of $\tilde L^\pm$, then its linkage graph $\graph(C;\tilde L^\mp)$ is either a star\footnote{Recall that a star is a connected graph with at most one vertex of valence greater than $1$.} or a cycle.
\end{lemma}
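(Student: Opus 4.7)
The plan is to classify $\Gamma := \graph(C;\tilde L^\mp)$ by extracting a short list of properties and then doing a case split on whether it contains a cycle. Three properties are immediate: $\Gamma$ is connected (Lemma~\ref{lem:linkage_connected}), its ideal vertices have valence $1$ (Lemma~\ref{l.no-ideal-connection}), and it satisfies the simple cycle condition, which passes from $(L^+,L^-)$ via Lemma~\ref{lem:closure_same_graph} (closure step) and Lemma~\ref{lem:divide_graph} (adding a crossing geodesic produces subgraphs of the originals with a relabelled vertex, hence creates no new cycles). The key additional property I would prove is that $\Gamma$ has neither a cut-edge nor a cut-pair.

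To establish this, I would first observe that any cut-edge has both endpoints geodesic --- an edge incident to an ideal vertex is a pendant whose removal isolates that vertex, so it is disconnecting but not a cut-edge --- and any cut-pair lies on a cycle, whose vertices are all geodesic. Moreover, for a cut-edge both endpoints have valence $\geq 2$, so Lemma~\ref{lem:divide_graph} tells us that the associated crossing geodesic $\alpha(e)$ lies in the interior of $C$ (and analogously for a cut-pair and $\alpha(e_1,e_2)$). If $\Gamma$ contained such a cut-edge or cut-pair, Definition~\ref{def_completion} would put $\alpha(e)$ or $\alpha(e_1,e_2)$ into the completion; but an interior geodesic of $C$ belonging to $\tilde L^\pm$ would split $C$ into two smaller complementary regions, contradicting the hypothesis that $C$ itself is a complementary region of $\tilde L^\pm$. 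Corollary~\ref{cor:completion_stabilizes} is what prevents this situation: the completion is already stable, so no further crossing geodesics can be added.

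Given these properties, the classification is a short graph-theoretic argument. If $\Gamma$ has no cycle, it is a tree in which every edge is disconnecting; the absence of cut-edges forces each edge to have an endpoint of valence $1$, which in turn forces $\Gamma$ to have at most one vertex of valence $\geq 2$: if $u$ and $v$ were two such vertices, the first edge $(u,x_1)$ on the unique path from $u$ to $v$ would have both endpoints of valence $\geq 2$ (since $x_1$ is either $v$ or an interior vertex of the path, in which case it has two neighbors on the path). Thus $\Gamma$ is a star. If instead $\Gamma$ has a cycle $Z$, the simple cycle condition makes $Z$ the unique cycle through each of its edges, so for any $v\in Z$ the two cycle-edges at $v$ form a disconnecting pair. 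The component of $\Gamma\smallsetminus\{e_1,e_2\}$ containing the remaining arc of $Z$ is nontrivial (cycles have length $\geq 3$ since the linkage graph is simple), so the no-cut-pair property forces the other component (consisting of $v$ together with anything attached to $v$ outside $Z$) to be trivial, i.e., $v$ has valence exactly $2$ with both edges in $Z$. Applying this to every $v\in Z$ and using connectedness of $\Gamma$ yields $\Gamma = Z$.

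The main obstacle is the no-cut-edge/no-cut-pair step, which requires carefully invoking Corollary~\ref{cor:completion_stabilizes} and verifying that the crossing geodesic attached to a hypothetical cut-edge or cut-pair would indeed be a genuinely \emph{new} interior leaf of $\tilde L^\pm$ (rather than coinciding with an existing boundary side, which happens in the degenerate low-valence cases of Observation~\ref{obs:easy} and Lemma~\ref{lem_low_valence_crossing}). Once this is in hand, the combinatorial classification as star or cycle is routine.
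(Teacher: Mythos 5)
Your proof is correct and follows essentially the same strategy as the paper's (extremely terse) proof: show that $\graph(C;\tilde L^\mp)$ has no cut-edges or cut-pairs, then classify. The only cosmetic difference is that you route the no-cut-edge/no-cut-pair step through Corollary~\ref{cor:completion_stabilizes} (completion is idempotent, so a cut-edge would yield a crossing geodesic in $\tilde L^\pm$ interior to $C$, a contradiction), whereas the paper cites Observation~\ref{obs_types_leaves_completion} and Lemma~\ref{lem:divide_graph} directly; since the corollary is an immediate consequence of those, the arguments coincide, and you additionally spell out the graph-theoretic classification step that the paper leaves implicit.
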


\begin{proof}
By Observation \ref{obs_types_leaves_completion} and Lemma \ref{lem:divide_graph}, $\graph(C)$ has no cut-edge or cut-pairs 
so it must either be a cycle, or a star.
\end{proof}

\begin{lemma}
Let $C$ be a complementary region of of $\tilde L^\pm$ such that $\graph(C)$ is a star.  Then $C$ is a one-root region. 
\end{lemma}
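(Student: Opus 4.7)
My plan is to identify the center of the star as the root and verify the one-root conditions in order. In the non-degenerate case ($\graph(C;\tilde L^\mp)$ has $\geq 3$ vertices), the center is the unique vertex of valence $\geq 2$; since ideal vertices have valence $1$ by Lemma \ref{l.no-ideal-connection}, this center is a geodesic vertex, which I call $v_0$. In the degenerate case (exactly two vertices), since a non-trivial complementary region cannot have two geodesic sides sharing the same pair of ideal endpoints, exactly one of the two vertices is geodesic, and I take it to be $v_0$. In both cases every edge of the star is incident to $v_0$, so by Lemma \ref{lem:valence1} every $\tilde L^\mp$-leaf meeting $C$ crosses $v_0$, giving the first of the one-root conditions.

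For the second one-root condition ($v_0\notin \tilde L^\pm$), I argue by contradiction: were $v_0$ a leaf of $\tilde L^\pm$, then in the non-degenerate case $v_0$ would have valence $\geq 2$ and lie on no cycle (the star is a tree), so item \ref{item_high_val_greater_2} of Definition \ref{d.high-valence} would make $v_0$ a high-valence side; in the degenerate case item \ref{item_high_val_degenerate} gives the same conclusion. Either way $v_0$ would be a high-valence leaf of $\tilde L^\pm$, contradicting Corollary \ref{cor_no_high_valence_leaf_in_completion}.

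The hardest step is the third one-root condition: every other geodesic side $v\neq v_0$ of $C$ must itself be a leaf of $\tilde L^\pm$. By Observation \ref{obs_types_leaves_completion}, the only way $v\in \overline{\tilde L^\pm}$ can fail to be in $\tilde L^\pm$ is for $v$ to be a high-valence geodesic boundary of a complementary region $C'$ of $L^\pm$ with $C\subseteq C'$ (such a $C'$ exists since $\geo(L^\pm)\subseteq \geo(\tilde L^\pm)$). Since $v$ has valence $1$ in $\graph(C;\tilde L^\mp)$, I plan to trace back its valence in $\graph(C';L^\mp)$ by applying Lemma \ref{lem:divide_graph} iteratively to the sequence of crossing-geodesic splits that produced $C$ from $C'$: through each such split, non-crossing vertices that are not the shared vertex of a cut-pair retain their valence. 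In the generic subcase $v$ therefore has valence $1$ in $\graph(C';L^\mp)$ as well, which fails every high-valence condition (the non-degeneracy of the star forces $\graph(C';L^\mp)$ to have $\geq 3$ vertices, which rules out item \ref{item_high_val_degenerate}, while valence $1$ rules out items \ref{item_high_val_cycle_greater_3}, \ref{item_high_val_greater_2}, and \ref{item_high_val_two_polygons}). The main obstacle I expect is the subcase where $v$ was the shared vertex of one or more cut-pair splits, whose valence decreases by one at each such split; here I will use the simple-cycle hypothesis together with the hypothesis that $L^\pm$ has no high-valence leaves to control the original cycle-valence of $v$ and conclude it is nonetheless not high-valence. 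Uniqueness of $v_0$ as the root follows immediately from the fact that in a non-degenerate star only the center is incident to every edge, while in the degenerate case $v_0$ is the only geodesic side.
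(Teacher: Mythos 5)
Your overall plan follows the same outline as the paper's proof (identify the center $v_0$ of the star as the root, show every $\tilde L^\mp$-leaf through $C$ crosses $v_0$ via Lemma~\ref{lem:valence1}, show $v_0\notin\tilde L^\pm$ via Corollary~\ref{cor_no_high_valence_leaf_in_completion}, show the remaining geodesic sides lie in $\tilde L^\pm$ via Observation~\ref{obs_types_leaves_completion}), and the first three steps are handled correctly. One small note on the second step: you should also observe that a $\tilde L^\mp$-leaf meeting $C$ might cross $v_0$ directly rather than through a valence-$1$ side; this is immediate and the paper states it explicitly.

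The genuine issue is your plan for the ``main obstacle'' in the fourth step. You propose to handle the subcase where $v$ was the shared vertex of one or more cut-pair splits by ``conclud[ing] it is nonetheless not high-valence.'' This cannot succeed: the shared vertex of a cut-pair has, by definition, valence $\geq 3$ on a cycle of $\graph(K;L^\mp)$, which is exactly condition~\ref{item_high_val_cycle_greater_3} of Definition~\ref{d.high-valence}. Such a $v$ \emph{is} high-valence, so the conclusion you want is false there. The correct resolution is that this subcase is \emph{vacuous}: a cut-edge split leaves the valence of every surviving vertex unchanged (the edge through $v$ simply has its far endpoint relabelled by the crossing geodesic), and a cut-pair split with shared vertex $v$ decreases $v$'s valence by exactly one, but is only performed when $v$ has valence $>2$, so $v$ retains valence $\geq 2$ afterward and can never subsequently fall below $2$. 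Iterating Lemma~\ref{lem:divide_graph} therefore shows that valence $\geq 2$ in $\graph(K;L^\mp)$ persists all the way to $\graph(C;\tilde L^\mp)$, contradicting the hypothesis that $v$ has valence $1$ in the star. Once you replace the proposed ``control the cycle-valence'' argument by this monotonicity observation, your fourth step closes and the proof is complete. (You should also note, when invoking Observation~\ref{obs_types_leaves_completion}, that if $v$ were a crossing geodesic of type~\ref{item_added_crossing_geodesic_cutedge} it would automatically lie in $\tilde L^\pm$, which is why only the ``high-valence geodesic boundary of $K$'' case needs ruling out.)
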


\begin{proof}
As a first case, suppose $C$ is degenerate in the sense that $\graph(C)$ is a single edge with two vertices. Then 
only one of these vertices, call it $v_0$, is geodesic (since a complementary region is a convex domain bounded by geodesics and segments of $S^1$) and it is high-valence (Definition \ref{d.high-valence}), so by Observation \ref{obs_types_leaves_completion} $v_0$ is not a leaf of $\tilde L^\pm$. 
Finally, every leaf of $\tilde L^\mp$ that intersects $C$ intersects $v_0$, by Lemma \ref{l.no-ideal-connection}.
So $C$ is a one-root region.

Now we assume that $\graph(C)$ is non-degenerate. Call $v_0$ the unique vertex in $\graph(C)$ of valence $\geq 2$. So $v_0$ is high-valence and thus is not a leaf of $\tilde L^\pm$ (by Corollary \ref{cor_no_high_valence_leaf_in_completion}). By Lemma \ref{lem:divide_graph} (or Observation \ref{obs_types_leaves_completion}), every other geodesic side of $\graph(C)$ is a leaf of $\tilde L^\pm$. 
Let $\beta$ be a leaf intersecting $C$.  Then $\beta$ meets at least one geodesic side of $C$ by Lemma \ref{l.no-ideal-connection}.  Either this is $v_0$, and we are done, or it meets some other geodesic side, which is valence 1 since $\graph(C)$ is a start.  In this case, Lemma \ref{lem:valence1} implies $\beta$ intersects $v_0$ as well, which is what we needed to show.  Thus, $C$ is a one-root region.
\end{proof}

\begin{lemma}
Let $C$ be a complementary region of $\tilde L^\pm$ such that $\graph(C)$ is a cycle.  Then $C$ is an ideal polygon and part of a coupled pair.
\end{lemma}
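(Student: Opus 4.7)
The plan is to first show $C$ is an ideal polygon (which is essentially combinatorial given the cycle hypothesis), and then to construct the coupled pair using a limit argument on the families of leaves representing each edge.

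\textbf{Step 1: $C$ has only geodesic sides.} Since $\graph(C;\tilde L^\mp)$ is a cycle of length $n\geq 3$, every vertex has valence exactly $2$. Lemma \ref{l.no-ideal-connection} says ideal vertices have valence $1$, so all vertices of $\graph(C)$ are geodesic. Enumerate them as $v_1,\dots,v_n$ in cyclic order along $\partial C$.

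\textbf{Step 2: Each $v_i$ is a leaf of $\tilde L^\pm$.} Assume for contradiction $v_i\notin \tilde L^\pm$. Then $v_i$ lies in $\overline{\tilde L^\pm}\setminus \tilde L^\pm$ and is a limit of leaves; such a geodesic is accumulated on at least one side, so it can be the geodesic side of at most one complementary region (the side isolated from the approaching leaves). With valence $2$ on a cycle of length $\geq 3$, conditions (i),(ii),(iii) of Definition \ref{d.high-valence} fail, and (iv) fails as just noted. So $v_i$ is not a high-valence side of $C$. By Corollary \ref{cor:completion_stabilizes} the completion of $\tilde L^\pm$ is itself, so Observation \ref{obs_types_leaves_completion} applied to the pair $(\tilde L^+,\tilde L^-)$ forces any non-high-valence geodesic side of a complementary region to lie in $\tilde L^\pm$, contradicting $v_i\notin \tilde L^\pm$. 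Hence $v_i\in \tilde L^\pm$ for each $i$. Consecutive geodesic sides of a convex region in $\bD^2$ with no ideal sides must share endpoints on $S^1$, so $C$ is an ideal polygon with vertices $w_1,\dots,w_n$ and $v_i=\{w_i,w_{i+1}\}$.

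\textbf{Step 3: Construct the coupled polygon.} Let $I_i=(w_i,w_{i+1})$ be the arc of $S^1$ outside $C$. Every $\tilde L^\mp$-leaf crossing $C$ must enter and exit through two sides; since $\graph(C)$'s only edges are between consecutive vertices, it must cross some adjacent pair $v_i,v_{i+1}$ and hence have endpoints in $I_i$ and $I_{i+1}$. Call this family $E_i$. Two leaves in $E_i$ cannot cross (both lie in $\tilde L^\mp$) and a direct check shows they are nested. For each $i$, let $u_{i+1}\in\overline{I_{i+1}}$ be the supremum (in the order along $I_{i+1}$ from $w_{i+1}$ to $w_{i+2}$) of the $I_{i+1}$-endpoints of leaves in $E_i$. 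I would then show two claims: (a) the supremum is realized by a leaf $\alpha_i^\mp\in E_i\cap\tilde L^\mp$ with right endpoint $u_{i+1}$; (b) $u_{i+1}$ also equals $\inf\{a:(a,b)\in E_{i+1}\}$ and is realized by a leaf $\alpha_{i+1}^\mp\in E_{i+1}$ with left endpoint $u_{i+1}$. Granting (a) and (b), the leaves $\alpha_i^\mp=(u_i,u_{i+1})$ lie in $\tilde L^\mp$, share endpoints pairwise, and bound a convex region whose interior contains no $\tilde L^\mp$-leaves (by choosing the extremal supremum); this is the ideal polygon $P^\mp$ of $\tilde L^\mp$, and its vertices $u_i\in I_i$ alternate with the $w_i$'s by construction, giving the coupled pair.

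\textbf{Main obstacle.} The delicate step is establishing (a) and (b). The plan is to take a sequence $(a_n,b_n)\in E_i$ with $b_n\to u_{i+1}$; pass to a subsequence so that $a_n\to a^*\in\overline{I_i}$, yielding a limit leaf $(a^*,u_{i+1})\in\overline{\tilde L^\mp}$. If $a^*$ and $u_{i+1}$ both lie in the interiors of their respective arcs, this limit is a geodesic side of a complementary region of $\tilde L^\mp$ that is not high-valence, and so Observation \ref{obs_types_leaves_completion} combined with the completion stability (Corollary \ref{cor:completion_stabilizes}) forces $(a^*,u_{i+1})\in\tilde L^\mp$. The only remaining danger is that $a^*\in\{w_i,w_{i+1}\}$ or $u_{i+1}\in\{w_{i+1},w_{i+2}\}$, but in any such degenerate case one can extract three $\tilde L^\pm$-leaves sharing a vertex $w_j$ of $C$ (namely $v_{j-1}$, $v_j$, and a leaf produced by the limit) all crossing a common $\tilde L^\mp$-leaf, directly contradicting hypothesis (\ref{item_same_endpoint_condition}) of Theorem \ref{thm_completion}. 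The equality $\sup A_i^{(R)}=\inf A_{i+1}^{(L)}$ in (b) is then forced because a strict inequality would, via the FT density property, produce an $L^\pm$-leaf in the gap that would become a crossing geodesic of a cut-pair in some linkage graph, to be added in the completion, contradicting stability.
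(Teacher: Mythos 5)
Your overall strategy — show $C$ has only geodesic sides via the cycle hypothesis and Lemma~\ref{l.no-ideal-connection}, argue those sides are leaves, then construct the coupled $\tilde L^\mp$-polygon from the ``innermost'' crossing leaves — matches the paper's in spirit. But the implementation of Step 3 diverges from the paper and has a genuine gap.

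The paper does not do a sup/inf analysis on endpoints. Instead it intersects \emph{all} convex regions bounded by choices of edge representatives $\{\alpha_1,\dots,\alpha_k\}$. The boundary of the intersection consists of segments $\beta_i$, each a leaf of $\tilde L^\mp$ or accumulated by leaves representing $e_i$; it then notes that no other $\tilde L^\mp$-leaf crosses $C$, so the $\beta_i$ bound a complementary region $C^\mp$ of $\tilde L^\mp$, and that the $\beta_i$ form a cycle in $\Gamma(C^\mp)$. Now Lemma~\ref{lem:star_or_cycle} forces $\Gamma(C^\mp)$ to \emph{be} that cycle, so $C^\mp$ has no ideal segments, and Observation~\ref{obs_types_leaves_completion} yields that the $\beta_i$ are leaves. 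This route sidesteps entirely the need to rule out your ``degenerate cases'' — you never have to worry about whether $u_{i+1}$ coincides with a vertex of $C$, because you get the cycle structure of $\Gamma(C^\mp)$ for free from Lemma~\ref{lem:star_or_cycle}.

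The gap in your proposal is precisely in the degenerate-case handling of the ``Main obstacle'' paragraph. You claim that if $a^*$ or $u_{i+1}$ equals a vertex $w_j$ of $C$, then one obtains ``three $\tilde L^\pm$-leaves sharing a vertex $w_j$ of $C$ (namely $v_{j-1}$, $v_j$, and a leaf produced by the limit) all crossing a common $\tilde L^\mp$-leaf,'' contradicting condition~\ref{item_same_endpoint_condition}. But the ``leaf produced by the limit'' is a limit of $\tilde L^\mp$-leaves $(a_n,b_n)\in E_i$, hence lies in $\overline{\tilde L^\mp}$, \emph{not} in $\tilde L^\pm$. So at $w_j$ you only have the two $\tilde L^\pm$-leaves $v_{j-1},v_j$, and condition~\ref{item_same_endpoint_condition} is not triggered. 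Moreover, even if a $\tilde L^\mp$-leaf had endpoint $w_j$, it would share an endpoint with $v_{j-1}$ and $v_j$ rather than cross them, so the hypothesis of~\ref{item_same_endpoint_condition} would not even apply. This degenerate case needs a different argument to exclude it (or, better, the paper's intersection construction avoids needing to exclude it at all). The rest of the proposal (Steps 1--2) is correct and in fact makes explicit a point the paper handles implicitly by symmetry, namely that the sides of $C$ itself are leaves of $\tilde L^\pm$.
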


\begin{proof}
Since $\graph(C)$ is a cycle and ideal vertices have valence 1 (Lemma \ref{l.no-ideal-connection}), $\partial C$ has no ideal segments.  Thus, $C$ has only geodesic sides. 

Let $e_1, e_2, \ldots e_k$ denote the edges of $\graph(C)$ in cyclic order.  Each choice $\{\alpha_1, \ldots \alpha_k\}$ of geodesics representing the edges bounds a convex region of $\bD^2$.    The intersection of all such regions is again a convex region bounded by geodesic segments, which are either leaves of $\tilde L^\mp$ or accumulated by $\tilde L^\mp$ leaves representing an edge of $\graph(C)$.   Let $\beta_1, \ldots \beta_k$ denote these segments.  

Because $\graph(C)$ is a cycle, no other leaves $\tilde L^\mp$ cross $C$, so the $\beta_i$ form the sides of a complementary region $C^\mp$ of $\tilde L^\mp$; moreover they are cyclically ordered, and any two consecutive segments meet a common boundary component of $C$.  Thus, the $\beta_i$ are vertices of a cycle of $\Gamma(C^\mp)$.    
By Lemma \ref{lem:star_or_cycle}, $\Gamma(C^\mp)$ is a cycle.  
We conclude, as above that  $\partial C^\mp$ has no ideal segments.  
By Observation \ref{obs_types_leaves_completion}, this implies that the $\beta_i$ are in $\tilde L^\mp$, and thus the sides of an ideal polygon, which is necessarily coupled with $C$.    
\end{proof}

The next result ends the proof of Proposition \ref{p.completion}.
\begin{lemma} 
There are no adjacent ideal polygons.
\end{lemma}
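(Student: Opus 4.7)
The plan is to argue by contradiction and invoke Corollary \ref{cor_no_high_valence_leaf_in_completion}, which tells us the completion has no high-valence leaves. Suppose toward a contradiction that $\alpha$ is a geodesic which is a common boundary leaf of two ideal polygons $P_1$ and $P_2$ of $\tilde L^\pm$; by symmetry we may assume these are complementary regions of $\tilde L^+$, so $\alpha \in \tilde L^+$.

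By the immediately preceding lemma, each $P_i$ has linkage graph $\graph(P_i; \tilde L^-)$ that is a cycle. In a cycle every vertex has valence exactly $2$; in particular $\alpha$, viewed as a geodesic vertex of $\graph(P_i; \tilde L^-)$, has valence $2$ in both $\graph(P_1; \tilde L^-)$ and $\graph(P_2; \tilde L^-)$. This is precisely condition \ref{item_high_val_two_polygons} of Definition \ref{d.high-valence}, so $\alpha$ would be a high-valence leaf of the completion $(\tilde L^+, \tilde L^-)$, contradicting Corollary \ref{cor_no_high_valence_leaf_in_completion}.

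There is essentially no obstacle here: the entire argument is a one-line consequence of the structural results already established, namely that cycle linkage graphs characterize coupled ideal polygons in the completion and that the completion was specifically constructed so as not to introduce high-valence leaves. The only point to verify carefully is that $\alpha$ does indeed qualify as a vertex of each $\graph(P_i; \tilde L^-)$ (it does, since it is a geodesic side of both regions) and that valence $2$ on both sides is the content of Definition \ref{d.high-valence}\ref{item_high_val_two_polygons}. With these checks in place, the proof of the lemma, and hence of Proposition \ref{p.completion}\ref{item_completion_one_root_ideal_polygon}, is complete.
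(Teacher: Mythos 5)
Your proof is correct and considerably slicker than the paper's, but it takes a genuinely different route, so it is worth comparing the two. You invoke Corollary~\ref{cor_no_high_valence_leaf_in_completion} directly: since ideal polygons of $\tilde L^\pm$ have linkage graph a cycle, the shared side $\alpha$ has valence~$2$ in both $\graph(P_1;\tilde L^\mp)$ and $\graph(P_2;\tilde L^\mp)$, which is precisely condition~\ref{item_high_val_two_polygons} of Definition~\ref{d.high-valence}, contradicting the corollary. The paper instead argues by cases on the provenance of $\alpha$: if $\alpha$ is a crossing geodesic of a cut-edge or cut-pair, it applies Corollary~\ref{cor:crossing_is_valence_1} directly to get valence~$1$ in one of the two polygons; if $\alpha\in L^\pm$, it invokes Lemma~\ref{lem:same_graphs} to push the valence-$\geq 2$ condition back to the linkage graphs for complementary regions of $L^\pm$ and contradicts the no-high-valence hypothesis on $(L^+,L^-)$ itself.

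The one thing to be aware of with your shortcut is the scope of Corollary~\ref{cor_no_high_valence_leaf_in_completion}: its stated proof classifies leaves via Observation~\ref{obs_types_leaves_completion}, whose trichotomy is only for leaves in $\tilde L^\pm\smallsetminus L^\pm$. For a leaf $\alpha\in L^\pm$ the argument that ``not high-valence for $(L^+,L^-)$'' persists to ``not high-valence for $(\tilde L^+,\tilde L^-)$'' is left implicit there, and this is nontrivial since passing to the completion subdivides the complementary regions. The paper's second case is essentially filling in exactly that gap for the specific configuration at hand. So your argument is valid if one takes the corollary at face value (as one normally would), while the paper's explicit case analysis makes the lemma independent of how carefully one reads the corollary's justification. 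Also, you should state explicitly the small step you implicitly use --- that ideal polygons of $\tilde L^\pm$ have cycle linkage graphs --- which follows from Lemma~\ref{lem:star_or_cycle} together with the fact that a one-root region cannot have all its sides be leaves.
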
 

\begin{proof} 
Assume that $\alpha$ is a $\tilde L^\pm$ leaf which belongs to two ideal polygons.
 In particular, $\alpha$ bounds two complementary regions of $\tilde L^\pm$, so is either a leaf of $L^\pm$ or a crossing geodesic associated with a cut-edge or cut-pair (i.e., of type \ref{item_added_crossing_geodesic_cutedge} in Observation \ref{obs_types_leaves_completion}). In the latter case, Corollary \ref{cor:crossing_is_valence_1} implies that $\alpha$ must have valence $1$ in at least one of these two ideal polygons which is absurd.
So we must have $\alpha\in L^\pm$. However, $\alpha$ has valence $\geq 2$ in the linkage graphs on both sides for $\tilde L^\mp$, so by Lemma \ref{lem:same_graphs}, $\alpha$ also has valence $\geq 2$ in the linkage graphs for the adjacent complementary regions of $L^\mp$, contradicting the no high valence hypothesis on $(L^+, L^-)$.
\end{proof}

%%%%%%%%%%%%%%%%%%%%%%%%%%%%%%%%%%%%%%%%%%%%%%%%%%%%%%%%%%%%%%%%
\section{Proof of Theorem \ref{thm_completion} -- Necessity and Uniqueness} \label{sec:necessity_uniqueness}
%%%%%%%%%%%%%%%%%%%%%%%%%%%%%%%%%%%%%%%%%%%%%%%%%%%%%%%%%%%%%%%%%
In this section we will prove both the necessity and the uniqueness parts of Theorem~\ref{thm_completion}, as they both follow from the same preliminary lemmas.  

We begin by noting an elementary observation

\begin{observation} \label{obs:endpoints}
Let $l_n$ be a sequence of leaves of $\cF$ and let $\{a_n, b_n\}$ be the endpoints of $l_n$.  Suppose that $\{a_n, b_n\} \to \{a, b\}$.  If $l$ is a limit of $l_n$, but the endpoints of $l$ is not equal to the set $\{a, b\}$, then $\{a, b\}$ and $l$ form two sides of a complementary region.  
\end{observation}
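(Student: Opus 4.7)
\textbf{Proof plan for Observation \ref{obs:endpoints}.} The plan is to exhibit a leaf (or face of a singular leaf) $l'$ of $\cF$ whose endpoints are exactly $\{a,b\}$, and then to argue that $l$ and $l'$ must be non-separated in the leaf space and hence bound a common complementary region of the induced prelamination. Write $\{c,d\}$ for the endpoints of $l$; by hypothesis $\{c,d\}\neq\{a,b\}$, so up to relabeling the subscripts, I may assume $b\notin\{c,d\}$.

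First I would pass to a subsequence so that $l_n$ converges to $l$ in the Hausdorff topology on compact subsets of $\cP$. Fix a regular point $p \in l$, pick $p_n \in l_n$ with $p_n \to p$, and split each $l_n$ at $p_n$ into rays $r_n^a, r_n^b$ ending at $a_n, b_n$, and similarly split $l$ at $p$ into rays $r^c, r^d$. After passing to a further subsequence and possibly relabeling, $r_n^a \to r^c$ and $r_n^b \to r^d$ on compact subsets of $\cP$. Since $b_n \to b \neq d$, the rays $r_n^b$ in $\bar\bD^2 = \cP \cup S^1_\infty$ diverge from $r^d$ at infinity.

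Next I would take the Hausdorff limit $K^b \subseteq \bar\bD^2$ of the closures $r_n^b \cup \{b_n\}$. This $K^b$ is compact, connected, and contains both $r^d \cup \{d\}$ and $b$. By analyzing the local structure of the pA-bifoliation at regular points and at the isolated prong singularities, each component of $K^b \cap \cP$ is contained in a leaf or face of $\cF$; so $K^b$ must contain a leaf or face $l'$ of $\cF$ reaching the ideal point $b$, with $l'$ non-separated from $l$ on the side of $r^d$. Running the analogous argument for $r_n^a$ gives either the same $l'$ or a second non-separated neighbor of $l$ with endpoint $a$. The main obstacle---and the step that needs care---is to use the coherent pair structure of each $l_n$ (a single arc with endpoint \emph{pair} $\{a_n,b_n\}$, not two independent rays) to conclude that there is a single leaf or face $l'$ whose endpoint pair is \emph{exactly} $\{a,b\}$. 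Intuitively, $r_n^a$ and $r_n^b$ are joined at $p_n$, so their Hausdorff limits are linked through $p$; the pA-assumption that a singular leaf has only one singularity then prevents the two ends from ``belonging'' to two different non-separated partners of $l$.

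Finally, with $l'$ in hand I would apply Proposition \ref{o.same} together with the classification of complementary regions in Proposition \ref{prop:converse_A}: two leaves of $\cF$ that are non-separated in the sense above correspond to two leaves of the induced prelamination whose geodesic realizations bound a common complementary region (necessarily an ideal polygon face or one-root region, by Proposition \ref{prop:converse_A}). Hence $l$ and the leaf corresponding to $\{a,b\}$ form two sides of a complementary region, which is the desired conclusion.
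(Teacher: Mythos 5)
The paper states Observation \ref{obs:endpoints} without proof, so there is no argument of the authors' to compare yours against; the evaluation below is of your argument on its own merits.

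Your plan has a genuine gap at its very first step: you aim to ``exhibit a leaf (or face of a singular leaf) $l'$ of $\cF$ whose endpoints are exactly $\{a,b\}$,'' but under the hypotheses of the observation no such leaf exists. Indeed, once the endpoint sets $\alpha(l_n)$ converge in $\bD^2$ to $\geo(a,b)\neq\geo(\alpha(l))$, the geodesic $\geo(a,b)$ must lie strictly between $\geo(\alpha(l))$ and $\geo(\alpha(l_n))$ for all large $n$ (otherwise some $\geo(\alpha(l_N))$ would separate $\geo(\alpha(l))$ from all later $\geo(\alpha(l_n))$, hence $l_N$ would separate $l$ from $l_n$ in $\cP$, contradicting $l_n\to l$). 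If $\{a,b\}$ were the endpoint pair of a leaf or face $l'$, this $l'$ would therefore separate $l$ from $l_n$, again contradicting $l_n\to l$. So $\{a,b\}$ is \emph{never} the endpoint pair of a leaf: it is the root of a one-root complementary region, which by Proposition \ref{prop:converse_A} is precisely a geodesic side that is \emph{not} a leaf of $L^\pm_\cF$. Your closing remark that ``the pA-assumption that a singular leaf has only one singularity then prevents the two ends from belonging to two different non-separated partners'' is therefore misdirected --- the single-singularity hypothesis concerns prong points and says nothing about branching of the leaf space, and the two ends $a$ and $b$ really do belong in general to \emph{different} non-separated neighbors of $l$ (or to no leaf at all), with $\geo(a,b)$ being the root that closes them off. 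The Hausdorff-limit construction of $K^a,K^b$ is a reasonable way to locate non-separated neighbors of $l$, but as written it cannot deliver the object your plan requires.

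A more direct route avoids this. Since each $\alpha(l_n)\in L_\cF^\pm$, the limit $\{a,b\}$ lies in $\bar{L}_\cF^\pm$, so $\geo(a,b)$ does not cross $\geo(\alpha(l))$. Moreover no leaf $\delta\in L_\cF^\pm$ separates $\geo(a,b)$ from $\geo(\alpha(l))$: such a $\delta$ would eventually separate $\geo(\alpha(l_n))$ from $\geo(\alpha(l))$, and hence (using the naturality of the circle-at-infinity compactification) the corresponding leaf of $\cF$ would separate $l_n$ from $l$ in $\cP$, contradicting $l_n\to l$. Both geodesics therefore bound the same connected component of $\bD^2\smallsetminus\geo(L^\pm_\cF)$, which is exactly the assertion of the observation. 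This bypasses entirely the question of whether $\{a,b\}$ is realized by a leaf.
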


\begin{lemma} \label{lem:density}
Let $(\cF^+, \cF^-)$ be a planar completion of $(L^+,L^-)$. Let $(L^+_{\cF}, L^-_{\cF})$ be the prelaminations induced by $(\cF^+, \cF^-)$.
If $\alpha \in L^+_{\cF}$, then either $\alpha \in \bar{L}^+$, or $\alpha$ bounds a complementary region to $L^+_{\cF}$ on each side; at least one of which must be one-root. 
\end{lemma}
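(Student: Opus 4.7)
Plan for the proof of Lemma \ref{lem:density}.

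The approach is to suppose $\alpha \in L^+_\cF \smallsetminus \bar L^+$, and then establish two things: that $\alpha$ bounds a nontrivial complementary region of $L^+_\cF$ on each side in the geodesic realization, and that at least one of these two regions is one-root. The key tools are the density of $L^+$-leaves in the plane (which is built into the definition of planar completion), Observation \ref{obs:endpoints}, and Proposition \ref{prop:converse_A}.

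First I would fix a side $S$ of $\alpha$ and argue by contradiction: assume $\alpha$ is not a side of any nontrivial complementary region on side $S$; equivalently, $\alpha$ is accumulated on side $S$ in $\geo(L^+_\cF)$. Using the density of $L^+$-leaves in the plane, I would choose a transversal to $\alpha$ at a regular point and extract a sequence $l_n \in L^+$ with $l_n \to \alpha$ in the plane, approaching from side $S$. Passing to a subsequence, the endpoint pairs $\{a_n, b_n\}$ of $l_n$ converge to some pair $\{a^*, b^*\}$ in $S^1$. If $\{a^*, b^*\}$ coincides with the endpoint pair of $\alpha$, then by definition $\alpha \in \bar L^+$, contradicting the hypothesis. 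Otherwise, Observation \ref{obs:endpoints} applied to the sequence $l_n$ with limit $\alpha$ yields that $\alpha$ and $\{a^*, b^*\}$ are two sides of a complementary region $R$ of $L^+_\cF$. Since the $l_n$ approach $\alpha$ from side $S$, the pairs $\{a_n, b_n\}$, and hence their limit $\{a^*, b^*\}$, lie in the arc of $S^1$ defining side $S$; thus $R$ lies on side $S$, contradicting our standing assumption. This argument applies verbatim on both sides of $\alpha$, proving the first claim.

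For the second claim, I would invoke Proposition \ref{prop:converse_A}: since $(\cF^+, \cF^-)$ is a pA-bifoliation inducing $(L^+_\cF, L^-_\cF)$, every complementary region of $L^+_\cF$ is either an ideal polygon or a one-root region, and no leaf lies in the boundary of two ideal polygons. If the two complementary regions on either side of $\alpha$ were both ideal polygons, $\alpha$ would simultaneously bound two such polygons, which is forbidden; hence at least one is one-root.

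The main technical subtlety I anticipate is verifying that the limiting pair $\{a^*, b^*\}$ consists of two distinct points (which follows from $l_n \to \alpha$ in the plane, preventing the endpoints from collapsing to a single ideal point) and handling the case where $\alpha$ is a face of a singular leaf. In that case, the notion of ``side'' must be interpreted in the geodesic realization, where $\alpha$ corresponds to a geodesic with two unambiguous sides, and the density of $L^+$-leaves in the plane still produces approximating leaves on the appropriate (non-singular) side of $\alpha$ since the singular point is isolated.
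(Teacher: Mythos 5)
Your proof is correct and follows essentially the same route as the paper's: use density of $L^+$-leaves in the plane to extract sequences approaching $\alpha$ from each side, dichotomize on whether the endpoint pairs converge to those of $\alpha$, apply Observation \ref{obs:endpoints} to produce a complementary region on each side in the nonconvergent case, and finally invoke the necessity direction of Theorem \ref{thm:realization} (Proposition \ref{prop:converse_A}) to rule out two adjacent ideal polygons. The contradiction framing in your first step is unnecessary — you never actually use the assumption that $\alpha$ is accumulated on side $S$, and the paper runs the same argument directly — but that is a matter of exposition, not substance.
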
 

\begin{proof}
By density of $L^+$ in the plane, there exists a sequence of leaves $\alpha_i$ of $\cF^+$ approaching $\alpha$ from one side, and a sequence $\beta_i$ approaching from the other.  If the endpoints of either sequence approach the endpoints of $\alpha$, then $\alpha \in \bar{L}^+$.  Otherwise, by Observation \ref{obs:endpoints} there is a complementary region of $L^+_{\cF}$ on each side of $\alpha$. 
Finally, the necessity part of Theorem \ref{thm:realization}, gives that no two ideal polygons share sides and hence at least one of the complementary regions bounded by $\alpha$ must be one-root.  
\end{proof}

\begin{lemma} \label{lem:edges_exist}
Let $(\cF^+, \cF^-)$ be a planar completion of $(L^+,L^-)$. Let $(L^+_{\cF}, L^-_{\cF})$ be the prelaminations induced by $(\cF^+, \cF^-)$.
If $C^+$ is a complementary region to $L^+_{\cF}$, then $\graph(C^+; L_{\cF}^-) \cong \graph(C^+; L^-)$.  In other words, each edge of $\graph(C^+, L_{\cF}^-)$ is represented by a leaf of $L^-$.
\end{lemma}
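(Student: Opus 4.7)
Since $L^- \subseteq L^-_\cF$, we have the automatic inclusion $\graph(C^+; L^-) \subseteq \graph(C^+; L^-_\cF)$. The lemma reduces to showing (a) each edge of $\graph(C^+; L^-_\cF)$ is represented by a leaf of $L^-$, and (b) the ideal segments of $C^+$ with respect to $L^-$ and $L^-_\cF$ coincide (the geodesic sides of $C^+$ are intrinsic and so agree in both graphs). The plan is to work in the planar completion $\cP$ and exploit that $L^-$-leaves are dense among $\cF^-$-leaves in the plane.

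Fix an edge $e$ with vertices $v_1, v_2$ represented by $\alpha \in L^-_\cF$, and let $\ell$ denote the corresponding leaf or face of singular leaf of $\cF^-$ in $\cP$. When both $v_1$ and $v_2$ are geodesic sides, I take an interior point $p$ of the segment $\ell \cap C^+$ and a small bifoliated rectangular neighborhood $B \subset C^+$ of $p$. By density, $B$ contains $\cF^-$-leaves corresponding to elements of $L^-$ arbitrarily close to $\ell$ near its intersections with $v_1$ and $v_2$; since crossing a fixed geodesic is an open condition in the plane, such a sufficiently close $\beta \in L^-$ also crosses both $v_1$ and $v_2$, representing $e$.

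The subtle case is when (say) $v_2 = I$ is an ideal segment, so that $\ell$ ends at a point $a$ of the arc of $S^1_\infty$ corresponding to $\mathring I$. Here my plan is to take a point $q$ on $\ell$ inside $C^+$ but \emph{close enough to $a$} that a small bifoliated rectangular box $B' \subset C^+$ around $q$ consists only of $\cF^-$-leaves ending in $\mathring I$. Every leaf through $B'$ then automatically crosses $v_1$, by the defining property of the ideal segment $I$ (Definition~\ref{d.ideal-segments} and Remark~\ref{rem_ideal-segments}). Density of $L^-$ in $\cF^-$ then supplies a $\beta \in L^-$ in $B'$ representing $e$. Running this construction at every $x \in \mathring I$ also shows that endpoints of $L^-$-leaves crossing $v_1$ are dense in $\mathring I$, so $\mathring I$ is an ideal segment for $L^-$ attached to the same geodesic side $v_1$, establishing (b).

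The principal technical point will be to justify that the box $B'$ can be chosen so that every leaf through it has endpoint in $\mathring I$. This amounts to showing that the endpoint map from the leaf space of $\cF^-$ to $S^1_\infty$ is transversally continuous near the ideal point $a$ on all but a ``small'' set of leaves; I plan to derive this from Theorem~\ref{thm:realization}\ref{item.countably_many} (at most countably many leaves share the endpoint $a$) together with the explicit construction of $S^1_\infty(\cF^+, \cF^-)$ recalled in Section~\ref{sec:circle_infty}, where endpoints are defined as equivalence classes of tails of leaves under countable cobounded equivalence, so that sufficiently short transversals to $\ell$ taken sufficiently close to $a$ meet only leaves whose tails end in the open arc $\mathring I$.
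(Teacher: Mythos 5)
Your proposal takes a genuinely different route from the paper: you work in the bifoliated plane $\cP$ and appeal to density of $L^-$-leaves, whereas the paper's proof stays in the geodesic realization $\bD^2$, applies Lemma~\ref{lem:density} to $\alpha$ to obtain a one-root complementary region $D$ of $L^-_\cF$, and then finds a representing $L^-$-leaf near the root of $D$ (which lies in $\bar L^-$). Both strategies have merit, but yours as written has a real gap.

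The gap is in the very first step: the phrase ``an interior point $p$ of the segment $\ell\cap C^+$'' with ``a small bifoliated rectangular neighborhood $B\subset C^+$'' conflates the disc $\bD^2$ (where $C^+$ lives as a region) with the plane $\cP$ (where $\ell$ is a leaf of $\cF^-$). There is no natural identification of these interiors, and in fact the segment $\alpha\cap C^+$ has \emph{no interior} when seen in $\cP$. Concretely: every point of $\cP$ is a crossing of an $\cF^+$- and an $\cF^-$-leaf, i.e.\ of geodesics in $L^+_\cF$ and $L^-_\cF$, so the corresponding subset of $\bD^2$ meets $C^+$ only along $\partial C^+$. If $C^+$ is an ideal polygon with adjacent sides $v_1,v_2$ crossed by $\alpha$, then $\ell\cap v_1=\ell\cap v_2$ is the singular point. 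If $C^+$ is one-root with root $v_1$, then $v_1$ corresponds to no $\cF^+$-leaf and the intersection points of $\ell$ with the $\cF^+$-leaves from outside $C^+$ accumulate onto $\ell\cap v_2$ with no gap between. In either case the ``box $B\subset C^+$'' you want does not exist, and the assertion that ``crossing a fixed geodesic is an open condition in the plane'' begs the question precisely when $v_1$ is a non-leaf root: closeness of $\ell'$ to $\ell$ in a compact piece of $\cP$ does not control the endpoints of $\ell'$ on $S^1_\infty$.

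Your idea can be repaired --- for instance, take $B$ to be a small bifoliated box in $\cP$ around $\ell\cap v_2$ (a genuine leaf-side), bounded transversally by $v_2$'s $\cF^+$-leaf and some leaf $m$ just outside $C^+$ near the root; then every $\cF^-$-leaf through $B$ crosses both, hence also the root $v_1$ sandwiched between them. For the ideal-segment case one should take $B$ between two $\cF^-$-leaves with endpoints in $\mathring I$ on either side of $a$ (such leaves exist by the density built into Definition~\ref{d.ideal-segments}), rather than trying to prove transverse continuity of the endpoint map near $a$, which is the part you flagged as needing work. But these repairs amount to building by hand the structure that the paper instead obtains directly from Lemma~\ref{lem:density}: when $\alpha\notin\bar L^-$, the complementary regions of $L^-_\cF$ on either side of $\alpha$ already package exactly which $\cF^-$-leaves are nearby in the plane, and the root of the one-root region is the geodesic in $\bar L^-$ you are looking for. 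The paper's structural route is both shorter and avoids the plane-versus-disc confusion.
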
 

\begin{proof}
Suppose $\alpha \in L^-_{\cF}$ represents an edge of the linkage graph of $C^+$ between sides $s_1, s_2$ of $C^+$. We need to show some leaf of $L^-$ also intersects $s_1$ and $s_2$. 
 If $\alpha$ lies in $L^-$ or is accumulated upon by leaves of $L^-$ (in particular, if it is a boundary of at most one complementary region of $L^-$) then we are done.   Otherwise, by Lemma \ref{lem:density} $\alpha$ bounds two complementary regions of $L_{\cF}^-$ and one is a one-root region.  
If $s_1, s_2$ are leaves, then because they intersect $\alpha$, they must also intersect the root of the one-root region.  If either is not a leaf, then they are accumulated by leaves (which intersect $\alpha$) and so again must intersect the root of the one-root region.  
In either case, the geodesic representing the root lies in $\bar{L}^-$, so we conclude that there is a leaf of $L^-$ intersecting $s_1$ and $s_2$, as desired.  
\end{proof}

The next lemma says that, if a pair of prelaminations admits a planar completion, then the pair is FT.  
\begin{lemma}\label{lem_planar_completion_imply_FT}
Let $(\cF^+,\cF^-)$ be a pA-bifoliation with induced laminations $L^\pm_{\cF}$, and suppose $L^\pm \subset L^\pm_{\cF}$ are sublaminations that are both dense in the plane.  Then $(L^+, L^-)$ is fully transverse.  
\end{lemma}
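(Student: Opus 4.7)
The plan is to verify the three axioms of full transversality — transversality, density of endpoints, and connectedness through crossings — for the pair $(L^+,L^-)$.

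\textbf{Transversality.} This will be immediate: Proposition \ref{prop:converse_A} applied to the pA-bifoliation $(\cF^+,\cF^-)$ gives that the induced prelaminations $L^\pm_\cF$ are themselves fully transverse, and in particular disjoint, so $L^+\cap L^- \subseteq L^+_\cF\cap L^-_\cF = \emptyset$.

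\textbf{Density.} I will bootstrap from the density of endpoints of $L^+_\cF\cup L^-_\cF$ in $S^1$, which Proposition \ref{prop:converse_A} also provides. Given an open arc $I\subset S^1$, I would pick a leaf $\gamma$ of $\cF^+$ (say) with an endpoint in $I$, a non-singular point $p\in\gamma$, and a foliated rectangle $R$ around $p$ with $\gamma$ as one of its $\cF^+$-sides. Using continuity of the endpoint map from a $\cF^+$-transversal to $S^1_\infty(\cF^+,\cF^-)$ (a standard feature of the construction of the circle at infinity, cf.\ \cite{Bonatti_boundary}), one can shrink $R$ so that every $\cF^+$-leaf meeting $R$ still has an endpoint in $I$. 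Density of $L^+$ in the plane then provides an $L^+$-leaf passing through $R$, which has an endpoint in $I$.

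\textbf{Connectedness.} The key local observation will be: any two leaves $\alpha,\alpha'\in L^+\cup L^-$ that meet a common foliated rectangle $R$ are chain-connected through crossings — directly, if they lie in opposite foliations, and through a common transverse leaf inside $R$ (produced by density of the appropriate $L^\mp$ in the plane) if they lie in the same foliation. Fixing $\alpha_0\in L^+$ and letting $A\subseteq L^+\cup L^-$ be its chain-connected class, I will consider the open set $U$ of points $q$ of the plane such that some foliated rectangle around $q$ meets a leaf of $A$, and its analogue $U'$ defined using leaves outside $A$. If $U\cap U'$ were non-empty, density of $L^+$ in the intersection would furnish an $L^+$-leaf which, by the local observation, would lie simultaneously in $A$ and outside $A$ — a contradiction. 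Hence $U$ and $U'$ are disjoint; they cover every non-singular point of the plane (take any foliated rectangle at the point and invoke density of $L^+$), and the plane minus its countable set of singular points is connected. Since $\alpha_0\in A$ gives $U\neq\emptyset$, we must have $U'=\emptyset$; and since every leaf of $L^+\cup L^-$ has a non-singular point around which a foliated rectangle exists, $U'=\emptyset$ forces $A = L^+\cup L^-$.

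The main potential obstacle will be the continuity of the endpoint map used in the density step; I intend to invoke this as a standard consequence of the construction of $S^1_\infty(\cF^+,\cF^-)$ rather than reprove it here. The remaining steps reduce to routine manipulations with foliated rectangles and the two density hypotheses.
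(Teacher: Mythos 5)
Your transversality and connectedness verifications are correct; the connectedness argument via the disjoint open sets $U$, $U'$ is a clean expansion of what the paper dismisses with ``connectedness follows similarly.''

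However, the density step rests on a claim that is false in general: the endpoint map from a $\cF^+$-transversal to $S^1_\infty(\cF^+,\cF^-)$ need not be continuous, so one cannot in general shrink $R$ so that every $\cF^+$-leaf meeting $R$ keeps an endpoint in $I$. The obstruction is nonseparated leaves. Concretely, if $\gamma$ (viewed as a pair of endpoints) lies in the boundary of a one-root complementary region $C$ of the full induced prelamination $L^+_\cF$, then the leaves crossing a transversal at $p\in\gamma$ on the $C$-side converge to $\gamma$ on compact sets, yet their \emph{pairs of endpoints} converge to the endpoints of the root of $C$, not of $\gamma$ --- this is exactly the phenomenon in Claim \ref{claim_every_leaf_in_limit}, where a single sequence $l_n^0$ accumulates on all of $K$ while its geodesic realizations converge to $\alpha^0$. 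When the root does not share the endpoint $a$ with $\gamma$, the endpoints of the relevant rays of nearby leaves stay bounded away from $a$, and passing to the other side of $\gamma$ does not always help, since $\gamma$ can bound one-root regions on both sides.

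The gap is repairable. One fix is to first replace $\gamma$ by a leaf that does not bound any complementary region of $L^+_\cF$ (there are only countably many excluded leaves, while any open arc of $S^1_\infty$ contains uncountably many leaf endpoints), since at such $\gamma$ the endpoint map really is continuous along the transversal. A cleaner route replaces the rectangle by a wedge: choose rays $r_1\in\cF^+$ and $r_2\in\cF^-$ meeting at a nonsingular point with ideal endpoints bounding an arc inside $I$ (this uses the neighborhood basis for $S^1_\infty$ invoked in the proof of Proposition \ref{prop:uniqueness}); any $\cF^+$-leaf entering the wedge other than the one carrying $r_1$ cannot cross $r_1$ and crosses $r_2$ at most once, so cannot escape on both ends and therefore has an endpoint in $I$. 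Density of $L^+$ in the plane then produces such a leaf in $L^+$, as you intended.
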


\begin{proof}
$(L^+, L^-)$ satisfy the transversality because they are subsets of the prelaminations induced by $(\cF^+,\cF^-)$. 
For the density condition, note that each open interval $U \subset S^1$ contains ends of leaves of $\cF^+ \cup \cF^-$, and thus endpoints of $\cF^+_0 \cup \cF^-_0$ by density in the plane. Connectedness follows similarly.
\end{proof}

 \begin{lemma}\label{lem_planar_completion_imply_simple_cycle}
 Let $(\cF^+,\cF^-)$ be a pA-bifoliation with induced laminations $L^\pm_{\cF}$, and suppose $L^\pm \subset L^\pm_{\cF}$ are sublaminations that are both dense in the plane.  Then $(L^+, L^-)$ satisfies the simple cycle condition. 
 \end{lemma}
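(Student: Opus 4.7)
I would proceed by contradiction: suppose there is a complementary region $C$ of $L^+$ and an edge $e \in \Gamma(C;L^-)$, represented by a leaf $\beta \in L^-$ crossing sides $s,t$ of $C$, belonging to two distinct cycles $\gamma_1,\gamma_2$ in $\Gamma(C;L^-)$. Each cycle $\gamma_i$ of length $k_i\ge 3$ encloses a topological disc $R_i$ in $C$ bounded by a $2k_i$-gon whose sides alternate between arcs of $L^+$-leaves (sides of $C$ visited by the cycle) and $L^-$-leaves (the edges of $\gamma_i$). By construction $R_1$ and $R_2$ share the single side $\beta$.

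Applying the Poincar\'e--Hopf index formula for the pA-bifoliation $(\cF^+,\cF^-)$ restricted to $R_i$ (a topological disc whose boundary is a $2k_i$-gon alternating between $\cF^+$- and $\cF^-$-arcs), the total index of interior singularities equals $1-k_i/2 \le -1/2$. Hence each $R_i$ contains at least one singularity, corresponding to a coupled pair of ideal polygons $(P^+_i, P^-_i)$ of $(L^+_\cF, L^-_\cF)$ by Theorem~\ref{thm:realization} applied to the completion. The core of the argument is to identify the shared boundary leaf $\beta$ with a side of each coupled polygon $P^-_i$: since $\beta$ is an $L^-$-leaf on $\partial R_i$ separating the singularity inside $R_i$ from the exterior, one uses Lemma~\ref{lem:edges_exist} (to relate $L^-$-edges to $L^-_\cF$-edges inside complementary regions of $L^+_\cF$) together with Lemma~\ref{lem:density} (to characterize the $L^-_\cF$-leaves not in $\bar{L}^-$ as roots of one-root regions) to show that $\beta$ must coincide with one of the sides of $P^-_i$. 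Since $\gamma_1 \ne \gamma_2$ forces $R_1 \ne R_2$ and hence distinct singularities, we get $P^-_1 \ne P^-_2$ both containing $\beta$ in their boundary---contradicting the property (from Theorem~\ref{thm:realization} applied to $L^\pm_\cF$) that no leaf of $L^-_\cF$ lies in the boundary of two distinct ideal polygons.

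\textbf{Main obstacle.} The delicate step is rigorously identifying $\beta$ with a side of each $P^-_i$, since a priori $\gamma_i$ could use $L^-$-leaves lying strictly outside the coupled $P^-_i$ (with other $L^-$-leaves nested between them and the singularity). A cleaner alternative---which I would probably pursue in a write-up---is to first decompose $C$ into its complementary regions of $L^+_\cF$, noting that they form a tree under the adjacency relation induced by shared internal $L^+_\cF$-leaves (since a disc decomposed by non-crossing chords yields a tree of sub-regions). By Lemma~\ref{lem:edges_exist} and Theorem~\ref{thm:realization}, each such sub-region's linkage graph is a star (for one-root regions) or a single cycle (for ideal polygons). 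Because the adjacency graph is acyclic, any cycle in $\Gamma(C;L^-)$ must descend, after collapsing the internal-leaf vertices, from a cycle of one single ideal-polygon sub-region; the no-shared-sides property then immediately rules out any edge being in two such cycles.
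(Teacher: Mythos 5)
Your proposal is genuinely different from the paper's proof, which is more local: it considers the set $S$ of $L_\cF^\pm$-leaves inside $C$ that cross the leaf $\alpha$ realizing $e$, shows $|S| \le 2$ using density of $L^\pm$ in the plane, and then argues that the two cycles through $e$ force every element of $S$ to be a high-valence leaf of $(L_\cF^+, L_\cF^-)$, contradicting Theorem~\ref{thm:realization}. Your first approach has the gap you yourself flag, and it is real. Your second approach (tree of $L_\cF^+$-sub-regions with star/cycle linkage graphs) is closer to a workable argument, but two of its steps are asserted as if obvious when they are not, and closing the gaps would in effect lead you back to the paper's key observation.

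Concretely: (1) The claim that ``any cycle in $\graph(C;L^-)$ must descend, after collapsing the internal-leaf vertices, from a cycle of one single ideal-polygon sub-region'' does not follow from the acyclicity of the adjacency tree alone, because an edge of $\graph(C;L^-)$ corresponds to a \emph{path} through several sub-regions (one segment per sub-region the $L^-$-leaf traverses), not to a single edge of the glued graph of sub-region linkage graphs; establishing this requires producing a singular point inside the polygonal disc bounded by the cycle (as in Lemma~\ref{l.polygon-with-more-sides}) and then propagating through one-root sub-regions via Lemma~\ref{lem:valence1}. (2) ``The no-shared-sides property then immediately rules out any edge being in two such cycles'' is not immediate: if $\gamma_1,\gamma_2$ descend from distinct ideal-polygon sub-regions $C'_1 \neq C'_2$, the $L^-$-leaf $\alpha$ realizing the shared edge $e$ must cross both $C'_1$ and $C'_2$, and to exclude this you need to show $\alpha$ meets at most one ideal-polygon sub-region of $C$. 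That rests on $\alpha$ crossing only finitely many $L_\cF^\pm$-leaves inside $C$ (otherwise density of $L^\pm$ would place an $L^\pm$-leaf in the interior of the complementary region $C$), which is exactly the paper's $|S|\leq 2$ step. So the second sketch is repairable, but the word ``immediately'' sits exactly where the substantive work lies.
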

 
\begin{proof}
 From the previous lemma we have that $(L^+, L^-)$ is fully transverse.   
 Suppose for contradiction that there is a complementary region $C$ of $L^\pm$ and an edge $e$ of $\Gamma(C; L^\mp)$ in two different cycles.
 Let $\alpha\in L^\mp$ be a leaf representing $e$. Consider the set $S$ of leaves of $\tilde L^\pm$  that are contained in $C$ and cross $\alpha$.
 We claim that $S$ has at most two elements: If there are three elements $\beta_1,\beta_2, \beta_3$ in $S$, then they must correspond to faces of at least two different leaves in $\cF^+$, which intersect the same leaf of $\cF^-$ corresponding to $\alpha$. By density of $L^+$ in $\cF^+$ we deduce that there are leaves of $L^+$ that will cross $\alpha$ between $\beta_1$, $\beta_2$ and  $\beta_3$. But such leaves would have to intersect $C$, which is impossible.
Therefore either $S$ consists in a unique leaf $l$, or in two leaves $l_1,l_2$ that are the sides of an ideal polygon.

Since $e$ is contained in two cycles, every leaf of $S$ intersect at least three edges of $\graph(C; L^-)$, in the sense that they intersect $\alpha$ as well as at least the two other leaves of $L^-$ representing the edges of $\graph(C;L^-)$ in the two cycles containing $e$. But this implies that either the leaves of $S$ are valence at least $3$ in one of the linkage graphs for $L_\cF^+$, or valence at least $2$ in the linkage graphs on both sides. This implies that the element of $S$ are high-valence leaves, contradicting the fact that $(L_\cF^+,L_\cF^-)$ is induced by a bifoliation.  
\end{proof}

Both the necessity and uniqueness parts of Theorem \ref{thm_completion} will now follow easily from the following characterization of leaves of a planar completion.  It says that the leaves of a planar completion are exactly the ones we added in the construction in Section \ref{sec:proof_completion_thm}. (See Remark \ref{rem:equiv_def_completion}.)
\begin{lemma}\label{lem_charac_of_leaves}
Let $(\cF^+,\cF^-)$ be any planar completion of a pair of prelaminations $(L^+,L^-)$. 
Call $(L_\cF^+,L_\cF^-)$ the pair of prelaminations induced by $(\cF^+,\cF^-)$ on $S^1$. If $\alpha$ is a leaf of $L_\cF^\pm$ then one of the following is true:
\begin{enumerate}[label=(\roman*)]
\item $\alpha\in \bar{L}^\pm\smallsetminus\{\text{high-valence leaves of }\bar{L}^\pm\}$, or
\item $\alpha$ is a crossing geodesic of a cut-edge $e$ or cut-pair $(e_1,e_2)$ 
 in a linkage graph $\graph(K)$ of a complementary region to $\bar{L}^\pm$. That is $\alpha$ is contained in $K$ and a leaf of $L^\mp$ intersects $\alpha$ if and only if it represents the edge $e$, resp.~one of the edges $e_1,e_2$. 
\end{enumerate}
\end{lemma}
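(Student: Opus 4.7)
My plan is to split into cases via Lemma \ref{lem:density}: either (a) $\alpha \in \bar{L}^\pm$, or (b) $\alpha \notin \bar L^\pm$, in which case $\alpha$ bounds two complementary regions $R_1,R_2$ of $L_\cF^\pm$ (with at least one one-root). Assume throughout that $\alpha \in L_\cF^+$.

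In case (a), I want to show $\alpha$ is not a high-valence leaf of $\bar L^+$. The key first step is to argue that if $\alpha \in \bar L^+$ bounds a nontrivial complementary region $C$ of $\bar L^+$, then $C$ is in fact also a complementary region of $L_\cF^+$: any $L_\cF^+$-leaf inside $C$, or accumulating on $\alpha$ from the $C$-side, would --- by the density of $L^+$ in the plane together with the continuity of endpoints in $S^1_\infty$ (Proposition \ref{o.same}) --- produce $L^+$-leaves with endpoints converging into $C$, contradicting that $C$ is a complementary region of $\bar L^+$. By Proposition \ref{prop:converse_A}, $C$ is then a one-root region or ideal polygon, and by Lemma \ref{lem:edges_exist} we have $\graph(C; L^-) \cong \graph(C; L_\cF^-)$, a star or a cycle respectively. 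A direct check rules out the four high-valence conditions of Definition \ref{d.high-valence}: the star/cycle structure bounds $\alpha$'s valence by $2$; in the star case $\alpha$ is a valence-$1$ leaf (not the root, since $\alpha \in L_\cF^+$); non-degeneracy follows from the presence of at least one ideal segment or other geodesic side in every such $C$; and condition (iv) fails because two ideal polygons of $L_\cF^+$ cannot share the leaf $\alpha$.

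In case (b), let $K$ be the complementary region of $\bar L^+$ containing $\alpha$. By Lemma \ref{l.crossing-geodesic}, it suffices to show that the $L^-$-leaves crossing $\alpha$ represent exactly one edge, or exactly two edges sharing a vertex, in $\graph(K;\bar L^-)$, and that these form a cut-edge or cut-pair. I trace a typical $L_\cF^-$-leaf $\beta$ crossing $\alpha$: within each $R_i$, $\beta$ must exit through a controlled side lying in $\bar L^+$ --- either the root $\rho_i$ (which lies in $\bar L^+$ by the same density argument as in case (a), applied to leaves accumulating on $\rho_i$ from the $R_i$-side) if $R_i$ is one-root, or one of the two sides of $R_i$ adjacent to $\alpha$ (each in $\bar L^+$, as boundary leaves of the ideal polygon) if $R_i$ is an ideal polygon. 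This yields two sub-cases: (b.i) both $R_i$ one-root, giving the single edge $e=(\rho_1,\rho_2)$, and (b.ii) one one-root and one ideal polygon (at most one can be a polygon by the no-shared-sides condition), giving two edges sharing the root vertex $\rho_1$. The cut-edge or cut-pair condition is then checked by observing that, after removing these edges, each component of $\graph(K;\bar L^-)$ contains at least one additional vertex --- coming from an ideal segment or an additional geodesic side of $R_i$ --- so both components are nontrivial.

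The main obstacle is in case (b): the careful path-tracing that ensures every $L^-$-leaf crossing $\alpha$ exits $K$ through the specific $\bar L^+$-side predicted by the one-root or ideal-polygon structure, and the verification that both components of $\graph(K;\bar L^-)$ after edge removal are nontrivial. Both steps rely crucially on combining the structural description of complementary regions of $L_\cF^\pm$ from Proposition \ref{prop:converse_A} with the identification $\graph(R;L^-) \cong \graph(R;L_\cF^-)$ of Lemma \ref{lem:edges_exist}.
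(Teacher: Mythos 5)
Your proof follows the paper's overall architecture — the case split via Lemma \ref{lem:density} and tracing $L^-$-leaves through the adjacent regions of $L_\cF^\pm$ — but two of the key intermediate claims are false, and the arguments offered for them do not work.

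\textbf{Case (a).} The claim that a nontrivial complementary region $C$ of $\bar{L}^+$ bounded by $\alpha$ is ``in fact also a complementary region of $L_\cF^+$'' is false in general. A complementary region of $\bar{L}^+$ may be subdivided by crossing geodesics, i.e.\ leaves of $L_\cF^+ \setminus \bar{L}^+$, in the passage to $L_\cF^+$; this is precisely what the completion construction does. Your density argument does not prevent this: if $\beta \in L_\cF^+ \setminus \bar{L}^+$ lies inside $C$, then by Lemma \ref{lem:density} it bounds regions of $L_\cF^+$ on both sides, and the $L^+$-leaves accumulating $\beta$ in the \emph{plane} have endpoints converging not to $\partial \beta$ (impossible since $\beta \notin \bar{L}^+$) but to the roots of those adjacent one-root regions, which lie on $\partial C$ rather than strictly inside $C$. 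So no contradiction with $C$ being a region of $\bar{L}^+$ arises, and the chain ``$C$ is a region of $L_\cF^+$ $\Rightarrow$ star or cycle $\Rightarrow$ not high-valence'' breaks at the first step. The paper avoids this entirely: it observes directly that $\alpha$ is not high-valence for $(L_\cF^+, L_\cF^-)$ because induced prelaminations of a pA-bifoliation have no high-valence leaves, and then transfers to $(L_\cF^+, L^-)$ via Lemma \ref{lem:edges_exist}.

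\textbf{Case (b.ii).} You assert that the two sides of the ideal polygon $R_2$ adjacent to $\alpha$ are ``each in $\bar{L}^+$, as boundary leaves of the ideal polygon.'' This is not justified and can fail: a face of a singular leaf of $\cF^+$ is a leaf of $L_\cF^+$ but need not be a limit of $L^+$-leaves. The paper handles this explicitly: for each side $\alpha_i$ of the polygon it distinguishes $\alpha_i \in \bar{L}^+$ (in which case it is a side of $K$) from $\alpha_i \notin \bar{L}^+$ (in which case Lemma \ref{lem:density} gives a one-root region on the far side of $\alpha_i$ whose root $s_i \in \bar{L}^+$ is the relevant vertex of $\graph(K)$). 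This detour is essential to locate the cycle in $\graph(K;L^-)$ and therefore the cut-pair; skipping it leaves the cut-pair unidentified in precisely the cases where the ideal polygon is not bounded by $\bar{L}^+$-leaves.

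Finally, the nontriviality check at the end of case (b) is unnecessary if you instead note that a crossing geodesic associated to a disconnecting edge or pair that is \emph{not} a cut-edge or cut-pair equals one of its vertices (Observation \ref{obs:easy}), hence lies in $\bar{L}^+$; since $\alpha \notin \bar{L}^+$, the edge or pair must automatically be a cut-edge or cut-pair.
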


\begin{proof}
Recall that $(L^+,L^-)$ are automatically fully transverse by Lemma \ref{lem_planar_completion_imply_FT} and satisfy the simple cycle condition by Lemma \ref{lem_planar_completion_imply_simple_cycle}. In particular, every edge is either disconnecting or part of a disconnecting pair.

To fix notation, we do the proof for leaves of $L^+$.
Let $\alpha\in L_\cF^+$. Then $\alpha$ is not a high-valence leaf of $(L_\cF^+,L_{\cF}^-)$ (which has no high valence leaves) so by Lemma \ref{lem:edges_exist} it is also not high valence for $(L_\cF^+,L^-)$.  
By Lemma \ref{lem:density}, if $\alpha \notin \bar{L}^+$, then it has complementary regions of $L_\cF^+$ on each side; at least one of which is a one-root region.  Let $C_l,C_r$ denote these regions, and suppose $C_l$ is one-root.   Let $K$ be 
the complementary region of $L^+$ containing $C_l \cup C_r$. 
Let $s_l$ be the side of $C_l$ representing the high-valence vertex of $\graph(C_l)$. Then $s_l$ is in $\bar{L}^+$ by Lemma \ref{lem:density}, so it is also a side of $K$. 
Since $C_l$ is one-root, there is a leaf of $L_{\cF}^-$ through $s_l$, and any leaf through $\alpha$ meets $s_l$.  By Lemma \ref{lem:edges_exist} there is also a leaf of $L^-$ through $s_l$ and $\alpha$ (and since $L^- \subset L_{\cF}^-$ any leaf through $s_l$ intersects $\alpha$).  

If $C_r$ is also one-root, the same argument applies: There is a side $s_r$ of $K$ such that every leaf of $L^-$ through $\alpha$ intersects $s_r$, and some such leaf of $L^-$ exists.  We conclude that $\alpha$ is the crossing leaf of an edge from $s_r$ to $s_l$ in $K$, as desired.  

It remains to analyze the case where $C_r$ is an ideal polygon.  Suppose this is the case and let $\alpha = \alpha_0, \alpha_1, \ldots \alpha_k$ denote its sides in cyclic order.   Since $\graph(C_r)$ is a cycle, Lemma \ref{lem:edges_exist} says every adjacent pair $\alpha_i, \alpha_{i+1}$ (indexed cyclically) have a leaf $e_{i}$ of $L^-$ intersecting both of them.   If $\alpha_i \in \bar{L}^+$, then it is a side of $K$, and $e_i, e_{i+1}$ are edges incident to this vertex.  If $\alpha_i \notin \bar{L}^+$ then Lemma \ref{lem:density} implies that on the other side of $\alpha_i$ there is a one-root region; let $s_i$ denote the root side.  Thus, $e_i$ and $e_{i+1}$ intersect $s_i$, which is necessarily a side of $K$, and each leaf of $L^-$ passing through $\alpha_i$ must also intersect $s_i$. In particular, no leaves intersect both $s_i$ and $s_j$ if $|j-i| > 1$. 
This shows that $K$ contains a cycle with edges $e_i$, and $\alpha = \alpha_0$ is the crossing leaf of the pair $(e_0, e_1)$.  
\end{proof}

Using this, we now prove the necessity of the conditions in Theorem \ref{thm_completion}:
\begin{proposition}[Necessity of conditions of Theorem \ref{thm_completion}] \label{l.necessary}
Let $(\cF^+,\cF^-)$ be a planar completion of a pair of prelaminations $(L^+,L^-)$.
Then $(L^+, L^-)$ are FT, 
satisfy simple cycle, have no high valence leaves, and no three $L^\pm$ leaves sharing an endpoint cross a common $L^\mp$ leaf.  
\end{proposition}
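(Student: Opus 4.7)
The plan is to handle the four claims in sequence. The first two---that $(L^+, L^-)$ is FT and satisfies the simple cycle condition---have already been established in Lemmas~\ref{lem_planar_completion_imply_FT} and~\ref{lem_planar_completion_imply_simple_cycle}, so only the absence of high-valence leaves and the three-endpoint condition remain to be proved.

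For the absence of high-valence leaves, the plan is to invoke Lemma~\ref{lem_charac_of_leaves}. Given $\alpha \in L^+ \subseteq L^+_\cF$, that lemma forces either $\alpha \in \bar L^+ \setminus \{\text{high-valence leaves of }\bar L^+\}$, or $\alpha$ to be a crossing geodesic contained in the interior of a complementary region of $\bar L^+$. The second option is excluded since $\alpha \in \bar L^+$ is itself a leaf, so $\alpha$ is not high-valence in $\bar L^+$; combining this with Lemma~\ref{lem:closure_same_graph}, which identifies the linkage graphs for $L^+$ and $\bar L^+$, yields that $\alpha$ is not high-valence in $L^+$ either.

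For the three-endpoint condition, I will argue by contradiction. Suppose $\alpha_1, \alpha_2, \alpha_3 \in L^+$ share a common endpoint $a \in S^1$ and all cross a common leaf $\beta \in L^-$. By Theorem~\ref{thm:realization}\ref{item.countably_many}, the leaves of $L^+_\cF$ sharing endpoint $a$ are countably many; I enumerate them by their other endpoints in cyclic order along $S^1 \setminus \{a\}$ as $\ell_j = \{a, b_j\}$, and write $\alpha_k = \ell_{i_k}$ for indices $i_1 < i_2 < i_3$. A direct analysis of which arcs of $S^1$ must contain the endpoints of $\beta$---these must be $(a, b_{i_1})$ and $(b_{i_3}, a)$---shows that $\beta$ in fact crosses every $\ell_j$ for $i_1 \leq j \leq i_3$.

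For each $j$ with $i_1 \leq j < i_3$, the consecutive leaves $\ell_j$ and $\ell_{j+1}$ bound a common complementary region $C_{j,j+1}$ of $L^+_\cF$, with $\ell_j, \ell_{j+1}$ meeting as adjacent geodesic sides at the ideal vertex $a$. By Theorem~\ref{thm:realization}\ref{item.one_root_ideal_polygon}, each $C_{j, j+1}$ is either an ideal polygon or a one-root region, and the key step is to show it must be the former: if $C_{j,j+1}$ were one-root, its root could not terminate at $a$ (at the ideal vertex $a$ only two geodesic sides of the convex region $C_{j,j+1}$ can meet, and these are $\ell_j, \ell_{j+1}$), so its endpoints would lie in the arc $[b_j, b_{j+1}]$ not containing $a$; but then both endpoints of $\beta$ would lie on the same side of this root, contradicting the defining property that $\beta$ (which enters $C_{j,j+1}$) must cross it. Consequently $\alpha_2 = \ell_{i_2}$ is a side of two ideal polygons $C_{i_2-1, i_2}$ and $C_{i_2, i_2+1}$, contradicting the assertion of Theorem~\ref{thm:realization}\ref{item.one_root_ideal_polygon} that no leaf lies in the boundary of two ideal polygons. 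The main obstacle here is pinning down the position of a putative root; once it is located in the ``far'' arc away from $a$, the crossing obstruction with $\beta$ is immediate.
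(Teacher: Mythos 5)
Your treatment of the first two conditions matches the paper exactly (both invoke Lemmas \ref{lem_planar_completion_imply_FT} and \ref{lem_planar_completion_imply_simple_cycle}), and your ``no high-valence'' argument is correct --- arguably cleaner than the paper's, since you read off the conclusion directly from option (i) of Lemma \ref{lem_charac_of_leaves} and then transfer via Lemma \ref{lem:closure_same_graph}, whereas the paper tracks $\alpha$'s valence through the finer linkage graph for $L^\mp_\cF$ and then rules out each clause of Definition \ref{d.high-valence} by hand.

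For the three-endpoint condition you take a genuinely different route: a combinatorial walk through the complementary regions of $L^+_\cF$ wedged between the $\ell_j$'s at the vertex $a$, culminating in a violation of the ``no two adjacent ideal polygons'' clause of Theorem \ref{thm:realization}. The paper instead argues directly that two distinct leaves of $\cF^+$ sharing an endpoint on $S^1_\infty$ cannot cross a common $\cF^-$-leaf (else uncountably many rays between them would share that endpoint, contradicting the construction of $S^1_\infty$), and then observes that at most two of $\alpha_1,\alpha_2,\alpha_3$ can be faces of the same singular $\cF^+$-leaf.

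There is a gap in your route. You enumerate the leaves of $L^+_\cF$ sharing endpoint $a$ as $\ell_j=\{a,b_j\}$ ``in cyclic order'' and repeatedly use the \emph{successor} $\ell_{j+1}$ and the complementary region $C_{j,j+1}$ between consecutive leaves. This presupposes that the set of $b_j$'s lying in $[b_{i_1},b_{i_3}]$ is \emph{discretely} ordered. Countability (Theorem \ref{thm:realization}\ref{item.countably_many}) does not give you this: a priori the $b_j$'s could accumulate at $b_{i_2}$ from one side, in which case $\ell_{i_2}$ would be accumulated by leaves of $L^+_\cF$ on that side, would bound no nontrivial complementary region there, and your contradiction ($\ell_{i_2}$ a side of two ideal polygons) would simply not be available. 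To rule this out you would need to show that only finitely many of the $\ell_j$'s can cross the fixed leaf $\beta$ --- but that is exactly the countability-at-$S^1_\infty$ argument the paper uses as its \emph{first step}, so the gap is not merely cosmetic: your combinatorics are well-defined only after the paper's key observation is already in hand. With that observation supplied, the remainder of your argument (locating the root of a putative one-root region away from $a$ and checking it would not be crossed by $\beta$) is correct.
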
 

\begin{proof}
We check each condition separately \\
\noindent{\bf Fully transverse:} This is given by Lemma \ref{lem_planar_completion_imply_FT}.

\noindent{\bf Simple cycle condition:} This is given by Lemma \ref{lem_planar_completion_imply_simple_cycle}.

\noindent{\bf No high valence:} 
Suppose $\alpha\in L^\pm$ has valence $\geq 2$ for a linkage graph $\graph(K;L^\mp)$, with $K$ a complementary region of $L^\pm$, then, by Lemma \ref{lem_charac_of_leaves}, it has valence greater than two for $\graph(C;L_\cF^\mp)$ with $C$ a complementary region of $L_\cF^\pm$ contained in $K$. Since $L_\cF^\pm$ is induced by a pA-bifoliation, it implies that $C$ is an ideal polygon, and that either the other side of $\alpha$ is a one-root region or $\alpha$ only bounds one complementary region. In either case, $\alpha$ is not high-valence.

Similarly, $\alpha\in L^\pm$ cannot be the unique geodesic side of a degenerate linkage graph.
Thus, $(L^+,L^-)$ has no high-valence leaves.

\noindent{\bf Leaves sharing endpoints:}
	
Two distinct leaves $l_0,l_1$ of $\cF^+$ that have a shared endpoint cannot intersect the same leaf $l^-\in \cF^-$, otherwise every leaf of $\cF^+$ intersecting $l^-$ in between $l_0$ and $l_1$ (of which there are uncountably many) would have one shared endpoint, contradicting the countability condition in the definition of $S^1_\infty$. Now given three distinct leaves $\alpha_1,\alpha_2,\alpha_3$ of $L^+$ with one shared endpoint, at most two of them can represent faces of the same $\cF^+$-leaf, so no leaf of $L^-$ can intersect the three leaves $\alpha_i$. 
\end{proof}

Now we can prove the uniqueness of planar completions:
\begin{proposition}\label{prop_uniqueness_completion}
Let $(L^+,L^-)$ be a pair of prelaminations 
and $(\cF_i^+,\cF_i^-)$, $i=1,2$, be two planar completions of $(L^+,L^-)$. Then there exists a homeomorphism of the disc taking $\cF^\pm_1$ to $\cF^\pm_2$ which is the identity on $S^1$.
\end{proposition}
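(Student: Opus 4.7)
The plan is to reduce this to the uniqueness statement already established in Proposition \ref{prop:uniqueness} (the uniqueness part of Theorem \ref{thm:realization}) by showing that the two planar completions induce \emph{exactly the same} prelaminations on $S^1$, not just prelaminations that agree on $L^\pm$.

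First, for each $i \in \{1,2\}$, let $(L_{\cF_i}^+, L_{\cF_i}^-)$ denote the pair of prelaminations induced on $S^1$ by $(\cF_i^+, \cF_i^-)$, where we use the canonical identification of $S^1$ with $S^1_\infty(\cF_i^+, \cF_i^-)$ provided by the planar completion structure (sending each point of $S^1$ to the endpoint of the corresponding leaf or face in the completion). By Lemma \ref{lem_charac_of_leaves}, a pair $\alpha \subset S^1$ is a leaf of $L_{\cF_i}^\pm$ if and only if either $\alpha \in \bar{L}^\pm$ and $\alpha$ is not a high-valence leaf of $\bar{L}^\pm$, or $\alpha$ is the crossing geodesic associated to a cut-edge or cut-pair in the linkage graph of some complementary region of $\bar{L}^\pm$. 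This characterization depends only on the initial data $(L^+, L^-)$ and not on the choice of completion, so $L_{\cF_1}^\pm = L_{\cF_2}^\pm$ as subsets of pairs of points of $S^1$.

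Second, by Proposition \ref{prop:converse_A}, each pair $(L_{\cF_i}^+, L_{\cF_i}^-)$ satisfies the hypotheses of Theorem \ref{thm:realization}. Therefore the hypotheses of Proposition \ref{prop:uniqueness} are satisfied, with the homeomorphism between the two circles at infinity being the identity map of $S^1$: it takes leaves of $L_{\cF_1}^\pm$ to leaves of $L_{\cF_2}^\pm$ precisely because these two sets are equal. Proposition \ref{prop:uniqueness} then produces a unique homeomorphism $H \colon \bD^2 \to \bD^2$ restricting to the identity on $S^1$ and sending leaves of $\cF_1^\pm$ to leaves of $\cF_2^\pm$, which is the desired homeomorphism.

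The only conceptual subtlety worth flagging is the distinction between $L^\pm$ and $\bar{L}^\pm$: two completions might a priori differ by adding different ``extra'' leaves in the closures and in the crossing geodesics, but Lemma \ref{lem_charac_of_leaves} rules this out by pinning down exactly which leaves must appear. Once that is observed, no further work is required beyond invoking the uniqueness machinery already built for Theorem \ref{thm:realization}.
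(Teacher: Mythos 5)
Your overall strategy coincides with the paper's: reduce to the uniqueness statement already proved for induced prelaminations (Proposition \ref{prop:uniqueness}) by showing that the two completions induce the same prelaminations on $S^1$. However, there is a gap in how you establish that $L_{\cF_1}^\pm = L_{\cF_2}^\pm$.

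You assert that Lemma \ref{lem_charac_of_leaves} gives an \emph{if and only if} characterization of leaves of $L_{\cF_i}^\pm$. It does not: the lemma only states that \emph{if} $\alpha$ is a leaf of $L_\cF^\pm$, \emph{then} $\alpha$ satisfies condition (i) or (ii). It does not assert the converse, namely that every pair satisfying (i) or (ii) must appear as a leaf of $L_{\cF_i}^\pm$ for an \emph{arbitrary} completion $\cF_i$. (One does know that the constructed completion $\tilde L^\pm$ from Section \ref{sec:proof_completion_thm} realizes exactly the set $S$ of pairs of type (i)/(ii) --- that is Remark \ref{rem:equiv_def_completion} --- but this only shows $S$ is the induced prelamination of one particular planar completion, not of every planar completion.) Therefore from Lemma \ref{lem_charac_of_leaves} alone you only get $L_{\cF_1}^\pm, L_{\cF_2}^\pm \subseteq S$, which does not imply equality.

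The paper closes this gap by first arguing that one may take, without loss of generality, $L_{\cF_2}^\pm = \tilde L^\pm = S$, so that Lemma \ref{lem_charac_of_leaves} gives $L_{\cF_1}^\pm \subseteq L_{\cF_2}^\pm$, and then upgrading this containment to equality using a separate general fact (Lemma \ref{lem:containment_is_equality}): if a pair of induced prelaminations is contained in another pair of induced prelaminations, and the smaller one is dense in the larger bifoliation, then they coincide. The density hypothesis of that lemma holds here because $L^\pm \subseteq L_{\cF_1}^\pm$ and $L^\pm$ is dense in $\cF_2^\pm$ by definition of a planar completion. You should insert one of these two arguments (the converse direction of the characterization, or the containment-plus-density lemma) to make the step $L_{\cF_1}^\pm = L_{\cF_2}^\pm$ rigorous; with that in place, the reduction to Proposition \ref{prop:uniqueness} is exactly right.
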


The proof below uses Lemma \ref{lem_charac_of_leaves}.  As an alternative approach, one could also imitate the strategy of Proposition \ref{prop:uniqueness}, using the rectangle topology.

\begin{proof}
Let $L_i^\pm$ be the prelaminations induced by $\cF_i^\pm$. By Proposition \ref{l.necessary}, the pair $(L^+,L^-)$ satisfies the conditions of Theorem \ref{thm_completion}, so we may assume without loss of generality, that $L_2^\pm$ is the completion $\tilde L^\pm$ that we constructed in section \ref{sec:proof_completion_thm}. By assumption, both $L_i^\pm$ contain the leaves of $L^\pm$. We need to show that $L_1^\pm=L_2^\pm$.

By Lemma \ref{lem_charac_of_leaves}, if $\alpha \in L_1^\pm$ then either $\alpha$ is a crossing geodesic or is in the closure of $L^\pm$, and not a high-valence leaf. By definition of the completion (see Observation \ref{obs_types_leaves_completion}), we deduce that $\alpha \in L_2^\pm$.
Thus, $L_1^\pm \subset L_2^\pm$.   The proof is now reduced to showing that, in the case of induced laminations, containment implies equality.  Since this is a general fact, we prove this in a separate lemma below.   Using this lemma (Lemma \ref{lem:containment_is_equality}), we have $L_1^\pm = L_2^\pm$.  Now the uniqueness part of Theorem \ref{thm:realization} implies that $\cF_1^\pm = \cF_2^\pm$, i.e., there is a homeomorphism of the disc taking one to the other and restricting to the identity on the boundary circle.  
\end{proof}

\begin{lemma} \label{lem:containment_is_equality}
Suppose $(L_i^+, L_i^-)$, $i=1,2$, are prelaminations induced by pA-bifoliations$(\cF_i^+, \cF_i^-)$. If $L_1^\pm \subset L_2^\pm$, and $L_1^\pm$ is dense in $\cF_2^\pm$ then $L_1^\pm = L_2^\pm$.  
\end{lemma}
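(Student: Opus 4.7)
The strategy is to apply Lemma~\ref{lem_charac_of_leaves} to $(\cF_2^+,\cF_2^-)$, which is a planar completion of $(L_1^+,L_1^-)$ since $L_1^\pm\subseteq L_2^\pm$ and $L_1^\pm$ is dense in $\cF_2^\pm$.  That lemma says each leaf of $L_2^+$ is either (i) an element of $\bar L_1^+$ that is not a high-valence leaf of $\bar L_1^+$, or (ii) a crossing geodesic for a cut-edge or cut-pair in a linkage graph of some complementary region of $\bar L_1^+$.  The plan is to show that in this setting both options force the leaf into $L_1^+$; the argument for $L^-$ is symmetric.

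To analyze option~(i), I will combine Proposition~\ref{prop:converse_A} (complementary regions of $L_1^\pm$ are ideal polygons or one-root regions) with Lemma~\ref{lem:closure_same_graph} (these regions and their linkage graphs agree with those of $\bar L_1^\pm$).  First, I will show $\bar L_1^+\smallsetminus L_1^+$ is exactly the set of roots of one-root regions of $L_1^+$:  such a limit pair cannot lie in a trivial complementary region (Lemma~\ref{lem:closure_same_graph}), hence it is a non-leaf geodesic side of some complementary region, and all sides of an ideal polygon are leaves.  Second, each such root is high-valence by Definition~\ref{d.high-valence}: its linkage graph is a star centered at the root, satisfying condition~(ii) when there are $\geq 3$ vertices and condition~(iii) otherwise.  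Third, no leaf of $L_1^+$ is itself high-valence -- each such leaf has valence $2$ on the cycle of an ideal polygon side or valence $1$ on a one-root side, ruling out conditions~(i), (ii), (iv) directly (using that two ideal polygons cannot share a leaf, by Proposition~\ref{prop:converse_A}), while condition~(iii) is excluded by a short case analysis outlined below.  Together these three facts give $\bar L_1^+ \smallsetminus\{\text{high-valence leaves of } \bar L_1^+\} = L_1^+$, so option~(i) forces $\alpha \in L_1^+$.

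For option~(ii), I will check that neither type of complementary region admits a cut-edge or cut-pair:  the linkage graph of an ideal polygon is a cycle of length $\geq 3$, in which every edge lies on the unique cycle (no cut-edges) and removing any two adjacent edges isolates a vertex (no cut-pairs); the linkage graph of a one-root region is a star, in which removing any edge isolates the peripheral vertex (no cut-edges) and which has no cycles (no cut-pairs).  Hence option~(ii) is vacuous, and combined with option~(i) this gives $L_2^+\subseteq L_1^+$; equality follows from the reverse inclusion in the hypothesis.

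I expect the most delicate point to be the exclusion of condition~(iii) of Definition~\ref{d.high-valence} for leaves of $L_1^+$:  a degenerate graph with two geodesic vertices would force a complementary region bounded by two geodesic sides with no ideal segments on its ideal sides, contradicting Lemma~\ref{lem:union_of_segments} (or the observation that two distinct geodesics of $\bD^2$ cannot share both endpoints), while a degenerate graph with one geodesic and one ideal vertex would correspond to a complementary region with a single geodesic side that is a leaf of $L_1^+$, hence neither an ideal polygon nor a one-root region, contradicting Proposition~\ref{prop:converse_A}.
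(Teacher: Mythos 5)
Your proof is correct, but it takes a genuinely different route from the paper's. The paper proves this lemma directly and topologically: it builds a map $\bD^2 \to \bD^2$ sending each intersection point $l^+ \cap l^-$ of $\cF_1^\pm$-leaves to the intersection point of the $\cF_2^\pm$-leaves with the same boundary data, checks continuity and injectivity by elementary order-preservation, and then invokes invariance of domain together with density of the image (which is exactly where the hypothesis that $L_1^\pm$ is dense in $\cF_2^\pm$ enters) to conclude the map is a homeomorphism taking leaves to leaves. Your approach instead recycles the heavy combinatorial machinery of Lemma~\ref{lem_charac_of_leaves}: viewing $(\cF_2^+,\cF_2^-)$ as a planar completion of $(L_1^+,L_1^-)$, you classify each leaf of $L_2^\pm$ as either a non-high-valence element of $\bar L_1^\pm$ or a crossing geodesic, then use the structure theorem (Proposition~\ref{prop:converse_A}) to show that one-root regions and ideal polygons admit no cut-edges or cut-pairs (so the crossing-geodesic alternative never occurs), and that $\bar L_1^\pm$ minus its high-valence leaves is exactly $L_1^\pm$. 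Both arguments are sound. The paper's is shorter, self-contained, and illustrates the direct topological technique (the same spirit as Proposition~\ref{prop:uniqueness}); yours is purely combinatorial, avoids invariance of domain, and is well-aligned with the linkage-graph methods of Section~\ref{sec:necessity_uniqueness}, at the cost of depending on Lemma~\ref{lem_charac_of_leaves}, which is itself a substantial result. One small citation correction: to rule out that a leaf in $\bar L_1^\pm \smallsetminus L_1^\pm$ is a trivial complementary region of $L_1^\pm$, the relevant fact is that Proposition~\ref{prop:converse_A} forces every complementary region of an induced prelamination to be an ideal polygon or a one-root region (hence nontrivial); Lemma~\ref{lem:closure_same_graph} speaks only to the trivial complementary regions of the closure $\bar L_1^\pm$, which is not what is needed at that step.
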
 

\begin{proof} 
Let $L^\pm_i$, $\cF^\pm_i$ be as in the statement of the Lemma. 
Define a map from the open disc to open disc as follows.  Each point in the disc is uniquely realizable as the intersection of a leaf of $\cF^+_1$ with one of $\cF^-_1$.  Suppose $x = l^+ \cap l^-$.  Consider the endpoints $e^+(l^+)$ and $e^-(l^-)$.  The set of endpoints of $l^\pm$ is a subset of $S^1_\infty(\cF_1^+, \cF_1^-)$, either two points or $k$ points depending on whether $l^\pm$ is singular or not.  In the singular case, the set of $k$ points are the ideal vertices of an ideal $k$-gon.  
Since $L_1^\pm \subset L_2^\pm$, $e^+(l^+)$ and $e^-(l^-)$ are also the endpoints of leaves of $\cF^+_2$ and $\cF^-_2$ respectively.  Send $x$ to the unique intersection point of these two leaves.  Note that they do indeed intersect because intersection is encoded by the boundary circle.   Thus, this map is well defined, and by construction it is continuous and injective.  
Since $L_1^\pm$ is dense in $\cF_2^\pm$, the image of this map is dense in the open disc; thus by the invariance of domain theorem, it is a homeomorphism onto the disc.   By construction it sends leaves to leaves; so we conclude $L_1^\pm = L_2^\pm$.
\end{proof}

We can now also prove Corollary \ref{cor:for_nontransitive} from the Introduction:
\begin{corollary} \label{cor:for_nontransitive_inthepaper}
Let $(P_i,\cF_i^+,\cF_i^-)$, $i=1,2$ be pA-bifoliations. Let $\cL_i^\pm$ be subsets of $\cF_i^\pm$ and call $L_i^\pm$ the prelaminations induced by $\cL_i^\pm$ in $\Pbound_i:=S^1_\infty(\cF^+_i, \cF^-_i)$.   

Let $h\colon \Pbound_1 \to \Pbound_2$ be a homeomorphism inducing the map $\hat{h}\{x,y\} = \{h(x), h(y)\}$ on pairs of points in $\Pbound_1$.  
Suppose that:
\begin{enumerate}[label=(\roman*)]
\item The subsets $\cL_i^\pm \subset \cF_i^\pm$ are dense in the plane, and 
\item $\hat{h}(L_1^+\cup L_1^-) = L_2^+\cup L_2^-$. 
\end{enumerate}
Then there exists a unique homeomorphism $H\colon P_1 \to P_2$ such that $H|_{\Pbound_1} = h$, $H(\cF^+_1) = \cF^\pm_2$, and $H(\cF^-_1) = \cF^\mp_2$.
\end{corollary}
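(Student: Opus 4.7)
The plan is to reduce to Proposition \ref{prop:uniqueness} by showing that $h$ extends uniquely to a homeomorphism between the boundary circles that sends the \emph{full} induced prelaminations of $\cF_1^\pm$ to those of $\cF_2^\pm$ (possibly after swapping $+$ and $-$).

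First, since $\cL_i^\pm$ is dense in the plane, the bifoliation $(\cF_i^+, \cF_i^-)$ is by definition a planar completion of $(L_i^+, L_i^-)$. Proposition \ref{l.necessary} then guarantees that each pair $(L_i^+, L_i^-)$ is fully transverse, satisfies the simple cycle condition, has no high-valence leaves, and no three $L_i^\pm$-leaves sharing an endpoint cross a common $L_i^\mp$-leaf.

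Next I would show that $\hat h$ respects the partition into $L^+$ and $L^-$, up to swapping them. The transversality condition means that $L_i^+ \cap L_i^- = \emptyset$, and two leaves of $L_i^+ \cup L_i^-$ cross if and only if one lies in $L_i^+$ and the other in $L_i^-$. Therefore the bipartite graph on $L_i^+ \cup L_i^-$ with edges corresponding to crossings has exactly two color classes, namely $L_i^+$ and $L_i^-$. The FT connectedness property (Definition \ref{def_FT}(3)) says this bipartite graph is connected, so its bipartition is uniquely determined up to a global swap. Since $\hat h$ preserves the crossing relation (as it is defined in terms of the cyclic order on $S^1$, which $h$ preserves or reverses), and since $\hat h(L_1^+ \cup L_1^-) = L_2^+ \cup L_2^-$, we must have $\{\hat h(L_1^+), \hat h(L_1^-)\} = \{L_2^+, L_2^-\}$. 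After swapping the labels of $\cF_2^+$ and $\cF_2^-$ if necessary, we may assume $\hat h(L_1^\pm) = L_2^\pm$.

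The third step is to promote this to an identification of the \emph{completed} prelaminations. The completion $(\tilde L_i^+, \tilde L_i^-)$ of Definition \ref{def_completion} is constructed in a purely combinatorial, intrinsic way from the pair $(L_i^+, L_i^-)$: it amounts to adding all non-high-valence leaves of $\bar L_i^\pm$ together with all crossing geodesics of cut-edges and cut-pairs in the linkage graphs (Remark \ref{rem:equiv_def_completion}). All of the notions involved — closure, high valence, complementary region, linkage graph, cut-edge/cut-pair, crossing geodesic — depend only on the pair $(L_i^+, L_i^-)$ viewed as a set of pairs of points in $S^1$ and the crossing relation between these pairs. Since $h$ preserves all such structure, $\hat h$ carries the completion of $(L_1^+, L_1^-)$ onto the completion of $(L_2^+, L_2^-)$: that is, $\hat h(\tilde L_1^\pm) = \tilde L_2^\pm$.

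Finally, since $(\cF_i^+, \cF_i^-)$ is a planar completion of $(L_i^+, L_i^-)$, the uniqueness part of Theorem \ref{thm_completion} (or directly the proof of Proposition \ref{prop_uniqueness_completion} via Lemma \ref{lem:containment_is_equality}) shows that the prelamination induced by $\cF_i^\pm$ coincides with $\tilde L_i^\pm$. Thus $h$ identifies the induced prelaminations of the two pA-bifoliations, and Proposition \ref{prop:uniqueness} produces the unique desired homeomorphism $H\colon P_1 \to P_2$ extending $h$ and taking $\cF_1^\pm$ to $\cF_2^\pm$. The main obstacle is the middle step of verifying that $\hat h$ commutes with the completion construction; this is conceptually easy but requires a careful check that each ingredient (linkage graphs, ideal segments, valences, cut-edges) is manifestly invariant under homeomorphism of $S^1$, which is where the naturality result of Theorem \ref{t.naturality} plays an implicit role.
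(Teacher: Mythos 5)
Your proof is correct but takes a genuinely different (and longer) route than the paper's. Both proofs share the first two steps: deducing that $(L_i^+,L_i^-)$ is fully transverse and showing, via the connectedness axiom, that $\hat h$ respects the partition into $L^+$ and $L^-$ up to a global swap. The divergence is in how the uniqueness machinery gets invoked. The paper uses a slick shortcut: take \emph{any} homeomorphism $g\colon \overline{P_1}\to\overline{P_2}$ extending $h$ (such extensions of circle homeomorphisms to closed discs always exist). Then $(g(\cF_1^+),g(\cF_1^-))$ and $(\cF_2^+,\cF_2^-)$ are both planar completions of $(L_2^+,L_2^-)$, so Proposition~\ref{prop_uniqueness_completion} directly gives a homeomorphism between them restricting to the identity on $\partial P_2$, and composing with $g$ yields $H$. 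You instead show explicitly that $\hat h$ commutes with the completion construction of Definition~\ref{def_completion}, i.e., $\hat h(\tilde L_1^\pm)=\tilde L_2^\pm$, and then apply Proposition~\ref{prop:uniqueness}. This works and is more transparent about \emph{what} structure is being preserved, but as you acknowledge, it requires carefully checking that every ingredient (closure, linkage graph, ideal segment, cut-edge, crossing geodesic) is invariant under circle homeomorphisms — which is precisely what the naturality result Theorem~\ref{t.naturality} is for. Your phrase "depend only on the pair $(L_i^+,L_i^-)$ viewed as a set of pairs of points... and the crossing relation" is slightly too strong: notions like ideal segments involve topology of the geodesic realization in $\bD^2$, not merely the crossing relation; it is Theorem~\ref{t.naturality} that rescues this. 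The paper's approach buys you exemption from this verification, because the uniqueness result absorbs it automatically. Your approach buys you the explicit (and reusable) fact that completion is natural under $\Homeo(S^1)$.
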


\begin{proof}
By Lemma \ref{lem_planar_completion_imply_FT}, the pairs of prelaminations $(L_i^+,L_i^-)$ are fully transverse. 

Pick some $\alpha_0\in L_1^+$. By assumption, $\hat{h}(\alpha_0)$ is in $L_2^+$ or in $L_2^-$. Up to exchanging the names of $L_2^+$ and $L_2^-$, we will assume that $\hat{h}(\alpha_0)\in L_2^+$.
For any leaf $\beta\in L_1^-$ that intersects $\alpha$, the leaf $\hat{h}(\beta)$ intersects $\hat{h}(\alpha_0)$, so $\hat{h}(\beta)\in L_2^-$. Since $(L_i^+,L_i^-)$ are fully transverse, the connectedness axiom together with an inductive argument implies that we must have $\hat{h}(L_1^+) = L_2^+$ and $\hat{h}(L_1^-) = L_2^-$, i.e., $h$ preserves the individual laminations, not just their union.  

Let $g\colon \bD^2 \to \bD^2$ be a homeomorphism that extends $h$. Then $(g(\cF^+_1),g(\cF^-_1))$ and $(\cF^+_2,\cF^-_2)$ are two planar completions of $h(L_1^+,L_1^-) = (L_2^+,L_2^-)$.
Thus, by Proposition \ref{prop_uniqueness_completion}, $(g(\cF^+_1),g(\cF^-_1))$ and $(\cF^+_2,\cF^-_2)$ are homeomorphic via a map that is the identity on the boundary. Thus, there exists a (unique) homeomorphism $H\colon P_1 \to P_2$, extending $h$ and such that $H(\cF^\pm_1) = \cF^\pm_2$.
\end{proof}

The fact that $H$ is canonically defined shows that, as in Proposition \ref{prop:uniqueness}, the extension to the disc respects composition.  This gives the proof of Theorem \ref{cor:action_extends_2}.  

Finally, we prove Theorem \ref{thm_special_case_regular} from the Introduction:
\begin{corollary} \label{cor:restatmentofregularformintro}
Suppose that $(L^+, L^-)$ is a pair of {\em regular} fully transverse prelaminations.   There is a unique planar completion $(\cF^+, \cF^-)$ of $(L^+, L^-)$ into a {\em nonsingular} bifoliated plane if and only if each simple, closed, polygon in $\bD$ whose sides are (finite) leaf segments of $\geo(L^+) \cup \geo(L^-)$ is a rectangle.  

Equivalently, this occurs if and only if there are no cycles among linkage graphs.  
\end{corollary}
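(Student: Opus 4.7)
The plan is to prove the corollary via the chain of equivalences: a nonsingular planar completion exists $\iff$ no cycle appears in any linkage graph of $L^\pm$ $\iff$ every simple closed polygon in $\geo(L^+) \cup \geo(L^-)$ is a rectangle. The regularity hypothesis substantially simplifies Theorem \ref{thm_completion}: because every leaf of $L^\pm$ is accumulated on both sides, no $L^\pm$-leaf can be a geodesic side of a complementary region, so no leaf is ever high-valence; and because no two leaves share an endpoint, the ``no three leaves at a shared endpoint'' condition is vacuous. Thus, assuming no cycles exist (which makes the simple cycle condition trivially satisfied), Theorem \ref{thm_completion} yields a unique planar completion $(\cF^+, \cF^-)$, and only nonsingularity remains to check.

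First I would establish ``nonsingular completion'' $\iff$ ``no cycles in the linkage graphs of $L^\pm$''. By Lemma \ref{lem:star_or_cycle}, every complementary region of $\tilde L^\pm$ has linkage graph that is either a star (a one-root region) or a cycle (an ideal polygon of the completion). Passing from $L^\pm$ to $\tilde L^\pm$ preserves the linkage graphs of $L^\mp$-regions by Lemmas \ref{lem:closure_same_graph} and \ref{lem:same_graphs}, while Lemma \ref{lem:divide_graph} shows that adding crossing geodesics of cut-edges or cut-pairs isolates each cycle into its own complementary region of $\tilde L^\pm$ without creating new cycles. Hence cycles in the linkage graphs of $L^\pm$ correspond bijectively to ideal polygons of the completion, so ``no cycles'' is equivalent to nonsingularity.

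Finally, I would prove ``no cycles'' $\iff$ ``every simple closed polygon is a rectangle''. For the backward direction, suppose $\graph(C; L^-)$ contains a cycle of length $n \geq 3$ with vertices $\hat{\alpha}_1, \dots, \hat{\alpha}_n$ and representatives $\beta_k \in L^-$ where $\beta_k$ crosses $\hat{\alpha}_k$ and $\hat{\alpha}_{k+1}$ (indices modulo $n$). Since the $\beta_k$ are pairwise non-crossing chords inside the cyclic arrangement of vertices along $\partial C$, a standard planarity argument forces the cycle's vertex order to match the order along $\partial C$. By regularity each $\hat{\alpha}_k$ is approximated from outside $C$ by leaves $\alpha_k^{(m)} \in L^+$ (taking $\alpha_k^{(m)} = \hat\alpha_k$ if $\hat{\alpha}_k \in L^+$); for $m$ large enough each $\beta_k$ still crosses $\alpha_k^{(m)}$ and $\alpha_{k+1}^{(m)}$ and no other $\alpha_j^{(m)}$, via openness of the crossing relation. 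The appropriate segments of $\alpha_k^{(m)}$ and $\beta_k$ then assemble into a simple closed $2n$-gon in $\geo(L^+)\cup\geo(L^-)$, contradicting the rectangle hypothesis. For the forward direction, assuming no cycles yields a nonsingular completion by the previous step; then by Lemma \ref{l.polygon-with-more-sides} applied to $\tilde L^\pm$, any closed polygonal path with more than five sides in $\geo(L^+)\cup\geo(L^-)\subset \geo(\tilde L^+)\cup\geo(\tilde L^-)$ would contain a coupled pair of ideal polygons in its interior, which is impossible in a nonsingular completion. Since consecutive sides of a simple closed polygon must alternate between $L^+$ and $L^-$ (two same-type consecutive sides would share an endpoint, forbidden by regularity), any such polygon has at most four alternating sides and is therefore a rectangle.

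The main obstacle is the simplicity check in the approximation argument: one must verify that the $2n$-gon built from the approximating $L^+$-leaves and the chosen representatives $\beta_k$ is genuinely free of self-intersections. This relies simultaneously on the cyclic-order/non-crossing property of $L^-$-leaves along $\partial C$ and on the openness of the crossing relation to rule out spurious intersections appearing when $\hat\alpha_k$ is replaced by $\alpha_k^{(m)}$.
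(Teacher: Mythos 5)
Your proposal is correct and follows essentially the same route as the paper: reduce to Theorem \ref{thm_completion} by observing that regularity makes conditions \ref{item_no_high_valence} and \ref{item_same_endpoint_condition} automatic, then argue the chain ``nonsingular completion $\iff$ no cycles in linkage graphs $\iff$ no polygon with more than four sides.'' The paper simply asserts these two equivalences as ``facts to notice,'' while you supply the supporting details (the linkage-graph lemmas for the first, the approximation argument and Lemma \ref{l.polygon-with-more-sides} for the second), so your write-up is the same argument carried out more explicitly.
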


\begin{proof}
Leaves of regular FT prelaminations are accumulated on both sides, so do not bound any complementary regions. Thus no leaf of a regular FT prelamination can be high-valence, and condition \ref{item_no_high_valence} of Theorem \ref{thm_completion} is automatically verified. Similarly, no two leaves of a regular prelamination share a common endpoint so condition \ref{item_same_endpoint_condition} is also automatically satisfied.
Hence, to deduce Corollary \ref{cor:restatmentofregularformintro} from Theorem \ref{thm_completion}, it suffices to notice two facts: First, a pA-bifoliation $(\cF^+,\cF^-)$ is non-singular if and only if there are no cycles in the linkage graphs of its induced prelamination, or any prelaminations whose planar completion is  $(\cF^+,\cF^-)$. Second, there exists a cycle in a linkage graph if and only if there is a closed polygonal path with strictly more than 4 sides.
\end{proof}

%%%%%%%%%%%%%%%%%%%%%%%%%%%%%%%%%%%%%%%%%%%%%%%%%%
%. EXAMPLES 
%%%%%%%%%%%%%%%%%%%%%%%%%%%%%%%%%%%%%%%%%%%%%%%%%%
\section{Unique versus non-unique embeddings of prelaminations} \label{sec:uniqueness}
%%%%%%%%%%%%%%%%%%%%%%%%%%%%%%%%%%%%%%%%%%%%%%%%%

In Theorem \ref{thm_completion}, uniqueness of the planar completion was due in large part to the requirement that leaves of $L^\pm$ be dense in the plane.
In this section, we relax this density requirement by considering more general ways a prelamination may sit inside a pA-bifoliation :
\begin{definition}[Embedding into a bifoliation, planar extension]
A pair of prelaminations $(L^+,L^-)$ \emph{embeds} in a pA-bifoliation $(\cF^+, \cF^-)$ if $L^+ \subset L_\cF^\pm$, where $L_\cF^\pm$ are the prelaminations induced by $\cF^\pm$. In this case we say $(\cF^+, \cF^-)$ is a \emph{(planar) extension} of $(L^+,L^-)$.
\end{definition}

We emphasize that, as opposed to a planar completion, the prelaminations $L^\pm$ are not necessarily dense in a planar extension. 

We do not address in this article the general question of when a pair of prelaminations admits a planar extension. Instead, we consider the following question of rigidity:
{\em For which FT prelaminations $(L^+, L^-)$ is their planar completion the unique planar {\em extension} of $(L^+, L^-)$? } 
In Section \ref{subsec_examples}, we provide examples of such prelaminations admitting (many) different extensions, and in Section \ref{subsec_uniqueness}, we  prove a rigidity result, characterizing prelaminations whose unique extension is their planar completion.

For this, we use two different strategies.  Roughly speaking, in Section \ref{subsec_examples}, we start with a bifoliation, then remove some leaves and show that what is left will still admit a planar completion.  By contrast, in Proposition \ref{p.killing-product} and \ref{prop_uncountable_ideal}, we start with a pair of FT prelaminations and add (a lot of) leaves in some complementary regions making sure that these added leaves are \emph{not} crossing geodesics, so that the resulting bifoliated planes are ``bigger" than the completion.  

\subsection{Examples} \label{subsec_examples}
As we aim to build examples of distinct extensions of the same pair of prelaminations, we start by making precise what we mean by distinct:
\begin{definition}
Let $(L^+,L^-)$ be a pair of FT prelaminations.
Two planar extensions $(\cF_i^+,\cF_i^-)$, $i=1,2$, of $(L^+,L^-)$ are \emph{weakly equivalent} if there is an orientation preserving homeomorphism $h$ of $\bD^2$ sending $\cF^\pm_1$ to $\cF_2^\pm$ and whose restriction to $S^1$  preserves (setwise) $L^+$ and $L^-$.
They are  \emph{strongly equivalent} if one can find such a homeomorphism $h$ whose restriction to $S^1$ is the identity.
\end{definition}

Notice that strong equivalence was the notion we used to define ``uniqueness'' in Theorems \ref{thm:realization} and \ref{thm_completion}.
The difference between the notions of strong and weak equivalences will appear in Example \ref{e.R-covered} (see Claim \ref{claim_exampleR-covered}). We now can state precisely the results proved below.   
For simplicity, we only build examples involving non-singular foliations. One could of course generalize these constructions to give singular examples.

\begin{proposition}\label{p.non-uniqueness}
There exists pairs $(L^+,L^-)$ of fully transverse prelaminations that admit a planar completion and also admit an uncountable family $(\cF^+_i,\cF^-_i)_{i\in \cI}$ of pairwise non-weakly equivalent extensions.
\end{proposition}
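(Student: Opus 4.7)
The plan is to construct a pair $(L^+, L^-)$ of FT prelaminations that is sparse enough for its planar completion to have a complementary region $U$ with nondegenerate ideal boundary arc, and then exhibit an uncountable family of planar extensions by varying the bifoliated structure inside $U$. The parameter giving the uncountable family will be the endpoint of a distinguished inserted leaf, while the rigidity of $(L^+, L^-)$ forces distinct parameter values to yield non-weakly-equivalent extensions.

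Concretely, I would start with a non-singular pA-bifoliation $(\cF_0^+, \cF_0^-)$ of $\R^2$ whose induced prelamination $(L_0^+, L_0^-)$ is FT and whose group of $(L_0^+, L_0^-)$-preserving circle homeomorphisms is at most countable; a natural candidate is the orbit space of a generic $\R$-covered Anosov flow on a closed 3-manifold, where the symmetry group is essentially the deck group. I would then choose a countable subset $(L^+, L^-) \subset (L_0^+, L_0^-)$ such that $(L^+, L^-)$ satisfies the hypotheses of Theorem \ref{thm_completion} (so admits a planar completion) and has at least one complementary region $U$ of $L^+$ that is a one-root region whose ideal boundary contains a nondegenerate open arc $J \subset S^1$. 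Such a choice is straightforward: one removes from $L_0^+$ enough leaves to create a gap, then keeps sufficiently many leaves of both families to satisfy the FT, simple cycle, and no high-valence conditions. For each $t \in J$ that is not already an endpoint of a leaf of $L^+ \cup L^-$, I would construct a planar extension $(\cF_t^+, \cF_t^-)$ that agrees with $(\cF_0^+, \cF_0^-)$ outside $U$ and whose restriction to $U$ contains a distinguished leaf of $\cF_t^+$ ending at $t$ and at the opposite endpoint of the root; the bifoliation inside $U$ is adjusted to a different non-singular product structure compatible with this choice. Since no leaf of $L^\pm$ enters $U$ by construction, each $(\cF_t^+, \cF_t^-)$ is a valid planar extension of $(L^+, L^-)$, and the induced prelamination $(L^+_{\cF_t}, L^-_{\cF_t})$ contains the chosen distinguished leaf.

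The main obstacle will be establishing pairwise non-weak equivalence. A weak equivalence between $(\cF_{t_1}^+, \cF_{t_1}^-)$ and $(\cF_{t_2}^+, \cF_{t_2}^-)$ would, by Theorem \ref{cor:action_extends_2} (or its proof), induce a circle homeomorphism $h \colon S^1 \to S^1$ preserving $L^\pm$ setwise that carries the distinguished endpoint $t_1$ of $(\cF_{t_1}^\pm)$ to $t_2$ for $(\cF_{t_2}^\pm)$. Since the self-symmetry group of $(L^+, L^-)$ is at most countable, the equivalence relation on $J$ generated by the action of these circle homeomorphisms has countable orbits. Because $J$ is uncountable, quotienting by this countable equivalence relation leaves uncountably many classes, each giving a distinct weak equivalence class of extensions. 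The key technical subtlety is ensuring the symmetry group of $(L^+, L^-)$ is genuinely countable (which one can guarantee by choosing the base flow, or by perturbing the chosen subset $(L^+, L^-)$ to remove accidental symmetries) and that the construction of the family $(\cF_t^+, \cF_t^-)$ indeed yields valid pA-bifoliations of the plane --- that is, the bifoliation inside $U$ glues continuously to $(\cF_0^+, \cF_0^-)$ along the root and ideal boundary of $U$.
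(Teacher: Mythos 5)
Your overall strategy --- produce an uncountable family of extensions by varying a continuous parameter, then quotient by a countable symmetry group of $(L^+,L^-)$ --- is not the one the paper uses. The paper's Example \ref{e.R-covered} works with the explicit horizontal/vertical bifoliation of a diagonal strip in $\R^2$, removes the $\cF^+$-leaves in a $\bZ$-indexed family of substrips $\Delta_n$, and indexes the extensions by subsets $\cE\subset\bZ$ (adding back the leaves in $\Delta_n$ for $n\notin\cE$). Uncountability comes from $2^\bZ$, and Claim \ref{claim_exampleR-covered} pins down the weak equivalence classes precisely as $\cJ=\pm\cI+m$. The general idea you propose (inserting new leaves into an ideal arc of a complementary region) is closer in spirit to Propositions \ref{p.killing-product}--\ref{prop_uncountable_ideal}, and the paper even remarks that a continuum family can be built that way, but explicitly flags that establishing pairwise non-weak-equivalence in that setting is ``more technical'' and does not carry it out. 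So you have chosen to attack the harder version of the construction, and several of the technical points you wave at are precisely where the difficulty lives.

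Concretely, there are three gaps. First, the construction of the extensions $(\cF^+_t,\cF^-_t)$ is not specified in a way that guarantees they are planar extensions of the same $(L^+,L^-)$: you say you keep $\cF^-$ fixed and change $\cF^+$ only inside $U$, but the circle at infinity $S^1_\infty(\cF^+_t,\cF^-_t)$ depends on both foliations, so modifying $\cF^+$ inside $U$ can shift the endpoints of $\cF^-$-leaves in $S^1_\infty$, in which case $L^-$ need no longer embed in $L^-_{\cF_t}$. Also, the ``distinguished leaf ending at $t$ and at the opposite endpoint of the root'' shares an endpoint with the root $\gamma$; in a pA-bifoliation two $\cF^+$-rays share an endpoint on $S^1_\infty$ only if there are countably many rays between them, and your product-type filling of $U$ will generically put uncountably many leaves between $\alpha_t$ and $\gamma$. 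Second, the heart of your non-equivalence argument --- that a weak equivalence must carry $t_1$ to $t_2$ --- is asserted, not proved. A weak equivalence does give a circle homeomorphism $h$ preserving $L^\pm$ and sending $L^+_{\cF_{t_1}}$ to $L^+_{\cF_{t_2}}$, but for this to force $h(t_1)=t_2$ you need the distinguished leaf $\alpha_t$ to be characterized intrinsically inside $U$ (e.g., as the unique leaf non-separated from the root), and you have not arranged or verified this. Third, the countability of the symmetry group of the \emph{sparse} $(L^+,L^-)$ (not just of the ambient induced prelamination) is assumed without argument; it does not follow from properties of the base flow alone and needs a dedicated proof. As written, the proposal leaves the key steps unproved, and the paper's concrete strip example avoids all three issues at once.
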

In the example we provide for Proposition~\ref{p.non-uniqueness} (see Example \ref{e.R-covered} below) the set of points in $S^1$ which are endpoints of leaves in $L^+$ is not dense.  However, this lack of density is only one of the phenomenon that prevents uniqueness. Indeed, we also build an example (see Example \ref{e.lattice}) such that the endpoints of $L^+$ as well as those of $L^-$ are dense in $S^1$:
\begin{proposition}\label{p.still-non-uniqueness}
There exists prelaminations $(L^+,L^-)$ admitting a planar completion and such that:
\begin{itemize}
\item there are two pairs $(\cF^+_i,\cF^-_i)_{i\in \{0,1\}}$ of pairwise non-equivalent extensions of $(L^+,L^-)$.
\item both sets $\{a\in S^1_{\infty}(\cF^+,\cF^-) : \exists b, (a,b)\in L^\pm\}$ are dense in $S^1_{\infty}(\cF^+,\cF^-)$.
\end{itemize}
\end{proposition}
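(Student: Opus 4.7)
I plan to construct the example by enriching the prelamination of Example~\ref{e.R-covered} with extra leaves whose endpoints are dense in $S^1$, without spoiling the non-equivalence of its two extensions. Concretely, Example~\ref{e.R-covered} provides a fully transverse pair $(L_0^+,L_0^-)$ admitting two non-weakly equivalent pA-bifoliated extensions $(\cF_i^+,\cF_i^-)$, $i\in\{0,1\}$, whose only defect is that the set of endpoints of $L_0^\pm$ fails to be dense. The aim is to add to $L_0^\pm$ countably many leaves, common to $\cF_0^\pm$ and $\cF_1^\pm$, whose endpoints fill $S^1$ densely, so that non-equivalence is preserved and Theorem~\ref{thm_completion} still applies.

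\textbf{Construction.} Take $(\cF_i^+,\cF_i^-)$ as above. Since each is a pA-bifoliation, the induced prelamination $L_{\cF_i}^\pm$ has dense endpoints in $S^1_\infty(\cF_i^+,\cF_i^-)$, which I identify with $S^1$ via the embedding of $L_0^\pm$. By the construction of Example~\ref{e.R-covered}, there is a canonical correspondence between leaves of $\cF_0^\pm$ and $\cF_1^\pm$ outside a bounded ``modifiable" region $U$ of the plane: the two bifoliations coincide (via a foliation-preserving homeomorphism) outside $U$, and this induces a boundary-preserving identification of their circles at infinity. Select a countable collection of leaves of $\cF_0^\pm$ lying outside $U$ whose endpoints are dense in $S^1$, and add them, together with their counterparts in $\cF_1^\pm$ under the canonical identification, to $L_0^\pm$ to obtain $(L^+,L^-)$. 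By construction, $(L^+,L^-)$ embeds in both $L_{\cF_i}^\pm$, and has endpoints dense in $S^1$ on both sides. Choosing the added leaves generically, the hypotheses of Theorem~\ref{thm_completion} (simple cycle, no high valence leaves, no three leaves sharing an endpoint crossing a common transverse leaf) are all preserved, so $(L^+,L^-)$ admits a planar completion.

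\textbf{Non-equivalence and main obstacle.} Non-equivalence of the two extensions as extensions of $(L^+,L^-)$ follows from non-equivalence as extensions of $(L_0^+,L_0^-)\subset (L^+,L^-)$: any weak equivalence $h$ preserving $L^\pm$ setwise would in particular preserve $L_0^\pm$ setwise, contradicting Example~\ref{e.R-covered}. The main obstacle is to verify the canonical identification between leaves of $\cF_0^\pm$ and $\cF_1^\pm$ outside $U$ really does induce a boundary-preserving bijection on circles at infinity, which in turn requires a precise description of the two extensions of Example~\ref{e.R-covered}: in the $\R$-covered Anosov setting, this typically amounts to the two extensions agreeing outside a single modifiable rectangle, with the difference concentrated in a ``shear" or reparametrization inside it. A secondary, more routine, technical point is the genericity argument that guarantees the combinatorial conditions of Theorem~\ref{thm_completion} survive the enrichment; this is a Baire-type or inductive construction exploiting that each forbidden configuration (a non-simple cycle, a high valence vertex, or three leaves sharing an endpoint crossing a transverse leaf) is codimension-one in the space of choices of added leaves.
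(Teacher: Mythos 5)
Your approach diverges from the paper's (which uses a fresh construction, Example~\ref{e.lattice}, based on the universal cover of $\R^2$ minus a dense lattice), and unfortunately your route contains a gap that is not merely technical but unfixable within the framework you chose.

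The critical step is your claim that one can ``select a countable collection of leaves of $\cF_0^\pm$ lying outside $U$ whose endpoints are dense in $S^1$.'' In Example~\ref{e.R-covered} the ambient bifoliated plane is the strip $B=\{|x-y|<1\}$, and the set of leaves of $\cF_0^+$ common to the two extensions (i.e.\ outside the modifiable region, say $\Delta_0=\{|y|\le\tfrac12\}$) are exactly the horizontal leaves $\{y=c\}$ with $|c|>\tfrac12$. Their endpoints are $(c-1,c)$ and $(c+1,c)$ on the two boundary lines, so the set of \emph{all} endpoints of such leaves misses the full intervals $\{(c-1,c):|c|\le\tfrac12\}$ and $\{(c+1,c):|c|\le\tfrac12\}$. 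Consequently \emph{no} subcollection of these leaves, countable or not, has endpoints dense in $S^1$; the set you wish to select from simply does not exist. Your ``genericity'' argument for the combinatorial hypotheses of Theorem~\ref{thm_completion} therefore never gets off the ground, and your verification of the bullet on endpoint density fails.

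The structural reason this cannot be patched within the strip model is worth noting: in $B$, every $\cF^-$-leaf has one endpoint on each of the two boundary lines, so no $\cF^-$-leaf has both ideal endpoints in a small interval of $S^1$. This means that removing any $\cF^+$-saturated set with nonempty interior (which is required, or else the completion is just $\cF^+$ again and you get only one extension) always produces a ``gap'' in the endpoint set of $L^+$. The paper's Example~\ref{e.lattice} is designed precisely to escape this: by passing to the universal cover $P$ of $\R^2\smallsetminus\Lambda$ for a dense lattice $\Lambda$, the non-separated leaves of $\cF^\pm$ become dense in the plane, and by Bonatti's criterion (\cite[Lemma 4.19, Prop.\ 5.7]{Bonatti_boundary}) the endpoints of $\cF^\pm$ are dense in $\partial P$; moreover each small interval $I\subset\partial P$ now contains \emph{both} endpoints of some $\cF^-$-leaf, and since such a leaf must cross an $L^+$-leaf (as $U$ meets every $\cF^-$-leaf), density of $L^+$-endpoints follows even after deleting a positive-width strip of $\cF^+$-leaves. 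To repair your proposal you would need to replace the ambient strip by a plane with this ``locally recurrent'' boundary behaviour; at that point you are essentially reproducing the paper's example rather than genuinely modifying Example~\ref{e.R-covered}.
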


As one tool to construct examples, we use the following lemma, which gives a condition for a subset of an induced prelamination to satisfy the conditions of Theorem \ref{thm_completion} (i.e., to admit a planar completion).  

\begin{lemma}\label{l.non-uniqueness}
Let $(\cF^+,\cF^-)$ be a pair of transverse foliations of the open disc $\mathring\bD^2$. Assume that the set of endpoints of leaves of $\cF^-$ is dense in $S^1_{\infty}(\cF^+,\cF^-)$.

Let $U\subsetneq \bD^2$ be a proper, open, $\cF^+$-saturated set such that every leaf of $\cF^-$ intersects $U$,
and let $L^+$ be the set of pair of endpoints of $\cF^+$-leaves contained  in $U$, and $L^-$ the set of pair of endpoints of $\cF^-$-leaves.
Then $(L^+,L^-)$ admits a (non-singular) planar completion $(\tilde \cF^+,\tilde\cF^-)$.

Moreover, $(\tilde \cF^+,\tilde\cF^-)$ is not weakly-equivalent to $(\cF^+,\cF^-)$, so $(L^+,L^-)$ admits at least two distinct extensions.
\end{lemma}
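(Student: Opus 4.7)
The plan is to apply Theorem \ref{thm_completion} to $(L^+,L^-)$ to obtain a planar completion $(\tilde{\cF}^+,\tilde{\cF}^-)$, and then to show it is not weakly equivalent to $(\cF^+,\cF^-)$. I would verify the four conditions of Theorem \ref{thm_completion}: transversality is immediate; density of endpoints in $S^1_\infty(\cF^+,\cF^-)$ follows from the $\cF^-$-density hypothesis; the condition that no three $L^\pm$-leaves with a common endpoint cross a common $L^\mp$-leaf is inherited from the pA-bifoliation $(\cF^+,\cF^-)$, since at most two faces of a singular leaf share an endpoint at infinity. For connectedness, given $\alpha,\beta\in L^+\cup L^-$, one reduces to the case $\alpha,\beta\in L^-$; both meet $U$ by hypothesis, and discretizing a path in $U$ between points of $\alpha$ and $\beta$ (via the local rectangle structure of the bifoliation) gives a chain of alternating crossings in $L^+\cup L^-$, because $U$ being open and $\cF^+$-saturated ensures every $\cF^+$-leaf met along the path lies in $L^+$.

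For the no-high-valence and simple cycle conditions: each $\alpha\in L^+$ admits a bifoliated rectangle neighborhood contained in $U$, so $\alpha$ is accumulated on both sides by other $L^+$-leaves and hence is not the side of any complementary region of $L^+$, ruling out high-valence among $L^+$-leaves. Since $L^-=L^-_\cF$, the complementary regions of $L^-$ coincide with those of $L^-_\cF$ in $(\cF^+,\cF^-)$. In the non-singular setting (as claimed by the lemma), these are one-root regions by Theorem \ref{thm:realization}, so the linkage graphs $\Gamma(C;L^+_\cF)$ are stars; the subgraph $\Gamma(C;L^+)$ is connected by Lemma \ref{lem:linkage_connected} applied to the already-verified FT pair $(L^+,L^-)$ and has the same vertex set, so it equals the full star. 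Simple cycle and no-high-valence then follow from the corresponding properties of $(\cF^+,\cF^-)$. The main obstacle (which only really arises in a hypothetical singular generalization) is ruling out that $\Gamma(C;L^+)$ is a Hamiltonian path inside a cycle $\Gamma(C;L^+_\cF)$, which would produce high-valence $L^-$-leaves; the hypothesis that every $\cF^-$-leaf meets $U$, combined with the local prong structure, is the key tool to ensure enough edges survive.

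For non-weak-equivalence, by the defining property of a planar completion the $L^+$-leaves of $\tilde{\cF}^+$ are dense in the bifoliated plane, whereas in $\cF^+$ the $L^+$-leaves fill exactly $U\subsetneq\bD^2$. A weak equivalence $h\colon \bD^2\to\bD^2$ would send $U$ (the union of $L^+$-leaves in $\cF^+$) to the union of $L^+$-leaves in $\tilde{\cF}^+$, a dense subset of $\bD^2$; this is immediately contradictory when $U$ is not dense. In the remaining case where $U$ is dense but proper, its complement contains $\cF^+$-leaves absent from $L^+$, which in the completion correspond to pairs of nonseparated (branching) leaves bounding a one-root complementary region---a topological invariant of the bifoliated plane preserved by any weak equivalence, ruling out the existence of $h$.
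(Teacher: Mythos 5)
Your proposal follows the same overall strategy as the paper: verify the hypotheses of Theorem \ref{thm_completion} for $(L^+,L^-)$, then argue that the resulting completion and $(\cF^+,\cF^-)$ cannot be weakly equivalent because $L^+$ is dense in the former but not the latter. However, the argument for ruling out high-valence $L^+$-leaves contains a genuine error. You claim that each $\alpha\in L^+$, being contained in a bifoliated rectangle $R\subset U$, is accumulated on both sides by $L^+$-leaves and therefore ``is not the side of any complementary region of $L^+$.'' The accumulation you get from the rectangle is accumulation \emph{in the plane}, while complementary regions of $\geo(L^+)$ are controlled by accumulation of \emph{endpoints on $S^1$}. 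These two notions diverge exactly when $\alpha$ is a nonseparated (branching) leaf of $\cF^+$: the nearby leaves $\gamma_n\subset R$ converge in the plane to $\alpha$ and to its nonseparated partners simultaneously, and their endpoints converge to the \emph{root} geodesic of the associated one-root region, not to the endpoints of $\alpha$. In that case $\alpha$ genuinely does bound a complementary region of $L^+$; such $\alpha$ occur in the lemma's intended applications (e.g.~Example \ref{e.lattice}, where $\cF^+$ has a dense set of nonseparated leaves). The paper's argument is different and survives this: it does not assert that $L^+$-leaves never bound complementary regions, only that if $\alpha\in L^+$ bounds a complementary region of $L^+$, then because $U$ is open it also bounds a complementary region of $L^+_\cF$; that region is one-root (nonsingular case of Theorem \ref{thm:realization}), $\alpha$ is a leaf side rather than the root, so it has valence $1$ and is not high-valence.

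Two smaller issues. Your connectedness argument picks a path between the two leaves ``in $U$'', but $U$ need not be connected; the paper instead covers the plane by the open sets of $\cF^-$-leaves crossing a fixed $\cF^+$-leaf contained in $U$, which makes no connectedness assumption on $U$. Finally, the discussion of Hamiltonian paths inside cycles, and the treatment of the ``dense $U$'' case of the non-equivalence argument, do not hold up: in the nonsingular setting of the lemma the relevant linkage graphs are all stars, so there are no cycles to worry about; and if $U$ were dense then $(\cF^+,\cF^-)$ would itself be a planar completion of $(L^+,L^-)$, hence strongly equivalent to $(\tilde\cF^+,\tilde\cF^-)$ by the uniqueness in Theorem \ref{thm_completion}, so there would be no topological invariant distinguishing the two as you suggest.
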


\begin{proof}
We will show that $(L^+,L^-)$ satisfies the conditions of Theorem \ref{thm_completion}, with the added property that the linkage graphs have no cycles at all, hence proving that it admits a non-singular planar completion $(\tilde \cF^+,\tilde\cF^-)$. Since $L^+$ is dense in $\tilde \cF^+$ but not dense in $\cF^+$, the bifoliations cannot be even weakly equivalent.

The fact that $(L^+,L^-)$ embeds in the non-singular bifoliation $(\cF^+,\cF^-)$ directly implies that $(L^+,L^-)$ are transverse, have only trees as linkage graphs and any three leaves of $L^\pm$ that intersect the same $L^\mp$ leaf cannot all share the same endpoint in $S_\infty^1$.

The prelaminations $(L^+,L^-)$ have no high-valence leaves because $U$ is open, hence any leaf of $L^+$ that is the boundary of a complementary region of $L^+$ is also a boundary of a complementary region of the lamination induced by all the leaves of $\cF^+$. In particular, they are not high-valence.

So we only have to show the density and connectedness properties to finish proving that the prelaminations are FT.
The density condition of $(L^+,L^-)$ follows from the hypothesis that the endpoints of $\cF^-$-leaves are dense (recall that $L^-$ contains the endpoints of all leaves of $\cF^-$).
For connectedness, 
since any leaf of $\cF^-$ intersects $U$ by hypothesis, it in particular intersects leaves of $\cF^+$ contained in $U$ (recall that $U$ is saturated by $\cF^+$). Thus, one can cover $\mathring \bD^2$ by a countable family of open sets which are each the union of the $\cF^-$-leaves crossing a given $\cF^+$-leaf contained in $U$. The connectedness property of $(L^+,L^-)$ follows.
\end{proof}

The next example gives the family used for Proposition~\ref{p.non-uniqueness}.

\begin{example}\label{e.R-covered}
Let $\cF^+,\cF^-$ be the horizontal and vertical foliations, respectively, of $\bR^2$ restricted to the strip $B=\{(x,y)\in\bR^2 : |x-y|<1\}$.   $B$ is homeomorphic to $\R^2$, so $(\cF^+, \cF^-)$ is a bifoliated plane.

Let $\Delta_n:=\{(x,y)\in\bR^2 :  |y-4n|\leq \frac{1}{2} \}$, for $n\in \bZ$.
Consider the open set 
\[U:= B\smallsetminus \cup_{n\in \bZ} \Delta_n.\]
Let $L^+$ be the pairs of endpoints of $\cF^+$-leaves in $U$, and $L^-$ be the set of endpoints of all $\cF^-$-leaves. 

We will now build an uncountable family of pairwise non-weakly equivalent extensions of $(L^+,L^-)$, by pairs of transverse $\bR$-covered foliations (i.e., such that each foliation is conjugated to a trivial one).

Let $\cE\subset \bZ$ be any subset and let $L^+_\cE $ be the prelamination obtained from $L^+$ by adding all the $\cF^+$-leaves in $\Delta_n$ if $n\notin \cE$. The prelaminations $(L^+_\cE,L^-)$ are as in Lemma \ref{l.non-uniqueness}, so in particular they admit planar completions $(\cF^+_\cE,\cF^-_\cE)$.

Moreover, one easily sees that the boundary circles $S^1_\infty(\cF^+_\cE,\cF^-_\cE)$ are all canonically identified with the boundary circle of $S^1_\infty(\cF^+,\cF^-)$ which is just the boundary of the strip $B$ together with two points at infinity (one for the top right infinity and one for the bottom left).
Thus the family $\left\{(\cF^+_\cE,\cF^-_\cE)\right\}_{\cE\subset \bZ}$ is an uncountable collection of extensions of $(L^+,L^-)$.
\end{example}

In order to finish the proof of Proposition \ref{p.non-uniqueness}, we only have to show that different choices of subsets of $\bZ$ in the example above gives different bifoliations.  This is shown in the following claim.  

\begin{claim} \label{claim_exampleR-covered}
Let $\cI,\cJ \subset \bZ$.  Then the bifoliations $(\cF^+_\cI,\cF^-_\cI)$ and $(\cF^+_\cJ,\cF^-_\cJ)$  are strongly equivalent if and only if $\cI=\cJ$, and weakly equivalent if and only if there is $m\in \bZ$ such that $\cJ=\pm \cI+m$.

In particular, the family $\left\{(\cF^+_\cE,\cF^-_\cE)\right\}_{\cE\subset \bZ}$ contains uncountably many non-weakly equivalent extensions of $(L^+,L^-)$.
\end{claim}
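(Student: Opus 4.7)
\emph{Step 1 (identifying the induced prelaminations).} My first step is to compute $\tilde L^+_\cE$, the induced prelamination of $\cF^+_\cE$, viewed in the canonical circle at infinity $S^1_\infty$, which for every $\cE$ is naturally identified with $\partial B \cup \{p_+,p_-\}$. For each $n\in \cE$ the complementary region $\Delta_n$ of $L^+_\cE$ has linkage graph $\graph(\Delta_n;L^-)$ equal to a tree on four vertices (the top and bottom horizontal sides, together with the two ideal segments of $\partial\Delta_n\cap\partial B$) and three edges, in which (top,~bottom) is the unique cut-edge. Hence by the construction of Section~\ref{sec:proof_completion_thm} (and Lemma~\ref{lem_charac_of_leaves}), $\tilde L^+_\cE = L^+_\cE \cup \{\alpha_n : n \in \cE\}$, where $\alpha_n$ is the crossing geodesic associated to the cut-edge (top,~bottom); its endpoints lie at $y=4n+1/2$ on the arc $y=x+1$ and $y=4n-1/2$ on the arc $y=x-1$.

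\emph{Step 2 (strong equivalence).} A strong equivalence restricts to the identity on $S^1_\infty$ and therefore forces $\tilde L^+_\cI=\tilde L^+_\cJ$ as subsets of pairs of points in $S^1_\infty$. From Step~1, the $L^+_\cE$-leaves are distinguished inside $\tilde L^+_\cE$ as the pairs with equal $y$-coordinate on the two arcs (each $\alpha_n$ satisfies $y_{\mathrm{top}}-y_{\mathrm{bot}}=1$), and so $\cE$ itself is determined by $\tilde L^+_\cE$; this forces $\cI=\cJ$. The converse implication is immediate from Proposition~\ref{prop_uniqueness_completion}.

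\emph{Step 3 (weak equivalence).} For sufficiency, the translation $T_m\colon (x,y)\mapsto(x+4m,y+4m)$ and the half-turn $R\colon (x,y)\mapsto(-x,-y)$ are orientation-preserving self-homeomorphisms of $B$ that preserve the bifoliation $(\cF^+,\cF^-)$, preserve $L^+$ and $L^-$ setwise, and send $\Delta_n$ to $\Delta_{n+m}$ and $\Delta_{-n}$ respectively; hence they and their compositions implement weak equivalences whenever $\cJ=\pm\cI+m$. For necessity, I will analyze a hypothetical weak equivalence $h$ via its restriction $\phi := h|_{S^1_\infty}$. The pair $\{p_+,p_-\}$ admits a purely combinatorial characterization as the two points of $S^1_\infty$ that are not endpoints of any leaf of $L^+\cup L^-$ (every other point of $\partial B$ is the endpoint of an $L^-$-leaf), so $\phi$ preserves $\{p_+,p_-\}$; being orientation-preserving it either fixes both and preserves each arc (Case~A), or swaps both $p_\pm$ and interchanges the two arcs (Case~B). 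Parametrizing each arc by the $y$-coordinate, preservation of the $L^+$-pairs $(y,y)$ forces $\phi$ to be given by a single function $\psi\colon\bR\to\bR$ in both cases; preservation of the $L^-$-pairs $(y+1,y-1)$ then gives $\psi(y+1)-\psi(y-1)=2$ (Case~A, $\psi$ increasing) or $=-2$ (Case~B, $\psi$ decreasing). Since $\psi$ must also permute the gap intervals $\{[4n-1/2,4n+1/2]:n\in\bZ\}$, it acts on gap indices as $n\mapsto n+m$ in Case~A and $n\mapsto -n+m$ in Case~B. Applying $\psi$ to the endpoints of $\alpha_n$ identifies its image with $\alpha_{n+m}$ or $\alpha_{-n+m}$ respectively, yielding $\cJ=\pm\cI+m$.

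\emph{Step 4 (the uncountable family, and main obstacle).} Each weak equivalence class $\{\pm\cI+m:m\in\bZ\}$ is countable, whereas $|\cP(\bZ)|=2^{\aleph_0}$, so the family $\{(\cF^+_\cE,\cF^-_\cE)\}_{\cE\subset\bZ}$ contains $2^{\aleph_0}$ pairwise non-weakly-equivalent extensions of $(L^+,L^-)$. The hardest step is the necessity half of Step~3: concluding rigidity of the gap permutation from purely combinatorial data. This requires carefully separating the two orientation cases and managing the interaction of $\psi$ with the ``difference~$2$'' constraint coming from $L^-$-pairs and the ``equal $y$'' constraint coming from $L^+$-pairs across the two arcs.
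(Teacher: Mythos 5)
Your proof is correct and follows the same essential strategy as the paper's: identify the induced prelaminations explicitly, then read off rigidity of the gap pattern from what a conjugacy must do at the circle at infinity. Where the paper phrases the weak-equivalence argument in terms of the ``push-maps'' $f^-\circ f^+_\cI$ and counting fundamental domains of that composition between the ideal segments $\sigma_{n,\pm}$, you carry out the same calculation concretely by computing $\tilde L^+_\cE = L^+_\cE\cup\{\alpha_n : n\in\cE\}$, parametrizing both boundary arcs by the $y$-coordinate, and extracting the functional relation $\psi(t+2)-\psi(t)=\pm 2$ from $L^-$-preservation; these are two phrasings of one computation, and yours is the more explicit.

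One point in Step~3 should be tightened. The set $S$ of $y$-values of $L^+$-endpoints has closure $\overline S=\R\setminus\bigcup_{n\in\bZ}(4n-\tfrac12,4n+\tfrac12)$, which is \emph{not} all of $\R$. Therefore preservation of $L^+$-pairs forces $\psi_1=\psi_2$ only on $\overline S$, and not globally, as the phrase ``given by a single function $\psi$'' suggests; the two arc-restrictions of $\phi$ can in fact disagree on the interiors of the intervals $(4n-\tfrac12,4n+\tfrac12)$. This does not damage the argument, since you only ever evaluate $\psi$ at the gap endpoints $4n\pm\tfrac12\in\overline S$: monotonicity together with $\psi_1(S)=S$ already yields the gap-index translation $n\mapsto\pm n+m$, and $\psi_1(4n+\tfrac12)=\psi_2(4n+\tfrac12)$ (valid on $\overline S$) then pins down $\phi(\alpha_n)=\alpha_{\pm n+m}$. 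But the claim should be stated for $\overline S$ rather than $\R$, or reorganized to work with $\psi_1$ alone at gap endpoints.
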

\begin{proof}
Let $\alpha_{n,\pm}$ denote the geodesics corresponding to the leaves $\{y=4n\pm\frac 12\}$.
The $\alpha_{n,\pm}$, $n\in \cI$ (resp.~$n\in \cJ$) are the boundary components of complementary regions of $L^+_\cI$ (resp. of $L^+_\cJ$) and are limit of $L^+$-leaves. If $(\cF^+_\cI,\cF^-_\cI)$ and $(\cF^+_\cJ,\cF^-_\cJ)$  are strongly equivalent via a homeomorphism $h$, then $h$ maps boundary components to boundary components and is the identity on $S^1$, so both prelaminations have the same boundary components. Thus $\cI=\cJ$.

We will show that the difference $s-r$ for two successive points in $\cI$ can be characterized by a sort of ``push-map'' defined using the prelaminations. This will imply that $(\cF^+_\cI,\cF^-_\cI)$ and $(\cF^+_\cJ,\cF^-_\cJ)$  are conjugated by a homeomorphism preserving the orientations of the foliations if and only if $\cJ=\cI+m$ finishing the proof of the claim.

Recall that we can identify the circle at infinity of $(\cF^+_\cI,\cF^-_\cI)$ with the boundary of the strip $B$ union the two infinite ends.  We fix this identification. 
The laminations  $L_\cI^+,L^-$ induce two maps $f^+_\cI,f^-$ on subsets of this circle, associating to an endpoint of a leaf its other endpoint. The map $f^-$ is defined on the whole circle minus the two infinite ends, while the map $f_\cI^+$ is defined away from these two infinite ends as well as the intervals corresponding to the ideal segments $\sigma_{n,\pm}$, $n\in \cI$.
Where it is defined, the composition $f^-\circ f^+_\cI$ coincides with the map $(x,y)\mapsto (x+1,y+1)$ on $\{y-x=1\}$ and with $(x,y)\mapsto (x-1,y-1)$ on $\{x-y=1\}$, which are the boundary component of the strip.
Then, if $s,r$ are two consecutive elements of $\cI$, the difference $s-r$ is characterized by the number of fundamental domains of $f^-\circ f^+_\cI$ between the two intervals $\sigma_{r,+}$ and $\sigma_{s,+}$. 

This difference is preserved by any orientation preserving homeomorphisms conjugating $(\cF^+_\cI,\cF^-_\cI)$ and $(\cF^+_\cJ,\cF^-_\cJ)$, leading to the announced result.
\end{proof}

The next example is used for Proposition~\ref{p.still-non-uniqueness}.

\begin{example}\label{e.lattice}
Let $\mu,\nu$ be two irrational numbers, independent over $\mathbb{Q}$, and $\Lambda= \bZ (1,\mu)\oplus \bZ (1,\nu)$ be the induced lattice on $\bR^2$.

Let $P$ be the universal cover of $\bR^2\smallsetminus \Lambda$ and $(\cF^+,\cF^-)$ the bifoliation on $P$ obtained by lifting the horizontal and vertical foliations of $\bR^2\smallsetminus \Lambda$.
Call $\pi\colon P \to \bR^2\smallsetminus \Lambda$ the projection.

Let $\Delta\subset P$ be the lift of the strip $\{ |y|\leq 1\} \cap (\bR^2\smallsetminus \Lambda)$, and $U= P\smallsetminus \Delta$. Notice that the closed set  $\Delta$ does not contain any leaf of $\cF^-$ and is saturated for $\cF^+$ (so that $U$ is also saturated for $\cF^+$).

Finally, let $(L^+,L^-)$ be the prelaminations such that  $L^-$ is the set of  pairs of endpoints of the  $\cF^-$-leaves and $L^+$ is the set of  pairs of endpoints of leaves contained in $U$.
\end{example}

\begin{proof}[Proof of Proposition~\ref{p.still-non-uniqueness}]
Let $(L^+,L^-)$ be as in the example constructed above.  We will show that they
satisfy the conditions of Lemma \ref{l.non-uniqueness}, so that $(\cF^+,\cF^-)$ is an extension that is non-weakly equivalent to its completion.
We will also show that the set of endpoints of $L^+$ leaves, and the set of endpoints of $L^-$ leaves are 
each dense in  $\partial P := S^1_{\infty}(\cF^+,\cF^-)$.

First, note that a leaf of $\cF^\pm$ in $P$ whose projection by $\pi$ is contained in a horizontal or vertical line that contain a point of the lattice $\Lambda$ is non-separated from above and below with other leaves. 
Thus, the set of $\cF^\pm$-leaves which are not separated from below is dense in $P$, and so are leaves non separated from above. According to \cite[Lemma 4.19]{Bonatti_boundary} this implies that any non-empty open interval $I$ of $\Pbound$ contains both endpoint of a leaf of $\cF^-$, and of $\cF^+$.  Then \cite[Proposition 5.7]{Bonatti_boundary} implies that the endpoints of leaves of $\cF^\pm$ are dense in $\Pbound$. In particular, we deduce that  $(L^+,L^-)$ satisfy the conditions of Lemma \ref{l.non-uniqueness} and that the endpoints of $L^-$-leaves are dense in $\Pbound$.

We have left to show that the endpoints of $L^+$ are also dense. 
Suppose $I$ is an non-empty open interval in $\Pbound$. By the above, we know that there is a leaf $\beta$ of $L^-$ having both ends in $I$.  As the pair $(L^+,L^-)$ is fully transverse, $\beta$ crosses a $L^+$-leaf $\alpha$, which must thus have an endpoint in $I$, which is what we needed to show.
\end{proof}

%%%%%%%%%%%%%%%%%%%%%%%%%%%%%%%%%%%%%%%%%%%%%%%%%%%%%%%
\subsection{A characterization of unique strong extensions} \label{subsec_uniqueness}
%%%%%%%%%%%%%%%%%%%%%%%%%%%%%%%%%%%%%%%%%%%%%%%%%%%%%%%
Having built examples of non-(weakly) equivalent extensions, we now describe for which prelaminations 
their {\em unique} extension up to strong equivalence is their planar completion. 
\begin{theorem}\label{thm_uniqueness_extension}
Suppose $(L^+,L^-)$ are prelaminations admitting a planar completion. This planar completion is the unique extension of $(L^+, L^-)$ up to strong equivalence if and only if for each complementary region $C$ of $L^\pm$, the ideal boundary of $C$ (i.e., the set $\partial C \cap S^1$) has countable cardinality.
\end{theorem}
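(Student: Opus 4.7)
The plan is to prove both directions separately, using Theorem~\ref{thm:realization}'s uniqueness statement to reduce strong equivalence to equality of the induced prelaminations on $S^1$. Write $(\tilde L^+, \tilde L^-)$ for the planar completion produced in Section~\ref{sec:proof_completion_thm}.

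For the direction \emph{countable ideal boundary implies unique extension}, note that since ideal sides are nondegenerate intervals, the countability of $\partial C \cap S^1$ is equivalent to $C$ having no ideal sides. Let $(\cF^+, \cF^-)$ be any extension with induced prelaminations $L_\cF^\pm$; the goal is to show $L_\cF^\pm = \tilde L^\pm$. A leaf $\alpha \in L_\cF^+ \setminus L^+$ must lie inside some complementary region $C$ of $L^+$ (since it cannot cross leaves of $L^+$), with both endpoints in the countable set $\partial C \cap S^1$, so $\alpha$ shares endpoints with geodesic sides of $C$. The core step is a case analysis: $\alpha$ partitions $C$ into sub-regions, each of which must be an ideal polygon or one-root region by Theorem~\ref{thm:realization} applied to $L_\cF^\pm$. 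The absence of ideal sides in $C$ and in every neighbouring complementary region constrains where coupled ideal polygons can have their vertices and which sides can serve as roots; combined with the characterization of crossing geodesics (Lemma~\ref{l.crossing-geodesic}) and the ``no three shared-endpoint leaves cross a common leaf'' condition from Proposition~\ref{l.necessary}, this forces $\alpha$ to match a leaf listed in Observation~\ref{obs_types_leaves_completion}. The reverse containment $\tilde L^\pm \subseteq L_\cF^\pm$ is similar: a leaf of $\tilde L^\pm$ omitted by $L_\cF^\pm$ would leave a complementary region of $L_\cF^\pm$ with a linkage graph incompatible with the ideal-polygon/one-root classification.

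For the direction \emph{uncountable ideal boundary implies non-unique extension}, suppose some complementary region $C$ of $L^+$ has an ideal side $I \subset S^1$. The plan is to construct a second extension, distinct from the planar completion, following the strategy of Example~\ref{e.R-covered} and Lemma~\ref{l.non-uniqueness}: augment $(L^+, L^-)$ by new leaves with endpoints in $\mathring I$ to produce a pair $(L'^+, L'^-)$ still satisfying the hypotheses of Theorem~\ref{thm_completion}, and let $(\tilde\cF'^+, \tilde\cF'^-)$ be the resulting planar completion. This gives a second extension of $(L^+, L^-)$ whose induced prelaminations strictly contain the new leaves (which do not belong to $\tilde L^\pm$), so by Proposition~\ref{prop:uniqueness} the two extensions are strongly non-equivalent. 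The density of $L^-$-leaf endpoints in $\mathring I$, guaranteed by the definition of ideal segment, provides enough room to place the new leaves generically so that FT, simple cycle, no high-valence, and no three shared endpoints are all preserved.

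The main obstacle is the sufficiency direction, specifically the case analysis ruling out extraneous new leaves inside a complementary region $C$ without ideal sides: hypothetical sub-regions produced by an extra leaf might seem to admit coupled ideal polygons, but their vertices would have to lie on arcs of $S^1$ outside $\partial C$, which are absorbed by ideal sides of neighbouring complementary regions; the no-ideal-sides hypothesis globally then forbids this configuration. A secondary challenge in the necessity direction is choosing the inserted leaves' anchor points so as not to introduce high-valence leaves or multi-cycle structures in the augmented linkage graphs, which again exploits the density of $L^-$-endpoints in $\mathring I$ to find ``generic'' anchor points.
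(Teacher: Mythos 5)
Your proposal contains a significant gap in the necessity direction, and your sufficiency argument follows a considerably harder and riskier path than the paper's.

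The key error is your claimed equivalence ``countability of $\partial C \cap S^1$ is equivalent to $C$ having no ideal sides.'' This is false: an ideal side is a nondegenerate \emph{interval} in $S^1$, but a complementary region can have an uncountable ideal boundary $\partial C \cap S^1$ with empty interior, for instance a Cantor set. Countability implies no ideal sides, but not conversely. As a result, your necessity argument — which begins ``suppose some complementary region $C$ has an ideal side $I$'' — silently excludes exactly the hardest case. The paper treats this case separately in Proposition~\ref{prop_uncountable_ideal}: after reducing by Proposition~\ref{p.killing-product} to the case of no ideal sides, it invokes the Cantor--Bendixson theorem to decompose the uncountable ideal boundary as a Cantor set plus a countable set, splits the Cantor set into two Cantor pieces $\cC_0, \cC_1$, and adds a \emph{regular} prelamination $L^+_0 = \{(a,\phi(a))\}$ for a decreasing homeomorphism $\phi\colon \cC_0 \to \cC_1$. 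Regularity of the added leaves is essential for Claim~\ref{claim_L_1_completable}; anchoring the new leaves at endpoints of $L^-$-leaves in an ideal segment, as you propose, does not address this case at all.

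The sufficiency direction also diverges substantially. You propose to show $L_\cF^\pm = \tilde L^\pm$ directly by a case analysis on how a hypothetical extra leaf $\alpha$ subdivides a complementary region $C$, invoking the ideal-polygon/one-root classification and the structure of neighbouring regions. This is much more delicate than what the paper does, and the ``core step'' is not actually carried out; there is no guarantee that both endpoints of $\alpha$ are vertices of geodesic sides (the countable set $\partial C \cap S^1$ can contain isolated points that are not such vertices), and chasing the coupled-polygon constraints across neighbouring complementary regions is not as automatic as you suggest. The paper's Proposition~\ref{p.unicité} instead uses a short counting argument: there are only countably many nontrivial complementary regions, each has countable ideal boundary, and at most countably many leaves of $L_\cF^\pm$ can share a given endpoint, so $L_\cF^\pm \smallsetminus \hat L^\pm$ is countable. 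Hence $L^\pm$ is dense in $\cF^\pm$, so $(\cF^+, \cF^-)$ is a planar completion, and uniqueness of planar completions (Proposition~\ref{prop_uniqueness_completion}) finishes the argument without any case analysis. This route is both correct and far shorter; you should adopt it.
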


\begin{rem}
In fact, we will prove slightly more: the countability condition implies the uniqueness of the extension up to strong equivalence, while the failure of the countability condition implies the existence of at least two non weakly equivalent extensions.

Deciding whether one can build weakly equivalent but non-strongly equivalent completions of a given pair of prelamination would involve a study of the symmetries of bifoliated planes which is beyond the scope of this article.
\end{rem}

We start by remarking that it is enough to prove Theorem \ref{thm_uniqueness_extension} for prelaminations \emph{induced} by a pA-bifoliation.

\begin{lemma} \label{lem:uncountable_boundary_iff}
Let $(L^+,L^-)$ be prelaminations admitting a planar completion and let $(L_\cF^+,L_\cF^-)$ be the prelaminations induced from the completion.
Then $L^\pm$ has a complementary region with uncountable ideal boundary if and only if $L_\cF^\pm$ also does.
\end{lemma}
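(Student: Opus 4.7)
The plan is to use the characterization of leaves of the completion given by Lemma \ref{lem_charac_of_leaves} (equivalently Observation \ref{obs_types_leaves_completion}): every leaf of $L_\cF^\pm$ is either (a) a non-high-valence leaf of $\bar L^\pm$, or (b) a crossing geodesic associated to a cut-edge or cut-pair of $\graph(K; L^\mp)$ for some complementary region $K$ of $L^\pm$.

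For the easy direction ($\Leftarrow$), suppose $C'$ is a complementary region of $L_\cF^\pm$ with uncountable $\partial C' \cap S^1$. Since $L^\pm \subseteq L_\cF^\pm$, the region $C'$ meets no leaf of $L^\pm$ in its interior, so $C' \subseteq C$ for a unique complementary region $C$ of $L^\pm$. Both sets are closed subsets of $\bD^2$, so $\partial C' \cap S^1 \subseteq C \cap S^1 = \partial C \cap S^1$, and the latter is therefore uncountable.

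For the other direction ($\Rightarrow$), suppose $C$ is a complementary region of $L^\pm$ with $\partial C \cap S^1$ uncountable. I would first observe that the geodesic sides and the ideal segments of $C$ are each collections of pairwise interior-disjoint nondegenerate arcs on the simple closed curve $\partial C$, hence both are countable; so $\graph(C; L^\mp)$ has at most countably many vertices and edges, and therefore at most countably many crossing geodesics. By Lemma \ref{lem:closure_same_graph}, the complementary regions of $L^\pm$ and $\bar L^\pm$ coincide, so leaves of type (a) above are boundary components of complementary regions of $L^\pm$ rather than sitting strictly inside; consequently the countably many crossing geodesics are the only leaves of $L_\cF^\pm$ strictly inside $C$. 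They split $C$ into at most countably many complementary regions $C'_j$ of $L_\cF^\pm$.

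It remains to check that the ideal boundaries $\partial C'_j \cap S^1$ cover $\partial C \cap S^1$ outside a countable set: any $p \in \partial C \cap S^1$ that is not the endpoint of a crossing geodesic lies in $C$ and off every crossing geodesic, hence in some $C'_j$, and being on $S^1$, in $\partial C'_j \cap S^1$; only countably many $p$ are excluded since each crossing geodesic contributes two endpoints. Since removing countably many points from an uncountable set leaves an uncountable set, the pigeonhole principle then produces some $C'_j$ with uncountable ideal boundary. The only technical point I expect to confirm carefully is that type (a) leaves do not lie strictly inside complementary regions of $L^\pm$, which is a direct consequence of Lemma \ref{lem:closure_same_graph}.
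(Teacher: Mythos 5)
Your proof follows essentially the same route as the paper's: both directions rest on Lemma \ref{lem_charac_of_leaves} / Observation \ref{obs_types_leaves_completion}, which show that $L_\cF^\pm \smallsetminus \bar L^\pm$ consists of countably many crossing geodesics, so each nontrivial complementary region of $L^\pm$ splits into complementary regions of $L_\cF^\pm$, and the conclusion follows by pigeonhole. The paper's version is more terse (it states the decomposition $K = \bigcup_i C_i$ and concludes in one line), while you spell out the easy inclusion $C' \subseteq C$ and the pigeonhole step more explicitly.

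One small point to tighten: the step ``$p$ lies in $C$ and off every crossing geodesic, hence in some $C'_j$'' is not quite immediate. Countably many pairwise non-crossing chords in a convex region can accumulate on a limit geodesic $\gamma$ (for instance on a side of $C$), and then a point $p \in \gamma \cap S^1$ is off every crossing geodesic yet need not lie in any $C'_j$ contained in $C$. This does not endanger the conclusion --- since $L_\cF^\pm$ is induced by a pA-bifoliation it has no trivial complementary regions, so any such limit geodesic is a leaf or bounds a complementary region of $L_\cF^\pm$ on its far side, and the resulting exceptional points form a countable set --- but your sentence asserts containment rather than ``up to countably many more exceptions,'' and the justification would need the extra observation. (The published proof is similarly brief on this point, so this is a refinement rather than a disagreement with the paper's strategy.)

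Similarly, the claim that the crossing geodesics ``split $C$ into at most countably many complementary regions $C'_j$'' deserves a word of justification beyond the countability of the crossing geodesics themselves: it holds because $L_\cF^\pm$, being induced by a pA-bifoliation, has no trivial complementary regions, so each $C'_j$ has nonempty interior and there can only be countably many. You gesture at this via Lemma \ref{lem:closure_same_graph}, but the nonempty-interior point is the one doing the work.
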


\begin{proof}
By Lemma \ref{lem_charac_of_leaves}, leaves of $L_\cF^\pm$ are either leaves of the closure $\bar{L}^\pm$ (which does not affect the nontrivial complementary regions) or are crossing geodesics in a nontrivial complementary region $K$ of $L^\pm$.  Since there are only countably many crossing leaves, every nontrivial complementary region $K$ of $L^\pm$ is a countable union of complementary regions $C_i$ of $L_\cF^\pm$. In particular, $K$ has an uncountable ideal boundary if and only if at least one $C_i$ has uncountable ideal boundary.
\end{proof}

We start with the proof of the sufficient condition:
\begin{proposition}\label{p.unicité} 
Suppose $(L^+,L^-)$ are prelaminations admitting a planar completion.
Assume that the ideal boundary of each complementary region of $L^\pm$  is countable.  Then $(L^+,L^-)$ admits a unique, up to strong equivalence, extension which is its planar completion.
\end{proposition}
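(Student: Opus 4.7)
The plan is to reduce the claim to showing the containment $L_\cF^\pm\subseteq\tilde L^\pm$ for any extension $(\cF^+,\cF^-)$, where $L_\cF^\pm$ denotes its induced prelaminations and $\tilde L^\pm$ those of the planar completion. Once this containment is established, Lemma \ref{lem:containment_is_equality} promotes it to equality (using that $L^\pm\subseteq L_\cF^\pm$ is dense in $\tilde\cF^\pm$ by the very definition of planar completion); Proposition \ref{prop:uniqueness} then gives the strong equivalence of $(\cF^+,\cF^-)$ with $(\tilde\cF^+,\tilde\cF^-)$.

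The heart of the argument is a ``three shared-endpoint'' obstruction. By Proposition \ref{l.necessary}, the induced prelaminations $L_\cF^\pm$ of any pA-bifoliation satisfy: no three $L_\cF^\pm$-leaves sharing a common endpoint all cross a single $L_\cF^\mp$-leaf. Under the countability hypothesis, this condition rigidly prevents any extra leaves beyond $\tilde L^\pm$, because any such extra leaf would be a diagonal of a finite-vertex complementary region, and such a diagonal always creates a forbidden three-leaf configuration at an interior vertex.

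Concretely, suppose for contradiction that some $\alpha\in L_\cF^+\setminus\tilde L^+$ exists. First I would establish the preliminary inclusion $\tilde L^\pm\subseteq L_\cF^\pm$ by showing that each leaf added in forming the completion (either a non-high-valence limit in $\bar L^\pm$ or a crossing geodesic, as classified in Observation \ref{obs_types_leaves_completion}) must already be a leaf of $L_\cF^\pm$, since its characterizing crossing pattern with $L^\mp\subseteq L_\cF^\mp$ persists, and the structural requirements of Theorem \ref{thm:realization} applied to $L_\cF^\pm$ leave no alternative decomposition of complementary regions. Granted this, $\alpha$ does not cross any leaf of $\tilde L^+$, so $\alpha$ lies in the closure of some complementary region $C$ of $\tilde L^+$. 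By Lemma \ref{lem:uncountable_boundary_iff}, $\partial C\cap S^1$ is countable, so $C$ has no non-degenerate ideal sides, and by Theorem \ref{thm:realization} $C$ is either a finite ideal polygon or a one-root region with only finitely many geodesic sides. Hence $\alpha$ is a diagonal connecting two of the finitely many vertices of $C$, subdividing $C$ into two sub-regions.

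Now pick an endpoint $P$ of $\alpha$ that is an \emph{interior} vertex of $C$, i.e.\ the meeting point of two geodesic leaf-sides of $C$; such a vertex exists because in a one-root region the only non-interior vertices are the two endpoints of the root, whose connecting chord is the root itself and hence cannot equal the leaf $\alpha$, while in an ideal polygon every vertex is interior. Then $\alpha$ together with the two sides of $C$ at $P$ yields three distinct leaves of $L_\cF^+$ sharing the endpoint $P$. The coupled ideal-polygon or one-root structure in $L_\cF^-$ near $P$, again forced by Theorem \ref{thm:realization}, produces a leaf $\beta\in L_\cF^-$ whose endpoints lie in the two arcs of $S^1$ immediately adjacent to $P$; this $\beta$ simultaneously crosses all three $L_\cF^+$-leaves at $P$, contradicting Proposition \ref{l.necessary}. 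The main obstacle will be rigorously establishing the preliminary inclusion $\tilde L^\pm\subseteq L_\cF^\pm$ (which must rule out non-Hausdorff ``gap'' phenomena where the limit of $L^\pm$-leaves in the extension fails to be a single leaf) and verifying the existence of the enclosing leaf $\beta$ across the ideal-polygon and one-root subcases, both of which rely on the structural rigidity encoded by Theorem \ref{thm:realization} together with the countability of $\partial C\cap S^1$.
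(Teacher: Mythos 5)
Your approach diverges substantially from the paper's, and it has genuine gaps that would need to be filled.

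The paper's proof is a short cardinality argument: given any extension $(\cF^+,\cF^-)$, one shows that the set $L_\cF^\pm\smallsetminus L^\pm$ is \emph{countable}. Leaves of $L_\cF^\pm$ not in $L^\pm$ either realize trivial complementary regions (adding countably many) or live inside a nontrivial complementary region $C$ of $L^\pm$, hence have both endpoints in $\partial C\cap S^1$. Since there are countably many such $C$, each with countable ideal boundary, only countably many such leaves can exist. Thus $L^\pm$ is dense in the plane of the extension, so the extension \emph{is} a planar completion, and uniqueness is immediate from Theorem~\ref{thm_completion}. No structural analysis of linkage graphs or crossing patterns is needed.

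Your plan, by contrast, tries to establish $L_\cF^\pm=\tilde L^\pm$ directly by a two-step containment and a ``three shared endpoints'' obstruction. There are several problems. First, the preliminary inclusion $\tilde L^\pm\subseteq L_\cF^\pm$ is a nontrivial claim that you acknowledge but do not justify: the characterization of crossing geodesics in Lemma~\ref{l.crossing-geodesic} singles out a unique geodesic in a complementary region of $L^\pm$, but it does not directly force a leaf of the \emph{extension} to sit exactly there, since the extension can subdivide that complementary region in many ways a priori (this is precisely the non-uniqueness phenomenon of Section~\ref{sec:uniqueness}). Lemma~\ref{lem_charac_of_leaves} gives what you want, but only for planar \emph{completions}, which is what is being proved here. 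Second, the assertion that the countability of $\partial C\cap S^1$ forces $C$ to be a one-root region ``with only finitely many geodesic sides'' is false: a one-root region can have countably infinitely many geodesic sides and still have countable ideal boundary (the endpoints and accumulation points of the sides form a countable set). In that case the endpoints of $\alpha$ need not be ``interior vertices'' where two geodesic sides meet, so the construction of the three leaves at a common endpoint $P$ breaks down. Third, the existence of the enclosing leaf $\beta\in L_\cF^-$ crossing all three $L_\cF^+$-leaves at $P$ is asserted via ``the coupled ideal-polygon or one-root structure'' without an argument. The cardinality argument sidesteps all three issues at once, which is why the paper takes that route.
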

\begin{proof}
Let $(\cF^+, \cF^-)$ be an extension of $(L^+,L^-)$ and let $L_\cF^\pm$ be the prelaminations induced by $\cF^\pm$.
We will show $L^+$ and $L^-$ are is dense in the plane for $(\cF^+, \cF^-)$, hence $(\cF^+, \cF^-)$ is the (unique) planar completion of $(L^+,L^-)$ by Theorem \ref{thm_completion}.  

First, notice that each trivial complementary region of $L^\pm$ must be a leaf of $L_\cF^\pm\smallsetminus L^\pm$. Call $\hat L^\pm$ the prelamination $L^\pm$ together with all trivial complementary regions added. So $\hat L^\pm \subset L_\cF^\pm$.
Now, all the leaves of $L_\cF^\pm\smallsetminus \hat L^\pm$ are necessarily contained in nontrivial complementary regions of $L^\pm$. Since there are only countably many nontrivial complementary regions, that these regions have countable ideal boundaries, and that only countably many leaves of $L_\cF^\pm$ can share the same endpoint (because it is induced by a bifoliation), we deduce that there are only countably many leaves in $L_\cF^\pm\smallsetminus \hat L^\pm$.

Therefore $\hat L^\pm$, and thus $L^\pm$, are dense in the foliation $\cF^\pm$. So $(\cF^+, \cF^-)$ is the unique planar completion of $(L^+,L^-)$.
\end{proof}

To prove the other direction of Theorem \ref{thm_uniqueness_extension}, we need to show that if one complementary region has an uncountable ideal boundary, then we can build non-equivalent extensions.  
We start with a lemma for complementary regions that contain ideal segments or not.  

\begin{proposition}\label{p.killing-product} 
Suppose $(L^+,L^-)$ are prelaminations admitting a planar completion.  Assume that there is a complementary region $C$ of $L^\pm$ such that $\partial C$ contains a nontrivial interval of $S^1$.
Then $(L^+,L^-)$ admits an extension that is not weakly equivalent to the planar completion.
\end{proposition}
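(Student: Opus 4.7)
The plan is to produce a new pair of prelaminations $(\hat L^+, L^-)$ by adjoining to $L^+$ a continuous nested family of chords with endpoints inside an ideal segment of $C$, and to show that its planar completion yields an extension of $(L^+, L^-)$ in which $L^+$ fails to be dense, hence cannot be weakly equivalent to the planar completion $(\tilde\cF^+,\tilde\cF^-)$ of $(L^+,L^-)$. Without loss of generality assume $C$ is a complementary region of $L^+$. By Lemma~\ref{lem:union_of_segments}, the nontrivial interval in $\partial C \cap S^1$ contains an ideal segment $s$; by Remark~\ref{rem_ideal-segments}, every $L^-$-leaf with endpoint in $\mathring s$ crosses the unique geodesic side $\alpha$ of $C$ associated to $s$, and no $L^-$-leaf has both endpoints in $\mathring s$. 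Fix a homeomorphism $\phi\colon \mathring s\to(0,1)$ and for $t\in(0,1/2)$ set $\beta_t:=\{\phi^{-1}(t),\phi^{-1}(1-t)\}$. The chords $\beta_t$ are pairwise non-crossing (they are nested), their endpoints converge to $\partial s=\{p_1,p_2\}$ as $t\to 0$, and their union is an open foliated region of $\bD^2$ bounded by the limit chord $\gamma:=\{p_1,p_2\}$ and the arc $s$. Set $\hat L^+:=L^+\cup\{\beta_t:t\in(0,1/2)\}$.

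The heart of the proof is verifying that $(\hat L^+, L^-)$ satisfies the hypotheses of Theorem~\ref{thm_completion}. Transversality is immediate since $L^-$ has no leaf with both endpoints in $\mathring s$; density and connectedness are inherited, as each $\beta_t$ crosses the many $L^-$-leaves with endpoint between those of $\beta_t$. For the linkage-graph conditions, the essential observation is that because the foliated region $\bigcup_t \beta_t$ admits no two-dimensional complementary region, no $\beta_t$ is a side of any complementary region of $\hat L^+$, and hence the $\beta_t$ themselves contribute no new high-valence leaves nor new cycles. The sole new complementary region adjacent to the family has $\gamma \notin \hat L^+$ as its "replacing" geodesic side in place of the old ideal vertex $s$; consequently $\alpha$'s incidence in the new linkage graph mirrors its original one (with $s$ swapped for $\gamma$), and so $\alpha$ remains non-high-valence. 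Condition~(iv) for $(\hat L^+, L^-)$ reduces to checking that no three $L^-$-leaves sharing an endpoint cross a common $\beta_t$: the original~(iv) combined with the fact that every $L^-$-leaf ending in $\mathring s$ crosses $\alpha$ already forces (when $\alpha\in L^+$) at most two such leaves through any point of $\mathring s$, so the new~(iv) is automatic in that situation.

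Applying Theorem~\ref{thm_completion} produces a planar completion $(\hat\cF^+,\hat\cF^-)$ of $(\hat L^+,L^-)$, which via $(L^+,L^-)\subset(\hat L^+,L^-)$ is a planar extension of $(L^+,L^-)$. It is not weakly equivalent to $(\tilde\cF^+,\tilde\cF^-)$: in the latter, $L^+$ is dense in $\tilde\cF^+$ by definition of planar completion, while in the former the leaves $\beta_t\in\hat\cF^+$ have endpoints in $\mathring s$, disjoint from the endpoints of $L^+$, and so are not accumulated by leaves of $L^+$, meaning $L^+$ is not dense in $\hat\cF^+$. A weak equivalence would send $L^+$ to $L^+$ on $S^1$ and carry $\tilde\cF^+$ to $\hat\cF^+$, hence would preserve the closure of $L^+$ in the respective bifoliations, contradicting the density disparity. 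The main obstacle is the linkage-graph verification and condition~(iv) in the remaining case $\alpha\notin L^+$, where $\mathring s$ may contain points through which three or more $L^-$-leaves pass; there I would instead invoke Lemma~\ref{l.non-uniqueness} applied to $(\tilde\cF^+,\tilde\cF^-)$ with $U=\tilde P\setminus\ell_\alpha$, where $\ell_\alpha$ is the leaf of $\tilde\cF^+$ corresponding to $\alpha$ (which lies in $\tilde L^+\setminus L^+$ in this case), yielding the required non-weakly-equivalent extension directly.
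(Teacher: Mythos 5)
Your core construction --- adjoining a continuous nested family of chords with endpoints inside an ideal segment and invoking Theorem~\ref{thm_completion} --- is essentially the paper's strategy. The paper's variant fixes closed segments $U\subset\mathring V\subset V\subset\mathring I$ (with $I$ the ideal segment), a decreasing homeomorphism between the two components of $V\smallsetminus U$, and adds the resulting chords, so the whole family and its limit chord sit strictly inside $\mathring I$ and never touch $\partial I$. Your version lets the chords accumulate on the geodesic $\gamma=\{p_1,p_2\}$ spanning the entire segment $s$, which can also be made to work but invites exactly the extra worry at the endpoints of $s$ that you flag.

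That worry, and the fallback it triggers, is where the write-up is incomplete, though easily repaired. The split on whether $\alpha\in L^+$ is not needed to verify condition~(iv). Suppose three leaves $\mu_1,\mu_2,\mu_3\in L^-$ share an endpoint and all cross some $\beta_t$. Each $\mu_i$ then has an endpoint in $\mathring s$, and hence crosses the geodesic side $\alpha$ transversally at an interior point of the disc. If $\alpha\in L^+$ this already contradicts condition~(iv) for $(L^+,L^-)$. If $\alpha\notin L^+$, then $\alpha\in\overline{L^+}$ is accumulated by $L^+$-leaves on the side away from $C$, and since crossing is an open condition, any $L^+$-leaf sufficiently close to $\alpha$ crosses all three $\mu_i$ --- again contradicting~(iv). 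You should close the argument this way and delete the fallback, which has its own gaps: Lemma~\ref{l.non-uniqueness} requires the endpoints of $\cF^-$-leaves to be dense in $S^1_\infty$, a hypothesis not granted here and which can fail; and it produces a completion of a strictly larger pair of prelaminations than $(L^+,L^-)$, so the desired non-weak-equivalence as extensions of $(L^+,L^-)$ would still need an independent argument that $L^+$ alone fails to be dense in the new bifoliation.
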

\begin{proof}
By assumption, the linkage graph $\graph(C)$ contains at least one ideal vertex $I$ corresponding to an ideal segment. Let $e$ be the edge containing $I$ and call $\gamma$ its other vertex. So $\gamma$ is a geodesic side of $C$ (Lemma \ref{l.no-ideal-connection}).

Consider two closed segments $U,V$ such that $U\subset \mathring V\subset V\subset \mathring I$, and let $V_0,V_1$ be the two connected components of $V\smallsetminus U$.

Pick $\phi\colon V_0\to V_1$ a decreasing homeomorphism.  We define
$$L^+_1:=L^+\cup\{(a,\phi(a)) : a\in V_0\}.$$

One easily sees that $(L^+_1,L^-)$ is a pair of FT prelamination that still satisfy all the conditions of Theorem \ref{thm_completion}.

Hence $(L^+_1,L^-)$ admits a planar completion $(\cF_1^+,\cF^-_1)$, which is an extension of $(L^+,L^-)$. Since $L^+$ is by construction not dense in $\cF_1^+$, we deduce that $(L^+,L^-)$ admit at least two non-weakly equivalent extensions.
\end{proof}

\begin{rem}
One can easily modify the construction in the proof above to get an uncountable family of non equivalent extensions. For example, it suffices to consider infinite families of pairs of intervals and homemorphisms $\phi_n\colon V_{0,n}\to V_{1,n}$ with compatible relative positions. It is obvious that the extensions obtained that way are not strongly equivalent, the fact that uncountably many such extensions are not they are not even weakly equivalent is more technical.
\end{rem}

The next proposition finishes the proof of Theorem \ref{thm_uniqueness_extension}, using Lemma \ref{lem:uncountable_boundary_iff}.

\begin{proposition}\label{prop_uncountable_ideal}
Let $(L^+,L^-)$ be prelaminations admitting a planar completion 
Suppose that a complementary region $C$ of $L^\pm$ has an uncountable ideal boundary. Then there are at least two non weakly equivalent extensions of $(L^+,L^-)$.
\end{proposition}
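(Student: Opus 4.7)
By Proposition~\ref{p.killing-product}, we may reduce to the case where $\partial C \cap S^1$ contains no nontrivial interval. Since $C$ has only countably many geodesic sides (they are pairwise disjoint chords in the disc, hence countable), $\partial C \cap S^1$ is the complement in $S^1$ of countably many disjoint open arcs $J_i$. Combined with the uncountability hypothesis, this forces the set of non-isolated points of $\partial C \cap S^1$ to be a nonempty, perfect, nowhere-dense subset of $S^1$, i.e., a Cantor set $K \subset \partial C \cap S^1$.

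The plan is to mimic Proposition~\ref{p.killing-product} using $K$ in place of an ideal interval. Pick a point $p \in K$ accumulated by $K$ on both cyclic sides, and choose disjoint Cantor sets $A, B \subset K$ lying in small arcs flanking $p$ on each side. By a cardinality argument, $A \cup B$ can be arranged to avoid the countable collections of endpoints of $L^-$-leaves and of geodesic sides of $C$. Moreover, by a Baire-type construction of $\phi$, one may further arrange that for every $a \in A$ the pair $\{a, \phi(a)\}$ is not in $\bar L^+$, using that $\{b : (a,b) \in \bar L^+\}$ is at most countable for each $a$ (which follows from Proposition~\ref{o.same} applied to the planar completion $(\cF^+,\cF^-)$ of $(L^+,L^-)$). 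Fix such an order-reversing homeomorphism $\phi \colon A \to B$ and set $c_a := \{a, \phi(a)\}$, $L^+_1 := L^+ \cup \{c_a : a \in A\}$. The chords are pairwise nested (hence disjoint), lie inside the convex region $C$, and share no endpoint with any $L^\pm$-leaf.

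The main work is to verify the hypotheses of Theorem~\ref{thm_completion} for $(L^+_1, L^-)$. Transversality, density, and the shared-endpoint condition are immediate from the construction. FT connectedness holds because each $c_a$ splits the vertex set of the connected graph $\graph(C; L^-)$ (Lemma~\ref{lem:linkage_connected}) into two nonempty subsets---arcs $J_i$ accumulate on $p$ from both sides, giving vertices on both sides of $c_a$---so the connectedness of $\graph(C; L^-)$ forces some edge to cross $c_a$, yielding an $L^-$-leaf crossing $c_a$. The principal technical obstacle is the simple cycle and no-high-valence conditions in the linkage graphs of the new sub-regions of $C$: the nested family $\{c_a\}$ partitions $C$ into sub-regions whose linkage graphs are naturally subgraphs of $\graph(C; L^-)$ with chord vertices attached. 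To avoid degenerate graphs and high-valence configurations, the Cantor set $A$ must be chosen sparsely enough that each gap of $A$ contains sufficiently many $J_i$'s entirely, so that each new sub-region inherits enough original geodesic-side vertices and each chord remains of valence one in its adjacent graphs.

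Granted the verification, Theorem~\ref{thm_completion} furnishes a planar completion $(\cF^+_1, \cF^-_1)$ of $(L^+_1, L^-)$, which is in particular a planar extension of $(L^+, L^-)$. To establish non-equivalence to the planar completion $(\cF^+, \cF^-)$ of $(L^+,L^-)$, the key invariant is the density of $L^+$ in the bifoliation, which is preserved under weak equivalence. In $(\cF^+, \cF^-)$, $L^+$ is dense by definition of a planar completion. In $(\cF^+_1, \cF^-_1)$, however, each leaf realizing a chord $c_a$ is not accumulated by $L^+$-leaves, since by construction $(a, \phi(a)) \notin \bar L^+$; hence $L^+$ is not dense in $\cF^+_1$. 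Therefore $(\cF^+_1, \cF^-_1)$ and $(\cF^+, \cF^-)$ are not weakly equivalent, completing the proof.
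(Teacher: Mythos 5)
Your proposal follows the same broad strategy as the paper — extract a Cantor set from the ideal boundary and add a nested family of chords indexed by it, then appeal to Theorem~\ref{thm_completion} — but it is missing the key structural reduction that makes the verification of Theorem~\ref{thm_completion}'s hypotheses tractable, and the step you label as the ``principal technical obstacle'' is exactly the step where the paper's argument does real work that your sparseness heuristic does not replace.

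The paper begins by invoking Lemma~\ref{lem:uncountable_boundary_iff} and Corollary~\ref{cor:completion_stabilizes} to reduce, without loss of generality, to the case where $(L^+,L^-)$ is itself \emph{induced} by a pA-bifoliation. After also ruling out ideal segments via Proposition~\ref{p.killing-product}, Theorem~\ref{thm:realization} then forces the uncountable-boundary region $C$ to be a \emph{one-root region}. This one-root property is the engine of the whole verification: every $L^-$-leaf meeting $C$ passes through the root $\gamma$, so (a) the linkage graphs $\graph(K^-; L^+_1)$ of $L^-$-complementary regions are literally unchanged, since any new chord connecting two sides of $K^-$ is shadowed by $L^+$-leaves accumulating on $\gamma$; and (b) each new sub-region $K^+\subset C$ inherits a unique ``lowest'' geodesic side through which every $L^-$-leaf meeting $K^+$ must pass, making $\graph(K^+;L^-)$ a star. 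Your proposal never establishes any structure on $\graph(C;L^-)$ beyond connectedness. Without one-rootedness, an $L^-$-leaf through a new chord $c_a$ need not be funneled through a common side, and your assertion that ``each chord remains of valence one in its adjacent graphs'' has no justification; the claim that the new linkage graphs ``are naturally subgraphs of $\graph(C;L^-)$ with chord vertices attached'' is also imprecise (they are obtained by a contraction-type operation, not a subgraph inclusion). Choosing $A$ ``sparsely enough that each gap contains sufficiently many $J_i$'s'' controls neither cycles nor the valence of the chord vertices — it addresses the wrong quantity. You also do not check the $L^-$-side linkage graphs at all.

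Two smaller points. First, your requirement that $\{a,\phi(a)\}\notin\bar L^+$ is automatic: these chords lie in the interior of the (closed, convex) complementary region $C$, and any limit of $L^+$-leaves lies in $\partial C$ or outside $C$, so the condition costs you nothing but also buys you nothing. What the paper actually needs instead is that $L^+_0$ be \emph{regular}, which is why it restricts the index set to the subset $R_0$ of points accumulated on both sides; this regularity is what guarantees the new sides of the $K^+$'s are not leaves of $L^+_1$ and therefore cannot become high-valence. Second, the reduction via Proposition~\ref{p.killing-product} should be pushed to the case where \emph{no} complementary region of either $L^+$ or $L^-$ has an ideal segment (equivalently, endpoints of both $L^+$ and $L^-$ are dense), which the paper uses to know that $L^-$-complementary regions meeting $C$ have only geodesic sides; reducing only for the one region $C$, as you do, is not enough.
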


\begin{proof}
By Lemma \ref{lem:uncountable_boundary_iff}, if $(L^+,L^-)$ have such a complementary region, then 
the prelaminations induced by their planar completion also has this property.  Thus, using Corollary \ref{cor:completion_stabilizes} we can assume without loss of generality that $(L^+,L^-)$ are induced by a pA-bifoliation.

If one ideal boundary of a complementary region of $L^\pm$ has non empty interior, then the result follows from  Proposition~\ref{p.killing-product}. So from now one, we restrict to the case where no complementary regions have any ideal sides (equivalently, the endpoints of leaves of $L^+$ and $L^-$ are both dense).

%%%
To fix notation, we now suppose that there exists a complementary region $C$ of $L^+$ with uncountable ideal boundary. The case for $L^-$ is obviously symmetric.
Since $(L^+,L^-)$ are induced by a pA-bifoliation, and $C$ has infinitely many sides, it is not an ideal polygon so is one-root region (by the necessity part of Theorem \ref{thm:realization}).

Since the ideal boundary of $C$ is an uncountable compact set in $S^1$ with empty interior, the Cantor--Bendixson Theorem says that this ideal boundary is the union of a Cantor set $\cC$ and a countable set.
Let $U_0,U_1$ be disjoint open subsets of $\cC$ such that $\cC=\cC_0\cup\cC_1$ where $\cC_i := \cC\cap U_i$ are both homeomorphic to cantor sets.

Choose $\phi\colon\cC_0\to \cC_1$ a decreasing homeomorphism.
Let $R_0\subset\cC_0$ and $R_1\subset\cC_1$ be the subsets consisting of the points which are accumulated on both sides. 

Define
$$L^+_0=\{ (a,\phi(a) : a\in R_0\}.$$
Then $L^+_0$ is a \emph{regular} prelamination of $S^1$ whose leaves are contained in $C$ and thus do not cross any $L^+$-leaf.  Therefore, $L^+_1=L^+\cup L^+_0$ is a prelamination.
Moreover, by the connectedness property of $(L^+,L^-)$, every leaf of $L^+_0$ crosses a $L^-$-leaf. Thus $(L^+_1,L^-)$ are fully transverse.

We will show the following
\begin{claim}\label{claim_L_1_completable}
The pair $(L^+_1,L^-)$ satisfies the hypotheses of Theorem~\ref{thm_completion}.\footnote{It is essential in the proof of this claim that the leaves $L_0^+$ that we added are regular, otherwise this claim would fail.}
\end{claim}

With the claim in hand, one can conclude the proof of the proposition as previously: The planar completion of $(L^+_1,L^-)$ is an extension of $(L^+,L^-)$ and cannot be weakly equivalent to the planar completion of $(L^+,L^-)$.\qedhere
\end{proof}

\begin{proof}[Proof of Claim \ref{claim_L_1_completable}]
We have already proved that $(L^+_1,L^-)$ are fully transverse. As no two leaves of $L_0^+$ share the same endpoint (and $(L^+,L^-)$ are induced by a pA-bifoliation), condition \ref{item_same_endpoint_condition} of Theorem \ref{thm_completion} is also satisfied.

So we have left to show the simple cycle and no high valence leaves conditions. To do this we describe how the linkage graphs have changed.

Recall that the leaves in $L^+_1\smallsetminus L^+$ are contained in a fixed complementary region $C$ of $L^+$. So if $K^-$ is a complementary region of $L^-$ that does not intersect $C$, we trivially have that $\graph(K^-; L^+_1)=\graph(K^-;L^+)$. Similarly, if $K^+$ is a complementary region of $L_1^+$ that is not contained in $C$, then $K^+$ is also a complementary region of $L^+$ and the linkage graphs are the same.

Now suppose $K^-$ is a complementary region of $L^-$ that does intersect $C$.
By assumption, $K^-$ has only geodesic sides.  Since $C$ is a one-root region, every $L^-$ leaf that intersects $C$ must intersect the root, call it $\gamma$, of $C$. As the root is always accumulated upon by leaves of $L^+$, we deduce that if a leaf of $L^+_0$ crosses two sides of $K^-$, these two (geodesic) sides also cross a same $L^+$-leaf.
So in this case too, $\graph(K^-; L^+_1)=\graph(K^-;L^+)$.
We deduce that $L^-$ has no high-valence leaves and the simple cycle condition for the pair $(L_1^+,L^-)$.

We treat the last case now.  Suppose $K^+$ is a complementary region of $L_1^+$ contained in $C$.
Since the prelamination $L_0^+$ is regular, the sides of $K^+$ are either sides of $C$ or geodesics accumulated upon by, but not equal to, leaves of $L_0^+$.
Moreover, since the sets $R_0$, $R_1$ are contained in disjoint open intervals  and $\phi$ is decreasing, the set of geodesics $(a,\phi(a))$ is totally ordered in the following way: $(a,\phi(a))<(b,\phi(b))$ if $(a,\phi(a)$ separates $(b,\phi(b))$ from $\gamma$, the root of $C$.
Therefore, $K^+$ has a geodesic side which is its ``lowest side'' (which is either $\gamma$ or the limit of the increasing sequence of leaves in $L_0^+$ which separate $K^+$ from $\gamma$). Since $C$ is a one-root region of $(L^+,L^-)$, any leaf of $L^-$ that intersects $K^+$ intersects that lowest sides. In particular, $\graph(K^+;L^-)$ is a star, and $L_1^+$ has no high-valence leaves. This ends the proof of the claim.\qedhere
\end{proof}

%%%%%%%%%%%%%%%%%%%%%%%%%%%%%%%%%%%%%
\appendix

\section{Naturality of the geodesic realization}\label{sec.naturality}

Here we prove the naturality of geodesic realizations as follows:
\begin{theorem}\label{t.naturality}
Let $(L^+, L^-)$ be a pair of fully transverse prelaminations of $S^1$.   Given any $f\in \mathrm{Homeo}(S^1)$, there exists a homeomorphism $F$ of $\bD^2$ sending geodesics of $\geo(L^\pm)$ to geodesics of $\geo(f(L^\pm))$ and restricting to $f$ on $S^1$.  
\end{theorem}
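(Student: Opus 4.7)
The plan is to build $F$ explicitly from the combinatorial data encoded by $\Sigma = \geo(L^+)\cup \geo(L^-)$. I would first reduce to the case where $L^\pm$ are closed laminations: by Lemma~\ref{lem:extend_lamination}, the closures $\bar L^\pm$ form an FT pair of laminations containing $L^\pm$, and since $f$ is a homeomorphism of $S^1$ it commutes with closure in $S^1\times S^1$, so $f(\bar L^\pm)=\overline{f(L^\pm)}$. A homeomorphism $F$ realizing naturality for $(\bar L^+,\bar L^-)$ automatically sends $\geo(L^\pm)\subset\geo(\bar L^\pm)$ to $\geo(f(L^\pm))\subset \geo(\overline{f(L^\pm)})$. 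Hence I may assume $L^\pm$ are laminations, and $\Sigma$ is a closed subset of $\bar\bD^2$.

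The complement $\mathring\bD^2\setminus\Sigma$ then decomposes into a countable family of open convex \emph{cells} (each being an intersection of open half-disks), and each cell $C$ has boundary in $\bar\bD^2$ a Jordan curve consisting of a cyclically ordered union of chord segments and arcs of $S^1$. Since $f$ preserves cyclic order on $S^1$ and sends each chord $[a,b]$ to $[f(a),f(b)]$, it induces a canonical bijection $C\mapsto\hat C$ between cells of $(L^+,L^-)$ and cells of $(f(L^+),f(L^-))$ matching their boundary combinatorics. I would then define $F$ in three stages: (i) set $F|_{S^1}=f$; (ii) on each chord $\alpha=[a,b]\in\geo(L^\pm)$, observe that the intersection points of $\alpha$ with chords of $\geo(L^\mp)$ are linearly ordered along $\alpha$ in a manner determined purely by the cyclic order of endpoints on $S^1$ (crucially, because distinct chords of $L^\mp$ do not cross each other, they cannot swap their order of intersection with $\alpha$), so I can map each intersection $\alpha\cap\beta$ to the corresponding intersection in $[f(a),f(b)]$ and interpolate affinely on each complementary subinterval; (iii) on each cell $C$, the map $F|_{\partial C}\colon\partial C\to\partial\hat C$ is a homeomorphism of Jordan curves bounding the topological disks $\bar C$ and $\bar{\hat C}$, and extends to a homeomorphism $\bar C\to\bar{\hat C}$ by the Schönflies theorem.

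The hard part will be verifying global continuity of the assembled $F$ at points of $\Sigma$ that are accumulation points of infinitely many cells. Agreement across single chord segments and chord intersections is built into the construction, since each such point lies on the boundaries of only finitely many cells and all cell extensions match the chord definition there. However, for $p\in\Sigma$ with sequences of points $x_n\in C_n$ converging to $p$, continuity requires that the images $\phi_{C_n}(x_n)$ converge to $F(p)$. I would resolve this by choosing the cell extensions $\phi_C$ with sufficient uniformity -- for instance via a canonical radial construction from a distinguished interior point of each cell, or through an inductive approximation scheme: exhaust $\Sigma$ by finite sublaminations $\Sigma_n$ on which the analogous construction is elementary, and show that the affine prescription on chords makes the associated sequence $\{F_n\}$ uniformly convergent to the desired $F$ (with the FT density hypothesis ensuring that nested cells contract to single points of $\Sigma$). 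Applying the same procedure with $f^{-1}$ in place of $f$ yields a homeomorphism inverse to $F$, so $F$ is a homeomorphism of $\bD^2$ with all the required properties.
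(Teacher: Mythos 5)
Your proposal starts along the same lines as the paper: reduce to closed laminations via Lemma~\ref{lem:extend_lamination}, define $F$ on $S^1$ and on the set $X$ of crossing points, interpolate affinely on leaf segments, and then extend over complementary cells. You also correctly identify the crux: continuity of the assembled $F$ at points of $\Sigma$ (and $S^1$) that are accumulated by infinitely many cells. But at exactly that point the proposal stops short.

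Two concrete issues. First, your description of a cell boundary as ``chord segments and arcs of $S^1$'' is not accurate for a fully transverse pair: the paper's Lemma~\ref{l.complementary_component} shows that the closure of a complementary component of $\geo(L^+)\cup\geo(L^-)$ meets $S^1$ in \emph{at most one point} (if it met two, the geodesic joining them would be crossed by no leaf, violating density/connectedness). This is not a cosmetic detail; it is precisely the structural fact that makes the cell extension controllable. In particular, the potentially infinite-sided cells are exactly those whose sides accumulate at a \emph{single} ideal vertex. Second, invoking Sch\"onflies to extend on each cell produces some homeomorphism $\bar C\to\bar{\hat C}$, but offers no control over what happens as you range over infinitely many cells accumulating at a point. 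Your two suggested fixes are both problematic: coning from an interior point of the cell gives no a priori control near the accumulation point on $S^1$, and the inductive exhaustion by finite sublaminations is not actually carried out (you would need to show the $F_n$ Cauchy in $C^0$, which again requires knowing something about how the cells shrink --- essentially the same difficulty in different clothing).

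The paper's resolution is more rigid and avoids both problems: each complementary region is triangulated with vertices in $X\cup S^1$ (coning from the unique ideal vertex when there is one), $F$ is extended affinely on each triangle, and continuity is then obtained by a compactness argument on $X\cup S^1$ together with the observation that the convex hull of any triple of limit vertices lies inside a single leaf or a single triangle. That affine rigidity is what makes the limiting argument go through; Sch\"onflies alone does not supply it. So the proposal captures the right architecture but leaves the hardest step --- the global continuity --- genuinely unresolved.
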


We also show that naturality of geodesic realizations may fail without the fully transverse assumption (see example \ref{ex:non_natural}.  

Previously in this work, we freely passed between prelaminations of the circle and their geodesic realizations. Since the goal of this appendix is to justify that practice, we will be much more careful with notation.  
\begin{notation} 
Given $\alpha\in L^{\pm}$, we write $\bar \alpha$ for its geodesic realization, i.e., if $\alpha=\{a_1, a_2\}$ then $\bar{\alpha}$ is the straight line segment between $a_1$ and $a_2$.  
Inversely, given a geodesic $\ell \subset \bD$ we denote by $\partial \ell$ the pair of endpoints of $\ell$ in $S^1$. 
\end{notation}

Sometimes it will be convenient to refer to pairs of points explicitly. Thus, extending our previous notation, we also define
\begin{notation} 
Given $p, q \in \bD^2$, let $\geo(p,q)$ denote the straight line between $p$ and $q$.  
\end{notation}

\begin{proof}[Proof of Theorem \ref{t.naturality}]
By Lemma \ref{lem:extend_lamination}, we may (by replacing $L^\pm$ by their closures, if needed) assume without loss of generality that $L^\pm$ are fully transverse {\em laminations}. 
Let $f \in \Homeo(S^1)$, without loss of generality we may assume that $f$ preserves orientation of $S^1$.  For $\{a, a'\} = \alpha$ in $L^\pm$ we will write $f(\alpha)$ to mean $\{f(a), f(a')\}$.  

Define $F$ as follows.   First, set $F|_{S^1} = f$.  
Secondly, for a pair of crossing leaves $\alpha$ and $\beta$ in $L^{\pm}$, define $F(\bar \alpha \cap \bar \beta) = \overline{f(\alpha)} \cap \overline{f(\beta)}.$   Note that this is well defined since $f$ is a homeomorphism so preserves crossing, and distinct pairs of leaves always cross at different points.  

Let $X = X(L^\pm) \subset \bD^2$
denote the set of intersection points of pairs of crossing geodesics realization of leaves.  So $F$ is thus far defined on $X \cup S^1$.
Our first goal is to show that $F$ is {\em continuous} on $X\cup S^1$. This follows from the following two claims.

\begin{claim}  \label{claimA1}
Suppose that $\alpha_n, \beta_n$ are crossing leaves in $L^+, L^-$ respectively, and $\overline{\alpha_n} \cap \overline{\beta_n} \to p \in S^1$.  Then $\overline{f(\alpha_n)} \cap \overline{f(\beta_n)}$ converges to $f(p)$.
\end{claim}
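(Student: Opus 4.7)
The plan is to reduce Claim A1 to a continuity statement about Hausdorff limits of chords, using compactness of $\overline{\bD^2}$ together with the transversality of the FT closures $\bar L^\pm$. I will pass to a convergent subsequence of the four endpoints of $\alpha_n, \beta_n$ and analyze the resulting limit chords and their images under $f$.

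First I would record the elementary geometric fact that a straight chord in $\overline{\bD^2}$ meets $S^1$ only at its two endpoints, so if $x_n \in \overline{\gamma_n}$ satisfies $x_n \to q \in S^1$ and the endpoints of $\overline{\gamma_n}$ converge in $S^1$ (as we may arrange by compactness), then at least one endpoint of $\overline{\gamma_n}$ must converge to $q$. Applying this to both $\overline{\alpha_n}$ and $\overline{\beta_n}$ and writing $\alpha_n = \{a_n, a_n'\}$ and $\beta_n = \{b_n, b_n'\}$, I may pass to a subsequence with $a_n \to p$, $b_n \to p$, $a_n' \to a'$, and $b_n' \to b'$ for some $a', b' \in S^1$. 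The degenerate cases where $a' = p$ or $b' = p$ are immediate: the corresponding chord shrinks to $\{p\}$, so by continuity of $f$ on $S^1$ the image chord shrinks to $\{f(p)\}$, and the intersection $\overline{f(\alpha_n)} \cap \overline{f(\beta_n)}$, being contained in this shrinking chord, must converge to $f(p)$.

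The main case is $a' \ne p$ and $b' \ne p$. Here the crucial observation is that $a' \ne b'$: by Lemma \ref{lem:extend_lamination}, $(\bar L^+, \bar L^-)$ is a transverse pair of FT laminations, and $a' = b'$ would force $\{p, a'\}$ to be a common leaf of $\bar L^+$ and $\bar L^-$, violating $\bar L^+ \cap \bar L^- = \emptyset$. Consequently $\overline{\alpha_n}$ converges in Hausdorff distance to $\geo(p, a')$ and $\overline{\beta_n}$ to $\geo(p, b')$, and these two limit chords meet only at the point $p$. Since $f$ is a homeomorphism of $S^1$, the image chords $\overline{f(\alpha_n)}$ and $\overline{f(\beta_n)}$ converge in Hausdorff distance to $\geo(f(p), f(a'))$ and $\geo(f(p), f(b'))$, which again share only the point $f(p)$; thus any subsequential limit of the crossing points $\overline{f(\alpha_n)} \cap \overline{f(\beta_n)}$ must lie in both limit chords and so equals $f(p)$, giving the claim. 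The main obstacle I anticipate is ruling out the degenerate configuration $a' = b' \ne p$, which would prevent the limit chords from intersecting only at $p$; this is precisely what the transversality of the \emph{closures} $\bar L^\pm$ (a nontrivial strengthening of transversality of $L^\pm$) buys us, and everything else is routine Hausdorff convergence and compactness.
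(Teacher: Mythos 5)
Your proof is correct and follows essentially the same route as the paper's: pass to a convergent subsequence of all four endpoints of $\alpha_n, \beta_n$, dispose of the degenerate case where one chord collapses to $\{p\}$, and in the main case invoke transversality of the closed laminations $\bar L^\pm$ (via Lemma \ref{lem:extend_lamination}) to rule out $a' = b'$, so that the limit chords intersect only at $p$ and hence their $f$-images intersect only at $f(p)$. The only cosmetic difference is that the paper reduces at the outset of Theorem \ref{t.naturality} to the case where $L^\pm$ are already closed, whereas you invoke the closures directly in this claim; the content is identical.
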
 

\begin{proof}As $\bD^2$ is strictly convex, the fact that $\overline{\alpha_n} \cap \overline{\beta_n}$ tends to a point $p$ in $S^1$ implies that at least one of the points of $\alpha_n$ tends to $p$, and the same for $\beta_n$.

Consider the limits of the sequences of points $\alpha_n$ and $\beta_n$.  As a first case, if at least three limit points coincide, then either $\alpha_n$ or $\beta_n$ limits to $p$, and thus its image under $f$ limits to $f(p)$, and hence $\overline{f(\alpha_n)} \cap \overline{f(\beta_n)}$ converges to $f(p)$. 

Otherwise, letting $\alpha_n = \{a_n, a_n'\}$ and $\beta_n = \{b_n, b_n'\}$, we may have that $a_n \to p$, $b_n \to p$, but the other endpoints do not.  Passing to any convergent subsequence of endpoints, we will have $a_n' \to q$, $b_n' \to r$, for some $r \neq p$ and $q\neq p$.  Furthermore $r\neq q$ because $\{p,r\}$ and $\{p,q\}$ are leaves of distinct laminations $L^\pm$ and $L^\mp$.

Then $\overline{f(\alpha_n)} \cap \overline{f(\beta_n)}$ is the intersection point of geodesics converging to the geodesics $\geo(f(q),f(p))$ and $\geo(f(r),f(p))$, and thus converges to $f(p)$.
\end{proof}

\begin{claim}  \label{claimA2}
Suppose that $\alpha_n, \beta_n$ are pairs of crossing leaves in $L^+$ and $L^-$ respectively, and $\overline{\alpha_n} \cap \overline{\beta_n} \to p \in \bD^2 \smallsetminus S^1$.  Then $p = \bar{\alpha} \cap \bar{\beta}$ for some $\alpha, \beta$ in $L^+, L^-$, and 
$\overline{f(\alpha_n)} \cap \overline{f(\beta_n)}$ converges to $\overline{f(\alpha)} \cap \overline{f(\beta)}$.  
\end{claim}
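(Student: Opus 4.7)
The plan is to pass to convergent subsequences of endpoints, use that $L^\pm$ (now assumed to be laminations, hence closed) to identify the limits as actual leaves, check that this identification is unique, and finally transport everything by $f$ using continuity.

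More precisely, write $\alpha_n = \{a_n, a_n'\}$ and $\beta_n = \{b_n, b_n'\}$. By compactness of $S^1$, pass to a subsequence so that $a_n \to a$, $a_n' \to a'$, $b_n \to b$, $b_n' \to b'$. I would first argue $a \ne a'$ and $b \ne b'$: if, say, $a = a'$, then $\mathrm{diam}(\overline{\alpha_n}) \to 0$ and every accumulation point of $\overline{\alpha_n}$ lies on $S^1$, so the intersection points $\overline{\alpha_n} \cap \overline{\beta_n}$ cannot converge to an interior point $p$. Hence $\{a,a'\}$ and $\{b,b'\}$ are genuine pairs, and they cross since $p$ is the interior intersection of two accumulating geodesics. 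Since we have replaced $L^\pm$ by their closures, $\alpha := \{a,a'\} \in L^+$ and $\beta := \{b,b'\} \in L^-$, and $\bar\alpha \cap \bar\beta = p$.

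The next step is uniqueness of $(\alpha,\beta)$: if $\alpha,\alpha' \in L^+$ both pass through an interior point $p$, then either $\alpha = \alpha'$ or their endpoints alternate on $S^1$, forcing them to cross -- which is impossible in the lamination $L^+$. So $\alpha$ (and likewise $\beta$) is determined by $p$, and the limiting pair does not depend on the choice of subsequence. This shows that the full original sequence $(a_n,a_n',b_n,b_n')$ converges to $(a,a',b,b')$, giving the first half of the claim.

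For the second half, apply the orientation-preserving $f$: by continuity $f(a_n) \to f(a)$, etc., so $\overline{f(\alpha_n)} \to \overline{f(\alpha)}$ and $\overline{f(\beta_n)} \to \overline{f(\beta)}$ in the space of straight line segments in $\bD^2$. Because $f$ preserves the cyclic order on $S^1$, the pairs $\{f(a),f(a')\}$ and $\{f(b),f(b')\}$ still cross, so $\overline{f(\alpha)}$ and $\overline{f(\beta)}$ meet at a single interior point. A standard transversality argument in the strictly convex disc $\bD^2$ shows that the intersection map $(\ell_1,\ell_2) \mapsto \ell_1 \cap \ell_2$ is continuous on the open set of transversely crossing pairs of geodesics, so $\overline{f(\alpha_n)} \cap \overline{f(\beta_n)} \to \overline{f(\alpha)} \cap \overline{f(\beta)}$, as desired.

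I expect the main subtlety to be the uniqueness step, since without closure of $L^\pm$ one could not identify the limit pairs as honest leaves (which is why the reduction via Lemma~\ref{lem:extend_lamination} at the start of the proof of Theorem~\ref{t.naturality} is essential), and the non-crossing property of laminations is precisely what lets the whole sequence converge rather than just a subsequence.
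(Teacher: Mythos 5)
Your proposal is correct and follows essentially the same route as the paper's proof: pass to convergent subsequences of endpoints, rule out degenerate limits using the interiority of $p$, use closedness of the laminations to identify $\alpha,\beta$, and use the non-crossing property of each lamination to show the limit pair is uniquely determined by $p$, hence the full sequence converges and the conclusion transports by $f$. The only cosmetic difference is that you argue $a\neq a'$ via shrinking diameters whereas the paper compresses this into a single remark that all four limit points must be distinct; both observations have the same content.
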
 

\begin{proof}
Let $\alpha_n = \{a_n, a_n'\}$ and $\beta_n = \{b_n, b_n'\}$.  Pass to a subsequence so that the sequences $a_n$, $a'_n$, $b_n$ and $b'_n$ all converge, say to $a, a', b$, and $b'$ respectively.   Since $\bar{\alpha_n} \cap \bar{\beta_n} \to p \in \bD^2 \smallsetminus S^1$, all four limit points are distinct.  Since we assumed that $L^{\pm}$ were laminations (hence closed), we deduce that $\{a, a'\} = \alpha$ and $\{b, b'\} = \beta$ for some $\alpha, \beta \in L^+$ and $L^-$ respectively; and necessarily $p = \bar{\alpha} \cap \bar{\beta}$.  

Since $L^+$ and $L^-$ are laminations, $p$ cannot be the intersection point of any other pair of crossing leaves in the realization, so 
$a_n$, $a'_n$, $b_n$ and $b'_n$ all converge (without needing to pass to subsequences).  
Thus, $f(\alpha_n)$ and $f(\beta_n)$ converge to $f(\alpha)$ and $f(\beta)$, and so $\overline{f(\alpha_n)} \cap \overline{f(\beta_n)}$ converges to $\overline{f(\alpha)} \cap \overline{f(\beta)}$.  
\end{proof} 

Let $X(f(L^\pm))$ denote the set of intersection points of leaves of $\geo(f(L^+) \cup f(L^-))$.  Summarizing our work above, we have: 
\begin{corollary}
The set $X\cup S^1$ is compact, as is its image $X(f(L^\pm)) \cup S^1$, and $F\colon X\cup S^1 \to X(f(L^\pm))\cup S^1$ is a homeomorphism.
\end{corollary}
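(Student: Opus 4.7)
The plan is to derive the corollary from Claims~\ref{claimA1} and~\ref{claimA2} together with standard compactness arguments, in three steps: compactness of $X\cup S^1$, bijectivity of $F$, and bicontinuity.

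First, I would show that $X\cup S^1$ is closed in $\bar\bD^2$, hence compact. Suppose $(p_n)\subset X\cup S^1$ converges to some $p\in\bar\bD^2$. If $p\in S^1$, there is nothing to check. Otherwise $p\in\bD^2\setminus S^1$, and since $S^1$ is closed and disjoint from $\{p\}$, all but finitely many $p_n$ lie in $X$; passing to a subsequence and applying Claim~\ref{claimA2} gives $p\in X$. Compactness of $X\cup S^1$ follows as a closed subset of the compact disc, and the same argument applied with $f(L^\pm)$ in place of $L^\pm$ yields compactness of $X(f(L^\pm))\cup S^1$.

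Next, bijectivity. On $S^1$, $F$ restricts to the bijection $f$. On $X$, $F$ is well-defined because $L^+$ and $L^-$ are each laminations, so no two leaves of the same sign cross in $\bD^2$; through any $p\in X$ there passes exactly one leaf of $L^+$ and exactly one leaf of $L^-$, so the crossing pair is uniquely recovered from $p$. The same construction performed with $f^{-1}$ and $f(L^\pm)$ produces a two-sided inverse for $F$, so $F$ is a bijection from $X\cup S^1$ onto $X(f(L^\pm))\cup S^1$.

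For continuity, I would take any sequence $p_n\to p$ in $X\cup S^1$ and verify $F(p_n)\to F(p)$ by cases. If $p\in X$, the separation argument used for closedness shows $p_n\in X$ eventually, and Claim~\ref{claimA2} delivers convergence of images. If $p\in S^1$, split into the subsequence with $p_n\in S^1$, where $F(p_n)=f(p_n)\to f(p)$ by continuity of $f$, and the subsequence with $p_n\in X$, where Claim~\ref{claimA1} gives $F(p_n)\to f(p)$; both subsequences share the limit $F(p)=f(p)$. Thus $F$ is a continuous bijection between compact Hausdorff spaces, and so is automatically a homeomorphism. I expect no real obstacle at this step: the substantive content is carried by Claims~\ref{claimA1} and~\ref{claimA2}, and the corollary is essentially a packaging of them together with the topology of $\bar\bD^2$.
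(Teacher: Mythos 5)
Your proof is correct and follows essentially the same route as the paper's: both deduce closedness of $X\cup S^1$ and continuity of $F$ from Claims~\ref{claimA1} and~\ref{claimA2}, and both invoke the symmetric construction with $f^{-1}$. The only cosmetic difference is at the end, where you appeal to the fact that a continuous bijection between compact Hausdorff spaces is a homeomorphism, whereas the paper instead observes that the analogous argument applied to $f^{-1}$ gives continuity of the inverse directly; since you also build the inverse from $f^{-1}$, this appeal is slightly redundant, but it is not a flaw.
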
 

\begin{proof}
Claims \ref{claimA1} and \ref{claimA2} show that the domain of $F$ is a closed subset of $\bD^2$ and that $F$ is continuous.  Using $f^{-1}$ instead of $f$, by analogous construction one obtains an inverse map to $F$ defined on $S^1 \cup X(f(L^\pm))$, and the same argument shows it is continuous.
\end{proof} 

Next we extend the domain of definition of $F$ to  $\geo(L^{\pm})$.  
Let $\alpha \in L^\pm$.  On each segment $I$ of a geodesic $\bar{\alpha}$ bounded by points $x, y \in X\cup S^1$, and containing no other points on $X\cup S^1$ (where  $F$ is already defined), we define $F|_I$ to be the unique affine map (with respect to the usual affine structure inherited from $\R^2$) sending $I$ to the straight line segment with endpoints $F(x), F(y)$.

\begin{lemma}\label{c.Fonlaminations} 
We have $F(\geo(L^\pm))=\geo(f(L^\pm))$.
Moreover, the restriction of $F$ to a leaf $\geo(a,b)$ of $\geo(L^\pm)$, oriented from $a$ to $b$, is an increasing homeomorphism onto the leaf $\geo(f(a),f(b))$ of  $\geo(f(L^\pm))$ oriented from $f(a)$ to $f(b)$.
\end{lemma}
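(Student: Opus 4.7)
The plan is to fix a leaf $\alpha = \{a,b\} \in L^\pm$ and show that $F$ maps $\bar\alpha$ homeomorphically and monotonically onto $\overline{f(\alpha)}$; taking the union over all $\alpha$ gives the first claim. By construction, $F$ has already been defined on $\bar\alpha \cap (X \cup S^1)$, which consists of the endpoints $a,b$ and all intersection points $\bar\alpha \cap \bar\beta$ where $\beta \in L^\mp$ crosses $\alpha$. For each such point, $F$ lands on $\overline{f(\alpha)}$: endpoints go to $f(a), f(b)$ by definition, and crossing intersections $\bar\alpha \cap \bar\beta$ go to $\overline{f(\alpha)} \cap \overline{f(\beta)} \in \overline{f(\alpha)}$. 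Since $X \cup S^1$ is closed in $\bar\alpha$ (by the corollary preceding the lemma), its complement in $\bar\alpha$ is a disjoint union of open intervals, each bounded by two points of $X \cup S^1$; the affine interpolation defining $F$ on such an interval stays on the straight segment $\overline{f(\alpha)}$, since its endpoints do. So $F(\bar\alpha) \subseteq \overline{f(\alpha)}$.

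Next I would establish \emph{monotonicity} on $\bar\alpha$, which is the key technical point. Orient $\bar\alpha$ from $a$ to $b$ and $\overline{f(\alpha)}$ from $f(a)$ to $f(b)$. For two crossing leaves $\beta_1, \beta_2 \in L^\mp$, with endpoints labelled $\beta_i = \{c_i, d_i\}$ so that $c_1, c_2$ lie in the same component of $S^1 \setminus \{a,b\}$, the order of $\bar\alpha \cap \bar\beta_1$ and $\bar\alpha \cap \bar\beta_2$ along $\bar\alpha$ is governed by the cyclic order of $\{a,c_1,c_2,b\}$ (equivalently, of $\{a,d_2,d_1,b\}$) on $S^1$. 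Since $f$ is an orientation-preserving homeomorphism of $S^1$, it preserves this cyclic order, and hence the order of the images $\overline{f(\alpha)} \cap \overline{f(\beta_i)}$ along $\overline{f(\alpha)}$ agrees. Combined with the fact that $a,b$ map to $f(a),f(b)$ (the endpoints of $\overline{f(\alpha)}$), this shows that $F$ is order-preserving on $\bar\alpha \cap (X \cup S^1)$; the affine interpolations on the gaps preserve orientation on each gap, so $F|_{\bar\alpha}$ is increasing overall.

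It remains to check surjectivity onto $\overline{f(\alpha)}$. A gap $(x,y)$ in $\bar\alpha \cap (X \cup S^1)$ corresponds (using Claim A.2) to a pair of adjacent intersection points with no $L^\mp$-leaf crossing $\bar\alpha$ strictly in between. Applying $f^{-1}$ and the order-preservation above in reverse, no $L^\mp$-leaf $f(\beta)$ crosses $\overline{f(\alpha)}$ strictly between $F(x)$ and $F(y)$ either, so $(F(x), F(y))$ is a gap in $\overline{f(\alpha)} \cap (X(f(L^\pm)) \cup S^1)$. The affine map from $[x,y]$ to $[F(x),F(y)]$ covers the gap homeomorphically. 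Thus $F|_{\bar\alpha}$ is a continuous, monotone bijection onto $\overline{f(\alpha)}$, hence a homeomorphism, proving both claims of the lemma.

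The main obstacle is the order argument in the second paragraph: one must verify that the linear order of crossing intersections along a straight chord depends only on the cyclic order of the four endpoints on $S^1$. Once this combinatorial observation is made precise, everything else is a routine continuity/interpolation argument using the closedness of $X \cup S^1$ and Claims A.1--A.2 established above.
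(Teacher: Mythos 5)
Your proof is correct and follows essentially the same route as the paper: both identify the linear order of crossing points along $\bar\alpha$ with the cyclic order of the opposite endpoints on the two arcs of $S^1\smallsetminus\{a,b\}$, use that $f$ preserves this cyclic order to get monotonicity on $X\cap\bar\alpha$, and then invoke the piecewise-affine interpolation on the gaps. The paper handles surjectivity a bit more compactly by observing that $F$ restricts to a bijection $X\cap\bar\alpha \to X(f(L^\pm))\cap\overline{f(\alpha)}$ (a direct consequence of $F$ being a bijection on $X\cup S^1$ that sends $\bar\alpha\cap\bar\beta$ to $\overline{f(\alpha)}\cap\overline{f(\beta)}$), whereas you reach the same conclusion via a gap-to-gap argument using $f^{-1}$ — same substance, slightly longer route.
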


\begin{proof} 
Let $\alpha=\{a,b\}\in L^\pm$ be a leaf and orient $\bar \alpha$ from $a$ to $b$ and $\overline{f(\alpha)}$ from $f(a)$ to $f(b)$.  
Consider $X\cap \bar\alpha$ and $X(f(L^\pm))\cap \geo(f(a),f(b))$. Each is a totally ordered set (a subset of an oriented straight line), and the restriction of $F$ to $X\cap \bar\alpha$ is a bijection onto $X(f(L^\pm))\cap \geo(f(a),f(b))$.

We claim this bijection is strictly increasing.  To see this, consider the connected components $I, J$ of $S^1 \smallsetminus \{a, b\}$.  Each leaf of $L^\pm$ that crosses $\alpha$ has one endpoint in $I$ and one in $J$; orient $I$ and $J$ from $a$ to $b$ so that the order of intersection points of leaves of $L^\mp$ along $\alpha$, $I$ and $J$ all agree.  Since $f$ preserves orientation, the same is true for leaves of $f(L^\mp)$ along $f(\alpha)$, $f(I)$ and $f(J)$. 
The desired conclusion now holds by definition of $F$ along $\alpha$ as a piecewise affine map.  
\end{proof}

Our next goal is to extend $F$ to the complement of $\geo(L^{\pm}) \cup S^1$ in $\bD^2$.

\begin{lemma}  \label{l.complementary_component}
Let $P$ be a connected component of $\bD^2 \setminus (\geo(L^-)\cup\geo(L^+))$.  Then $P$ is a convex region whose closure contains at most one point of $S^1$.   The boundary of $P$ consists of either a finite union of segments of leaves of $L^+$ and $L^-$, or a union of infinitely many such segments, which accumulate only at the unique point of $\overline{P} \cap S^1$. 
\end{lemma}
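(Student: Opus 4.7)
I will proceed in three stages corresponding to the three claims.

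\emph{Stage 1 (convexity).} Given $p,q\in P$, the Euclidean segment $[p,q]$ cannot meet any geodesic in $\geo(L^+\cup L^-)$: a chord $\bar\alpha$ crossing $[p,q]$ would place $p$ and $q$ in different components of $\bD^2\setminus\bar\alpha$, hence in different components of $\bD^2\setminus\geo(L^+\cup L^-)$. So $[p,q]\subset P$, and $P$ is convex.

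\emph{Stage 2 (at most one ideal point).} Suppose for contradiction $a\neq b$ both lie in $\overline P\cap S^1$; by convexity $[a,b]\subset\overline P$. Let $I,J$ be the two open arcs of $S^1\setminus\{a,b\}$, and split into cases. If $[a,b]$ is not a leaf, I first check that $(a,b)\subset P$: any leaf $\bar\beta$ meeting the open chord would cross it transversely (since no leaf equals $[a,b]$), but then $\bar\beta$ separates $\bD^2$ and would disconnect $P$, impossible. Next I apply the FT connectedness axiom: a leaf with both endpoints in $\overline I$ never interleaves with a leaf whose endpoints lie in $\overline J$, so if no leaf spanned $I$ and $J$ the crossing graph of $L^+\cup L^-$ would split into two parts, contradicting connectedness. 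Thus some leaf $\gamma$ has one endpoint strictly in $I$ and one in $J$, and $\bar\gamma$ crosses $(a,b)\subset P$, the desired contradiction. If instead $[a,b]=\bar\alpha$ for some $\alpha\in L^+$, then $P$ lies in one open half-disc $H$ bounded by $\bar\alpha$; FT connectedness yields $\beta\in L^-$ crossing $\alpha$, with endpoint $c\in I$, and the segment $\bar\beta\cap H$ splits $H$ into two pieces whose closures meet $S^1$ only on one side of $c$, each containing only one of $a,b$. The connected set $P$ lies in exactly one piece, forcing one of $a,b$ out of $\overline P$, a contradiction.

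\emph{Stage 3 (structure of $\partial P$).} Any point of $\partial P\cap\mathring{\bD^2}$ lies on a leaf of $\geo(L^+\cup L^-)$, and by convexity $\bar\alpha\cap\overline P$ is a single (possibly degenerate) segment for each leaf $\alpha$; combined with Stage 2, $\partial P$ decomposes as a union of leaf segments plus at most one $S^1$ point. If this union is infinite, the vertices (meeting points of consecutive segments) must accumulate somewhere in $\overline P$, and I need to show the accumulation lies on $S^1$, hence at the unique ideal point. Assume for contradiction the accumulation point $p$ lies in $\mathring{\bD^2}$. Then $p\in\partial P$ sits on some leaf $\bar\gamma$, and either $\bar\gamma\cap\overline P$ is a nondegenerate edge of $\overline P$ (which carries only two vertices, not infinitely many), or $\bar\gamma\cap\overline P=\{p\}$, meaning $\bar\gamma$ is a supporting line tangent to $\overline P$ at $p$. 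In this tangential case, convexity forces the tangent directions of $\partial P$ at the accumulating vertices $v_n$ to converge to that of $\bar\gamma$; passing to a Hausdorff-convergent subsequence, the two leaves $\bar\alpha_n,\bar\beta_n$ meeting at $v_n$ both converge to $\bar\gamma$. Since $\alpha_n,\beta_n$ are in opposite laminations, this exhibits the endpoint pair of $\gamma$ as an element of both $\bar L^+$ and $\bar L^-$, contradicting the disjointness $\bar L^+\cap\bar L^-=\emptyset$ established in the proof of Lemma~\ref{lem:extend_lamination}.

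The main obstacle will be Stage 3, particularly the tangential subcase: ruling out interior accumulation of vertices requires the Hausdorff-limit analysis of leaf sequences and the essential fact that the closures of the two FT prelaminations remain disjoint. Every other step follows directly from convexity together with the density and connectedness axioms of FT prelaminations.
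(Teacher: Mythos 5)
Your Stages 1 and 2 are correct and essentially parallel the paper's arguments (the paper gets convexity immediately from $P$ being an intersection of two convex components, and dispatches Stage 2 a bit more quickly, but the content is the same). Stage 3 is where you genuinely diverge. The paper handles the boundary structure in two sentences by asserting that every interior point of $\bD^2$ admits a local chart in which $\geo(L^+)$ and $\geo(L^-)$ become horizontal and vertical segments; in any compact subset of the open disc, $\partial P$ is then a simple curve with finitely many alternating sides, so infinite accumulation can only occur at the ideal point. You avoid charts altogether and instead rule out interior accumulation of vertices by a Hausdorff-limit analysis of the incident leaves, concluding with the disjointness $\bar L^+\cap\bar L^-=\emptyset$ (which is indeed available once one has passed to closures via Lemma~\ref{lem:extend_lamination}, as the proof of Theorem~\ref{t.naturality} does at the outset). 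This is a legitimate alternative; it replaces a chart whose existence the paper asserts without proof by an argument grounded directly in convexity, at the cost of more case analysis.

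There is, however, a real gap in your tangential subcase. You assert that ``convexity forces the tangent directions of $\partial P$ at the accumulating vertices $v_n$ to converge to that of $\bar\gamma$'' and conclude that $\bar\alpha_n$ and $\bar\beta_n$ converge to a common chord. But a vertex of a convex curve has two one-sided directions, and the supporting lines at $v_n\to p$ need only converge, a priori, to \emph{some} supporting lines at $p$ --- of which there may be a nondegenerate cone if $p$ is a corner of $\overline P$. So $\bar\alpha_n$ and $\bar\beta_n$ might converge to two \emph{distinct} chords $\ell_1\neq\ell_2$ through $p$, and then no contradiction with $\bar L^+\cap\bar L^-=\emptyset$ arises. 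The missing ingredient is the bounded total-turning of the convex Jordan curve $\partial P$: the exterior angles at its vertices are positive and sum to at most $2\pi$, so if there are infinitely many vertices these angles tend to $0$, forcing the two edges at $v_n$ to become asymptotically collinear; since both chords pass through $v_n\to p$, their subsequential limits must then coincide (and one doesn't even need the limit to equal $\bar\gamma$ --- coincidence of the two limits alone yields the contradiction). That step is essential and needs to be made explicit. You should also say a word about the case where $p$ is an \emph{endpoint}, rather than an interior point, of the nondegenerate segment $\bar\gamma\cap\overline P$, which your dichotomy omits; it reduces to the other two cases but merits a sentence.
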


\begin{proof} 
Let $P$ be a connected component of $\bD^2 \setminus (\geo(L^-)\cup\geo(L^+))$.  
Then $P$ is the intersection of two convex sets (which are connected components of $\bD^2 \setminus \geo(L^-)$ and $\bD^2 \setminus \geo(L^+)$, respectively) so is convex.   We first show its closure contains at most one point of $S^1$.  

Suppose for contradiction that $\overline{P}$ contains two points $p,q$ in $S^1$.  Since $\overline{P}$ is convex in $\bD^2$, we have $\geo(p,q) \subset \overline{P}$. This implies that no $L^\pm$-leaf crosses $\{p,q\}$, contradicting the connectedness and density assumption of fully transverse prelaminations.

Now, every point in the interior of $\bD^2$ admits a neighborhood with a chart to $\R^2$ such that leaves of $\geo(L^\pm)$ are sent to horizontal and vertical segments.  
This implies that in any compact subset of the interior of $\bD^2$, the boundary of $P$ is a simple curve consisting of alternating segments of $\geo(L^+)$ and $\geo(L^-)$.  This implies the statement given above.  
\end{proof}

Borrowing (and extending) our previous terminology, we call the closure $\overline{P}$ of a connected component $P$ of $\bD^2 \setminus (\geo(L^-)\cup\geo(L^+))$ a {\em complementary region}.   
\begin{corollary} \label{c.triangulation}
Each complementary region $\overline{P}$ admits a triangulation whose vertices are in $(X \cup S^1) \cap \overline{P}$ and whose edges are geodesic segments.
\end{corollary}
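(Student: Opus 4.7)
By Lemma~\ref{l.complementary_component}, the boundary of $\overline{P}$ consists of either finitely many leaf segments of $\geo(L^+)\cup\geo(L^-)$, or countably many such segments whose vertices accumulate only at the unique ideal point $p\in\overline{P}\cap S^1$. Each ``interior'' boundary vertex is the intersection of a leaf of $L^+$ with a crossing leaf of $L^-$, hence lies in $X$; the only other vertex, when present, is $p\in S^1$. Thus every vertex of $\partial P$ already lies in $(X\cup S^1)\cap\overline{P}$.

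My strategy is a fan triangulation from a suitable apex $v^{*}$, chosen as follows: if $\overline{P}\cap S^1=\{p\}$, take $v^{*}=p$; otherwise take $v^{*}$ to be any vertex of $\partial P$. Enumerate the remaining boundary segments cyclically along $\partial P$ (by $\{1,\dots,n\}$ in the finite case, or by $\bZ$ with both tails converging to $p$ in the infinite case) and write the $i$-th segment as $s_i=[x_i,x_{i+1}]$ with $x_i\in X$. Define $T_i$ to be the convex hull in $\bD^2$ of $\{v^{*},x_i,x_{i+1}\}$. By convexity of $\overline{P}$, each $T_i\subset\overline{P}$, and its edges are straight-line geodesic segments with endpoints in $(X\cup S^1)\cap\overline{P}$.

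It remains to verify that $\{T_i\}$ is a genuine triangulation of $\overline{P}$. Pairwise disjointness of interiors is immediate: consecutive segments $s_{i},s_{i+1}$ share the vertex $x_{i+1}$, so the chord $[v^{*},x_{i+1}]$ separates the interiors of $T_i$ and $T_{i+1}$, while non-adjacent triangles lie in disjoint half-planes cut out by these chords. For covering, given $z$ in the interior of $\overline{P}$, prolong the straight segment from $v^{*}$ through $z$ until it first meets $\partial P$ at a point $w$; then $z\in[v^{*},w]$, and if $w\in s_i$ (or is a vertex $x_i$, which merely places $z$ on a shared edge) we conclude $z\in T_i$.

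The delicate step is this covering argument in the infinite case: one must exclude the possibility that the ray from $v^{*}$ through $z$ exits $\overline{P}$ at a second point of $S^1$ rather than on some $s_i$. This is exactly ruled out by Lemma~\ref{l.complementary_component}, which asserts $\overline{P}\cap S^1=\{p\}=\{v^{*}\}$; combined with convexity, the exit point $w$ is forced into the interior of $\bD^2$, and hence onto a boundary segment $s_i$. Once this is in place, disjointness and covering together yield the desired triangulation.
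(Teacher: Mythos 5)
Your proof is correct and takes essentially the same approach as the paper: in both cases the triangulation is a fan from a single apex, taken to be the unique ideal point $p$ in the infinite-sided case (the paper handles the finite-sided case by simply noting any convex polygon is triangulable, whereas you fan from a chosen boundary vertex, which is fine). The covering argument you spell out — using $\overline{P}\cap S^1=\{p\}$ to force the exit point of a ray from $v^*$ to land on a boundary segment — is the content that the paper leaves implicit, and it is correct.
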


\begin{proof} 
Lemma \ref{l.complementary_component} implies that $\overline{P}$ is either a finite sided convex polygon (whose sides are segments of $\geo(L^-)\cup\geo(L^+)$) or a convex region whose boundary consists of alternating segments of $\geo(L^-)\cup\geo(L^+)$ which accumulate only at a single point $p$ of $S^1$ --- call this an ``infinite sided polygon"; it is the convex hull of its ``vertices" i.e., the points of $(X \cap \overline{P}) \cup \{p\}$.

In the first case, $\overline{P}$ obviously admits a triangulation, in the second one may triangulate it by connecting $p$ to each of the other vertices. 
\end{proof}

As a consequence of Lemma \ref{l.complementary_component} and the definition of $F$, for each complementary region $\overline{P}$ of $\geo(L^-)\cup\geo(L^+)\cup S^1$, there exists a unique complementary region of $\geo(f(L^-)\cup\geo(L^+)) \cup S^1$ which is the convex hull of the images by $F$ of the vertices of $\overline{P}$. We call this polygon $F(P)$.

Now, fix a triangulation $\cT_P$ of each complementary region $\overline{P}$ as given by Corollary~\ref{c.triangulation}, and call $F(\cT_P)$ the triangulation of $F(\overline{P})$ which has the corresponding combinatorial data (i.e., two vertices $x_1,x_2$ of $\overline{P}$ are joined by an edge in $\cT_{P}$ if and only if the vertices $F(x_1),F(x_2)$ are joined by an edge in $F(\cT_P)$).

We extend $F$ to $\overline{P}$ by using the unique affine map mapping a triangle of $\cT_P$ to the corresponding triangle in $F(\cT_P)$; i.e., if $v_1, v_2, v_3$ are vertices of a triangle $T$, for each convex combination $\sum t_i v_i \in T$ we define $F(\sum t_i v_i):=\sum t_i F(v_i)$.  Note that this agrees with the previous definition of $F$ for any points on the boundary of $T$ where $F$ is already defined.
At this stage $F$ is defined on all of $\bD^2$, and we need only show that it is continuous, since continuity of the inverse follows from the same argument.  

\subsection*{Continuity of $F$}
Let $x_n \in \bD^2$ be a sequence of points converging to $x\in \bD^2$.   Since $F$ agrees with $f$ on $S^1$, we may without loss of generality assume $x_n$ are not in $S^1$.  
For each $n$, the point $x_n$ is either inside a triangle of a complementary region, or lies on a segment of a geodesic leaf bounded by points of $X \cup S^1$.  To streamline the proof, we think of these segments as (degenerate) triangles, with two vertices equal to each other.  Thus, each $x_n$ lies in some ``triangle" $T_n$ with vertices in $X \cup S^1$.  

Let $(v_{1,n}, v_{2,n}, v_{3,n})$ denote the vertices of $T_n$, thus $x_n = \sum_i t_{i,n} v_{i,n}$ where $0 \leq t_{i,n} \leq 1$ and $\sum_i t_{i,n}=1$.  
By compactness of $X \cup S^1$, we can pass to a subsequence $T_j$ so that we have convergence $v_{i,j} \to v_i \in X\cup S^1$. 
If all three points $v_1,v_2,v_3$ coincide, then they necessarily agree with $x$.  Now $F(v_{i,j})$ tends to $F(x)$ for all $i$, as $F$ is a homeomorphism on $X\cup S^1$. Thus  $F(x_j)$ converges to $F(x)$ in that case.

\begin{claim}  If $v_1 \neq v_2$, then 
the interior of the geodesic segment between $v_1$ and $v_2$ does not cross any leaf of $L^-\cup L^+$, nor does it cross the interior of a side of a triangle in the triangulation $\cup_P \cT_P$.  
\end{claim}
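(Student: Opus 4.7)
The plan is to prove both assertions by contradiction, relying on the fact that every complementary region $\overline{P_j}$ containing $T_j$ is convex with interior disjoint from $\geo(L^+\cup L^-)$ (Lemma \ref{l.complementary_component}).

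For the first assertion, I would suppose the interior of $\geo(v_1,v_2)$ meets a leaf $\ell \in \geo(L^+\cup L^-)$ transversally at some interior point, so that $v_1$ and $v_2$ lie in the two distinct open half-discs of $\bD^2 \setminus \ell$. By continuity, for all sufficiently large $j$, the vertices $v_{1,j}$ and $v_{2,j}$ of $T_j$ lie in opposite open half-discs as well. In the non-degenerate case $T_j \subset \overline{P_j}$, and since $P_j$ is open, connected and disjoint from $\ell$, $\overline{P_j}$ lies in a single closed half-disc of $\bD^2 \setminus \ell$, which immediately contradicts $v_{1,j}$ and $v_{2,j}$ lying in opposite open half-discs. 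In the degenerate case $T_j$ is a maximal segment of a leaf with endpoints in $X \cup S^1$ and no interior points of $X$; any transverse crossing with $\ell$ would produce a point of $X$ in the interior of $T_j$, which is forbidden, so $T_j$ lies entirely in one closed half-disc, yielding the same contradiction.

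For the second assertion, let $s$ be the side of a triangle in $\cT_P$ whose interior is crossed by $\geo(v_1,v_2)$. If $s$ lies on $\partial \overline{P}$, then by Lemma \ref{l.complementary_component} it is contained in a leaf of $\geo(L^+\cup L^-)$, which is already ruled out by the first part. Otherwise $s$ is an interior diagonal of $\cT_P$ whose interior lies in the open set $P$. If the complementary regions $P_j$ are eventually different from $P$, then $T_j \subset \overline{P_j}$ is disjoint from the open set $P$; consequently $\geo(v_{1,j},v_{2,j}) \subset T_j$ avoids $P$, and hence the limit $\geo(v_1,v_2)$ lies in the closed set $\bD^2 \setminus P$, contradicting the assumption that it meets the interior of $s$.

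The remaining case, in which $P_j = P$ along a subsequence, is the main obstacle and requires showing that $T_j$ is eventually a fixed triangle $T\in\cT_P$ whose vertices are precisely $v_1, v_2, v_3$. When $\overline{P}$ has finitely many sides, $\cT_P$ is finite so this follows after passing to a subsequence. When $\overline{P}$ has infinitely many sides, by Lemma \ref{l.complementary_component} its vertices accumulate only at a single boundary point $p \in S^1$; with the coning triangulation chosen in Corollary \ref{c.triangulation}, each triangle has $p$ as a vertex and the remaining vertices march toward $p$, so triangles at high combinatorial position have diameter tending to $0$. Thus, if the indices of $T_j$ diverged we would obtain $v_1 = v_2 = p$, contradicting $v_1 \neq v_2$. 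Once $T_j$ is eventually equal to such a fixed $T$, the segment $\geo(v_1,v_2)$ is one of the sides of $T$, and distinct sides in a triangulation meet only at shared vertices, which rules out the crossing and completes the claim.
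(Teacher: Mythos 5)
Your proof is correct, but it takes a considerably more elaborate route than the paper's, which handles both assertions in one line: ``crossing a leaf transversally'' and ``crossing the interior of a triangle side'' are open conditions on the pair $(v_1,v_2)$, and the approximating segments $\geo(v_{1,j},v_{2,j})$ never cross any leaf or side interior (being either a side of a non-degenerate $T_j\subset\overline{P_j}$, hence confined to a single complementary region, or an arc of a leaf in the degenerate case); openness then transfers this to the limit segment. Your half-disc separation argument for the first assertion is precisely this openness, just phrased via convexity of $\overline{P_j}$. For the second assertion you substitute an explicit case analysis (whether $P_j$ eventually leaves $P$, and if not, showing $T_j$ stabilizes using the diameter decay of the coning triangulation near the accumulation point $p$) in place of openness. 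This is valid and even yields directly the stronger conclusion the paper records afterwards (that $\geo(v_1,v_2)$ is a side of a fixed triangle), but it is extra machinery, and it depends on the particular coning triangulation chosen in Corollary~\ref{c.triangulation}, whereas the openness argument is triangulation-agnostic.

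One small loose end in your second assertion: your cases A and B are indexed by ``the complementary region $P_j$ containing $T_j$,'' but when $T_j$ is a degenerate triangle (a maximal leaf segment, as allowed in the enclosing proof), $T_j$ may lie on the common boundary of two regions or even be accumulated by leaves on both sides, so $P_j$ is not canonically defined and $T_j$ is never an element of any $\cT_P$. This is easy to patch: a degenerate $T_j$ lies on a leaf, hence is disjoint from the open set $P$, so such indices fall under the same conclusion as your case A ($\geo(v_{1,j},v_{2,j})$ avoids $P$, so the limit stays in $\bD^2\smallsetminus P$). With that adjustment your case split --- segments eventually avoiding $P$ versus segments that are interior diagonals of $\cT_P$ infinitely often --- is exhaustive and the argument closes.
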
 
\begin{proof} Crossing a leaf, and crossing the interior of a side of a triangle are both open conditions, and no segment $\geo(v_{1,j}, v_{3,j})$ crosses such a leaf or side.  Thus, the claim follows, and 
so the segment between $v_1$ and $v_2$ is either contained in a triangle or in a leaf. 
\end{proof} 

As an immediate consequence of this claim, $v_1, v_2$ and $v_3$ all lie in the closure of a single triangle or a leaf.  In either case, $F$ is affine on the convex hull of $v_1, v_2, v_3$ and so $F(x_j)$ converges to $F(x)$ as desired.  
Since this was true for any choice of subsequence provided that the vertices converge, we conclude $F(x_n)$ converges to $F(x)$ which finishes the proof Theorem \ref{t.naturality}.    
\end{proof} 

The following example, illustrated in Figure \ref{fig:appendix} shows that Theorem \ref{t.naturality} fails without the fully transverse assumption.
The example can easily be adapted to produce one where endpoints are dense, but the connectedness property is not satisfied.  

   \begin{figure}[h]
     \centering
     \includegraphics[width=7cm]{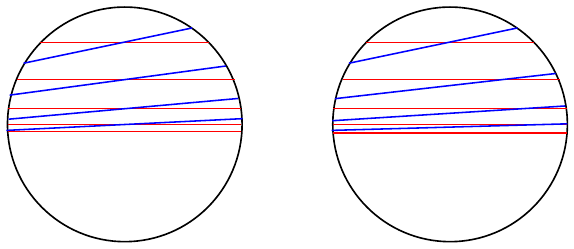}
     \caption{Two (partially defined) prelaminations that are homeomorphic on $S^1$ but with no extension to $\bD^2$.} 
     \label{fig:appendix}
   \end{figure}

\begin{example} \label{ex:non_natural}
We work with the unit disc $\bD^2$ in $\R^2$.  
Let $(a_n, a'_n)$ be the points of intersection of $S^1$ with the line $y = 1/n$ in $\bD^2$, for $n = 2, 3, \ldots$, so $a_n \to (-1,0)$ and $a'_n \to (1,0)$.  
Let $L^+$ be the collection of all such leaves $\{a_n, a'_n\}$.  
Consider sequences of points $b_n \in [a_n, a_{n+1}]$ and $b'_n \in [a'_n, a'_{n-1}]$.  We may choose these so that 
$\geo(a_n, a'_n) \cap \geo(b_n, b'_n)$ is any specified point on the line segment $\geo(a_n, a'_n)$.  To fix an example, choose $b_n$ and $b'_n$ such that $\geo(a_n, a'_n) \cap \geo(b_n, b'_n) = (0, 1/n)$.  Let $L^-$ be the collection of leaves $\{b_n, b'_n\}$.  

Now choose $c_n \in [a_n, a_{n+1}]$ and $c'_n  \in [a'_n, a'_{n-1}]$ such that $\geo(a_n, a'_n) \cap \geo(c_n, c'_n)$ converges to the point $(0,1)$.  There is a homeomorphism  $f$ of $S^1$ pointwise fixing $\{a_n, a'_n : n\geq 2 \in \bN\}$ and with $f(b_n) = c_n$, $f(b'_n) = c'_n$.   Obviously such a homeomorphism cannot extend continuously to a homeomorphism of $\bD^2$ taking lines of $\geo(L^\pm)$ to lines of $\geo(f(L)^\pm)$.   
\end{example}

\bibliographystyle{amsalpha}
\bibliography{refs_prelam}

\end{document}